    \DeclareMathSymbol{\partial}{\mathord}{letters}{"40} 
\newcommand{\customyinyang}[1][1]{%
    \begin{tikzpicture}[scale=#1*0.07]
      \draw[line width = #1*0.05mm,transform canvas={yshift=0.02cm}] (0,0) circle (1cm);
      \path[fill=black,transform canvas={yshift=0.02cm}] (90:1cm) arc (90:-90:0.5cm)
                        (0,0)    arc (90:270:0.5cm)
                        (0,-1cm) arc (-90:-270:1cm);

    \end{tikzpicture}}
\newtheorem{theorem}{Theorem}[section]
\newtheorem{prop}[theorem]{Proposition}
\newtheorem{corollary}[theorem]{Corollary}
\newtheorem{lemma}[theorem]{Lemma}
\newtheorem{definition}[theorem]{Definition}
\newtheorem{remark}[theorem]{Remark}
\DeclareMathOperator{\Span}{span}
\newcommand{\supp}{\operatorname{supp}}
\newcommand{\eps}{\varepsilon}
\newcommand\dint{\mathrm{d}}
\newcommand{\dd}{\,\mathrm{d}} 
\newcommand{\ee}{\mathrm{e}} 
\newcommand{\ii}{\mathrm{i}} 
\newcommand{\cC}{\ensuremath{\mathcal C}}
\newcommand{\N}{\mathbb{N}}
\newcommand{\Z}{\mathbb{Z}}
\newcommand{\R}{\mathbb{R}}
\newcommand{\C}{\mathbb{C}}
\newcommand{\T}{\mathbb{T}}
\newcommand{\funpool}{\mathscr{B}}
\newcommand{\testpool}{\tilde{\mathscr{B}}}
\newcommand{\n}{\ensuremath{{\N}_0}}
\newcommand{\nd}{\ensuremath{\n^d}}
\def \bn{\bar{n}}
\def \bj{\bar{j}}
\def \bh{\bar{h}}
\def \bm{\bar{m}}
\def \bk{\bar{k}}
\def \bl{\bar{l}}
\def \bb{\bar{b}}
\def \bt{\bar{t}}
\def \balpha{\bar \alpha}
\def \beps{\bar{\varepsilon}}
\def \bgamma{\bar{\gamma}}
\newcommand{\tq}{\tilde{q}}
\DeclareMathOperator{\dist}{dist}
\newcommand{\fj}{f^{\mathrm{hpc}}_{\bj}}
\NewDocumentCommand{\norm}{m}{\ensuremath{\left\lVert #1 \right\rVert}} 
\NewDocumentCommand{\abs}{m}{\ensuremath{\left\lvert #1 \right\rvert}} 
\NewDocumentCommand{\br}{m}{\ensuremath{\left\langle #1 \right\rangle}} 
\NewDocumentCommand{\dual}{m}{\ensuremath{\langle #1 \rangle_{\raisebox{-0.6ex}{\customyinyang[1]}}}}
\NewDocumentCommand{\hpc}{O{r} O{p} O{q} O{[0,1]^{d}}}{S^{#1}_{#2,#3}B_{\mathrm{hpc}}\ensuremath{\left(#4\right)}}
\title{Besov regularity of multivariate non-periodic functions \\ in terms of half-period cosine coefficients and consequences \\ for recovery and numerical integration}
\author{Martin Sch\"afer\footnote{Corresponding author. Email: martin.schaefer@mathematik.tu-chemnitz.de}
\and Tino Ullrich}
\date{Chemnitz Technical University, 09111 Chemnitz, Germany \\[2ex]
April~4, 2025}
\begin{document}

\maketitle

\vspace*{-3ex}

\begin{abstract}
\noindent
In the setting of $d$-variate periodic functions, often modelled as functions on the torus $\T^d\cong[0,1]^d$, the classical tensorized Fourier system is the
system of choice for many applications. Turning to non-periodic functions
on the unit cube $[0,1]^d$ the Fourier system is not as well-suited as exemplified by the Gibbs phenomenon at the 
boundary. 
Other systems have therefore been considered for such functions. One example is 
the half-period cosine system, which occurs naturally as the eigenfunctions of the Laplace operator under homogeneous Neumann
boundary conditions. 
We introduce and analyze
associated function spaces, $\hpc$, of dominating mixed Besov-type generalizing earlier concepts in this direction, in particular the half-period cosine spaces 
from~\cite{Dick2013,goda2019lattice}.
As a main result, we show that there is a 
natural parameter range,
where $\hpc$ coincides with the classical Besov space of dominating mixed smoothness $S^{r}_{p,q}B([0,1]^d)$. 
This finding has direct implications for different functional analytic tasks in $S^{r}_{p,q}B([0,1]^d)$. It allows to systematically transfer methods, originally taylored to the periodic domain, to the non-periodic setup. To illustrate this, we investigate half-period cosine approximation, sampling reconstruction, and tent-transformed cubature. 
Concerning cubature, for instance, we are able to reproduce the optimal convergence rate $n^{-r}(\log n)^{(d-1)(1-1/q)}$ 
for tent-transformed digital nets in the range $1\le p,q\le\infty$, $\tfrac{1}{p}<r<2$, where $n$ is the number of samples.
In our main proof we 
rely on Chui-Wang discretization of the dominating mixed
Besov space $S^{r}_{p,q}B(\R^d)$. This paper therefore also includes an extension of~\cite[Thm.~5.1]{DerUll19} to the multivariate domain.
\end{abstract}

\noindent
{\bf Keywords :} Besov space, Chui-Wang wavelets, Half-period cosine basis, Tent transform, Cubature, Recovery.\\
{\bf 2010 Mathematics Subject Classification :} 41A10, 41A25, 42C10, 42C40, 42B05, 42B35, 46E35, 65D32.

%
%
%
%
%
%


\section{Introduction}

Fourier series expansions of the form
\begin{align}\label{Fourier_expansion_1D}
f = \sum_{k\in\Z} \hat{f}(k) \exp_{k} =  \hat{f}(0) + \sum_{k\in\N}  \big(\hat{f}_{c}(k)  \cos_k +  \hat{f}_{s}(k)\sin_k\big)
\end{align}
have been a well-established and prevalent tool in mathematics
since the beginnings of Fourier analysis in the 19th century, see e.g.~\cite{Four1822}.
Their elementary building blocks, the exponentials 
and trigonometric functions, 
have proven to be well-suited for
the approximation of periodic functions $f:\R\to \C$, often modelled as functions $f:\T\to \C$ on the torus $\T$, if those are sufficiently smooth. Furthermore, the
smoothness of $f$ is handily encoded in the decay of the Fourier coefficients, 
a fact that is the foundation of the Fourier analytic characterization of a great variety of function spaces.
The tremendous success 
of the expansions~\eqref{Fourier_expansion_1D}
as a numerical tool, as pointed out in~\cite{IserNor2008}, 
can be traced back to 
mainly two features:
\begin{enumerate}
\item[(F1)]
The fast decay of the Fourier coefficients for smooth $f:\T\to \C$ leading to a fast convergence of~\eqref{Fourier_expansion_1D} to $f$.
\item[(F2)]
The stability of the expansion~\eqref{Fourier_expansion_1D} 
with respect to perturbations of the Fourier coefficients and, again for smooth $f$, the good numerical approximability of the latter.
\end{enumerate}

\noindent
As for~(F1), the Fourier coefficients decay of order $\mathcal{O}(k^{-r-1})$ 
if $f\in C^r(\T)$ and the $r$th derivative $f^{(r)}$ is of bounded variation. Under the assumption of analyticity the decay is even exponential.
As for~(F2), good stability properties of the expansion~\eqref{Fourier_expansion_1D} are, for example, given with respect to uniform convergence. Here the approximation is stable with respect to  perturbations of the coefficients in the $\ell_1$-norm. The latter can be numerically approximated via cubature with the discrete Fourier transform (DFT). If $f$ is analytic the 
deviation of the approximate values to the true coefficients decays exponentially in the number of samples.
The DFT, finally, can be efficiently computed by the fast Fourier transform (FFT). 
To recall such Fourier analytic results we refer to the standard literature (e.g.~\cite{Henrici1993,PlPoStTa2018}).

The short recapitulation above illustrates that
the numerical utility of the expansions~\eqref{Fourier_expansion_1D} rests on a number of factors,
which unfortunately disappear
when turning to non-periodic functions $f:I\to\C$
on a bounded interval $I\subset\R$. Considering for example  
$I=[0,1]$, 
the decay rate of
the Fourier coefficients breaks down to $\mathcal{O}(k^{-1})$ if $f(0)\neq f(1)$, even for
highly smooth functions such as $f\in C^\infty([0,1])$. Likewise, the approximability of the coefficients via the DFT drops sharply to a rate of only $\mathcal{O}(n^{-2})$ in the number of samples $n$.
The reason for this breakdown is the discrepancy of the function values at the endpoints of $I$, which correspond to jump discontinuities of the periodized function
\begin{align}\label{simple_periodization}
    f_{\rm per}:\R \to \C \;,\quad f_{\rm per}(x):=f(x\bmod 1) \,.
\end{align}
Gibbs' phenomenon, materializing at these jumps,
prohibits a fast convergence of~\eqref{Fourier_expansion_1D}. The slow decay of the Fourier coefficients further disrupts the correlation between the coefficients' decay rate and the smoothness of $f$, 
preventing straight-forward Fourier analytic characterization of smoothness.

Due to these deficiencies,
other approaches are necessary when dealing with non-periodic functions on compact domains. On the one hand, there are a variety of periodization strategies which aim to map functions to smooth periodic counterparts so that classical Fourier analysis can be applied efficiently. In this category fall domain transformation techniques, as analyzed e.g.\ in~\cite{PoNas20,PoNas21,PoNas22,NUUChangeVariable2015}, as well as extension methods enlarging the domain and extending the functions smoothly to the boundary, see e.g.~\cite{BOYD2010,Huy2010,AdHuy2020}. A combination of both techniques is used in~\cite{PoWei2024}. Another classical, very elegant approach is the polynomial subtraction method~\cite{kantorovich1958approximate,ROACHE1978204}, where a Hermite interpolant is subtracted from $f$ to enhance the approximation (cf.~Lanczos representation~\cite{Lyn1974}).

On the other hand, a more direct solution is to look for suitable substitutes of the Fourier system on the given domain. Examples here are the Chebyshev polynomials on $[-1,1]$ or systems of localized functions such as e.g.\ wavelet systems, see e.g.~\cite{Jia2009,DahKunUr1999}. Often such systems, like for instance the Chebyshev polynomials, are the dual counterparts of corresponding periodization procedures. The same holds true for
the half-period cosine system, which is a simple modification of the Fourier basis on $[0,1]$, see the definition in Subsection~\ref{ssec:HPC} and Section~\ref{sec:hpc_system} for the  associated periodization, and the basis for our considerations in this paper. Whereas it may not be as powerful for approximation purposes as other more sophisticated function systems, 
its conceptual proximity to the original Fourier system has many advantages. For instance, it inherits many features, 
among them the validity of Fej\'{e}r and de la Vall\'{e}e-Poussin type theorems ensuring pointwise convergence of associated expansions in a broad spectrum of situations.

\subsection{Modified Fourier systems}

The half-period cosine system
and other similarly modified Fourier systems 
have a long history in the theory of differential equations.
They naturally appear in the solution of Sturm-Liouville eigenfunction problems. The half-period cosine system, for instance, explicitly appeared as early as 1910 in Haar's dissertation~\cite{Haar1910}, which is nowadays more famous for the introduction of the Haar wavelet, as a specific solution to the Sturm-Liouville problem with homogeneous Neumann boundary conditions.
In the numerical approximation community these systems gained attention much later, notably in 2008 after Iserles and N\o{}rsett showed good numerical properties of a modified Fourier basis in $L_2([-1,1])$.
In~\cite{IserNor2008} the named authors considered the problem of approximating analytic functions $f$ on the domain $[-1,1]$, where the Fourier system
\begin{align}\label{def:Fourier_IserNor}
\mathcal{F}([-1,1]) := \big\{\tfrac{1}{\sqrt{2}}  \big\}  \cup \big\{ \cos(\pi k\cdot) \big\}_{k \in \N} \cup \:\, \big\{ \sin(\pi k\cdot ) \big\}_{k \in \N} 
\end{align}
is an orthonormal basis. They
noticed that the decay rate of the cosine coefficients $\hat{f}_{c}(k)$ and that of the sine coefficients $\hat{f}_{s}(k)$ differ by an order of $1$. Indeed, a calculation of these coefficients via partial integration reveals that the coefficients $\hat{f}_{s}(k)$ decay by an order of $1$ slower than the coefficients $\hat{f}_{c}(k)$ if $f(-1)\neq f(1)$:
\begin{align*}
			|\hat{f}_{c}(k)| &= \bigg| \frac{(-1)^k}{(\pi k)^2}\big[f^\prime(x)\big]_{x=-1}^{x=1} \:-\: \frac{1}{(\pi k)^2} \int\limits_{-1}^{1} f^{\prime\prime}(x)\cos(\pi k x) \,\dint x  \bigg| \:\:\:\in\:\: \mathcal{O}(k^{-2}) \,, 	\\
			|\hat{f}_{s}(k)| &= \bigg| \frac{(-1)^{k+1}}{\pi k}\big[f(x)\big]_{x=-1}^{x=1} \:-\: \frac{1}{\pi k} \int\limits_{-1}^{1} f^{\prime}(x)\cos(\pi k x) \,\dint x  \bigg| \:\:\:\in\:\: \mathcal{O}(k^{-1}) \,.
\end{align*}
To improve the overall rate,
Iserles and N\o{}rsett therefore suggested to replace the
sine functions in~\eqref{def:Fourier_IserNor} and came up with the modified system
\begin{align*}
  \mathcal{M}([-1,1]) := \big\{ \tfrac{1}{\sqrt{2}} \big\} \cup \big\{\cos(\pi k\cdot )\, \big\}_{k \in \N} \,\cup \:\, \big\{ \sin(\pi(k-\tfrac{1}{2})\cdot)\, \big\}_{k \in \N} \,.
\end{align*} 
By their modification, the orthonormality of the system is not disturbed. 
Moreover, the decay rate of the modified sine coefficients $\hat{f}_{s}^{\rm mod}(k)$
now matches that of the cosine coefficients as the following calculation shows,
\begin{align*}
 |\hat{f}_{s}^{\rm mod}(k)|
			&= \bigg| \frac{(-1)^{k+1}}{(\pi (k-\tfrac{1}{2}))^2}\big[ f^\prime(x)\big]_{x=-1}^{x=1}  \:-\: 
			 \frac{1}{(\pi (k-\tfrac{1}{2}))^2} \int\limits_{-1}^{1} f^{\prime\prime}(x)\sin(\pi(k-\tfrac{1}{2})x) \,\dint x \bigg| 
    \:\:\:\in\:\: \mathcal{O}(k^{-2}) \,. 
\end{align*}
Altogether, one thus gains $1$ order in the decay rate of the coefficients resulting in an order $1$ improvement in the approximation performance. 
As Iserles and N\o{}rsett themselves note, 
the system $\mathcal{M}([-1,1])$ has appeared earlier in the literature, e.g.\ in 1935
in a publication by Krein~\cite{Krein35} or in Young's proof of the Kadec $\tfrac{1}{4}$-theorem~\cite{Young80} from 1980. 
However, a systematic and comprehensive analysis was lacking,
specifically with respect to features (F1) and (F2). 
This motivated~\cite{IserNor2008}, which also served as a starting point for a series of follow-up papers~\cite{AdcockIserNor2010,IserNor2009,HuyIserNor2010a,HuyIserNor2010b}.

The main accomplishments in~\cite{IserNor2008} are the analysis of
pointwise convergence of expansions in $\mathcal{M}([-1,1])$, see also Olver~\cite{Olver2009}.
It is further shown that instead of approximating the coefficients by the DFT one can efficiently compute them via asymptotic expansions or quadrature techniques for highly oscillatory integrals, developed e.g.\ in~\cite{Iser2004,Iser2005,HuyVan2006}.
This addresses feature~(F2), where one should add that the stability properties of expansions in $\mathcal{M}([-1,1])$ obviously coincide with those of ordinary Fourier expansions.
Concerning~(F1), the coefficient decay is still rather unsatisfactory. While clearly an improvement over $\mathcal{F}([-1,1])$, it is limited to $\mathcal{O}(k^{-2})$ even for analytic $f$ if $f(-1)\neq f(1)$. 
To obtain higher rates, higher order analogues of $\mathcal{M}([-1,1])$ can be used
based on 
Birkhoff expansions. Such expansions were first considered in 1908 by Birkhoff~\cite{Birk1908} in the theory of differential equations, see also~\cite{Stone26,Naimark1968}.   
For a suitable choice of differential operator and boundary conditions the resulting system of eigenfunctions allows for orthogonal expansions with a rapid decay of the expansion coefficients. A possible choice, as e.g.\ used in~\cite{IserNor2009,Adcock2011}, are the polyharmonic operators and homogeneous Neumann boundary conditions.

\subsection{The half-period cosine system}
\label{ssec:HPC}

The half-period cosine system considered in this paper is closely related to $\mathcal{M}([-1,1])$. In the $1$-dimensional case, it consists of the functions $c_k:[0,1]\to\R$ defined by
\begin{align}\label{def:hpc_functions_1D}
c_0(\cdot):=1 \quad\text{and}\quad c_k(\cdot) := \sqrt{2}\cos(\pi k\cdot) \quad\text{for }k\in\N\,.
\end{align} 
It is a substitute for the orthonormal Fourier basis 
\begin{align*}
	\mathcal{F}([0,1]) := \big\{ 1 \big\}  \cup \big\{ \sqrt{2}\cos(2\pi k\cdot) \big\}_{k \in \N} \cup \:\, \big\{ \sqrt{2}\sin(2\pi k\cdot ) \big\}_{k \in \N} 
\end{align*}
on the interval $[0,1]$ and subsequently denoted by $\mathcal{C}([0,1])$. It differs from the Fourier basis in that respect that in $\mathcal{C}([0,1])$ the sine functions of $\mathcal{F}([0,1])$ are replaced by the half-period cosine functions $\sqrt{2}\cos(\pi k\cdot)$, $k\in\{1,3,\ldots\}$.

The relation between the two systems can be described by the transformation $f \mapsto  \sqrt{\tau_1^\prime(\cdot)}f(\tau_1(\cdot))$, which maps functions $f$ on $[-1,1]$ to functions on $[0,1]$, where $\tau_1$ is the
linear assignment
		    \begin{align}\label{eqdef:transmap}
			\tau_1: [0,1]\to[-1,1] \;,\quad\tau_1(x):=2x-1 \,.
			\end{align}
Applied to $\mathcal{M}([-1,1])$, this transformation leads to the system transformation 
		\begin{align*}
			\tfrac{1}{\sqrt{2}} \,\mapsto\, 1 \;,\quad
		\cos(\pi k\cdot) \,\mapsto\, (-1)^k \sqrt{2}  \cos(2\pi k\cdot)  \;,\quad 
		\sin(\pi(k-\tfrac{1}{2})\cdot) \,\mapsto\, (-1)^{k+1} \sqrt{2} \cos(\pi(2k-1)\cdot) \,. 
		\end{align*}
One can see that, up to the switching signs, $\mathcal{M}([-1,1])$  is transformed to $\mathcal{C}([0,1])$. The latter
can hence be considered the corresponding system on the domain $[0,1]$.
It features the same advantageous properties as $\mathcal{M}([-1,1])$.
On the downside, its approximation performance is subject to the same limitations.

Since we work on a $d$-dimensional cube $[0,1]^d$, with $d\in\N$, we need a $d$-variate version of this system. It is given by 
\begin{align}\label{def:hpc_system}
    \mathcal{C}_d:= \mathcal{C}([0,1]^d):=\big\{ c_{\bk}(\cdot) \big\}_{\bk\in\N^d_{0}} 
\end{align}
and has as elements the tensor products $c_{\bk}:[0,1]^d \to\R$
of the functions $c_k$ from~\eqref{def:hpc_functions_1D}. Indexed by multi-indices $\bk=(k_1,\ldots,k_d)\in\N^d_{0}$, those are defined as
\begin{align*}
c_{\bk}(x):= c_{k_1}(x_1)\cdots c_{k_d}(x_d) \quad\text{for } 
x=(x_1,\ldots,x_d)\in\R^d \,.
\end{align*}
More details 
of the system $\mathcal{C}_d$ are recalled and proved in Section~\ref{sec:hpc_system}. Note that in~\cite{IserNor2009}
the modified Fourier system $\mathcal{M}([-1,1])$ has also been extended to the multivariate cube $[-1,1]^d$. A short summary on this multivariate system $\mathcal{M}([-1,1]^d)$  is given by Adcock and Huybrechs in~\cite{AdcockHuy2011}. Further properties are proved by Adcock in~\cite{Adcock2010}.

\subsection{Periodization and the tent transform}
\label{ssec:per_and_tent}

An important tool in the analysis of the half-period cosine system is the so-called tent transform.
In dimension $d=1$ this is the function
\begin{align*}
\phi_1: [0,1] \to [0,1] \;,\quad \phi_1(t):= 1 - |2t -1| \,,
\end{align*}
whose graph looks like a tent. In some papers it is called baker's transform (e.g.\ in~\cite{Hick2002}) due to its stretching and folding of the interval $[0,1]$ reminiscent of the way a baker handles the bread dough. On the hypercube $[0,1]^d$ the tent transform $\phi:[0,1]^d \to [0,1]^d$ is the function that applies $\phi_1$ to each coordinate.

Its relevance in the analysis of the system~\eqref{def:hpc_system} comes from the fact that 
it connects the half-period cosine functions with the 
corresponding full-period functions. The precise relation is
\begin{align}\label{rel:tent_hpc}
c_{\bk} \circ \phi = 2^{|\bk|_0/2} \cos(2\pi k_1 \cdot)\cdots \cos(2\pi k_d \cdot)  \;,\quad \bk=(k_1,\ldots,k_d)\in\N_0^d \,,
\end{align}
where $|\bk|_0$ denotes the number of non-zero entries of $\bk$.
The transformation $f\circ \phi$ can be viewed as a periodization procedure 
that takes $f$ from the non-periodic domain $[0,1]^d$ to the periodic domain $\T^d\cong[0,1]^d$.
Comparing it to the standard periodization, given by~\eqref{simple_periodization} in the univariate case,
more regularity is preserved than simple repetitions of the function would.
The transform $\phi$ is a central element of the periodization principle concretized in Section~\ref{sec:hpc_system}, which is a formal connection between half-period cosine analysis of non-periodic functions on the cube $[0,1]^d$ and classical Fourier analysis of periodic functions on $\T^d$.

Important applications of $\phi$ have particularly been found in sampling theory, where it is used to transform node 
sets $\{x_i\}_{i\in I}\subset [0,1]^d$ to so-called tent-transformed node sets $\{\phi(x_i)\}_{i\in I}\subset [0,1]^d$.
In a non-periodic setting, using the latter often improves the performance of cubature rules or sampling reconstruction schemes. The corresponding procedures are called tent-transformed procedures. Taking the viewpoint that instead of the points the
functions are transformed, the better performance of the tent-transformed procedures can be explained by the higher regularity preserved under periodization by reflection compared to periodization by repetition. 
Hickernell, in 2002, was the first who realized the advantages of the tent transform in the analysis of cubature rules. 
In~\cite[Thm.~3]{Hick2002} he used it
as a tool to prove $\mathcal{O}(n^{-2+\varepsilon})$
convergence of tent-transformed randomly shifted lattice rules in certain Hilbert spaces, including $H^{2}_{\rm mix}([0,1]^d)$. Hereby, $n$ is the number of utilized samples. Goda, Suzuki, and Yoshiki later, in 2019, obtained the same rate~\cite[Thm.~2, Cor.~1]{goda2019lattice}
for tent-transformed lattice rules without shifting, building upon the investigations by Dick, Kuo, Nuyens, Pillichshammer, and Suryanarayana in~\cite{Dick2013,CKNS2016}.
Similarly, for tent-transformed digital nets with random digital shifts, $\mathcal{O}(n^{-2+\varepsilon})$ convergence was shown by Cristea, Dick, Leobacher, and Pillichshammer~\cite[Thm.~5]{CrisDickLeoPill2007} in 2007. Another area, where the tent transform is useful in applications, is function reconstruction and approximation from point samples.
This topic has been considered e.g.\ in~\cite{CKNS2016,KMNN2021}.

For technical reasons,
we will in the sequel in most cases parameterize the torus $\T^d$ by $[-1,1]^d$ instead of $[0,1]^d$. Instead of $\phi$, we then use the operator $\mathcal{P}$ given by
\begin{align}\label{def1:periodization_reflection}
\mathcal{P}f(x) := f(|x_1|,\ldots,|x_d|)  \;,\quad x=(x_1,\ldots,x_d)\in[-1,1]^d \,.
\end{align}
Like $\phi$, it shall be viewed as an operator that periodizes functions $f:[0,1]^d\to\C$ by mapping them to the torus $\T^d\cong[-1,1]^d$. It reflects them along the coordinate axes and
is related to the tent transform by the equality
\begin{align}\label{rel_phi_P}
f  \circ (1-\phi) = (\mathcal{P}f) \circ \tau \,, 
\end{align}
where $\tau:=\tau_1 \otimes \ldots \otimes \tau_1$ is the $d$-variate tensorized version of $\tau_1$ from~\eqref{eqdef:transmap}.

\subsection{Half-period cosine function spaces}

Half-period cosine coefficients can be used to define function classes based on the coefficients' properties,
similar as Fourier 
coefficients.
For $f\in L_1([0,1]^d)$ the half-period cosine coefficients $\hat{f}_{\rm hpc}(\bk)$ are calculated by the formula
\begin{align}\label{eqdef:hpc_coeff}
\hat{f}_{\rm hpc}(\bk) 
:= \int_{[0,1]^d} f(x) c_{\bk}(x) \,\dint x \,.
\end{align}  
A typical criterion is their decay and summability behaviour.
In~\cite{Dick2013,CKNS2016,goda2019lattice} the $d$-variate half-period cosine space
\begin{align*}
\operatorname{C}_{d,r,\bgamma} :=\Big\{ f\in L_2([0,1]^d) ~:~ 
 \|f\|^2_{\mathcal{\operatorname{C}}_{d,r,\bgamma}} := \sum_{\bk\in\N_0^d} |\hat{f}_{\rm hpc}(\bk)|^2  {\bf w}_{r,\bgamma}(\bk) < \infty \Big\}
\end{align*}
is defined for parameters $r>1/2$ and $\bgamma=(\gamma_1,\ldots,\gamma_d)>0$
with weights of the form
\[
{\bf w}_{r,\bgamma}(\bk) := {\rm w}_{r,\gamma_1}(k_1)\cdots  {\rm w}_{r,\gamma_d}(k_d)  \;,\quad {\rm w}_{r,\gamma_j}(k) :=\begin{cases}  1 \quad&,\,k=0\,, \\
|k|^{2r}/\gamma_j &,\, k\in\Z\backslash\{0\}\,.
\end{cases}
\]
It is related to a corresponding class of periodic functions based on classical Fourier coefficients, namely
\begin{align*}
\operatorname{E}_{d,r,\bgamma} :=\Big\{ f\in L_2([0,1]^d) ~:~ 
 \|f\|^2_{\operatorname{E}_{d,r,\bgamma}} := \sum_{\bk\in\Z^d} |\hat{f}(\bk)|^2  {\bf w}_{r,\bgamma}(\bk)< \infty \Big\} 
\end{align*}
with
\begin{align*}
\hat{f}(\bk) := \int_{[0,1]^d} f(x) \exp(-2\pi\ii \bk\cdot x) \,\dint x \,.
\end{align*} 
The latter
is called weighted Korobov space in~\cite{Dick2013,CKNS2016,goda2019lattice}. 
Noting that $(1 + |k|)/2 \le \max\{1,|k|\}\le 1 + |k|$ for all $k\in\Z$ and hence, for $j\in\{1,\ldots,d\}$,
\[
2^{-2r}\max\{1,\gamma_j\}^{-1} (1+|k|)^{2r} \le {\rm w}_{r,\gamma_j}(k)  \le \min\{1,\gamma_j\}^{-1} (1+|k|)^{2r} \,,
\]
this space is equivalent to the periodic
Sobolev space $H^{r}_{\rm mix}(\T^d):=S^{r}_{2,2}B(\T^d)$ (see Definition~\ref{def:PerBesov}) of dominating mixed smoothness on the torus $\T^d$. The dimension weights $\bgamma$ do not change $\operatorname{E}_{d,r,\bgamma}$ in terms of equivalence. 

For integer $r\in\N$, it is analyzed in~\cite{Dick2013,goda2019lattice} how the spaces $\operatorname{C}_{d,r,\bgamma}$ and $\operatorname{E}_{d,r,\bgamma}$, as well as their sum $\operatorname{C}_{d,r,\bgamma}+\operatorname{E}_{d,r,\bgamma}$, embed into the non-periodic Sobolev space $H^{r}_{\rm mix}([0,1]^d):=S^{r}_{2,2}B([0,1]^d)$ (see Definition~\ref{def:BesovDomain}). Due to $H^{r}_{\rm mix}(\T^d) \subset H^{r}_{\rm mix}([0,1]^d)$, it clearly holds $\operatorname{E}_{d,r,\bgamma} \subset H^{r}_{\rm mix}([0,1]^d)$ for $r\in\N$, which is the first observation by Dick, Nuyens, and Pillichshammer in~\cite{Dick2013}. For the half-period cosine space $\operatorname{C}_{d,r,\bgamma}$, these authors then show 
the strict embedding $\operatorname{C}_{d,r,\bgamma}\subsetneq H^{r}_{\rm mix}([0,1]^d)$ for integer $r>1$ and the equivalence $\operatorname{C}_{d,r,\bgamma}\asymp H^{r}_{\rm mix}([0,1]^d)$ for $r=1$.
Their results are summarized in~\cite[Thm.~1]{Dick2013}. Goda, Suzuki, and Yoshiki refine this analysis in~\cite{goda2019lattice} and show in detail how $\operatorname{C}_{d,r,\bgamma}$ and the sum $\operatorname{C}_{d,r,\bgamma}+\operatorname{E}_{d,r,\bgamma}$ are located in $H^{r}_{\rm mix}([0,1]^d)$. In particular, they prove $\operatorname{C}_{d,r,\bgamma}+\operatorname{E}_{d,r,\bgamma}\subsetneq H^{r}_{\rm mix}([0,1]^d)$ for integer $r>1$, which was left open in~\cite{Dick2013}. A summary of their results is given in~\cite[Thm.~1]{goda2019lattice}.

The idea to use the coefficients~\eqref{eqdef:hpc_coeff} to define function classes is universal and not restricted to the class $\operatorname{C}_{d,r,\bgamma}$.
A different kind of half-period cosine
space, but in the same spirit as $\operatorname{C}_{d,r,\bgamma}$, has been considered recently in~\cite{KMNN2021}. 
It contains all functions on $[0,1]^d$ with absolutely convergent half-period cosine series. 
The main motivation for 
such spaces are their natural properties, for instance, 
with respect to tent-transformed lattice rules or half-period cosine approximation.
A main result of~\cite{Dick2013} is the proof of the existence of lattice rules for tent-transformed
QMC integration in $\operatorname{C}_{d,r,\bgamma}$ with a quasi-optimal rate of $\mathcal{O}(n^{-r+\varepsilon})$, where $\varepsilon>0$ can be
arbitrarily small.

\subsection{Our contribution}

We pick up the ideas from~\cite{Dick2013,CKNS2016,goda2019lattice}
to define half-period cosine
spaces and introduce and systematically analyze a corresponding Besov-type scale, denoted by $\hpc$.
This scale includes the class $\operatorname{C}_{d,r,\bgamma}$ recalled in the previous subsection. 
It holds $\operatorname{C}_{d,r,\bgamma}\asymp\hpc[r][2][2][[0,1]^d]$ in the sense of equivalent norms. The periodic counterparts 
of $\hpc$ are the spaces $S^{r}_{p,q}B(\T^d)$
of dominating mixed smoothness on the torus $\T^d$. They correspond to $\operatorname{E}_{d,r,\bgamma}$ from the previous subsection in the Hilbert case $p=q=2$.

In the first main part of this article, \textbf{Section~\ref{sec:func_spaces}}, we give the definition
of $\hpc$ in Definition~\ref{d1}. Hereby the complete range $0<p,q\le\infty$ is covered, 
but we restrict the regularity to $r>\sigma_p$. This shall guarantee the embedding into proper function spaces and avoids technical difficulties in connection with the handling of distributions. For most practical applications, in sampling theory for instance, this restriction is not severe and even natural. 
The main result of this section, \textbf{Theorem~\ref{thm:relation_to_even_periodic}}, builds a bridge from $\hpc$ to periodic functions given by the bounded isomorphism
\begin{align*}
	\mathcal{P}:\; \hpc \cong S^r_{p,q}B(\T^d)_{\rm even} \quad\text{if }\sigma_p:=\max\{0,\tfrac{1}{p}-1\}<r \,.
\end{align*}
The periodization $\mathcal{P}$ is the operator from~\eqref{def1:periodization_reflection} and $S^r_{p,q}B(\T^d)_{\rm even}$ the subspace of $d$-fold even
functions in $S^r_{p,q}B(\T^d)$. The notion of $d$-fold even
function is introduced in Definition~\ref{def:nfoldeven}.
Theorem~\ref{thm:relation_to_even_periodic} is a periodization principle, which 
provides a useful translation mechanism from the non-periodic to the periodic domain.  
A notable consequence is that tent-transformed procedures in $\hpc$ exhibit the same performance as the original procedures in $S^r_{p,q}B(\T^d)_{\rm even}$. An upper bound for such procedures is thus always the original performance in $S^r_{p,q}B(\T^d)$, a
fact that was already observed in~\cite{CKNS2016}.

In \textbf{Subsection~\ref{ssec:app_theo_id}},
we apply the periodization principle to half-period cosine approximation, sampling reconstruction, and tent-transformed cubature. 
Using tent-transformed digital nets with $n$ integration nodes, we deduce the 
optimal convergence rate $n^{-r}(\log n)^{(d-1)(1-1/q)}$ for cubature in $S^{r}_{p,q}B([0,1]^d)$, provided the parameters are in the range $1\le p,q\le\infty$, $\tfrac{1}{p}<r<2$. As a special case, this implies the rate $n^{-r}(\log n)^{(d-1)/2}$ in the non-periodic Sobolev space $H^{r}_{\rm mix}([0,1]^d)$ if $\tfrac{1}{p}<r<2$. In \textbf{Theorem~\ref{thm:least_squares}}, we further analyze weighted least-squares approximation in $S^{3/2}_{2,1}B([0,1]^d)$ from $n$ point samples. Here we obtain the asymptotic approximation rate $n^{-3/2}(\log n)^{2(d-1)}$ for reconstruction in the half-period cosine basis. Due to the embedding $H^{3/2+\varepsilon}_{\rm mix}([0,1]^d)\hookrightarrow S^{3/2}_{2,1}B([0,1]^d)$ for $\varepsilon>0$, this approximation rate in particular holds true for functions $f\in H^{r}_{\rm mix}([0,1]^d)$ with $r>\tfrac{3}{2}$.

For the deduction of the above results,
in addition to the periodization principle, we use embeddings of the spaces  $S^{r}_{p,q}B([0,1]^d)$ into the half-period cosine scale $\hpc$. These embedding relations are analyzed in 
\textbf{Section~\ref{sec:main}}, the second main part of the paper.
Let us give here a short summary of the relevant results. 
According to \textbf{Theorem~\ref{thm:embedding_hpc}},   
we have for all $0<p,q\le\infty$ and $r>\sigma_p$ the continuous embedding
 \begin{align*}
           \operatorname{Id}:\;  \hpc \hookrightarrow  S^{r}_{p,q}B([0,1]^d) \,,
\end{align*}
where $\operatorname{Id}$ is the identity operator.
In \textbf{Theorem~\ref{thm:identification_hpc}} a partial converse is shown, namely
\begin{align*}
            \operatorname{Id}:\;    S^{r}_{p,q}B([0,1]^d)  \hookrightarrow \hpc \quad \text{in the restricted range}\quad \sigma_p<r<\min\{1+\tfrac{1}{p},2\}\,.
\end{align*}
Both theorems together establish a range of equivalence that substantially extends the earlier findings in~\cite[Thm.~1]{Dick2013}, where the authors restrict to integer smoothness $r\in\N$. 
A relevant endpoint result for $r=\min\{1+\tfrac{1}{p},2\}$ is obtained in \textbf{Theorem~\ref{thm:endpoint_result}}. It is the embedding
\begin{align*}
            \operatorname{Id}:\;    S^{r}_{p,\min\{p,1\}}B([0,1]^d) \hookrightarrow \hpc[r][p][\infty][[0,1]^d] \quad\text{if $\tfrac{1}{3}<p<1$ or $1< p\le\infty$}\,, 
\end{align*}
where the case $p=1$ remains open due to the limitations of our proof technique.

The last part of the paper, \textbf{Section~\ref{sec:CW_char}}, is a little 
detached from the rest.
It stands on its own and is concerned with Chui-Wang wavelet analysis, which we need for our main proof in Section~\ref{sec:main}. 
The central achievement, \textbf{Theorem~\ref{thm:CW_characterization}}, yields Chui-Wang characterizations for $S^{r}_{p,q}B(\R^d)$ in a certain parameter range.
Such characterizations have 
been given
in~\cite[Thm.~5.1]{DerUll19} for the $1$-dimensional case, however with an incorrect statement in the boundary case $p=\infty,q<\infty$.
We give a more thorough analysis here and handle the boundary cases with special care.

\subsection{Basic notation}

The symbols $\N$, $\Z$, $\R$, $\C$ have their usual meaning. Further, we use $\N_0:=\N\cup\{0\}$, $\N_{-1}:=\N_0\cup\{-1\}$, $\R_{\ge0}:=[0,\infty)$, and $\R_{>0}:=(0,\infty)$. For $d\in\N$, we put $[d]:=\{1,\ldots,d\}$.
Multi-indices $\balpha=(\alpha_1,\ldots,\alpha_d)\in\Z^d$
are denoted with a bar on top to distinguish them from scalar indices. For such multi-indices we define $b^{\balpha}:=b^{\alpha_1}\cdots b^{\alpha_d}$, where $b>0$. The notation $(x)_\pm$ stands for either $\max\{x,0\}$ or $\min\{x,0\}$ in case $x\in\R$. 
If $x=(x_1,\ldots,x_d)\in\R^d$ it stands for the vector where $(\cdot)_\pm$ is applied to each component. Often the bracket in $(x)_\pm$ is omitted for readability and the $\pm$ may be put at the top or bottom of the symbol for convenience.
With $|x|_p$, where $0<p\leq \infty$, we denote the standard $p$-(quasi-)norm of $x$.
The quantity $|x|_0$ gives the number of non-zero entries, $|x|$ denotes the vector $(|x_1|,\ldots,|x_d|)$ of the absolute coordinate values. 
By $x=(x_1,\ldots,x_d)>0$
we mean that each coordinate is positive.
The Euclidean inner product of $x$ with another vector $y$ is denoted by $x\cdot y$.
For two sequences $\{a_n\}_n$, $\{b_n\}_n\subset\R$ we write
$a_n \lesssim b_n$ if $a_n \leq c\,b_n$ for a fixed constant $c>0$ and all $n$. We write $a_n \asymp b_n$ if $a_n \lesssim b_n$ and $b_n\lesssim a_n$.

Throughout the paper, we will extensively use the bracket notation
\begin{align}\label{eqdef:integration_prod}
\langle f,g\rangle_{D} 
:= \int_{D} f(x) g(x) \,\dint x \,.
\end{align} 
for the integral with respect to the Lebesgue measure over a domain $D$, where $D$ is either the $d$-torus  $\T^d$ or $D\subset\R^d$. 
Since $D=[0,1]^d$ is our 
standard domain, we further abbreviate $\langle f,g\rangle := \langle f,g\rangle_{[0,1]^d}$. 
The torus $\T^d$ is usually represented in the Euclidean space $\R^d$
by the cube $[-1,1]^d$, with antipodal points on the boundary identified. At some places, which are indicated, we also use the identification $\T^d\cong[0,1]^d$.
The meaning of $\langle f,g \rangle_{\T^d}$ then depends on the utilized identification.

The Lebesgue spaces $L_p(D)$, $0<p\leq \infty$, are defined as usual. They contain 
all Lebesgue measurable functions $f:D\rightarrow \C$ satisfying
$$
     \|f\|_{L_p(D)} :=
     \Big(\,\int_{D} |f(x)|^p\,\dint x\Big)^{1/p} < \infty \,,
$$
with the usual modification in case $p=\infty$. 
The space $C(D)$ denotes the collection of all
continuous and bounded functions on $D$ equipped with the uniform norm. For 
$r\in\N\cup\{\infty\}$, the space $C^r(D)$ is comprised of the $r$-times differentiable functions on $D$, and
$C_{c}^{r}(D)$ denotes the subspace of compactly supported functions.

For $f\in L_1(\R^d)$ we use the following version of the Fourier transform,
\begin{align*}
\widehat{f}(x) := \int_{\R^d} f(\xi) \exp(-2\pi\ii x\cdot\xi) \,\dint\xi  \,.
\end{align*}
It is extended as usual to tempered distributions $f\in\mathcal{S}^\prime(\R^d)$, i.e.\ elements of the topological dual $\mathcal{S}^\prime(\R^d)$ of the Schwartz space $\mathcal{S}(\R^{d})$ of rapidly decreasing functions.  
Its inverse is denoted by $f^\vee$. 
For the dual pairing between tempered distributions $f$ and Schwartz functions $g$ we write $\langle f,g\rangle_{\mathcal{S}^\prime\times\mathcal{S}}$. 
More generally, we denote with $\langle f,g\rangle_{\mathscr{F}\times\tilde{\mathscr{F}}}$ the duality product on pairs $\mathscr{F}\times\tilde{\mathscr{F}}$ of conjugate spaces, see Definition~\ref{def:duality_prod}.


\section{The half-period cosine system}
\label{sec:hpc_system}

Let us, in this preliminary section, inspect more closely the half-period cosine system $\mathcal{C}_d=\{ c_{\bk}\}_{\bk\in\N^d_{0}}$
defined in~\eqref{def:hpc_system}. 
Hereby 
a periodization principle plays a key role that connects non-periodic functions on the $d$-dimensional hypercube $[0,1]^d$
with periodic functions on $[-1,1]^d$, however not in the standard way. It is realized by
the non-standard periodization operator $\mathcal{P}$ from~\eqref{def1:periodization_reflection} based on reflections rather than repetitions of the original function. More regularity is preserved this way, for instance the continuity of the input function. 

With the help of the auxiliary map 
\begin{align}\label{eqdef:aux_rho_1}
\rho_1:\: \R\to \R_{\ge0} \;,\quad x\mapsto 
 \min\big\{ x\bmod 2  , (-x)\bmod 2 \big\}
\end{align}
and its multivariate extension $\rho:=\rho_1 \otimes \ldots \otimes  \rho_1$,
this periodization operator $\mathcal{P}$ can be defined more conveniently than in~\eqref{def1:periodization_reflection} as follows.

\begin{definition}[Periodization]\label{def:PeriodizationOP}
	Let $f:D\to\C$ be a function with $[0,1]^d\subset D$. We denote by $\mathcal{P}$ the operator
	\begin{align*}
		\mathcal{P}:\; f\mapsto f\circ\rho|_{[-1,1]^d} \,.
	\end{align*}
\end{definition}

\noindent
Recall that we here identify $\T^d\cong [-1,1]^d$ and therefore parameterize the $1$-dimensional torus $\T$ by the extended unit interval $[-1,1]$, with endpoints identified.

The following restriction operator also plays an important role.

\begin{definition}[Restriction]
	\label{def:RestrictionOP}
        Let $f:D\to\C$ be a function with $[0,1]^d\subset D$.
	We denote by $\mathcal{R}$ the operator
	\begin{align*}
	\mathcal{R}:\; f\mapsto f|_{[0,1]^d} \,.
	\end{align*}
\end{definition}

\noindent
It is easy to check that 
\begin{align*}
\mathcal{P} \circ \mathcal{P} = \mathcal{P} \quad\text{and}\quad \mathcal{R} \circ \mathcal{R} = \mathcal{R} \quad\text{and}\quad \mathcal{R} \circ \mathcal{P} = \mathcal{R} \quad\text{and}\quad \mathcal{P} \circ \mathcal{R} = \mathcal{P} \,.
\end{align*}
These relations imply that $\mathcal{P}$ and $\mathcal{R}$ are pseudo-inverse to each other, fulfilling
$\mathcal{R} \circ \mathcal{P} \circ \mathcal{R} = \mathcal{R}$ and $\mathcal{P} \circ \mathcal{R} \circ  \mathcal{P} = \mathcal{P}$. Furthermore, for periodizations of functions $f:[0,1]^d\to\C$ via $\mathcal{P}$, i.e.\ applications of $\mathcal{P}$ to functions $f$ with domain $D=[0,1]^d$, the restriction operator $\mathcal{R}$ from  Definition~\ref{def:RestrictionOP} is a left-inverse.

\subsection{Periodization principle}

We are now ready to formulate the basic aspects of the periodization principle. 
One observation is that the periodizations $\mathcal{P}(f)$ lead to $d$-fold even functions on $\T^d$ with respect to the parametrization $\T^d\cong[-1,1]^d$.
Let us put this notion in a formal definition.
\begin{definition}\label{def:nfoldeven}
	A function $f:\T^d\cong [-1,1]^d \to\C$ is called \emph{$d$-fold even} if it satisfies
 \begin{align*}
f(x_1,\ldots,x_d) = f(\pm x_1,\ldots,\pm x_d)  \quad\text{for all}\quad (x_1,\ldots,x_d)\in[-1,1]^d \,.
\end{align*}
\end{definition}

\noindent
It is obvious that
\[
\mathcal{P}:\; L_1([0,1]^d) \to L_1(\T^d) 
\]
is an isometric operator with $\|\mathcal{P}(f)\|_{L_1(\T^d)} = 2^{d}\|f\|_{L_1([0,1]^d)}$.
Together with the observation that the image consists precisely of the $d$-fold even functions in $L_1([0,1]^d)$ we obtain the next lemma.

\begin{lemma}\label{lem:PeriodizationOP}
	The operator $\mathcal{P}$ from Definition~\ref{def:PeriodizationOP} is an isometric isomorphism
	\begin{align*}
		\mathcal{P}:\; L_1([0,1]^d) \cong L_1(\T^d)_{\rm even}  \,.
	\end{align*}
\end{lemma}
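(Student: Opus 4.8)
The plan is to verify the three ingredients that the text has already isolated: (i) $\mathcal{P}$ maps $L_1([0,1]^d)$ into $L_1(\T^d)$ with the norm relation $\|\mathcal{P}f\|_{L_1(\T^d)} = 2^d \|f\|_{L_1([0,1]^d)}$; (ii) the image of $\mathcal{P}$ is exactly the subspace $L_1(\T^d)_{\rm even}$ of $d$-fold even functions; and (iii) $\mathcal{R}$ restricted to $L_1(\T^d)_{\rm even}$ is the two-sided inverse of $\mathcal{P}$. Throughout I identify $\T^d\cong[-1,1]^d$ and use the description $\mathcal{P}f = f\circ\rho|_{[-1,1]^d}$ from Definition~\ref{def:PeriodizationOP}, together with the already-recorded identities $\mathcal{R}\circ\mathcal{P}=\mathcal{R}$ and $\mathcal{P}\circ\mathcal{R}=\mathcal{P}$.

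First I would make the change of variables explicit. The cube $[-1,1]^d$ decomposes (up to a null set) into the $2^d$ subcubes $\prod_{j=1}^d [\eps_j-1,\eps_j]$ indexed by $\eps\in\{0,1\}^d$, and on each such subcube the map $\rho$ coincides with the affine bijection $x\mapsto(|x_1|,\ldots,|x_d|)$ onto $[0,1]^d$, which has Jacobian determinant of absolute value $1$. Hence for $f\in L_1([0,1]^d)$ one gets $\int_{[-1,1]^d} |\mathcal{P}f(x)|\dd x = \sum_{\eps} \int_{[0,1]^d}|f(y)|\dd y = 2^d\|f\|_{L_1([0,1]^d)}$, which proves that $\mathcal{P}f\in L_1(\T^d)$ and gives the norm identity (so $\mathcal{P}$ is bounded and injective, being $2^d$ times an isometry). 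The same computation shows $\mathcal{P}f$ is $d$-fold even, since $\rho_1(\pm x) = \rho_1(x)$ coordinatewise, so $\mathcal{P}f(x) = \mathcal{P}f(|x_1|,\ldots,|x_d|)$ almost everywhere. Conversely, if $g\in L_1(\T^d)_{\rm even}$, set $f:=\mathcal{R}g = g|_{[0,1]^d}$; then $f\in L_1([0,1]^d)$ and $\mathcal{P}f = g\circ\rho|_{[-1,1]^d}$ agrees with $g$ a.e.\ on each subcube because on $\prod_j[\eps_j-1,\eps_j]$ one has $g(x) = g(|x_1|,\ldots,|x_d|) = f(\rho(x)) = \mathcal{P}f(x)$ by the $d$-fold evenness of $g$. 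Thus $\mathcal{P}(\mathcal{R}g) = g$, so $\mathcal{P}$ is onto $L_1(\T^d)_{\rm even}$, and $\mathcal{R}$ provides the inverse; combined with $\mathcal{R}\circ\mathcal{P}=\mathcal{R}=\operatorname{Id}$ on $L_1([0,1]^d)$, the map $\mathcal{P}:L_1([0,1]^d)\to L_1(\T^d)_{\rm even}$ is a bijection with bounded inverse $\mathcal{R}$.

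Finally I would note that whether one calls $\mathcal{P}$ an "isometric isomorphism" depends on the normalization of the $L_1(\T^d)$-norm; if the torus carries the normalized Lebesgue measure $2^{-d}\dd x$, then the factor $2^d$ disappears and $\mathcal{P}$ is literally an isometry, otherwise it is an isomorphism that rescales norms by $2^d$. Either way the statement as phrased holds. The argument is essentially bookkeeping; the only point needing a little care is the measure-theoretic claim that equality "on each subcube up to a null set" assembles to equality a.e.\ on $[-1,1]^d$, which is immediate since the overlaps of the subcubes form a Lebesgue-null set. I do not anticipate a genuine obstacle here — the lemma is a warm-up, and the real work (transferring the statement to the Besov scale in Theorem~\ref{thm:relation_to_even_periodic}) lies in controlling smoothness under $\mathcal{P}$, not in the $L_1$ case.
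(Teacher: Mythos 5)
Your proof is correct and follows the same route the paper sketches: the paper simply records that $\mathcal{P}$ is isometric (up to the factor $2^d$) from $L_1([0,1]^d)$ into $L_1(\T^d)$ and that the image is precisely $L_1(\T^d)_{\rm even}$, and the lemma is stated as an immediate consequence. You have merely filled in the routine change-of-variables bookkeeping over the $2^d$ subcubes and made the inverse $\mathcal{R}$ explicit, which is exactly what the text leaves implicit. Your side remark about the normalization of the measure on $\T^d$ is a fair observation: the constant $2^d$ is stated explicitly in the paper, so the word ``isometric'' is being used loosely (or with normalized measure in mind), but this does not affect the content of the lemma.
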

\noindent
Hereby $L_1(\T^d)_{\rm even}$ means the subspace of  $L_1(\T^d)$
comprised of the $d$-fold even functions.

Let us next have a look at the restriction operator $\mathcal{R}: f\mapsto f|_{[0,1]^d} $ from Definition~\ref{def:RestrictionOP} when applied to periodic functions on $\R^d$. Since $[0,1]^d\subset[-1,1]^d$, 
the interpretation $\T^{d} \cong [-1,1]^d$ indeed allows to apply this operator to functions $f\in L_1(\T^d)$.
One obtains $\mathcal{R}f\in L_1([0,1]^d)$ with $\|\mathcal{R}f\|_{L_1([0,1]^d)} \le \|f\|_{L_1(\T^d)}$ and hence 
the following lemma.

\begin{lemma}\label{lem:RestrictionOP}
The operator $\mathcal{R}$ from Definition~\ref{def:RestrictionOP} is bounded as operator
	\begin{align*}
		\mathcal{R}:\; L_1(\T^d) \to L_1([0,1]^d)  \,.
	\end{align*}
\end{lemma}

\noindent
Since $\mathcal{R}$ is a left-inverse to $\mathcal{P}$ on $L_1([0,1]^d)$, we see,
counting in Lemma~\ref{lem:PeriodizationOP}, that $\mathcal{P}$ and
$\mathcal{R}$ establish a $1$-$1$-correspondence between
$L_1([0,1]^d)$ and $L_1(\T^d)_{\rm even}$. Utilizing these operators, one can move back and forth between these spaces and it is this translation mechanism which is the foundation of the periodization principle. 

We next establish a relation of $\cC_d=\{c_{\bk}\}_{\bk\in\nd}$ to the standard Fourier basis in $L_2(\T^d)$ consisting of the exponential functions $\exp_{\bk}:\T^d \to \C$, $\bk\in\Z^d$. With respect
to the parametrization $\T^d \cong[-1,1]^d$ these
exponentials take the form
\begin{align*}
     \exp_{\bk}(x) := 2^{-d/2} \exp(\ii\pi \bk\cdot x) \;,\quad x=(x_1,\ldots,x_d)\in\T^d \,. 
\end{align*}
The normalization constant $2^{-d/2}$ ensures that the exponential system on $[-1,1]^d$,
\begin{align}\label{eqdef:exp_system}
\mathcal{E}([-1,1]^d) := \big\{ \exp_{\bk}\big\}_{\bk\in\Z^d}  \,,
\end{align}
is an orthonormal basis of $L_2(\T^d)$.

In addition, we define $\cos_{\bk}: \T^d \to \R$ 
for $\bk=(k_1,\ldots,k_d)\in\Z^d$ via 
\begin{align*}
\cos_{\bk}(x) &:= 2^{-d/2} \cos(\pi k_1 x_1)\cdots \cos(\pi k_d x_d)  \;,\quad x=(x_1,\ldots,x_d)\in\T^d \,. 
\end{align*}
As the next lemma states, the functions $c_{\bk}$ and $\cos_{\bk}$ are related by the periodization operator $\mathcal{P}$.

\begin{lemma}\label{lem:rel_ck_cosk}
For all $\bk\in\Z^d$ it holds
\[
\mathcal{P}(c_{|\bk|}) = 2^{(|\bk|_0+d)/2} \cos_{\bk} \,,
\]
where $|\bk|_0$ counts the number of non-zero entries of $\bk$
and $|\bk|=(|k_1|,\ldots,|k_d|)\in\nd$.

\end{lemma}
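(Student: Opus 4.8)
The plan is to reduce everything to the one-dimensional tensor factors and then merely keep track of the normalization constants. The key preliminary observation is that the auxiliary map $\rho_1$ from~\eqref{eqdef:aux_rho_1} restricts to the absolute value on $[-1,1]$: for $x\in[0,1]$ one has $x\bmod 2=x$ and $(-x)\bmod 2=2-x\ge x$, hence $\rho_1(x)=x$; for $x\in[-1,0]$ the roles are reversed and $\rho_1(x)=-x$. Thus $\rho_1=|\cdot|$ on $[-1,1]$, and therefore $\rho(x)=(|x_1|,\ldots,|x_d|)$ for $x\in[-1,1]^d$.

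Next I would unfold the definitions. By Definition~\ref{def:PeriodizationOP} and the tensor-product structure of $c_{|\bk|}$, one gets, for $x\in[-1,1]^d$,
\begin{align*}
\mathcal{P}(c_{|\bk|})(x) = c_{|\bk|}\big(|x_1|,\ldots,|x_d|\big) = \prod_{j=1}^d c_{|k_j|}(|x_j|) \,.
\end{align*}
For an index $j$ with $k_j=0$ the factor equals $c_0(|x_j|)=1=\cos(\pi k_j x_j)$, while for $k_j\neq 0$ the definition~\eqref{def:hpc_functions_1D} together with the evenness of the cosine gives $c_{|k_j|}(|x_j|)=\sqrt{2}\cos(\pi|k_j|\,|x_j|)=\sqrt{2}\cos(\pi k_j x_j)$. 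Multiplying the $d$ factors produces exactly $|\bk|_0$ copies of the constant $\sqrt{2}$, so
\begin{align*}
\mathcal{P}(c_{|\bk|})(x) = 2^{|\bk|_0/2}\prod_{j=1}^d\cos(\pi k_j x_j) \,.
\end{align*}

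Finally I would compare this with the definition of $\cos_{\bk}$, which carries the global normalization $2^{-d/2}$: indeed $2^{(|\bk|_0+d)/2}\cos_{\bk}(x)=2^{(|\bk|_0+d)/2}2^{-d/2}\prod_{j=1}^d\cos(\pi k_j x_j)=2^{|\bk|_0/2}\prod_{j=1}^d\cos(\pi k_j x_j)$, which matches the expression just obtained, proving the identity. There is no real obstacle in this argument; the only point requiring a bit of care is the bookkeeping of the factors $\sqrt{2}$ (one for each non-zero coordinate of $\bk$) against the factor $2^{-d/2}$ built into $\cos_{\bk}$, which is precisely what accounts for the exponent $(|\bk|_0+d)/2$ in the statement.
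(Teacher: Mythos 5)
Your argument is correct and follows essentially the same route as the paper's own proof: unfold the definition of $\mathcal{P}$ as reflection (i.e.\ $\rho_1=|\cdot|$ on $[-1,1]$), use the evenness of the cosine factor by factor, and balance the $|\bk|_0$ factors of $\sqrt{2}$ coming from the nonzero coordinates against the normalization $2^{-d/2}$ built into $\cos_{\bk}$. Your version is slightly more explicit (verifying the form of $\rho_1$, splitting the cases $k_j=0$ and $k_j\neq 0$), but it is the same computation.
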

\begin{proof}
By definition, $c_{\bk}(x)=2^{|\bk|_0/2} \cos(\pi k_1x_1)\cdots \cos(\pi k_dx_d)$ for $\bk\in\nd$ and $x\in[0,1]^d$. Inserting the definition of $\mathcal{P}$, we obtain for $x\in\T^d$
\begin{align*}
    \mathcal{P}(c_{\bk})(x) = 2^{|\bk|_0/2} \cos(\pi k_1 |x_1|)\cdots \cos(\pi k_d|x_d|)  =
     2^{|\bk|_0/2} \cos(\pi k_1 x_1)\cdots \cos(\pi k_dx_d) \,.
\end{align*}
Recalling the definition of $\cos_{\bk}$ and the symmetry $\cos_{\bk}=\cos_{|\bk|}$, we are finished.
\end{proof}
\noindent
Let us remark that the formula in Lemma~\ref{lem:rel_ck_cosk} is analogous to the connection, described in~\eqref{rel:tent_hpc}, of $c_{\bk}$ to the tensorized cosine functions $\cos(2\pi k_1 \cdot)\cdots \cos(2\pi k_d \cdot)$ on $[0,1]^d$ via the tent transform, namely 
\[
c_{|\bk|} \circ \phi = 2^{|\bk|_0/2} \cos(2\pi k_1 \cdot)\cdots \cos(2\pi k_d \cdot) \,.
\]

\noindent
Finally, drawing a connection between $\cos_{\bk}$ and $\exp_{\bk}$, note that by Euler's formula we have the expansion
\begin{align}\label{eq:expansion_expk}
  \exp_{\bk}(x) 
  = 2^{-d/2} \prod_{j=1}^d \exp(\ii \pi k_j x_j) &= \sum_{\beps\in\{0,1\}^d} \ii^{|\beps|_1} g_{\bk}^{(\beps)}(x)
  \nonumber \\
  &= \cos_{\bk}(x) + \sum_{\beps\in\{0,1\}^d \,, |\beps|_1>0} \ii^{|\beps|_1} g_{\bk}^{(\beps)}(x) \,, 
\end{align}
where for each $\beps=(\varepsilon_1,\ldots,\varepsilon_d)\in\{0,1\}^d$
\begin{align*}
   g_{\bk}^{(\beps)}(x) := g_{k_1}^{(\varepsilon_1)}(x_1)\cdots g_{k_d}^{(\varepsilon_d)}(x_d) \quad\text{with}\quad g_{k_j}^{(\varepsilon_j)}(x_j) := \begin{cases} 2^{-1/2}\cos(\pi k_j x_j) &,\,\varepsilon_j=0 \,, \\
    2^{-1/2}\sin(\pi k_j x_j)  &,\,\varepsilon_j=1 \,.
    \end{cases}
\end{align*}
Hence, the function $\cos_{\bk}$ can be considered as the $d$-fold even part of $\exp_{\bk}$.

\subsection{Half-period cosine coefficients}

The half-period cosine coefficients $\hat{f}_{\rm hpc}(\bk)$ of a function $f\in L_1([0,1]^d)$ have been defined in~\eqref{eqdef:hpc_coeff}.
The computation of the Fourier coefficients of an integrable $d$-variate periodic function on $\T^d\cong[-1,1]^d$ is performed by the formula 
$$
   \hat{f}(\bk) = 2^{-d/2} \int_{\T^d}
  f(x)\ee^{-\ii\pi \bk \cdot x}\,\dint x \,,\quad \bk\in \Z^d\,.
$$
Using the bracket notation introduced in~\eqref{eqdef:integration_prod}, we can write both coefficients short-hand as
\begin{align*}
\hat{f}_{\rm hpc}(\bk) =  \langle f, c_{\bk}\rangle \quad,\quad \hat{f}(\bk)= \langle f, \overline{\exp}_{\bk}  \rangle_{\T^d} \,.
\end{align*}

\noindent
We know from Lemma~\ref{lem:PeriodizationOP} that $\mathcal{P}(f)\in L_1(\T^d)_{\rm even}$ and $\mathcal{P}(g)\in L_1(\T^d)_{\rm even}$ for any $f,g\in L_1([0,1]^d)$. Another
basic but important observation is that for any $f,g\in L_1([0,1]^d)$ we have  
\begin{align}\label{eq:scalarprod_rel}
  \langle f, g \rangle= 2^{-d}  \langle \mathcal{P}(f), \mathcal{P}(g) \rangle_{\T^d} \,.
\end{align}
From this fact and Lemma~\ref{lem:rel_ck_cosk} we get the following result that relates
the Fourier coefficients of $\mathcal{P}(f)$ to the half-period cosine coefficients of $f$.

\begin{lemma}\label{lem:aux_Fourier}
Recall that $|\bk|_0$ counts the non-zero entries of $\bk\in\Z^d$ and that $|\bk|=(|k_1|,\ldots,|k_d|)\in\nd$. It holds 
\begin{align*}  
 \langle f, c_{|\bk|}\rangle =  2^{(|\bk|_0-d)/2}  \langle \mathcal{P}(f), \overline{\exp}_{\bk}\rangle_{\T^d}\,. 
\end{align*}
\end{lemma}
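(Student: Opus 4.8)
The proof is a short computation chaining together the two facts that immediately precede the statement. The plan is to start from the half-period cosine coefficient $\langle f, c_{|\bk|}\rangle$, rewrite the inner product on $[0,1]^d$ as an inner product on $\T^d$ via the scaling identity~\eqref{eq:scalarprod_rel}, then replace $\mathcal{P}(c_{|\bk|})$ using Lemma~\ref{lem:rel_ck_cosk}, and finally express $\cos_{\bk}$ through $\exp_{\bk}$ using the fact that $\cos_{\bk}$ is the $d$-fold even part of $\exp_{\bk}$ together with the $d$-fold evenness of $\mathcal{P}(f)$.

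Concretely, first I would apply~\eqref{eq:scalarprod_rel} with $g=c_{|\bk|}$ to get
\begin{align*}
\langle f, c_{|\bk|}\rangle = 2^{-d} \langle \mathcal{P}(f), \mathcal{P}(c_{|\bk|})\rangle_{\T^d} \,.
\end{align*}
Then Lemma~\ref{lem:rel_ck_cosk} gives $\mathcal{P}(c_{|\bk|}) = 2^{(|\bk|_0+d)/2}\cos_{\bk}$, so that
\begin{align*}
\langle f, c_{|\bk|}\rangle = 2^{(|\bk|_0-d)/2}\langle \mathcal{P}(f), \cos_{\bk}\rangle_{\T^d} \,.
\end{align*}
It remains to check that $\langle \mathcal{P}(f), \cos_{\bk}\rangle_{\T^d} = \langle \mathcal{P}(f), \overline{\exp}_{\bk}\rangle_{\T^d}$. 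For this I would use the expansion~\eqref{eq:expansion_expk}, which writes $\overline{\exp}_{\bk} = \cos_{\bk} + \sum_{\beps \neq 0} (-\ii)^{|\beps|_1} g_{\bk}^{(\beps)}$ (the conjugate flips the sign of $\ii$), where each $g_{\bk}^{(\beps)}$ with $|\beps|_1>0$ contains at least one sine factor $\sin(\pi k_j x_j)$ and is therefore odd in the variable $x_j$. Since $\mathcal{P}(f)$ is $d$-fold even by Lemma~\ref{lem:PeriodizationOP}, the integral over $\T^d\cong[-1,1]^d$ of $\mathcal{P}(f)\cdot g_{\bk}^{(\beps)}$ vanishes by antisymmetry in that coordinate. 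Hence only the $\cos_{\bk}$ term survives and the two pairings agree.

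There is no real obstacle here; the only point requiring a line of care is the parity argument for the mixed sine–cosine terms, i.e.\ making explicit that each $g_{\bk}^{(\beps)}$ with $|\beps|_1>0$ is odd with respect to a reflection $x_j\mapsto -x_j$ for some $j$, so that its pairing against the even function $\mathcal{P}(f)$ is zero. One should also note that this integral is absolutely convergent since $\mathcal{P}(f)\in L_1(\T^d)$ by Lemma~\ref{lem:PeriodizationOP} and $g_{\bk}^{(\beps)}\in L_\infty(\T^d)$, so splitting the integral along~\eqref{eq:expansion_expk} is legitimate.
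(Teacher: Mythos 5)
Your proof is correct and follows essentially the same route as the paper: first reduce $\langle f,c_{|\bk|}\rangle$ to $2^{(|\bk|_0-d)/2}\langle\mathcal{P}(f),\cos_{\bk}\rangle_{\T^d}$ via~\eqref{eq:scalarprod_rel} and Lemma~\ref{lem:rel_ck_cosk}, and then use~\eqref{eq:expansion_expk} together with the $d$-fold evenness of $\mathcal{P}(f)$ to kill the mixed sine--cosine terms by parity. The only cosmetic difference is that you conjugate the expansion up front, whereas the paper works with $\exp_{\bk}$ and notes $\overline{\exp}_{\bk}=\exp_{-\bk}$ at the end; the substance is identical.
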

\begin{proof}
Put $\tilde{f}:=\mathcal{P}(f)$. From Lemma~\ref{lem:rel_ck_cosk} and \eqref{eq:scalarprod_rel} we get
\begin{align*}
    \langle f,c_{|\bk|} \rangle = 2^{(|\bk|_0-d)/2} \langle \tilde{f},\cos_{\bk} \rangle_{\T^d}  \,.
\end{align*}

\noindent
Further, we have according to the expansion~\eqref{eq:expansion_expk}
\begin{align*}
     \langle \tilde{f},\exp_{\bk} \rangle_{\T^d}
    = \langle \tilde{f},\cos_{\bk} \rangle_{\T^d}  + \sum_{\beps\in\{0,1\}^d,\, |\beps|_1>0 } \langle \tilde{f}, \ii^{|\beps|_1}
    g_{\bk}^{(\beps)} \rangle_{\T^d} 
    = \langle \tilde{f},\cos_{\bk} \rangle_{\T^d} \,.
\end{align*}

Indeed, let $\beps\in\{0,1\}^d$ with $|\beps|_1>0$, where without loss of generality we assume $\varepsilon_1=1$. Then
\begin{align*}
\langle  \tilde{f}, g_{\bk}^{(\beps)} \rangle_{\T^d} &=
  \int\limits_{x_d=-1}^1 \cdots  \int\limits_{x_2=-1}^1 \bigg( \int\limits_{x_1=-1}^1 \tilde{f}(x_1,\ldots,x_d)  g_{k_1}^{(\varepsilon_1)}(x_1) \,\dint x_1 \bigg) 
 g_{k_2}^{(\varepsilon_2)}(x_2)\cdots g_{k_d}^{(\varepsilon_d)}(x_d) \,\dint x_2\cdots\,\dint x_d 
\end{align*}
and, since $\varepsilon_1=1$ and $\tilde{f}$ is even in each component,
\[
\int\limits_{x_1=-1}^1 \tilde{f}(x_1,\ldots,x_d) g_{k_1}^{(\varepsilon_1)}(x_1) \,\dint x_1 = \int\limits_{x_1=-1}^1  \tilde{f}(x_1,\ldots,x_d) 2^{-1/2} \sin(\pi k_1 x_1) \,\dint x_1  = 0 \,. 
\]
The final observation, which finishes the proof, is $\langle \tilde{f}, \exp_{\bk}\rangle_{\T^d}=\langle \tilde{f}, \overline{\exp}_{\bk}\rangle_{\T^d}$ due to $\overline{\exp}_{\bk}=\exp_{-\bk}$.
\end{proof}

\subsection{Half-period cosine expansions}

Since $\mathcal{C}_{d}=\{ c_{\bk}\}_{\bk\in\N^d_{0}}$ is an orthonormal basis of $L^{2}([0,1]^d)$ each $f\in L_2([0,1]^d)$
has an orthonormal expansion
\begin{align}\label{eq:HPC-Expansion}
f = \sum_{\bk\in\nd}  \langle f, c_{\bk} \rangle c_{\bk} 
\end{align}
with $L_2$ convergence and the coefficients satisfy $\{\langle f, c_{\bk} \rangle\}_{\bk}\in\ell_{2}(\N^d_{0})$.
We are interested in an analogous result for $f\in L_1([0,1]^d)$.
The first observation is that, since $c_{\bk}\in L_{\infty}([0,1]^d)$, the half-period cosine coefficients $\hat{f}_{\rm hpc}(\bk)=\langle f, c_{\bk} \rangle$ still make sense for every $f\in L_1([0,1]^d)$. In this more general case, however, one merely has $\{\hat{f}_{\rm hpc}(\bk)\}_{\bk}\in\ell_{\infty}(\N^d_{0})$ and the question of convergence in~\eqref{eq:HPC-Expansion} arises.

We will show in Lemma~\ref{Lemma HPC Reihe konvergiert im Distributionensinn} below that, when the functions $f$ and $c_{\bk}$ are considered as functions on $\R^d$ trivially extended by $0$'s outside of $[0,1]^d$, one has weak* convergence in $\mathcal{S}^{\prime}(\R^{d})$, where $\mathcal{S}^{\prime}(\R^{d})$ is the space of tempered distributions on $\R^{d}$.
To prove Lemma~\ref{Lemma HPC Reihe konvergiert im Distributionensinn}, we first analyze the decay of the
half-period cosine coefficients $\hat{f}_{\rm hpc}(\bk)$ of functions $f=\eta\cdot\chi_{[0,1]^d}$, where $\eta\in\mathcal{S}(\R^{d})$ is a Schwartz function.

\begin{lemma}\label{Lemma Abfallrate HPC Koeffizienten Schwartz Funktion}
Let $\eta \in \mathcal{S}(\R^{d})$ and $\br{x} := (1 + |x|^{2} )^{\frac{1}{2}}$ be the `analyst's bracket'. For every $\bk \in \N_{0}^{d}$ we have the estimate 
\begin{equation}\label{eq:decayHPCcoeffSchwartz}
    \abs{\br{\eta, c_{\bk}}}
    \lesssim \frac{1}{\prod_{i=1}^{d} \br{k_{i}}^{2}} \sum_{ \substack{\balpha \in \N_{0}^{d}\\\abs{\balpha}_{\infty} \leq 2} } \norm{\partial^{\balpha} \eta}_{L_{\infty}(\R^{d})} \,.
\end{equation}
\end{lemma}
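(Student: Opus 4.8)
The plan is to reduce the multivariate estimate to a one–dimensional one by Fubini and then peel off one coordinate at a time. Since $c_{\bk}(x)=\prod_{i=1}^{d}c_{k_{i}}(x_{i})$, we have $\br{\eta,c_{\bk}}=\int_{[0,1]^{d}}\eta(x)\prod_{i=1}^{d}c_{k_{i}}(x_{i})\,\dint x$, which by Fubini is an iterated application of the one–variable functionals $T_{k}g:=\int_{0}^{1}g(t)\,c_{k}(t)\,\dint t$, one in each coordinate. So the core of the argument is the claim that for every $k\in\n$ and every $g\in C^{2}([0,1])$
\[
\abs{T_{k}g}\;\lesssim\;\br{k}^{-2}\,\max_{0\le j\le 2}\norm{g^{(j)}}_{L_{\infty}([0,1])}
\]
with an absolute implied constant.

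For $k=0$ this is immediate, since $\abs{T_{0}g}\le\norm{g}_{L_{\infty}([0,1])}$ and $\br{0}=1$. For $k\in\N$ I would integrate by parts twice against $c_{k}=\sqrt{2}\cos(\pi k\,\cdot)$. The first integration by parts produces the boundary term $\bigl[\tfrac{\sqrt2}{\pi k}\,g(t)\sin(\pi k t)\bigr]_{t=0}^{t=1}$, which \emph{vanishes} because $\sin(\pi k\cdot 0)=\sin(\pi k\cdot 1)=0$ for integer $k$; this cancellation is precisely the feature that makes the half–period cosine system well behaved on $[0,1]$, and it is the single place where the special structure enters. The second integration by parts yields a boundary term proportional to $(\pi k)^{-2}\bigl((-1)^{k}g'(1)-g'(0)\bigr)$ plus the integral $-(\pi k)^{-2}\int_{0}^{1}g''(t)c_{k}(t)\,\dint t$, both of which are bounded by a constant times $k^{-2}\bigl(\norm{g'}_{\infty}+\norm{g''}_{\infty}\bigr)$. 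Since $\br{k}^{2}\asymp k^{2}$ for $k\ge 1$, the one–dimensional bound follows.

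Finally I would iterate over the coordinates, most cleanly by induction on $d$. Assuming the estimate in dimension $d-1$, apply it to $h(x'):=\int_{0}^{1}\eta(x',x_{d})\,c_{k_{d}}(x_{d})\,\dint x_{d}$ as a function of $x'=(x_{1},\dots,x_{d-1})$; differentiation under the integral sign (legitimate because $\eta\in\cS(\R^{d})$) gives $\partial^{\balpha'}h(x')=T_{k_{d}}\bigl(\partial^{\balpha'}_{x'}\eta(x',\cdot)\bigr)$ for every multi-index $\balpha'$ in the first $d-1$ variables, and the one–dimensional estimate bounds this pointwise by $\lesssim\br{k_{d}}^{-2}\max_{0\le j\le 2}\norm{\partial_{x_{d}}^{j}\partial^{\balpha'}_{x'}\eta}_{L_{\infty}(\R^{d})}$. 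Plugging this into the induction hypothesis and summing over all $\balpha'$ with $\abs{\balpha'}_{\infty}\le 2$ reproduces the full product $\prod_{i=1}^{d}\br{k_{i}}^{-2}$ on the right, and since each contributing multi-index $\balpha'+j e_{d}$ has $\infty$-norm at most $2$, only derivatives $\partial^{\balpha}\eta$ with $\abs{\balpha}_{\infty}\le 2$ survive (using $\norm{\cdot}_{L_{\infty}([0,1]^{d})}\le\norm{\cdot}_{L_{\infty}(\R^{d})}$). I do not expect a genuine obstacle here: the only thing requiring care is this bookkeeping — making sure the iteration never forces a derivative of order exceeding $2$ in any single variable and never destroys the product structure of the $\br{k_{i}}^{-2}$ factors.
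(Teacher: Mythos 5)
Your argument is correct and follows essentially the paper's own route: induction on the dimension, with the one-dimensional case settled by two integrations by parts against $\sqrt{2}\cos(\pi k\cdot)$ (exploiting the vanishing sine boundary terms) and the inductive step combining the one-variable bound in one coordinate with the induction hypothesis in the remaining ones. The only cosmetic difference is the order of operations in the inductive step — the paper peels off the last coordinate via the one-dimensional estimate \emph{before} invoking the hypothesis on the inner $(d-1)$-fold product, whereas you apply the induction hypothesis first to $h(x')=\int_0^1\eta(x',x_d)c_{k_d}(x_d)\,\dint x_d$ and then bound $\partial^{\balpha'}h$ by the one-dimensional estimate — and both require the same differentiation under the integral sign, justified by $\eta\in\mathcal{S}(\R^d)$; the bookkeeping $\abs{\balpha'+je_d}_\infty\le 2$ closes correctly in either order.
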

\begin{proof}
The proof is by induction on the dimension $d$. Our starting point is the case $d = 1$. Take $\eta \in \mathcal{S}(\R)$. Then 
\begin{equation*}
    \abs{\br{\eta,c_{0}}}
    \leq \int\limits_{0}^{1} \abs{\eta(x)} \dd x
    \leq \norm{\eta}_{L_\infty(\R)} \,.
\end{equation*}
For $k \in \N$ we get, integrating by parts twice, 
\begin{equation*}
    \br{\eta,c_{k}}
    = \int\limits_{0}^{1} \eta(x)c_{k}(x) \dd x
    = \frac{\sqrt{2}}{\pi^{2} k^{2}} \left( (-1)^{k} \partial \eta(1) - \partial \eta(0) \right) - \frac{1}{\pi^{2} k^{2}} \int\limits_{0}^{1} c_{k}(x) \partial^{2}\eta(x) \dd x \,.
\end{equation*}
Consequently, we have~\eqref{eq:decayHPCcoeffSchwartz} in the univariate case.
Let us next assume that~\eqref{eq:decayHPCcoeffSchwartz} is true up to dimension $d \in \N$. If $x \in \R^{d+1}$ we 
subsequently write $x = (\widehat{x}, t)$ with $\widehat{x} \in \R^{d}$ and $t \in \R$. Similarly, we write $\bk = (\widehat{k}, l) \in \N_{0}^{d} \times \N_{0}$ for $\bk\in\N_{0}^{d+1}$. 
Considering $\eta \in \mathcal{S}(\R^{d+1})$, we have
\begin{equation*}
    \br{\eta,c_{\bk}}
    = \br{ c_{\widehat{k}}(\widehat{x})c_{l}(t), \eta(\widehat{x}, t)}
    = \br{ c_{l}(t), \br{c_{\widehat{k}}(\widehat{x}), \eta(\widehat{x}, t)}_{\widehat{x}} }_{t} \,,
\end{equation*}
where $c_{\widehat{k}} = \bigotimes_{i = 1}^{d} c_{k_{i}}$. The map $t \mapsto \br{c_{\widehat{k}}(\widehat{x}), \eta(\widehat{x}, t)}_{\widehat{x}}$ belongs to $\mathcal{S}(\R)$ and so our previous calculations yield 
\begin{equation*}
    \begin{aligned}
    \abs{\br{c_{\bk}, \eta}}
    \lesssim 
    \frac{1}{\br{l}^{2}} \sum_{\beta = 0}^{2} \, \sup_{t \in \R} \abs{ \br{c_{\widehat{k}}(\widehat{x}), \partial_{t}^{\beta} \eta(\widehat{x}, t)}_{\widehat{x}}} \,,
    \end{aligned}
\end{equation*}
where, by induction hypothesis,
\begin{equation*}
    \abs{ \br{c_{\widehat{k}}(\widehat{x}), \partial_{t}^{\beta} \eta(\widehat{x}, t)}_{\widehat{x}}}
    \lesssim \frac{1}{\prod_{i = 1}^{d} \br{k_{i}}^{2}} \sum_{\substack{\widehat{\alpha} \in \N_{0}^{d}\\\abs{\widehat{\alpha}}_{\infty} \leq 2}} \sup_{\widehat{x} \in \R^{d}} \abs{ \partial_{t}^{\beta} \partial_{\widehat{x}}^{\widehat{\alpha}} \eta(\widehat{x}, t) } \,.
\end{equation*}
Hence, we obtain altogether
\begin{equation*}
    \begin{aligned}
    \abs{\br{c_{\bk}, \eta}}
    \lesssim \frac{1}{\br{l}^{2}} \sum_{\beta = 0}^{2} \, \sup_{t \in \R} \frac{1}{\prod_{i = 1}^{d} \br{k_{i}}^{2}} \sum_{\substack{\widehat{\alpha} \in \N_{0}^{d}\\\abs{\widehat{\alpha}}_{\infty} \leq 2}} \sup_{\widehat{x} \in \R^{d}} \abs{ \partial_{t}^{\beta} \partial_{\widehat{x}}^{\widehat{\alpha}} \eta(\widehat{x}, t) } 
    \leq \frac{1}{\br{l}^{2} \prod_{i=1}^{d} \br{k_{i}}^{2}} \sum_{ \substack{\balpha \in \N_{0}^{d+1}\\\abs{\balpha}_{\infty} \leq 2} } \norm{\partial^{\balpha} \eta}_{L_{\infty}(\R^{d+1})} \,.
    \end{aligned}
\end{equation*}
Our proof is complete.
\end{proof}

\noindent
Now we are ready to prove Lemma~\ref{Lemma HPC Reihe konvergiert im Distributionensinn}.

\begin{lemma}\label{Lemma HPC Reihe konvergiert im Distributionensinn}
    Let $f \in L_{1}([0,1]^{d})$ and identify $f$ with its trivial extension to $\R^{d}$. Then we have
    \begin{equation*}
        f
        = \sum_{\bk \in \N_{0}^{d}} \br{f, c_{\bk}} c_{\bk}
        \quad \text{weak*ly in } \mathcal{S}^{\prime}(\R^{d}) \,.
    \end{equation*}   
\end{lemma}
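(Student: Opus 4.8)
The plan is to test the claimed identity against an arbitrary Schwartz function $\eta\in\mathcal{S}(\R^d)$ and show that the partial sums of $\sum_{\bk}\br{f,c_{\bk}}c_{\bk}$, paired with $\eta$, converge to $\br{f,\eta}_{\mathcal{S}^\prime\times\mathcal{S}}$. Since $f$ is supported in $[0,1]^d$, we have $\br{f,\eta}_{\mathcal{S}^\prime\times\mathcal{S}}=\br{f,\eta}=\br{f,\eta\cdot\chi_{[0,1]^d}}$, and $\eta\cdot\chi_{[0,1]^d}\in L_2([0,1]^d)$. So by the $L_2$-expansion \eqref{eq:HPC-Expansion} applied to $\eta\cdot\chi_{[0,1]^d}$ (or, more directly, to $f$ if $f\in L_2$), the key point is a Fubini-type interchange: for each $\bk$, $\br{\br{f,c_{\bk}}c_{\bk},\eta}=\br{f,c_{\bk}}\br{c_{\bk},\eta\cdot\chi_{[0,1]^d}}=\overline{\br{c_{\bk},f}}\,\br{c_{\bk},\eta\cdot\chi}$ (all functions real, so conjugation is harmless), and I want $\sum_{\bk}\br{f,c_{\bk}}\br{c_{\bk},\eta\cdot\chi}=\br{f,\eta\cdot\chi}$.

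First I would reduce to establishing absolute convergence of the double sum $\sum_{\bk}|\br{f,c_{\bk}}|\,|\br{\eta,c_{\bk}}|$. Here Lemma~\ref{Lemma Abfallrate HPC Koeffizienten Schwartz Funktion} does the heavy lifting on the $\eta$-side: it gives $|\br{\eta,c_{\bk}}|\lesssim C_\eta\prod_{i=1}^d\br{k_i}^{-2}$, where $C_\eta:=\sum_{|\balpha|_\infty\le 2}\norm{\partial^{\balpha}\eta}_{L_\infty(\R^d)}$. On the $f$-side we only have the trivial bound $|\br{f,c_{\bk}}|\le 2^{d/2}\norm{f}_{L_1([0,1]^d)}$, since $\norm{c_{\bk}}_{L_\infty}\le 2^{d/2}$. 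Combining, $\sum_{\bk}|\br{f,c_{\bk}}|\,|\br{\eta,c_{\bk}}|\lesssim \norm{f}_{L_1}C_\eta\sum_{\bk\in\nd}\prod_i\br{k_i}^{-2}<\infty$, since $\sum_{k\in\N_0}\br{k}^{-2}<\infty$. This absolute convergence justifies rearrangement and the interchange of summation with the integral defining $\br{\,\cdot\,,\eta}$, so that $\br{\sum_{\bk}\br{f,c_{\bk}}c_{\bk},\eta}=\sum_{\bk}\br{f,c_{\bk}}\br{c_{\bk},\eta}$ — more precisely, it shows the partial sums $S_N:=\sum_{|\bk|_\infty\le N}\br{f,c_{\bk}}c_{\bk}$ satisfy $\br{S_N,\eta}\to\sum_{\bk}\br{f,c_{\bk}}\br{c_{\bk},\eta}$, and it also gives the tail estimate needed for weak* convergence uniformly in the relevant sense.

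It remains to identify $\sum_{\bk}\br{f,c_{\bk}}\br{c_{\bk},\eta}$ with $\br{f,\eta}$. This is exactly Parseval's identity for the orthonormal basis $\mathcal{C}_d$ of $L_2([0,1]^d)$ applied to the pair $f\in L_2$ (if $f\in L_2$) and $\eta\cdot\chi_{[0,1]^d}\in L_2$; for general $f\in L_1\setminus L_2$ I would instead write $\sum_{\bk}\br{f,c_{\bk}}\br{c_{\bk},\eta}=\br{f,\sum_{\bk}\br{\eta,c_{\bk}}c_{\bk}}$ (legitimate by the same absolute-convergence argument and since $f\in L_1$, $\sum_{\bk}\br{\eta,c_{\bk}}c_{\bk}\in L_\infty([0,1]^d)$ by Lemma~\ref{Lemma Abfallrate HPC Koeffizienten Schwartz Funktion}), and then use that $\sum_{\bk}\br{\eta,c_{\bk}}c_{\bk}=\eta$ a.e.\ on $[0,1]^d$ because $\eta|_{[0,1]^d}\in L_2([0,1]^d)$ expands in the orthonormal basis $\mathcal{C}_d$, with the $L_2$-series additionally converging uniformly (hence pointwise) thanks to the $\prod\br{k_i}^{-2}$ decay making the coefficient sequence absolutely summable. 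Thus $\sum_{\bk}\br{f,c_{\bk}}\br{c_{\bk},\eta}=\br{f,\eta}=\br{f,\eta}_{\mathcal{S}^\prime\times\mathcal{S}}$, which is the assertion.

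The main obstacle is purely bookkeeping: making sure the trivial $L_1$-$L_\infty$ pairing bound on $|\br{f,c_{\bk}}|$ is genuinely enough (it is, because the Schwartz decay alone makes the double sum summable), and being careful that $\eta$ is not compactly supported in $[0,1]^d$ — so one must consistently replace $\eta$ by $\eta\cdot\chi_{[0,1]^d}$ when invoking $L_2([0,1]^d)$ orthonormality, while the restriction does not affect the pairing $\br{f,\eta}$ since $\supp f\subseteq[0,1]^d$. No delicate analysis beyond Lemma~\ref{Lemma Abfallrate HPC Koeffizienten Schwartz Funktion} and Fubini/dominated convergence is needed.
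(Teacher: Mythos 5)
Your proof is correct, and it takes a genuinely simpler route than the paper's. The paper also tests against $\eta\in\mathcal{S}(\R^d)$ and establishes the pointwise a.e.\ identity $\eta(x)=\sum_{\bk}\langle\eta,c_{\bk}\rangle c_{\bk}(x)$ on $[0,1]^d$, but it does so by periodizing $\eta$ via $\mathcal{P}$, invoking the multivariate Carleson--Hunt theorem for a.e.\ convergence of Fourier series of $L_2(\T^d)$ functions, and then translating back via symmetry; it then concludes with dominated convergence, using Lemma~\ref{Lemma Abfallrate HPC Koeffizienten Schwartz Funktion} only to supply the integrable majorant $|f|\cdot(\|\eta\|_\infty+\sum_{\bk}|\langle\eta,c_{\bk}\rangle|\|c_{\bk}\|_\infty)$. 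Your observation is that the very decay estimate of Lemma~\ref{Lemma Abfallrate HPC Koeffizienten Schwartz Funktion} that makes that majorant finite already forces $\sum_{\bk}\langle\eta,c_{\bk}\rangle c_{\bk}$ to converge absolutely and uniformly on $[0,1]^d$; since the orthonormal expansion of $\eta\chi_{[0,1]^d}$ in $\mathcal{C}_d$ converges to $\eta$ in $L_2([0,1]^d)$, the uniform limit and the $L_2$ limit must coincide a.e., and Carleson--Hunt is never needed. This buys an entirely elementary argument (Bessel/Parseval plus absolute convergence) in place of a deep theorem, at no cost. The only thing to watch -- which you do -- is to consistently work with $\eta\chi_{[0,1]^d}$ when invoking $L_2([0,1]^d)$ orthonormality; since $\supp f\subset[0,1]^d$, this does not change any of the pairings. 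The final Fubini step $\sum_{\bk}\langle f,c_{\bk}\rangle\langle c_{\bk},\eta\rangle=\langle f,\sum_{\bk}\langle\eta,c_{\bk}\rangle c_{\bk}\rangle$ is cleanly justified by the majorant $|f|\sum_{\bk}|\langle\eta,c_{\bk}\rangle|\|c_{\bk}\|_\infty\in L_1$, exactly as you say.
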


\begin{proof}
    Take $\eta \in \mathcal{S}(\R^{d})$. Then $\mathcal{P}(\eta) \in L_{2}(\T^{d})$ for the periodization operator $\mathcal{P}$ from Definition~\ref{def:PeriodizationOP} and, by the multivariate version of the celebrated Carleson-Hunt theorem, see e.g.~\cite[Thm~4.3.16]{Grafakos2014}, the function $\mathcal{P}(\eta)$ coincides with its Fourier series pointwise almost everywhere on $\T^{d}$. Due to symmetry, this Fourier series is equal to the half-period cosine series of $\eta$ on $[0,1]^{d}$. In other words, on the unit cube the latter converges to $\eta$ pointwise almost everywhere. We thus have
    \begin{equation}\label{eq:pointwise_conv_HPC}
        \eta(x)
        = \sum_{\bk \in \N_{0}^{d}} \br{\eta, c_{\bk}} c_{\bk}(x)
        = \lim_{N \rightarrow \infty} \sum_{|\bk|_{\infty} \leq N} \br{\eta, c_{\bk}} c_{\bk}(x)  \quad \text{for a.e. } x \in [0,1]^{d} \,.
    \end{equation}

    \noindent
    Consider now an arbitrary $f\in L_{1}([0,1]^{d})$ and identify it with its trivial extension to $\R^{d}$. Then we have
    \begin{equation*}
        \begin{aligned}
            \bigg| \Big\langle f - \sum_{|\bk|_{\infty} \leq N} \br{f, c_{\bk}} c_{\bk}, \eta \Big\rangle \bigg|
            &= \bigg| \int_{[0,1]^{d}} \Big( f(x) - \sum_{|\bk|_{\infty} \leq N} \br{f, c_{\bk}} c_{\bk}(x) \Big) \, \eta(x) \dd x \bigg| \\
            &\leq \int_{[0,1]^{d}} \abs{f(x)} \, \bigg| \eta(x) - \sum_{|\bk|_{\infty} \leq N} \br{\eta, c_{\bk}} c_{\bk}(x) \bigg| \dd x \:\longrightarrow 0 \quad (N \longrightarrow \infty) \,.
        \end{aligned}
    \end{equation*}

    \noindent
    The convergence in the last step follows from Lebesgue's dominated convergence theorem, since~\eqref{eq:pointwise_conv_HPC} holds true and, due to Lemma~\ref{Lemma Abfallrate HPC Koeffizienten Schwartz Funktion}, the
    function
    \[
    f(\cdot)\Big( \|\eta\|_{L_\infty(\R^d)} + \sum\limits_{\bk\in \N_{0}^{d}} |\langle\eta, c_{\bk}\rangle| \|c_{\bk}\|_{L_\infty(\R^d)} \Big)
    \]
    is an integrable majorant.
\end{proof}


\subsection{Further aspects of the periodization principle}
\label{ssec:further_aspects}

In this final part of Section~\ref{sec:hpc_system} we want to present some additional aspects of the periodization principle from an applications' point of view.
In contrast to the rest of the paper, the identification $\T^d\cong[0,1]^d$ will be used here and the tent transform $\phi$ instead of $\mathcal{P}$ for the periodization. This 
has the advantage that then both $\T^d$ and $[0,1]^d$ have unit measure. It is further more common in the half-period cosine literature (see e.g.~\cite{Dick2013,CKNS2016,goda2019lattice}). 
The tent transform $\phi$ was explained in Subsection~\ref{ssec:per_and_tent}.
We set $f^\phi:=f\circ\phi$ for functions $f:[0,1]^d\to\C$.

As our first application, let us consider the problem of $L_p$ approximation of functions $f\in L_1([0,1]^d)$. 
Hereby we assume that we have a selection rule $\sigma:\N \to \mathscr{P}(\N_0^d)$ for the half-period cosine coefficients $\hat{f}_{\rm hpc}(\bk)$ of $f$, where $\mathscr{P}(\N_0^d)$ denotes the
power set of $\N_0^d$. The $N$th half-period cosine approximant of $f$ with respect to $\sigma$ is given by 
\begin{align*}
    A^{(\sigma)}_{{\rm hpc},N} f(x) := \sum_{\bk\in\sigma(N)} \hat{f}_{\rm hpc}(\bk)  c_{\bk}(x)  \,.
\end{align*}
For periodic functions $g\in L_1(\T^d)$ we further define the $N$th Fourier approximant
\begin{align*}
    A^{(\sigma)}_N g(x) :=  \sum_{|\bk|\in\sigma(N)} \langle g, \exp(-2\pi\ii\bk\cdot\,) \rangle_{\T^d}  \exp(2\pi\ii\bk\cdot x) 
\end{align*}
subject to the selection rule $\sigma$.
The following result then holds true.

\begin{lemma}
\label{app:approx}
For $0<p\le\infty$
\begin{align*}
\| f - A^{(\sigma)}_{{\rm hpc},N} f \|_{L_p([0,1]^d)} = \| f^\phi - A^{(\sigma)}_N f^\phi \|_{L_p(\T^d)} \,.
\end{align*}
\end{lemma}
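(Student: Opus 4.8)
The plan is to exploit the identity that relates half-period cosine functions to full-period cosine functions via the tent transform, namely \eqref{rel:tent_hpc}, combined with the measure-preserving properties of $\phi$. First I would observe that precomposition with $\phi$ is an isometry on $L_p(\T^d)$ for every $0<p\le\infty$: indeed $\phi$ maps each of the $2^d$ congruent subcubes of $[0,1]^d$ affinely and bijectively onto $[0,1]^d$ with constant Jacobian $2^{-d}$ on each piece, so by the change-of-variables formula $\int_{[0,1]^d}|g(\phi(x))|^p\dd x=\int_{[0,1]^d}|g(y)|^p\dd y$ (with the obvious modification for $p=\infty$, where $\esssup$ is preserved because $\phi$ is onto and measure-nonincreasing in the right sense). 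Hence $\|h\|_{L_p(\T^d)}=\|h\circ\phi\|_{L_p(\T^d)}$ for any measurable $h$.

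Next I would apply this to $h:=f-A^{(\sigma)}_{{\rm hpc},N}f$. Since $\phi$ is measure-preserving in the above sense and $f\in L_1$, the function $f^\phi=f\circ\phi$ is again in $L_1([0,1]^d)=L_1(\T^d)$, and $\|f-A^{(\sigma)}_{{\rm hpc},N}f\|_{L_p([0,1]^d)}=\|(f-A^{(\sigma)}_{{\rm hpc},N}f)\circ\phi\|_{L_p(\T^d)}$. It remains to identify $(f-A^{(\sigma)}_{{\rm hpc},N}f)\circ\phi$ with $f^\phi-A^{(\sigma)}_Nf^\phi$. By linearity, $(A^{(\sigma)}_{{\rm hpc},N}f)\circ\phi=\sum_{\bk\in\sigma(N)}\hat f_{\rm hpc}(\bk)\,(c_{\bk}\circ\phi)$, and \eqref{rel:tent_hpc} gives $c_{\bk}\circ\phi=2^{|\bk|_0/2}\prod_j\cos(2\pi k_j\cdot)$. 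So the core of the argument is the coefficient bookkeeping: I would show that, for $g\in L_1(\T^d)$ of the form $g=h\circ\phi$ (equivalently, $g$ even in the sense of the tent-transform symmetry), the Fourier coefficient $\langle g,\exp(-2\pi\ii\bk\cdot)\rangle_{\T^d}$ and the half-period cosine coefficient $\langle h,c_{|\bk|}\rangle$ agree up to exactly the normalizing factor $2^{|\bk|_0/2}$ needed so that the two expansions match termwise. This is the $\phi$-analogue of Lemma~\ref{lem:aux_Fourier} (which is stated for $\mathcal{P}$): expanding $\exp(2\pi\ii\bk\cdot x)$ into products of cosines and sines via Euler, the sine-containing terms integrate to zero against the symmetric function $g$, leaving only the pure-cosine term, whose integral against $g=h\circ\phi$ equals $\langle h,c_{|\bk|}\rangle$ after applying the change of variables and \eqref{rel:tent_hpc} again. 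Equivalently, one can route everything through $\mathcal{P}$ and $\tau$ using the relation \eqref{rel_phi_P}, $f\circ(1-\phi)=(\mathcal{P}f)\circ\tau$, together with Lemma~\ref{lem:aux_Fourier} and the fact that $x\mapsto1-\phi(x)$ and $\tau$ are measure-preserving; this reduces the claim entirely to already-proved statements.

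Putting the pieces together: $(f-A^{(\sigma)}_{{\rm hpc},N}f)\circ\phi=f^\phi-\sum_{\bk\in\sigma(N)}\langle f^\phi,\exp(-2\pi\ii\bk\cdot)\rangle_{\T^d}\exp(2\pi\ii\bk\cdot)=f^\phi-A^{(\sigma)}_Nf^\phi$ (noting that the sum over $|\bk|\in\sigma(N)$ in the definition of $A^{(\sigma)}_N$, together with the evenness of the Fourier coefficients of $f^\phi$, exactly reproduces the sum over $\bk\in\sigma(N)\subset\N_0^d$ on the half-period cosine side), and then taking $L_p(\T^d)$ norms and using the isometry of $\phi$ yields the claim. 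The main obstacle I anticipate is purely the coefficient/normalization bookkeeping — making sure the $2^{|\bk|_0/2}$ factors from \eqref{rel:tent_hpc}, the collapsing of $\exp$ to $\cos$ on symmetric functions, and the relabeling of the index set $\{|\bk|:\bk\in\Z^d\}$ versus $\N_0^d$ all line up so that the two approximants really are precompositions of one another; the analytic content (change of variables, dominated convergence to justify termwise manipulation of the $L_1$ series via Lemma~\ref{Lemma HPC Reihe konvergiert im Distributionensinn}) is routine. I would present the argument by first reducing via \eqref{rel_phi_P} to the $\mathcal{P}$-picture so that Lemma~\ref{lem:aux_Fourier} can be cited directly, which keeps the bookkeeping to a minimum.
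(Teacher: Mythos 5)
Your proposal is correct, and the route you say you would actually present — reducing via \eqref{rel_phi_P} to the $\mathcal{P}$-picture so that Lemma~\ref{lem:rel_ck_cosk} and Lemma~\ref{lem:aux_Fourier} give $\mathcal{P}(A^{(\sigma)}_{{\rm hpc},N}f)=\widetilde{A}^{(\sigma)}_N[\mathcal{P}(f)]$, then using the isometry $\|\mathcal{P}(g)\|_{L_p(\T^d)}^p=2^d\|g\|_{L_p([0,1]^d)}^p$ which cancels the $2^{-d}$ Jacobian of $\tau$ — is precisely the paper's proof. The direct $\phi$-route you sketch first (using that $\phi$ is measure-preserving on $[0,1]^d$ and establishing the $\phi$-analogue of Lemma~\ref{lem:aux_Fourier} by killing sine terms against the tent-symmetric function $f^\phi$) also works and is essentially a change of parametrization of the same argument, trading the $2^{\pm d}$ bookkeeping for the measure-preservation observation.
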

\begin{proof}
Due to relation~\eqref{rel_phi_P} between $\phi$ and $\mathcal{P}$, it suffices to show
\begin{align}\label{eq:app:approx_2}
\| f - A^{(\sigma)}_{{\rm hpc},N} f \|_{L_p([0,1]^d)} = 2^{-d/p} \| \mathcal{P}(f) - \widetilde{A}^{(\sigma)}_N[\mathcal{P}(f)] \|_{L_p(\T^d)} \,,
\end{align}
where here in the proof we let $\T^d\cong[-1,1]^d$ as usual and set
\begin{align*}
\widetilde{A}^{(\sigma)}_N[\mathcal{P}(f)] :=  \sum_{|\bk|\in\sigma(N)}  \langle \mathcal{P}(f), \overline{\exp}_{\bk}\rangle_{\T^d}  \exp_{\bk} \,.
\end{align*}
From Lemma~\ref{lem:rel_ck_cosk} and Lemma~\ref{lem:aux_Fourier} we obtain 
\begin{align*}
\mathcal{P}(A^{(\sigma)}_{{\rm hpc},N} f) = \sum_{\bk\in\sigma(N)} \langle f, c_{\bk} \rangle  \mathcal{P}(c_{\bk})
=  \sum_{\bk\in\sigma(N)}  2^{|\bk|_0}  \langle \mathcal{P}(f), \overline{\exp}_{\bk}\rangle_{\T^d}  \cos_{\bk}
=\widetilde{A}^{(\sigma)}_N[\mathcal{P}(f)] \,.
\end{align*}
Further
\begin{align*}
\| f - A^{(\sigma)}_{{\rm hpc},N} f \|^p_{L_p([0,1]^d)}
=  2^{-d} \| \mathcal{P}(f) - \mathcal{P}(A^{(\sigma)}_{{\rm hpc},N} f) \|^p_{L_p(\T^d)} = 2^{-d} \| \mathcal{P}(f) - \widetilde{A}^{(\sigma)}_N[\mathcal{P}(f)] \|^p_{L_p(\T^d)}  \,,
\end{align*}
which verifies~\eqref{eq:app:approx_2}.
\end{proof}

\noindent
Lemma~\ref{app:approx} provides the following insight:
For $f\in L_1([0,1]^d)$ an approximation scheme $\{A^{(\sigma)}_{{\rm hpc},N}\}_{\raisebox{-0.6ex}{$\scriptstyle{N\in\N}$}}$ based on half-period cosine functions works precisely as well as the corresponding Fourier approximation scheme $\{A^{(\sigma)}_{N}\}_{\raisebox{-0.6ex}{$\scriptstyle{N\in\N}$}}$ for $f^\phi$ on $\T^d$.

The cubature of functions $f\in C([0,1]^d)$ is the second application we are interested in. Here one tries to approximate the integral
\begin{align*}
I(f):= \int_{[0,1]^d} f(x) \,\dint x     
\end{align*}
from a finite number of function samples.
For $n\in\N$, let us fix a set of sampling points 
$X_n=\{x_j\}_{j=1}^{n}\subset[0,1]^d$ and an associated weight set 
$\Lambda_n=\{\lambda_j\}_{j=1}^{n}\subset \R$.
The pair $(X_n,\Lambda_n)$ determines a 
cubature rule $\Lambda_n(X_n,f)$ for functions $f\in C([0,1]^d)$
by
\begin{align*}
    \Lambda_n(X_n,f) := \sum_{x_{j}\in X_n} \lambda_{j} f(x_{j})  \,.
\end{align*}

\noindent
The tent-transformed node set $X^\phi_n=\{x^\phi_j\}_{j=1}^{n}$ 
is given by $x^\phi_j := \phi(x_j)$ and
we call $\Lambda_n(X^\phi_n,f)$ 
the corresponding tent-transformed cubature rule for $f$.
We have the following result.

\begin{lemma}
\label{app:cub}
It holds $|\Lambda_n(X_n^\phi,f) - I(f)| = |\Lambda_n(X_n,f^\phi) - I(f^\phi)|$.
\end{lemma}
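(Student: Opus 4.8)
The plan is to reduce the statement to the integral identity $\langle f^\phi, 1 \rangle_{\T^d} = I(f^\phi)$ together with the elementary observation that tent-transformed sampling of $f$ agrees with ordinary sampling of $f^\phi$. First I would unfold the definitions: by construction $x_j^\phi = \phi(x_j)$, hence $f(x_j^\phi) = f(\phi(x_j)) = f^\phi(x_j)$ for every node, so that
\begin{align*}
\Lambda_n(X_n^\phi, f) = \sum_{x_j \in X_n} \lambda_j f(x_j^\phi) = \sum_{x_j \in X_n} \lambda_j f^\phi(x_j) = \Lambda_n(X_n, f^\phi) \,.
\end{align*}
This already handles the cubature-sum part of both sides simultaneously, so it only remains to compare the two exact integrals, i.e.\ to show $I(f) = I(f^\phi)$.

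Next I would verify $I(f) = I(f^\phi)$, which is the substance of the claim. Since $f^\phi = f \circ \phi$ and $\phi$ acts coordinatewise via $\phi_1(t) = 1 - |2t-1|$, this is a one-dimensional change-of-variables statement tensorized over the $d$ coordinates. On $[0,\tfrac12]$ the map $\phi_1$ is the affine bijection $t \mapsto 2t$ onto $[0,1]$ with derivative $2$, and on $[\tfrac12,1]$ it is the affine bijection $t \mapsto 2-2t$ onto $[0,1]$ with $|\phi_1'| = 2$; splitting the integral over these two halves and substituting gives $\int_0^1 g(\phi_1(t))\,\dint t = \tfrac12\int_0^1 g(s)\,\dint s + \tfrac12\int_0^1 g(s)\,\dint s = \int_0^1 g(s)\,\dint s$ for any $g \in L_1([0,1])$. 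Applying this in each variable by Fubini yields $I(f^\phi) = \int_{[0,1]^d} f(\phi(x))\,\dint x = \int_{[0,1]^d} f(y)\,\dint y = I(f)$. Equivalently, one may invoke relation~\eqref{rel_phi_P}, $f \circ (1-\phi) = (\mathcal{P}f)\circ\tau$, together with the isometry property $\|\mathcal{P}(f)\|_{L_1(\T^d)} = 2^d\|f\|_{L_1([0,1]^d)}$ from Lemma~\ref{lem:PeriodizationOP} and the affine substitution $\tau$ (Jacobian $2^d$), noting that replacing $\phi$ by $1-\phi$ does not change the integral by the same symmetry; this gives $I(f^\phi) = 2^{-d}\int_{\T^d}\mathcal{P}(f) = I(f)$ directly.

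Finally I would combine the two pieces:
\begin{align*}
|\Lambda_n(X_n^\phi, f) - I(f)| = |\Lambda_n(X_n, f^\phi) - I(f)| = |\Lambda_n(X_n, f^\phi) - I(f^\phi)| \,,
\end{align*}
where the first equality is the node identity and the second is $I(f) = I(f^\phi)$. I do not anticipate a genuine obstacle here; the only point requiring minor care is that $f$ is assumed merely continuous, so $f^\phi$ is continuous as well and all the integrals and function evaluations are well-defined, and the change-of-variables step is valid because $\phi_1$ is piecewise affine and a bijection on each piece. The result is really just a restatement, at the level of cubature error, of the fact that the tent transform is measure-preserving, which is exactly why tent-transformed rules inherit the quality of their periodic counterparts.
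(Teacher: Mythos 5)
Your proposal is correct and follows essentially the same route as the paper: you identify the cubature-sum equality $\Lambda_n(X_n^\phi, f) = \Lambda_n(X_n, f^\phi)$ via $f(\phi(x_j)) = f^\phi(x_j)$, reduce everything to $I(f) = I(f^\phi)$, and establish the latter. The only difference is that the paper simply states $I(f) = I(f^\phi)$ as given, while you supply the (correct) one-line justification that $\phi_1$ is measure-preserving on $[0,1]$.
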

\begin{proof}
The statement follows from $I(f)=I(f^\phi)$ and
\begin{gather*}
    \Lambda_n(X_n^\phi,f) =  \sum\limits_{j=1}^{n} \lambda_{j} f(x^\phi_{j})  = \sum\limits_{j=1}^{n} \lambda_{j} f^\phi(x_{j}) = \Lambda_n(X_n,f^\phi) \,. \qedhere
\end{gather*}
\end{proof}

\noindent
From Lemma~\ref{app:cub} we learn that a tent-transformed cubature rule performs as well for a non-periodic function $f\in C([0,1]^d)$ 
as the original rule for the periodized function $f^\phi\in C(\T^d)$. 

Concrete applications of Lemmas~\ref{app:approx} and~\ref{app:cub} are provided in Subsection~\ref{ssec:app_theo_id}.



\section{Besov-type function spaces} 
\label{sec:func_spaces}

The Fourier analytic definition of the classical univariate Besov spaces $B^{r}_{p,q}(\R)$ is based on dyadic partitions of the Fourier domain. A central notion in this respect are
dyadic decompositions of unity $\{\varphi_{j}\}_{j \in \N_{0}}$, i.e.\ families of non-negative functions $\varphi_{j}\in \mathcal{S}(\R)$ subject to the following three conditions:
\begin{itemize}
	\item[(i)] There is $c > 0$ such that $\supp(\varphi_{0}) \subset \{x \in \R : \abs{cx} \leq 2 \}$ and $\supp (\varphi_{j}) \subset \{ x \in \R : 2^{j-1} \leq \abs{cx} \leq 2^{j+1} \}$ for $l \in \N$.
	\item[(ii)] For every $m \in \N_{0}$ there exists a constant $c_{m} > 0$ such that $\sup_{x\in\R,j\in\N_0} 2^{mj} \abs{ \partial^{m} \varphi_{j}(x) } \leq c_{m}$.
	\item[(iii)] $\displaystyle \sum_{j \in \N_{0}} \varphi_{j}(x) = 1$ at every point $x \in \R$.
\end{itemize}
Dyadic decompositions like this give rise to corresponding decompositions of functions $f:\R\to\C$, which are at the center of Littlewood-Paley theory and form the Fourier analytic basis for the  construction of a wide range of function spaces, prominently among them the Besov and the Triebel-Lizorkin scale (see e.g.~\cite{Tr10,Sawano18}).

\begin{remark}
The standard example of a dyadic decomposition of unity is as follows. One lets $\varphi_{0} \in C_{c}^{\infty}(\R)$ be non-negative with $\varphi_{0}(x) = 1$ on $[-1,1]$ and $\varphi_{0}(x) = 0$ if $\abs{x} > 2$. For $j \in \N$ one further sets
\begin{equation}
	\varphi_{j}(x)
	:= \varphi_{0}(2^{-j}x) - \varphi_{0}(2^{-j+1}x) \,. \notag
\end{equation}
\end{remark}

\noindent
There are two main directions for the generalization of the univariate spaces $B^{r}_{p,q}(\R)$ to the multivariate domain: the isotropic Besov spaces $B^{r}_{p,q}(\R^{d})$ and the Besov spaces of dominating mixed smoothness $S^{r}_{p,q}B(\R^{d})$. Both directions differ in the utilized concept of multivariate dyadic decompositions of unity on $\R^d$.
For the isotropic spaces $B^{r}_{p,q}(\R^{d})$ the Fourier domain is partitioned into rotation-symmetric dyadic annuli (see e.g.~\cite{Tr10,Sawano18}). 
The spaces $S^{r}_{p,q}B(\R^{d})$, on the other side, 
follow the idea of tensorization (see e.g.~\cite[Ch.~2]{Schmeisser1987} or~\cite[Ch.~1]{Vy06}). 

\subsection{The classical Besov spaces of dominating mixed smoothness} 

In this paper we will
deal with the spaces $S^{r}_{p,q}B(\R^{d})$ whose definition rests on multivariate partitions of unity obtained by tensorization.
Those are often referred to as hyperbolic in the literature. One starts with $d\in\N$, possibly different, dyadic partitions $\{\varphi^{(i)}_{j}\}_{j\in\N_{0}}$ on $\R$, indexed by $i\in[d]$, and then defines for $\bj=(j_1,\ldots,j_d)\in \N_{0}^{d}$
\begin{equation*}
	\varphi_{\bj}(x)
	:= \varphi^{(1)}_{j_{1}}(x_{1}) \cdot \ldots \cdot \varphi^{(d)}_{j_{d}}(x_{d}) \;,\quad x=(x_1,\ldots,x_d)\in R^{d} \, .
\end{equation*}
The class of families $\{\varphi_{\bj}\}_{\bj \in \N_{0}^{d}}$ of such construction will subsequently be denoted
by $\Phi_{\rm hyp}(\R^d)$.

Associated to each family $\{\varphi_{\bj}\}_{\bj \in \N_{0}^{d}}\in\Phi_{\rm hyp}(\R^d)$ is a family $\{\Phi_{\bj}\}_{\bj \in \N_{0}^{d}}$ of kernels $\Phi_{\bj}$ connected to $\varphi_{\bj}$ by the relation
\begin{align}\label{eqdef:ass_kernels}
\varphi_{\bj}(x)=\widehat{\Phi}_{\bj}(x) = \int_{\R^d} \Phi_{\bj}(\xi) \exp(-2\pi\ii x\cdot\xi) \,\dint\xi   \,.
\end{align} 
Since $\{\varphi_{\bj}\}_{\bj \in \N_{0}^{d}}$ is contained in $\mathcal{S}(\R^d)$
the kernels $\{\Phi_{\bj}\}_{\bj \in \N_{0}^{d}}$ are also elements of the Schwartz space $\mathcal{S}(\R^d)$.

\subsubsection{The spaces on $\R^d$}  

As mentioned in the beginning of this section,
dyadic decompositions of unity give rise to corresponding decompositions of functions.
More generally, 
let us now consider tempered distributions $f\in \mathcal{S}^{\prime}(\R^{d})$, i.e.\ elements of the dual space of $\mathcal{S}(\R^{d})$, and decompositions $\{\varphi_{\bj}\}_{\bj \in \N^{d}_{0}}\in\Phi_{\rm hyp}(\R^d)$.   
The components of the decomposition of $f$ are defined as $f_{\bj}:=(\varphi_{\bj}\widehat{f}\,)^\vee$ and are called Littlewood-Paley blocks.
They can be written as convolutions 
\begin{align}\label{eqdef:LPops}
f_{\bj}(x)  = \Phi_{\bj} \ast f(x) :=  \br{f, \Phi_{\bj}(x - \cdot)}_{\mathcal{S}^\prime\times \mathcal{S}}  
\end{align}
with the associated kernels $\Phi_{\bj}$, where $\langle\cdot,\cdot\rangle_{\mathcal{S}^\prime\times \mathcal{S}}$ denotes the duality product on $\mathcal{S}^\prime(\R^d)\times\mathcal{S}(\R^d)$.

The corresponding decomposition of $f$, 
\begin{align}\label{LPdecomp}
	f = \sum_{\bj\in\nd} f_{\bj} = \sum_{\bj\in\nd} \Phi_{\bj} \ast f \,,
\end{align}
which holds true with at least weak* convergence in $\mathcal{S}^{\prime}(\R^{d})$, 
is called Littlewood-Paley decomposition of $f$. 
With this notion we are ready to give the definition of 
$S^{r}_{p,q}B(\R^{d})$.

\begin{definition}[{cf.~\cite[Def.~2]{Schmeisser1987},~\cite[Def.~3.2.2]{TDiff06}}]\label{def:domix_Besov_Rd}
	Let $0 < p,q \leq \infty$, $r\in\R$, and $\{\Phi_{\bj}\}_{\bj \in \N_{0}^{d}}$ a family of kernels associated to $\{\varphi_{\bj}\}_{\bj \in \N_{0}^{d}}\in\Phi_{\rm hyp}(\R^d)$. The space $S^{r}_{p,q}B(\R^{d})$ is defined as the collection of all $f \in \mathcal{S}^{\prime}(\R^{d})$ for which
	\begin{equation*}
		\norm{f}_{S^{r}_{p,q}B(\R^{d})}
		:= \begin{cases}
			 \Big( \sum\limits_{\bj \in \N_{0}^{d}} 2^{r q |\bj|_{1}} \norm{\Phi_{\bj} \ast f}_{L_{p}(\R^{d})}^{q} \Big)^{1/q} \; , \quad &q < \infty \; , \\[4ex]
		\displaystyle \sup_{\bj \in \N_{0}^{d}} 2^{r |\bj|_{1}} \norm{\Phi_{\bj} \ast f}_{L_{p}(\R^{d})}  \; , \quad &q = \infty \; ,
		\end{cases}
	\end{equation*}
	is finite. The expression $\norm{f}_{S^{r}_{p,q}B(\R^{d})}$ constitutes a (quasi-)norm in $S^{r}_{p,q}B(\R^{d})$.
\end{definition}

\noindent
The spaces $S^{r}_{p,q}B(\R^{d})$ do not depend on the choice of family $\{\varphi_{\bj}\}_{\bj}\in\Phi_{\rm hyp}(\R^d)$ in the definition. Different families 
lead to the same class of functions and equivalent (quasi-)norms. In case $\min\{p,q\}\geq 1$ the
defined spaces are Banach spaces,
whereas they are quasi-Banach in case $u:=\min\{p,q\} < 1$. More precisely, they are then
$u$-Banach spaces since the quasi-norm satisfies the $u$-triangle inequality 
\[
\norm{f+g}^u_{S^{r}_{p,q}B(\R^{d})} \le \norm{f}^u_{S^{r}_{p,q}B(\R^{d})} + \norm{g}^u_{S^{r}_{p,q}B(\R^{d})} \,.
\]
For these facts and many other properties of these spaces we refer to the literature (e.g.~\cite{Schmeisser1987,Vy06}).

Let us also recall the concept of Besov-type sequence spaces. Spaces of this kind, $s^r_{p,q}b(\Gamma^d)$ with index set $\Gamma^d:= \N_{-1}^d\times \Z^d$, naturally associated to $S^{r}_{p,q}B(\R^{d})$ have been considered for $d=1$ in~\cite[Def.~A.4]{DerUll19}.

\begin{definition}[{cf.~\cite[Def.~A.4]{DerUll19} and Remark~\ref{rem:Besov_seq_spaces}}]
Let $0<p,q\leq \infty$, $r\in \R$. 
The space
$s^r_{p,q}b(\Gamma^d)$ is
the collection of all sequences $\{\lambda_{\bj,\bk}\}_{(\bj,\bk)\in \Gamma^d}$ in $\C$ with index set  $\Gamma^d:= \N_{-1}^d\times \Z^d$
such that
\begin{equation*}
  \|\{\lambda_{\bj,\bk}\}_{\bj,\bk}\|_{s^r_{p,q}b(\Gamma^d)}
  := \begin{cases}
			\Big(\sum\limits_{\bj\in
\N_{-1}^d}2^{|\bj|_1(r-\frac{1}{p})q}\Big(\sum\limits_{\bk\in
\Z^d}|\lambda_{\bj,\bk}|^p\Big)^{\frac{q}{p}} \Big)^{\frac{1}{q}} \; , \quad &q < \infty \; , \\[4ex]
		\displaystyle \sup_{\bj \in \N_{-1}^{d}} 2^{\abs{\bj}_{1}(r-\frac{1}{p})} \Big(\sum\limits_{\bk\in\Z^d}|\lambda_{\bj,\bk}|^p\Big)^{\frac{1}{p}}  \; , \quad &q = \infty \; ,
		\end{cases}
\end{equation*}
is finite. The expression $\|\{\lambda_{\bj,\bk}\}_{\bj,\bk}\|_{s^r_{p,q}b(\Gamma^d)}$ constitutes a (quasi-)norm.
\end{definition}

\begin{remark}\label{rem:Besov_seq_spaces}
If one defines $\chi_{\bj,\bk}:=\chi_{\raisebox{-0.5ex}{$\scriptstyle{Q_{\bj,\bk}}$}}$ as the characteristic function of the rectangle 
\begin{align}\label{def:chi_cubes}
 Q_{\bj,\bk} := Q_{j_1,k_1} \times\cdots\times  Q_{j_d,k_d} \quad\text{with}\quad Q_{j_i,k_i} := [2^{j_i^+}k_i,2^{j_i^+}(k_i+1)] \quad\text{for } i\in[d] \,,
\end{align}
it holds, with the usual modification if $q=\infty$, 
\begin{align*}
 \|\{\lambda_{\bj,\bk}\}_{\bj,\bk}\|_{s^r_{p,q}b(\Gamma^d)} 
 \asymp \Big(  \sum_{\bj\in\N^{d}_{-1}} 2^{|\bj|_1 rq} \Big\| \sum_{\bk\in\Z^d}  |\lambda_{\bj,\bk}|  \chi_{\bj,\bk}(\cdot)  \Big\|^q_{L_p(\R^d)}  \Big)^{1/q} \,.
\end{align*}
\end{remark}

\noindent
A Hölder-type inequality is given by the following lemma.

\begin{lemma}\label{lem:Hölder_discr}
Let $0<p,q\leq \infty$, $r\in \R$, and $p',q',r'$ the conjugate parameters as in Definition~\ref{def:conj_space}.
Then
\begin{align*}
\sum_{(\bj,\bk)\in \Gamma^d} |\lambda_{\bj,\bk}\mu_{\bj,\bk}|
\le  \| \{\lambda_{\bj,\bk}\}_{\bj,\bk}\|_{s^r_{p,q}b(\Gamma^d)} \cdot \| \{\mu_{\bj,\bk}\}_{\bj,\bk}\|_{s^{r'+1}_{p',q'}b(\Gamma^d)} \,.
\end{align*}
\end{lemma}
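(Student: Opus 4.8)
The plan is to reduce the claimed discrete Hölder inequality to the classical Hölder inequality on the nested $\ell_p(\ell_q)$-type structure of the sequence spaces, treating the inner sum over $\bk\in\Z^d$ and the outer sum over $\bj\in\N_{-1}^d$ separately. First I would unravel the definition: writing $s_{p,q}^r b(\Gamma^d)$ as a weighted $\ell_q$-norm over $\bj$ of $\ell_p$-norms over $\bk$, the goal is to bound $\sum_{\bj}\sum_{\bk}|\lambda_{\bj,\bk}\mu_{\bj,\bk}|$. The key observation is the bookkeeping of exponents: the conjugate space carries smoothness $r'+1$ where $\tfrac1p+\tfrac1{p'}=1$, $\tfrac1q+\tfrac1{q'}=1$, and $r'$ is chosen (see Definition~\ref{def:conj_space}, which I would invoke) precisely so that the weights $2^{|\bj|_1(r-1/p)q}$ and $2^{|\bj|_1(r'+1-1/p')q'}$ combine correctly. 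Indeed $(r-\tfrac1p)+(r'+1-\tfrac1{p'}) = r+r'+1 - (\tfrac1p+\tfrac1{p'}) = r+r'$, so one needs $r'=-r$ (or whatever the precise convention in Definition~\ref{def:conj_space} dictates) for the outer weights to cancel after applying Hölder in $\bj$.

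The concrete steps: for fixed $\bj$, apply the classical (quasi-)Hölder inequality on $\ell_p(\Z^d)$ versus $\ell_{p'}(\Z^d)$ to get
\begin{align*}
\sum_{\bk\in\Z^d}|\lambda_{\bj,\bk}\mu_{\bj,\bk}| \le \Big(\sum_{\bk\in\Z^d}|\lambda_{\bj,\bk}|^p\Big)^{1/p}\Big(\sum_{\bk\in\Z^d}|\mu_{\bj,\bk}|^{p'}\Big)^{1/p'} =: a_{\bj}\,b_{\bj}\,.
\end{align*}
Then sum over $\bj\in\N_{-1}^d$, insert the trivial factorization $1 = 2^{|\bj|_1(r-1/p)}\cdot 2^{-|\bj|_1(r-1/p)}$ to distribute the weights as $\big(2^{|\bj|_1(r-1/p)}a_{\bj}\big)\big(2^{|\bj|_1(1/p-r)}b_{\bj}\big)$, and apply Hölder in $\bj$ with exponents $q,q'$. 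This yields exactly $\|\{\lambda_{\bj,\bk}\}\|_{s_{p,q}^r b}\cdot\|\{\mu_{\bj,\bk}\}\|_{s_{p',q'}^{r'+1}b}$ once one checks that $2^{|\bj|_1(1/p-r)} = 2^{|\bj|_1((r'+1-1/p')-0)}\cdot 2^{-|\bj|_1(\dots)}$ matches the weight in the conjugate norm — this is the exponent arithmetic flagged above, and it is purely mechanical given the definition of the conjugate parameters. The $q=\infty$ and $p=\infty$ endpoint cases are handled by the usual sup-versus-sum pairing and require only the standard modifications.

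The main (and really only) obstacle is that $p,q$ may be less than $1$, so the ``Hölder inequality'' used in each step is not the textbook one. For $0<p<1$ the dual exponent $p'$ is negative and $\ell_p$ is only a quasi-normed space; one cannot literally apply Hölder. The resolution — which is why the statement is phrased with conjugate parameters in the sense of Definition~\ref{def:conj_space} rather than naive reciprocals — is that for $p\le 1$ one uses the embedding $\ell_p\hookrightarrow\ell_1$ (so $\sum_{\bk}|\lambda_{\bj,\bk}\mu_{\bj,\bk}|\le \|\{\lambda_{\bj,\bk}\}_{\bk}\|_{\ell_p}\|\{\mu_{\bj,\bk}\}_{\bk}\|_{\ell_\infty}$, matching $p'=\infty$), and likewise for $q\le 1$ one pairs $\ell_q$ with $\ell_\infty$ in the $\bj$-variable. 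So the proof splits into the four regimes ($p\lessgtr 1$, $q\lessgtr 1$), but in each the inequality is immediate once the correct conjugate indices from Definition~\ref{def:conj_space} are substituted; I would present the $p,q\ge 1$ case in detail and remark that the quasi-Banach cases follow by replacing Hölder with the corresponding $\ell_p\hookrightarrow\ell_1$ embedding.
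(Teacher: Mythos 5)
Your proof is correct and follows the same route as the paper's: Hölder in $\bk$ at fixed $\bj$, then weight redistribution and Hölder in $\bj$, closing with the identity $\tfrac1p-r = 1+r'-\tfrac1{p'}$ that Definition~\ref{def:conj_space} ensures (and which is the form one wants rather than $r'=-r$, since $\tfrac1p+\tfrac1{p'}=1+\sigma_p$ in general). The paper carries out this calculation only for $p,q,p',q'<\infty$, so your explicit note that for $p\le 1$ one has $p'=\infty$ and uses $\ell_p\hookrightarrow\ell_1$ to pair against $\ell_\infty$ is a worthwhile elaboration of the same argument rather than a different one.
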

\begin{proof}
The proof is a straight-forward calculation. We give it for $p,q,p',q'<\infty$. Then
\begin{align*}
\sum_{(\bj,\bk)\in\Gamma^d} |\lambda_{\bj,\bk}\mu_{\bj,\bk}|&\le \sum_{\bj\in\N_{-1}^d}
\Big( \sum_{\bk\in\Z^d} |\lambda_{\bj,\bk}|^p \Big)^{\frac{1}{p}} \Big( \sum_{\bk\in\Z^d} |\mu_{\bj,\bk}|^{p'} \Big)^{\frac{1}{p'}} \nonumber \\
&\le \Big( \sum_{\bj\in\N_{-1}^d} 2^{|\bj|_1(r-\frac{1}{p})q}
\Big( \sum_{\bk\in\Z^d} |\lambda_{\bj,\bk}|^p \Big)^{\frac{q}{p}} \Big)^{\frac{1}{q}} \cdot \Big(  \sum_{\bj\in\N_{-1}^d} 2^{|\bj|_1(\frac{1}{p}-r)q'} \Big( \sum_{\bk\in\Z^d} |\mu_{\bj,\bk}|^{p'} \Big)^{\frac{q'}{p'}} \Big)^{\frac{1}{q'}} \nonumber \,. 
\end{align*} 
Due to $\frac{1}{p}-r = 1 + r^{\prime} - \frac{1}{p'}$ the assertion follows.
\end{proof}

\subsubsection{Restriction to a subdomain}

Let us next turn to Besov spaces of dominating mixed smoothness on subdomains $D\subset\R^d$, denoted by $S^{r}_{p,q}B(D)$. Those 
are commonly defined in an extrinsic fashion, i.e.\ by restricting distributions $f\in S^{r}_{p,q}B(\R^{d})$ to $D$. In this paper we will a-priori only consider
the case 
$r > \sigma_{p}$
with 
\begin{align}\label{eqdef:sigma_p}
\sigma_{p} := \big(\tfrac{1}{p} - 1\big)_+ \,.
\end{align} 
Then $S^{r}_{p,q}B(\R^{d})$ is comprised of regular distributions $f\in L_1^{loc}(\R^d)\cap \mathcal{S}^{\prime}(\R^{d})$ by Proposition~\ref{prop:embed_regular}, which 
simplifies the definition of $S^{r}_{p,q}B(D)$ since the restriction operation 
\begin{align}\label{eqdef:gen_restriction}
\mathcal{R}_D:\; f\mapsto f|_{D} 
\end{align}
is naturally defined on functions.
Specifically, we are interested in functions on the unit cube $[0,1]^d$.
Note that the restriction~\eqref{eqdef:gen_restriction} of $f\in  L^{loc}_{1}(\R^{d})$ to $D=[0,1]^d$ is realized by the operator $\mathcal{R}$ from~Definition~\ref{def:RestrictionOP} and leads to
a function in $L_{1}([0,1]^{d})$.  
The scale we consider can hence be defined as follows.

\begin{definition}[{cf.~\cite[App.~B]{SickUll2011}}]\label{def:BesovDomain}
	Let $0 < p,q \leq \infty$, $r > \sigma_{p}$. The space $S^{r}_{p,q}B([0,1]^{d})$ consists of all functions $f \in L_{1}([0,1]^{d})$ for which an extension $F \in S^{r}_{p,q}B(\R^{d})$ exists with $f=F\vert_{[0,1]^{d}}$. The expression
	\begin{equation*}
		\norm{f}_{S^{r}_{p,q}B([0,1]^{d})}
		:= \inf_{\substack{F \in S^{r}_{p,q}B(\R^{d}) \\ F\vert_{[0,1]^{d}} = f}} \norm{F}_{S^{r}_{p,q}B(\R^{d})}
	\end{equation*}
	is a (quasi-)norm in $S^{r}_{p,q}B([0,1]^{d})$.
\end{definition}

\noindent
 Due to the extrinsic definition these spaces inherit many properties from the spaces $S^{r}_{p,q}B(\R^{d})$, for example their completeness, which follows from standard arguments from functional analysis regarding quotient spaces.
Also the proposition below is a direct consequence of the extrinsic definition of $S^{r}_{p,q}B([0,1]^{d})$.

\begin{prop}\label{prop:restriction1}
	For $0<p,q\le\infty$, $r>\sigma_p$, the restriction $\mathcal{R}:f\mapsto f|_{[0,1]^d}$ from Definition~\ref{def:RestrictionOP} is a bounded operator
	\begin{align*}
		\mathcal{R}:\; S^{r}_{p,q}B(\R^{d}) \to S^{r}_{p,q}B([0,1]^d) \,.
	\end{align*}
\end{prop}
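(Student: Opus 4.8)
The plan is to observe that Proposition~\ref{prop:restriction1} is an essentially trivial consequence of how $S^{r}_{p,q}B([0,1]^d)$ was \emph{defined}, namely by taking the infimum over extensions. First I would fix $F\in S^{r}_{p,q}B(\R^{d})$ with $r>\sigma_p$. By Proposition~\ref{prop:embed_regular}, $F$ is a regular distribution in $L_1^{loc}(\R^d)$, so its restriction $\mathcal R F = F|_{[0,1]^d}$ makes sense as a genuine function and lies in $L_1([0,1]^d)$, as noted in the text preceding Definition~\ref{def:BesovDomain}. Thus $f:=\mathcal R F$ is an element of $L_1([0,1]^d)$ admitting the extension $F\in S^{r}_{p,q}B(\R^{d})$, which by Definition~\ref{def:BesovDomain} means precisely that $f\in S^{r}_{p,q}B([0,1]^d)$.

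The norm bound is then immediate: by the defining infimum,
\begin{align*}
\norm{\mathcal R F}_{S^{r}_{p,q}B([0,1]^d)}
= \inf_{\substack{G\in S^{r}_{p,q}B(\R^{d}) \\ G|_{[0,1]^d}=\mathcal R F}} \norm{G}_{S^{r}_{p,q}B(\R^{d})}
\le \norm{F}_{S^{r}_{p,q}B(\R^{d})} \,,
\end{align*}
since $F$ itself is an admissible competitor in the infimum. This shows $\mathcal R$ maps $S^{r}_{p,q}B(\R^{d})$ into $S^{r}_{p,q}B([0,1]^d)$ with operator (quasi-)norm at most $1$, which is the claim. Linearity of $\mathcal R$ is clear, so boundedness in the quasi-Banach setting follows from this norm estimate.

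There is no real obstacle here; the only point requiring a tiny bit of care is the identification of the abstract restriction~\eqref{eqdef:gen_restriction} with the concrete operator $\mathcal R$ of Definition~\ref{def:RestrictionOP} (so that the statement is literally about $\mathcal R$), but this is exactly the remark made just before Definition~\ref{def:BesovDomain} and relies only on $r>\sigma_p$ via Proposition~\ref{prop:embed_regular}. One could phrase the whole argument in one sentence, but it is worth spelling out that $\mathcal R F\in L_1([0,1]^d)$ so that membership in the domain space is verified, not just the norm inequality.
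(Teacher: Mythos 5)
Your proof is correct and matches the paper's approach: the paper dispenses with a written proof by noting the proposition is ``a direct consequence of the extrinsic definition of $S^{r}_{p,q}B([0,1]^{d})$,'' which is exactly the infimum-over-extensions observation you spell out. Your added remark that $r>\sigma_p$ (via Proposition~\ref{prop:embed_regular}) guarantees $\mathcal{R}F\in L_1([0,1]^d)$ is a correct and sensible clarification of what the paper leaves implicit.
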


\noindent
For more information on Besov spaces on bounded domains $D\subset\R^d$ we refer to e.g.~\cite[Sec.~1.2.6]{Tr10a}).

\subsubsection{Periodic setting}

At last, we recall the definition of the Besov spaces $S^{r}_{p,q}B(\T^{d})$ of dominating mixed smoothness on the torus $\T^d$ (see e.g.~\cite[Sec.~4]{TDiff06} or~\cite[Ch.~3]{Schmeisser1987}). 
As before, we only consider
the range $r>\sigma_p$, where they are, according to~\cite[Lem.~3.2.2 \&~Rem.~4.2.1]{TDiff06}, embedded in $L_{1}(\T^{d})$.  We hence only deal with 
proper functions and not general distributions.
It is important to remark that there are two common ways to view the space $S^{r}_{p,q}B(\T^{d})$.
On the one hand, its elements are considered as functions on $\T^d$. If we use the identification $\T^{d} \cong [-1,1]^d$ those are functions on the domain $[-1,1]^d$. In an alternative viewpoint, those can also be understood as periodic functions on $\R^d$, however, with the period $[-1,1]^d$. 
Both viewpoints have their merits and we will notationally not distinguish between them.  

Now we come to the definition of $S^{r}_{p,q}B(\T^{d})$.
Since every function $f \in L_{1}(\T^{d})$ corresponds to a periodic function in $L^{loc}_{1}(\R^{d})\cap \mathcal{S}^{\prime}(\R^{d})$,
the Littlewood-Paley blocks~\eqref{eqdef:LPops} are well-defined and the definition below makes sense.

\begin{definition}\label{def:PerBesov}
Let $0 < p,q \leq \infty$, $r > \sigma_{p}$, and $\{\Phi_{\bj}\}_{\bj \in \N_{0}^{d}}$ a family of kernels associated to $\{\varphi_{\bj}\}_{\bj \in \N_{0}^{d}} \in\Phi_{\rm hyp}(\R^d)$. Then $S^{r}_{p,q}B(\T^{d})$ is defined as the space of all $f \in L_{1}(\T^{d})$ for which
	\begin{equation*}
		\norm{f}_{S^{r}_{p,q}B(\T^{d})}
		:= \begin{cases}
			\displaystyle \Big( \sum_{\bj \in \N_{0}^{d}} 2^{r q |\bj|_{1}} \norm{\Phi_{\bj} \ast f}_{L_{p}(\T^{d})}^{q} \Big)^{1/q} \, , \quad &q < \infty \; , \\[4ex]
			\displaystyle \sup_{\bj \in \N_{0}^{d}} 2^{r |\bj|_{1}} \norm{\Phi_{\bj} \ast f}_{L_{p}(\T^{d})}  \; , \quad &q = \infty \,,
		\end{cases}
	\end{equation*}
	is finite. The expression $\norm{f}_{S^{r}_{p,q}B(\T^{d})}$ constitutes a (quasi-)norm in $S^{r}_{p,q}B(\T^{d})$.
\end{definition}

\noindent
As the spaces on the cube, these periodic spaces share many properties with $S^{r}_{p,q}B(\R^{d})$.
For instance, their definition is independent of the chosen partition of unity $\{\varphi_{\bj}\}_{\bj}\in\Phi_{\rm hyp}(\R^d)$ in
the sense of equivalent (quasi-)norms. Also, they are $u$-Banach spaces for $u:=\min\{1,p,q\}$.

    In the literature, see e.g.~\cite[Def.~3.3.3]{DuTeUl2018} or~\cite[Def.~4.2.1]{TDiff06}, another form of the Littlewood-Paley blocks $\Phi_{\bj} \ast f$ is often used.
	To derive this form, we first note that
	every function $f\in L_1(\T^d)$ can be represented by a distributional Fourier series
	\[
	f = \sum_{\bk\in\Z^d}  \hat{f}(\bk) \exp_{\bk} 
	\] 
	with bounded coefficients $\{\hat{f}(\bk)\}_{\bk}\in\ell_\infty(\Z^d)$. The coefficients are thereby given by $\hat{f}(\bk)=\langle f, \overline{\exp}_{\bk} \rangle_{\T^d}$ with exponentials from the system $\mathcal{E}([-1,1]^d)$ defined in~\eqref{eqdef:exp_system}. Since the Fourier representation of $f$ is  weak* convergent in $\mathcal{S}^{\prime}(\R^{d})$, we can calculate
	\begin{equation*}
		\Phi_{\bj} \ast f(x)
		= \br{f, \Phi_{\bj}(x-\cdot)}_{\mathcal{S}^\prime\times \mathcal{S}}
		= \sum_{\bk \in \Z^{d}} \hat{f}(\bk) \br{\exp_{\bk}, \Phi_{\bj}(x-\cdot)}_{\mathcal{S}^\prime\times \mathcal{S}}
		= \sum_{\bk \in \Z^{d}} \hat{f}(\bk) \, \varphi_{\bar{j}} \! \big(\bk /2 \big) \exp_{\bk}(x)  \,.
	\end{equation*}	 
    The last equality is due to~\eqref{eqdef:ass_kernels} and
    \[
    \br{\exp_{\bk}, \Phi_{\bj}(x-\cdot)}_{\mathcal{S}^\prime\times \mathcal{S}} = \int_{\R^d}  \Phi_{\bj}(x-\xi) \exp_{\bk}(\xi) \,\dint\xi  
    = \int_{\R^d} 2^{-d/2}  \Phi_{\bj}(\xi) \exp(\pi\ii \bk\cdot(x-\xi)) \,\dint\xi = \widehat{\Phi}_{\bar{j}}  \! \big(\bk /2 \big) \exp_{\bk}(x) \,.
    \]
    Setting $\psi_{\bj}:=\varphi_{\bj}(\cdot/2)$ we can thus write the Littlewood-Paley blocks~\eqref{eqdef:LPops} in the alternative form
	\begin{align}\label{buildingblocks}
	f_{\bj} = \sum_{\bk\in\Z^{d}} \psi_{\bj}(\bk)\hat{f}(\bk) \exp_{\bk} \,, 
	\end{align}
  where $\{\psi_{\bj}\}_{\bj \in \N_{0}^{d}}$ still belongs to $\Phi_{\rm hyp}(\R^d)$. Since the definition of $S^{r}_{p,q}B(\T^{d})$ is independent of the utilized family in $\Phi_{\rm hyp}(\R^d)$, we can substitute $\psi_{\bj}$ by $\varphi_{\bj}$ in~\eqref{buildingblocks} and use this sum as an alternative to $\Phi_{\bj} \ast f$ in the definition of the (quasi-)norm of $S^{r}_{p,q}B(\T^{d})$.

Let us finally have a look at the operator $\mathcal{R}: f\mapsto f|_{[0,1]^d}$ from Definition~\ref{def:RestrictionOP}. Utilizing the identification
$\T^d\cong[-1,1]^d$, it can be applied to functions $f\in L_1(\T^d)$.
Recall Lemma~\ref{lem:RestrictionOP}.
An important mapping property of $\mathcal{R}$ for functions
$f\in S^{r}_{p,q}B(\T^d)$ is given in Proposition~\ref{prop:restriction2}.
But let us first recall a result, which we will use in its proof.

\begin{lemma}[{\cite[Thm.~1.3]{NUUChangeVariable2015}}]\label{lem:point_multiplier}
	Let $0 < p,q \le \infty$,
	$r > \sigma_p$. Let further $\psi\in C_c^k(K)$ be a function with 
	$k\ge \lfloor r \rfloor+1$
	for some compact set $K\subset \R^d$. 
	Then for all $f\in S^{r}_{p,q}B(\T^d)$
	\begin{align*}
		\| \psi\cdot f  \|_{S^{r}_{p,q}B(\R^d)} \lesssim \|  f  \|_{S^{r}_{p,q}B(\T^d)} \,.
	\end{align*}
\end{lemma}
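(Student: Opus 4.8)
The natural approach is to pass from the Fourier-analytic (quasi-)norms to the equivalent ones built from rectangular moduli of smoothness, which is legitimate on both $S^{r}_{p,q}B(\T^d)$ and $S^{r}_{p,q}B(\R^d)$ precisely because $r>\sigma_p$. Fix the integer $M:=\lfloor r\rfloor+1$ and, for $\emptyset\neq e\subseteq[d]$ and $\bh=(h_i)_{i\in e}$, let $\Delta^{M,e}_{\bh}$ denote the iterated $M$-th order difference acting in the directions $i\in e$ with increments $h_i$. Then, with the usual modification for $q=\infty$,
\begin{equation*}
\norm{g}_{S^{r}_{p,q}B(\R^d)}\asymp\norm{g}_{L_p(\R^d)}+\sum_{\emptyset\neq e\subseteq[d]}\Bigg(\int_{[-1,1]^{e}}\bigg(\prod_{i\in e}\abs{h_i}^{-r}\bigg)^{q}\norm{\Delta^{M,e}_{\bh}g}_{L_p(\R^d)}^{q}\prod_{i\in e}\frac{\dint h_i}{\abs{h_i}}\Bigg)^{1/q}\,,
\end{equation*}
and likewise with $\T^d$ in place of $\R^d$. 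So the plan is to bound $\norm{\psi f}_{L_p(\R^d)}$ and, for each $e$, the $\bh$-integral of $\norm{\Delta^{M,e}_{\bh}(\psi f)}_{L_p(\R^d)}$ by $\norm{f}_{S^{r}_{p,q}B(\T^d)}$.

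The $L_p$-term is harmless: $\psi$ is supported in the compact $K$, which is covered by finitely many translates of $[-1,1]^{d}$, so periodicity of $f$ gives $\norm{\psi f}_{L_p(\R^d)}\le\norm{\psi}_{L_\infty}\norm{f}_{L_p(K)}\lesssim\norm{f}_{L_p(\T^d)}\lesssim\norm{f}_{S^{r}_{p,q}B(\T^d)}$. For the moduli I would use the Leibniz rule for finite differences, $\Delta^{M}_{h}(uv)(x)=\sum_{\nu=0}^{M}\binom{M}{\nu}(\Delta^{\nu}_{h}u)(x)\,(\Delta^{M-\nu}_{h}v)(x+\nu h)$, applied once in each coordinate $i\in e$; this expands $\Delta^{M,e}_{\bh}(\psi f)$ into at most $(M+1)^{d}$ terms of the form $c_{\bar\nu}\,(\Delta^{\bar\nu,e}_{\bh}\psi)\cdot(\Delta^{(M-\bar\nu),e}_{\bh}f)(\cdot+\text{shift})$, indexed by $\bar\nu=(\nu_i)_{i\in e}\in\{0,\dots,M\}^{e}$, where $c_{\bar\nu}=\prod_{i\in e}\binom{M}{\nu_i}$ and $\Delta^{\bar\nu,e}_{\bh}$ (resp.\ $\Delta^{(M-\bar\nu),e}_{\bh}$) is the iterated difference of order $\nu_i$ (resp.\ $M-\nu_i$) in direction $i$. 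For $\abs{h_i}\le1$ every such term is supported in a fixed compact $\tilde K\supset K$, so the $\bh$-dependent translation is immaterial.

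It then remains to estimate a single term. Splitting $e=e_0\cup e_+$ with $e_0=\{i:\nu_i=0\}$ and $e_+=\{i:\nu_i\ge1\}$, one bounds the $\psi$-factor in $L_\infty$ and the translated $f$-factor in $L_p(\tilde K)$, then uses periodicity of $f$ to replace the latter by $\norm{\Delta^{(M-\bar\nu),e}_{\bh}f}_{L_p(\T^d)}$, uniformly for $\abs{h_i}\le1$. In the $e_0$-directions the weight $\abs{h_i}^{-r}$ pairs with an order-$M$ difference of $f$ (smoothness $r<M$), which is precisely a constituent of the torus modulus characterization of $f$ and hence, after integration, bounded by $\norm{f}^{q}_{S^{r}_{p,q}B(\T^d)}$. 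In the $e_+$-directions the $f$-difference has order $M-\nu_i\le\lfloor r\rfloor$ and contributes $\abs{h_i}^{M-\nu_i}$-decay, while $\Delta^{\bar\nu,e}_{\bh}\psi$, using the $C^{k}$-regularity of $\psi$, contributes a further factor $\prod_{i\in e_+}\abs{h_i}^{\nu_i}$; combined, each active direction yields $\abs{h_i}^{M}$-decay, which beats $\abs{h_i}^{-r}$ because $M>r$, so the $e_+$-integrals are convergent and factor out. Each Leibniz term is thus $\lesssim\norm{\psi}_{C^{k}}\,\norm{f}_{S^{r}_{p,q}B(\T^d)}$, and summing over $\bar\nu$ and over $e$ with the $u$-triangle inequality ($u=\min\{1,p,q\}$, finitely many terms) closes the estimate; for $q=\infty$ integrals are replaced by suprema.

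The delicate point — and the main obstacle — is the step just invoked: to produce, in \emph{every} term of the mixed Leibniz expansion, the required factor $\prod_{i\in e_+}\abs{h_i}^{\nu_i}$ from $\Delta^{\bar\nu,e}_{\bh}\psi$ one needs, in each active direction, a full $\abs{h_i}^{\nu_i}$-decay (fractional exponents are not available since difference orders are integers and a decay $\abs{h_i}^{\nu_i-1}$ is not enough as $M-1\le r$). For $d=1$ this is immediate and is exactly where the hypothesis $k\ge\lfloor r\rfloor+1$ enters (the single term $\nu=M$ needs $\psi\in C^{M}$). For $d\ge2$, however, the term $\bar\nu=(M,\dots,M,0,\dots,0)$ would require a \emph{mixed} partial derivative of $\psi$ of total order exceeding $k$, which $C_c^{k}(K)$ need not provide; circumventing this — by carefully distributing the available regularity of $\psi$ among the active directions, or, more robustly, by passing to a Fourier-side argument that exploits the compact support of $\psi$ and the rapid decay of $\widehat{\eta}$ for an auxiliary $\eta\in C_c^{\infty}$ with $\eta\equiv1$ on $K$ (write $\psi f=\psi\cdot(\eta f)$, estimate the Littlewood-Paley blocks of $\eta f$ via the Fourier series $f=\sum_{\bk}\hat{f}(\bk)\exp_{\bk}$, and then invoke the pointwise-multiplier property of $C_c^{k}$-functions, $k\ge\lfloor r\rfloor+1$, on $S^{r}_{p,q}B(\R^d)$) — is the technical heart of the proof.
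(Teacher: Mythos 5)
The paper does not supply a proof of this lemma. It is imported verbatim from \cite[Thm.~1.3]{NUUChangeVariable2015} and used in the sequel only with $\psi\in C_c^\infty(\R^d)$, so what you have written is an attempted reconstruction of that external result rather than something to be matched against an argument in the text.

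On its own terms, your sketch via rectangular moduli and the mixed discrete Leibniz rule is the standard route. Your diagnosis of the $d\ge2$ obstruction is accurate \emph{if} $C_c^k(K)$ is read isotropically, i.e.\ as bounding all partial derivatives of \emph{total} order $\le k$: the extreme Leibniz term $\bar{\nu}=(M,\dots,M)$ requires $\partial_1^{M}\cdots\partial_d^{M}\psi$ of total order $dM$, which isotropic $C^k$ with $k=M$ does not supply. In the dominating mixed smoothness framework, however, the natural and intended reading of the hypothesis is tensorial: all mixed derivatives $\partial^{\bar{\alpha}}\psi$ with $\max_i\alpha_i\le k$ are continuous and compactly supported. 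This is the class that embeds compactly supported into $S^{k}_{\infty,\infty}B(\R^d)$ and is what \cite{NUUChangeVariable2015} requires. Under that reading each Leibniz term $\bar{\nu}\in\{0,\dots,M\}^{e}$ yields the needed factor $\prod_{i\in e_+}\abs{h_i}^{\nu_i}$ from $\Delta^{\bar{\nu},e}_{\bh}\psi$, because the decay in each active direction draws only on regularity in that single coordinate; your estimate then closes exactly as you describe, and the $\T^d$-to-$\R^d$ form follows from periodicity of $f$. Since all applications in the paper take $\psi\in C_c^\infty$, the distinction is invisible there.

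The fallback you suggest to patch the perceived gap is not sound as stated: ``estimate the Littlewood--Paley blocks of $\eta f$ via the Fourier series and then invoke the pointwise-multiplier property of $C_c^k$ functions on $S^r_{p,q}B(\R^d)$'' is circular, because that multiplier property on $\R^d$ is precisely the substance of the cited Theorem~1.3, and passing from it to the torus-to-$\R^d$ statement is the easy step you already perform. So either adopt the tensorial reading of $C^k$ (in which case your Leibniz argument is complete and no fallback is needed), or you must prove the $\R^d$ multiplier theorem from scratch by some independent means; the route you sketch does not do that.
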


\noindent
From this lemma we can deduce Proposition~\ref{prop:restriction2}, which is analogous
to Proposition~\ref{prop:restriction1}. 
It concludes our introductory part on 
classical Besov spaces.

\begin{prop}\label{prop:restriction2}
	Let $0<p,q\le\infty$, $r>\sigma_p$. With the identification $\T^d \cong [-1,1]^d$, it holds that
	\begin{align*}
		\mathcal{R}:\; S^{r}_{p,q}B(\T^d) \to S^{r}_{p,q}B([0,1]^d)
	\end{align*}
	is bounded.
\end{prop}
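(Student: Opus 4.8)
The plan is to reduce the assertion to the pointwise multiplier estimate in Lemma~\ref{lem:point_multiplier}. Let $f \in S^{r}_{p,q}B(\T^d)$. As recalled before Definition~\ref{def:PerBesov}, since $r>\sigma_p$ we may view $f$ as a periodic function in $L^{loc}_{1}(\R^{d})\cap\mathcal{S}^\prime(\R^d)$, with period $[-1,1]^d$ under the identification $\T^d\cong[-1,1]^d$. First I would fix, once and for all, a cutoff function $\psi\in C_c^\infty(\R^d)$ with $\psi\equiv 1$ on (a neighbourhood of) $[0,1]^d$ and $\supp\psi$ contained in some compact set $K\subset\R^d$. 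Such a $\psi$ exists and depends on none of $f$, $p$, $q$, $r$.

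Next I would put $F:=\psi\cdot f$. As the product of a smooth compactly supported function with a tempered distribution, $F$ is a well-defined element of $\mathcal{S}^\prime(\R^d)$, in fact a compactly supported function in $L_1(\R^d)$. Since $\psi\equiv 1$ on $[0,1]^d$, we have $F|_{[0,1]^d}=f|_{[0,1]^d}=\mathcal{R}f$, so $F$ is an admissible extension of $\mathcal{R}f$ in the sense of Definition~\ref{def:BesovDomain}. The one non-routine ingredient is the bound $\|F\|_{S^{r}_{p,q}B(\R^d)}\lesssim\|f\|_{S^{r}_{p,q}B(\T^d)}$, which is exactly Lemma~\ref{lem:point_multiplier}: it applies because $\psi\in C_c^\infty(K)\subset C_c^k(K)$ for every $k$, in particular for $k=\lfloor r\rfloor+1$, and the implied constant depends only on $\psi$ (hence on $d,p,q,r$) and not on $f$. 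Combining this with the definition of the quotient (quasi-)norm on $S^{r}_{p,q}B([0,1]^d)$ yields
\[
\|\mathcal{R}f\|_{S^{r}_{p,q}B([0,1]^d)} \;\le\; \|F\|_{S^{r}_{p,q}B(\R^d)} \;\lesssim\; \|f\|_{S^{r}_{p,q}B(\T^d)}\,,
\]
which is the claimed boundedness; linearity of $\mathcal{R}$ is clear.

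I do not expect a genuine obstacle here: the argument parallels the proof of Proposition~\ref{prop:restriction1}, the difference being that in the periodic case the extension to $\R^d$ is not automatic and must be supplied by the multiplier $\psi$, with Lemma~\ref{lem:point_multiplier} controlling its norm. The only point deserving a moment's care is consistency of the interpretation of $\psi\cdot f$, namely that $f$ on the right-hand side is the periodic function on all of $\R^d$ (not merely its representative on $[-1,1]^d$), so that $\psi\cdot f$ genuinely lives on $\R^d$ — which is precisely the setting in which Lemma~\ref{lem:point_multiplier} is formulated.
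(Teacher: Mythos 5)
Your argument is correct and is essentially identical to the paper's own proof of Proposition~\ref{prop:restriction2}: both fix a cutoff $\psi\in C_c^\infty(\R^d)$ with $\psi\equiv 1$ on $[0,1]^d$, observe that $\psi\cdot f$ is an admissible extension of $\mathcal{R}f$, and invoke Lemma~\ref{lem:point_multiplier} to bound $\|\psi\cdot f\|_{S^r_{p,q}B(\R^d)}\lesssim\|f\|_{S^r_{p,q}B(\T^d)}$. Your write-up adds a few additional caveats (on the interpretation of $f$ as a periodic function on all of $\R^d$, existence and $f$-independence of $\psi$) that the paper leaves implicit, but the reasoning is the same.
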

\begin{proof}
	We interpret $f\in S^{r}_{p,q}B(\T^d)$ as a periodic function on $\R^d$ and choose $\psi\in C_c^\infty(\R^d)$ such that $\psi(x)=1$ on $[0,1]^d$. With Lemma~\ref{lem:point_multiplier} we obtain
	\begin{gather*}
		\| f \|_{S^{r}_{p,q}B([0,1]^d)}  
		\le \| \psi\cdot f \|_{ S^{r}_{p,q}B(\R^d)}
		\lesssim \| f \|_{S^{r}_{p,q}B(\T^d)} \,. \qedhere
	\end{gather*}
\end{proof}



\subsection{Half-period cosine spaces of dominating mixed Besov-type}

The function spaces,
we are about to define in this subsection, are new and represent
a generalization of half-period cosine spaces considered earlier in the literature, e.g.\ in~\cite{Dick2013, goda2019lattice}.
They are of dominating mixed Besov-type and shall be denoted by $\hpc$. Their 
definition closely resembles that of the periodic spaces 
$S^{r}_{p,q}B(\T^{d})$.

\subsubsection{Definition}

We restrict to spaces comprised of regular distributions, i.e.\ distributions that can be interpreted as proper functions. Since those are locally integrable, we can assume $f \in L_{1}([0,1]^{d})$.
Based on a decomposition of unity
$\{\varphi_{\bj}\}_{\bj\in\N_{0}^{d}} \in\Phi_{\rm hyp}(\R^d)$, we then
define the half-period cosine building blocks of $f$ by
\begin{equation}\label{f2}
	f^{\mathrm{hpc}}_{\bj} := \sum\limits_{\bk\in \N_{0}^d}
	\varphi_{\bj}(\bk) \hat{f}_{\rm hpc}(\bk) c_{\bk}\,,\quad \bj\in \N_{0}^d \,,
\end{equation}
with the half-period cosine coefficients $\hat{f}_{\rm hpc}(\bk) := \langle f, c_{\bk}\rangle$ as in~\eqref{eqdef:hpc_coeff}.
The following lemma holds true.

\begin{lemma}\label{lem:LPexpansionHPC}
For any $f \in L_{1}([0,1]^{d})$ we have the identity   
    \begin{equation*}
        f = \sum_{\bj\in\nd}  f^{\mathrm{hpc}}_{\bj} \quad \text{weak*ly in } \mathcal{S}^{\prime}(\R^{d}) \,.
    \end{equation*}   
\end{lemma}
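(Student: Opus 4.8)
The plan is to reduce the claimed Littlewood--Paley-type decomposition for the half-period cosine blocks to the distributional half-period cosine expansion already established in Lemma~\ref{Lemma HPC Reihe konvergiert im Distributionensinn}, using the summation property (iii) of the partition of unity. Concretely, I would fix $\eta\in\mathcal{S}(\R^d)$, identify $f\in L_1([0,1]^d)$ with its trivial extension to $\R^d$, and compute the action of the partial sums $\sum_{|\bj|_\infty\le M} f^{\mathrm{hpc}}_{\bj}$ on $\eta$. Since each $f^{\mathrm{hpc}}_{\bj}$ is a finite-in-frequency-type object built from $\{c_{\bk}\}$, plugging in~\eqref{f2} gives
\begin{equation*}
\Big\langle \sum_{|\bj|_\infty\le M} f^{\mathrm{hpc}}_{\bj}, \eta \Big\rangle
= \sum_{\bk\in\N_0^d} \Big( \sum_{|\bj|_\infty\le M}\varphi_{\bj}(\bk) \Big) \hat{f}_{\rm hpc}(\bk)\, \langle c_{\bk},\eta\rangle \,.
\end{equation*}

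The next step is to justify the interchange of the (finite) $\bj$-sum with the (infinite) $\bk$-sum above and to take the limit $M\to\infty$. The inner factor $\sum_{|\bj|_\infty\le M}\varphi_{\bj}(\bk)$ is a product over the $d$ coordinates of $\sum_{j_i\le M}\varphi^{(i)}_{j_i}(k_i)$; by property~(iii) this equals $1$ once $M$ is large enough relative to $|k_i|$ (because the supports of the $\varphi^{(i)}_{j}$ are dyadic annuli, only finitely many are nonzero at a fixed point, and they sum to $1$), and in general these partial sums are uniformly bounded by a constant depending only on the family. The crucial ingredient making everything absolutely convergent, and hence the interchange and the passage to the limit legitimate by dominated convergence on the index set $\N_0^d$, is the decay estimate from Lemma~\ref{Lemma Abfallrate HPC Koeffizienten Schwartz Funktion}: $|\langle c_{\bk},\eta\rangle| = |\langle \eta, c_{\bk}\rangle| \lesssim \prod_i \langle k_i\rangle^{-2}$, together with $|\hat{f}_{\rm hpc}(\bk)|\le \|f\|_{L_1}\|c_{\bk}\|_{L_\infty}\lesssim 2^{d/2}\|f\|_{L_1}$. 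This yields an absolutely summable majorant $\lesssim \|f\|_{L_1}\prod_i\langle k_i\rangle^{-2}$ for the terms, uniformly in $M$, so
\begin{equation*}
\lim_{M\to\infty}\Big\langle \sum_{|\bj|_\infty\le M} f^{\mathrm{hpc}}_{\bj}, \eta \Big\rangle
= \sum_{\bk\in\N_0^d} \hat{f}_{\rm hpc}(\bk)\, \langle c_{\bk},\eta\rangle
= \Big\langle \sum_{\bk\in\N_0^d}\hat{f}_{\rm hpc}(\bk)\, c_{\bk}, \eta \Big\rangle \,,
\end{equation*}
where the last equality and the identification of the right-hand side with $\langle f,\eta\rangle$ are exactly the content of Lemma~\ref{Lemma HPC Reihe konvergiert im Distributionensinn}.

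Two small technical points remain to be addressed. First, one must check that each $f^{\mathrm{hpc}}_{\bj}$ is itself a well-defined tempered distribution; this is immediate since $\varphi_{\bj}\in\mathcal{S}(\R^d)$ has all derivatives bounded, so $\varphi_{\bj}(\bk)$ decays rapidly in $\bk$, and combined with $|\hat{f}_{\rm hpc}(\bk)|\lesssim\|f\|_{L_1}$ and $\|c_{\bk}\|_{L_\infty}\lesssim 2^{d/2}$ the series~\eqref{f2} converges absolutely in $L_\infty([0,1]^d)\subset\mathcal{S}^\prime(\R^d)$ (after trivial extension). Second, one should note that $\varphi_{\bj}$ is evaluated at the integer points $\bk$ rather than at $\bk/2$ as in~\eqref{buildingblocks}; this is harmless here since we only need properties~(ii) and~(iii) of the family, both of which are invariant under this rescaling. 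The step I expect to be the only real ``obstacle'' --- though it is routine given the lemmas already proved --- is the uniform-in-$M$ domination needed to pass to the limit, which is why Lemma~\ref{Lemma Abfallrate HPC Koeffizienten Schwartz Funktion} was isolated beforehand; everything else is bookkeeping with the partition of unity.
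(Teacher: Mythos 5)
Your proposal is correct and follows essentially the same route as the paper's proof: both reduce the claim to the distributional half-period cosine expansion of Lemma~\ref{Lemma HPC Reihe konvergiert im Distributionensinn}, insert the partition-of-unity identity $\sum_{\bj}\varphi_{\bj}(\bk)=1$, and justify the interchange of the $\bj$- and $\bk$-sums via the decay estimate of Lemma~\ref{Lemma Abfallrate HPC Koeffizienten Schwartz Funktion}. You spell out the dominated-convergence step in more detail and add a couple of harmless side remarks (e.g.\ the well-definedness of $f^{\mathrm{hpc}}_{\bj}$ is in fact immediate since each $\varphi_{\bj}$ has compact support, making the $\bk$-sum in~\eqref{f2} finite), but the substance is identical.
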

\begin{proof}
    Starting from Lemma \ref{Lemma HPC Reihe konvergiert im Distributionensinn}, we see that in the sense of tempered distributions we have  
    \begin{equation*}
        \begin{aligned}
            f
            = \sum_{\bk \in \N_{0}^{d}} \br{f, c_{\bk}} c_{\bk} 
            = \sum_{\bk \in \N_{0}^{d}} \sum_{\bj \in \N_{0}^{d}} \varphi_{\bj}(\bk) \br{f, c_{\bk}} c_{\bk} 
            = \sum_{\bj \in \N_{0}^{d}} \sum_{\bk \in \N_{0}^{d}} \varphi_{\bj}(\bk) \br{f, c_{\bk}} c_{\bk} 
            = \sum_{\bj\in\nd}  f^{\mathrm{hpc}}_{\bj} \,.
        \end{aligned}
    \end{equation*}    
    This calculation is justified by the fact that for every $\eta \in \mathcal{S}(\R^{d})$ the series   
    \begin{equation*}
        \sum_{\bj \in \N_{0}^{d}} \sum_{\bk \in \N_{0}^{d}} \varphi_{\bj}(\bk) \br{f, c_{\bk}} \br{c_{\bk}, \eta}
    \end{equation*}
    is absolutely convergent.   
\end{proof}

\noindent
The definition of $f^{\mathrm{hpc}}_{\bj}$ in~\eqref{f2} is inspired by the form~\eqref{buildingblocks} of the Littlewood-Paley blocks $f_{\bj}$ and the expansion of $f$ in Lemma~\ref{lem:LPexpansionHPC} is analogous to the Littlewood-Paley expansion~\eqref{LPdecomp}.
The half-period cosine spaces  $S^{r}_{p,q}B_{\mathrm{hpc}}([0,1]^d)$  
of dominating mixed Besov-type 
are now defined in analogy to the periodic spaces $S^{r}_{p,q}B(\T^{d})$.

\begin{definition}[Half-period cosine spaces of dominating mixed Besov-type]\label{d1} 
    Let $\mathcal{C}_d=\{ c_{\bk} \}_{\bk\in\N^d_{0}}$ be the half-period cosine system given in \eqref{def:hpc_system}. Further, let $0< p,q\leq
    \infty$ and $r>\sigma_p$. Then $\hpc$ is defined
as the collection of all $f\in L_1([0,1]^d)$ such that
    \begin{equation*}
    \norm{f}_{\hpc}
    := \begin{cases}
    	\displaystyle \Big( \sum_{\bj \in \N_{0}^{d}} 2^{r q |\bj|_{1}} \big\|\fj\big\|_{L_{p}([0,1]^{d})}^{q} \Big)^{1/q} \; , \quad &q < \infty \, , \\[4ex]
    	\displaystyle \sup_{\bj \in \N_{0}^{d}} 2^{r |\bj|_{1}} \big\|\fj\big\|_{L_{p}([0,1]^{d})}  \; , \quad &q = \infty  \,,
    \end{cases}
    \end{equation*}
    is finite. The expression $\norm{f}_{\hpc}$ constitutes a (quasi-)norm in $\hpc$.
\end{definition} 

\noindent
It is not clear right from Definition~\ref{d1} that these spaces are independent of the chosen system $\{\varphi_{\bj}\}_{\bj}\in \Phi_{\rm hyp}(\R^d)$ in~\eqref{f2}. Also
other properties, such as the (quasi-)Banach space property, are not immediate.
However, these spaces fulfill analogous properties as their classical Besov space counterparts. This will be confirmed as a side result of Theorem~\ref{thm:relation_to_even_periodic} proved in the next subsection.

\subsubsection{Identification}

Our first main result on half-period cosine spaces, Theorem~\ref{thm:relation_to_even_periodic}, establishes an isomorphism that is essential for the periodization principle. It can be viewed as a refinement of Lemma~\ref{lem:PeriodizationOP}.

\begin{theorem}\label{thm:relation_to_even_periodic}
	Let $0<p,q\le\infty$, $r>\sigma_p$. Then 
	\[
	\mathcal{P}:\; \hpc \cong S^r_{p,q}B(\T^d)_{\rm even} \,.
	\]
\end{theorem}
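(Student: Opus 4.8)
The plan is to reduce the statement to the $L_1$-level correspondence between $L_1([0,1]^d)$ and $L_1(\T^d)_{\rm even}$ from Lemma~\ref{lem:PeriodizationOP}, by tracking what $\mathcal{P}$ does to the building blocks $\fj$ from~\eqref{f2}. First I would replace the fixed family $\{\varphi_{\bj}\}_{\bj}\in\Phi_{\rm hyp}(\R^d)$ used in~\eqref{f2} by its symmetrization $\tilde\varphi_{\bj}(x):=\varphi_{\bj}(|x|)$. This changes nothing on $\nd$, i.e.\ $\tilde\varphi_{\bj}(\bk)=\varphi_{\bj}(\bk)$ for $\bk\in\nd$, and one checks routinely that $\{\tilde\varphi_{\bj}\}_{\bj}$ is again an admissible hyperbolic family: each one-dimensional factor $t\mapsto\varphi^{(i)}_j(|t|)$ is non-negative and smooth (for $j\ge1$ it vanishes in a neighbourhood of the origin, and for $j=0$ it is $\equiv1$ there since the support condition forces $\varphi^{(i)}_0\equiv1$ near $0$), it inherits the dyadic support localisation and the derivative bounds, and the factors still sum to $1$. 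Hence $\{\tilde\varphi_{\bj}\}_{\bj}\in\Phi_{\rm hyp}(\R^d)$.

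Next set $g:=\mathcal{P}(f)$ for $f\in L_1([0,1]^d)$, so $g\in L_1(\T^d)_{\rm even}$ by Lemma~\ref{lem:PeriodizationOP} and $\hat{g}(\bl)=\hat{g}(|\bl|)$ for all $\bl\in\Z^d$. The core of the argument is the identity
\begin{equation*}
\mathcal{P}\big(\fj\big)=\sum_{\bl\in\Z^d}\tilde\varphi_{\bj}(\bl)\,\hat{g}(\bl)\,\exp_{\bl}\,,\qquad\bj\in\nd,
\end{equation*}
whose right-hand side is, up to the harmless rescaling of frequencies discussed after~\eqref{buildingblocks}, the $\bj$-th Littlewood--Paley block of $g$ with respect to the admissible family $\{\tilde\varphi_{\bj}\}_{\bj}$. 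To prove the identity I would apply $\mathcal{P}$ to~\eqref{f2} and insert $\mathcal{P}(c_{\bk})=2^{(|\bk|_0+d)/2}\cos_{\bk}$ from Lemma~\ref{lem:rel_ck_cosk} together with $\hat f_{\rm hpc}(\bk)=\langle f,c_{\bk}\rangle=2^{(|\bk|_0-d)/2}\hat{g}(\bk)$ from Lemma~\ref{lem:aux_Fourier}; the finite sum~\eqref{f2} then becomes $\sum_{\bk\in\nd}\varphi_{\bj}(\bk)\,2^{|\bk|_0}\,\hat{g}(\bk)\,\cos_{\bk}$. On the other side, the sum over $\bl\in\Z^d$ is grouped according to $\bm:=|\bl|\in\nd$; using $\hat{g}(\bl)=\hat{g}(\bm)$, $\tilde\varphi_{\bj}(\bl)=\varphi_{\bj}(\bm)$ and the elementary identity $\sum_{\bl:\,|\bl|=\bm}\exp_{\bl}=2^{|\bm|_0}\cos_{\bm}$ (there are $2^{|\bm|_0}$ sign patterns, and each pair $\pm$ contributes $2\cos$), it collapses to the same expression $\sum_{\bm\in\nd}\varphi_{\bj}(\bm)\,2^{|\bm|_0}\,\hat{g}(\bm)\,\cos_{\bm}$. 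I expect this bookkeeping of powers of $2$ — the $2^{|\bk|_0}$ coming from the half-period cosine normalisation on $[0,1]^d$ against the reflection multiplicity $2^{|\bm|_0}$ on $\T^d$ — to be the only point that needs genuine care, even though it is elementary.

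With the block identity in hand the rest is formal. Since $\mathcal{P}$ multiplies $L_p$-norms by $2^{d/p}$ — as in Lemma~\ref{lem:PeriodizationOP}, by splitting $[-1,1]^d$ into its $2^d$ octants on which $\rho(x)=|x|$, with the obvious reading for $p=\infty$ — taking the weighted $\ell_q$-norm in $\bj$ gives
\begin{equation*}
\Big(\sum_{\bj\in\nd}2^{rq|\bj|_1}\,\norm{\mathcal{P}(\fj)}_{L_p(\T^d)}^q\Big)^{1/q}=2^{d/p}\,\norm{f}_{\hpc}\,,
\end{equation*}
with the usual modification for $q=\infty$. By the discussion following~\eqref{buildingblocks}, the left-hand side is an equivalent (quasi-)norm of $g=\mathcal{P}(f)$ in $S^r_{p,q}B(\T^d)$, so $\norm{\mathcal{P}(f)}_{S^r_{p,q}B(\T^d)}\asymp\norm{f}_{\hpc}$. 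Hence $\mathcal{P}$ maps $\hpc$ boundedly into $S^r_{p,q}B(\T^d)$ with a bounded inverse on its range, and it is injective because $\mathcal{R}$ is a left inverse of $\mathcal{P}$ on $L_1([0,1]^d)$. To identify the range, note that $\mathcal{P}(f)$ is always $d$-fold even, so the range lies in $S^r_{p,q}B(\T^d)_{\rm even}$; conversely, given $g\in S^r_{p,q}B(\T^d)_{\rm even}$, which is in $L_1(\T^d)$ since $r>\sigma_p$, Lemma~\ref{lem:PeriodizationOP} yields $g=\mathcal{P}(f)$ with $f:=\mathcal{R}(g)\in L_1([0,1]^d)$, and the norm equivalence gives $\norm{f}_{\hpc}<\infty$, i.e.\ $f\in\hpc$. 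This establishes $\mathcal{P}:\hpc\cong S^r_{p,q}B(\T^d)_{\rm even}$. As a by-product, combining $\norm{\cdot}_{\hpc}\asymp\norm{\mathcal{P}(\cdot)}_{S^r_{p,q}B(\T^d)}$ with the partition-independence and completeness of $S^r_{p,q}B(\T^d)$ shows that $\hpc$, as a set and up to equivalence of (quasi-)norms, does not depend on the family chosen in~\eqref{f2}, and that it is a (quasi-)Banach space — the side results announced after Definition~\ref{d1}.
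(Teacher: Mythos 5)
Your proposal is correct and follows essentially the same route as the paper's proof: symmetrizing the hyperbolic decomposition of unity via $\varphi_{\bj}\mapsto\varphi_{\bj}(|\cdot|)$, using Lemmas~\ref{lem:rel_ck_cosk} and~\ref{lem:aux_Fourier} to show that periodization takes the half-period cosine block $\fj$ to the Littlewood--Paley block of $\mathcal{P}(f)$, and then invoking the octant-wise $L_p$-isometry of $\mathcal{P}$ together with partition-independence of $S^r_{p,q}B(\T^d)$. The only cosmetic difference is that you verify the block identity by matching both sides against a common $\nd$-indexed sum, whereas the paper directly transforms $\widetilde f_{\bj}$ into $\mathcal{P}(\fj)$; the bookkeeping of the factors $2^{|\bk|_0}$ and $2^{d/p}$ is identical.
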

\begin{proof}
Let $\{\varphi_{\bj}\}_{\bj}\in\Phi_{\rm hyp}(\R^d)$ be a fixed decomposition of unity and $\hpc$ the associated half-period cosine space. The functions $\varphi_{\bj}$ have the structure
$\varphi_{\bj}=\varphi^{(1)}_{j_1} \otimes \ldots \otimes\varphi^{(d)}_{j_d}$ with dyadic partitions of unity $\{\varphi^{(i)}_{j}\}_{j\in\N_0}$ for each $i\in[d]$. 
For $j\in\N_{0}$ and $x\in\R$ we now define $\psi^{(i)}_{j}(x):=\varphi^{(i)}_{j}(|x|)$ and put $\psi_{\bj}:=\psi^{(1)}_{j_1} \otimes \ldots \otimes\psi^{(d)}_{j_d}$. Then $\{\psi_{\bj}\}_{\bj} \in\Phi_{\rm hyp}(\R^d)$ and this decomposition is symmetric in the sense 
\begin{align}\label{feat:symmetry}
	\psi_{\bj}(x_1,\ldots,x_d) = \psi_{\bj}(\pm x_1,\ldots,\pm x_d) \quad\text{for all}\quad (x_1,\ldots,x_d)\in\R^d \,.
\end{align} 
Let 
$\tilde{f}:=\mathcal{P}f$ be the periodization of $f\in\hpc$ on $\T^d\cong[-1,1]^d$ by reflection according to Definition~\ref{def:PeriodizationOP}. 
Since $S^r_{p,q}B(\T^d)$ is independent of the utilized
decomposition of unity, up to equivalence of (quasi-)norms, we have 
\begin{align*}
\|\tilde{f}\|_{S^r_{p,q}B(\T^d)}  \asymp 
    \Big(\sum\limits_{\bj\in \N_0^d}
2^{|\bj|_1rq}\|\tilde{f}_{\bj}\|_{L_p(\T^d)}^q\Big)^{1/q} 
\end{align*}
with Littlewood-Paley blocks $\tilde{f}_{\bj}$ given by
\begin{equation*}
	\tilde{f}_{\bj} := \sum\limits_{\bk\in \Z^d}
	\psi_{j_1}(k_1)\cdots\psi_{j_d}(k_d)\langle \tilde{f}, \overline{\exp}_{\bk} \rangle_{\T^d} \exp_{\bk} \,,\quad \bj\in\nd \,.
    \end{equation*}

\noindent
By Lemma~\ref{lem:aux_Fourier}, it holds $\langle \tilde{f}, \overline{\exp}_{\bk} \rangle_{\T^d} = 2^{(d-|\bk|_0)/2} \langle f, c_{|\bk|}\rangle$ for every $\bk\in \Z^d$.
Taking into account~\eqref{feat:symmetry}, we get
\begin{equation*}
	\tilde{f}_{\bj} = \sum\limits_{\bk\in \nd}
	\varphi_{j_1}(k_1)\cdots\varphi_{j_d}(k_d) \cdot 2^{(d-|\bk|_0)/2} \cdot \langle f, c_{\bk} \rangle \cdot  2^{-(d-|\bk|_0)} \cdot \sum_{\beps\in\{-1,1\}^d} \exp_{(\varepsilon_1k_1,\ldots,\varepsilon_dk_d)} \,, 
    \end{equation*}
and for $x=(x_1,\ldots,x_d)\in\T^d$
\begin{align*}
    \sum_{\beps\in\{-1,1\}^d} \exp_{(\varepsilon_1k_1,\ldots,\varepsilon_dk_d)}(x) &= 2^{-d/2} \sum_{\beps\in\{-1,1\}^d}  \prod_{j=1}^d \exp(\ii\pi \varepsilon_j k_j x_j) \\
    &= 2^{-d/2} \big( \exp(\ii\pi k_d x_d) + \exp(-\ii\pi k_d x_d)\big) \sum_{\beps\in\{-1,1\}^{d-1}}  \prod_{j=1}^{d-1} \exp(\ii\pi \varepsilon_j k_j x_j) \\
    &= 2^{1-d/2} \cos(\pi k_d x_d) \sum_{\beps\in\{-1,1\}^{d-1}}  \prod_{j=1}^{d-1} \exp(\ii\pi \varepsilon_j k_j x_j)
    = \ldots = 2^{d} \cos_{\bk}(x) \,.
    \end{align*}

\noindent
We thus arrive at
\begin{align*}
    \tilde{f}_{\bj}= 2^{d/2} \sum\limits_{\bk\in \nd}
	\varphi_{j_1}(k_1)\cdots\varphi_{j_d}(k_d) \cdot  2^{|\bk|_0/2} \cdot  \langle f, c_{\bk} \rangle \cdot  \cos_{\bk} \,,
\end{align*}
and further, using Lemma~\ref{lem:rel_ck_cosk},
\[
\tilde{f}_{\bj} = 2^{d/2} \sum\limits_{\bk\in \nd}
	\varphi_{j_1}(k_1)\cdots\varphi_{j_d}(k_d) \cdot  2^{|\bk|_0/2} \cdot  \langle f, c_{\bk} \rangle \cdot    2^{-(|\bk|_0+d)/2} \cdot \mathcal{P}(c_{\bk}) = \mathcal{P}(\fj) \,.
\]
Finally, due to the symmetry of $\tilde{f}_{\bj}=\mathcal{P}(\fj)$,
\begin{align*}
    \| \tilde{f}_{\bj} \|^p_{L_p(\T^d)}
     = \|\mathcal{P}(\fj) \|^p_{L_p(\T^d)}= 2^{d} \| \mathcal{P}(\fj) \|^p_{L_p([0,1]^d)} = 2^{d} \|\fj \|^p_{L_p([0,1]^d)}  \,.
\end{align*}
Note that the restriction operator $\mathcal{R}$ is here a left-inverse of the periodization $\mathcal{P}$.
\end{proof}

\noindent
Several structural properties of $\hpc$ can be easily deduced from the identification given in Theorem~\ref{thm:relation_to_even_periodic}.

\begin{itemize}[leftmargin=*]
\item
\textbf{(Quasi-)Banach space property}

We know that $S^r_{p,q}B(\T^d)_{\rm even}$, as a closed subspace of $S^r_{p,q}B(\T^d)$, is a (quasi-)Banach space. Applying Theorem~\ref{thm:relation_to_even_periodic} now directly
yields the (quasi-)Banach space property of $\hpc$.

\item
\textbf{Embeddings}

Starting from an embedding $S^{r_1}_{p_1,q_1}B(\T^d) \hookrightarrow S^{r_2}_{p_2,q_2}B(\T^d)$, restriction to the even functions first yields the embedding $S^{r_1}_{p_1,q_1}B(\T^d)_{\rm even} \hookrightarrow S^{r_2}_{p_2,q_2}B(\T^d)_{\rm even}$. Theorem~\ref{thm:relation_to_even_periodic} then
leads to $\hpc[r_1][p_1][q_1]\hookrightarrow\hpc[r_2][p_2][q_2]$.

\item
\textbf{Independence of the partition of unity}

To establish the independence of 
$\hpc$ from the utilized partition of unity, up to equivalence of
(quasi-)norms, 
let us start with two different partitions $\{\varphi_{\bj}\}_{\bj}$ and $\{\tilde{\varphi}_{\bj}\}_{\bj}$ from $\Phi_{\rm hyp}(\R^d)$ and let $\hpc^{\varphi}$ and $\hpc^{\tilde{\varphi}}$ denote the associated half-period cosine spaces.
As in the proof of Theorem~\ref{thm:relation_to_even_periodic}, we further construct
corresponding partitions $\{\psi_{\bj}\}_{\bj}$ and $\{\tilde{\psi}_{\bj}\}_{\bj}$ via 
$\psi_{\bj}:=\varphi_{\bj}(|\cdot|)$ and $\tilde{\psi}_{\bj}:=\tilde{\varphi}_{\bj}(|\cdot|)$. Letting $S^r_{p,q}B(\T^d)^{\psi}$ and $S^r_{p,q}B(\T^d)^{\tilde{\psi}}$
denote the associated Besov spaces, we then have
\begin{align*}
\hpc^{\varphi} \cong S^r_{p,q}B(\T^d)_{\rm even}^{\psi} \cong S^r_{p,q}B(\T^d)_{\rm even}^{\tilde{\psi}} \cong \hpc^{\tilde{\varphi}}   
\end{align*}
since each partition from $\Phi_{\rm hyp}(\R^d)$ yields the same space $S^r_{p,q}B(\T^d)$, up to equivalence of (quasi-)norms.
\end{itemize}

\subsection{Applications of Theorem~\ref{thm:relation_to_even_periodic}}
\label{ssec:app_theo_id}

Let us finally take the time to demonstrate the application of Theorem~\ref{thm:relation_to_even_periodic} in conjunction with the periodization principle established in Section~\ref{sec:hpc_system}. For this note that, utilizing the identification $\T^d\cong[-1,1]^d$ and the restriction operator $\mathcal{R}$ from Definition~\ref{def:RestrictionOP},
Theorem~\ref{thm:relation_to_even_periodic} can be restated in the form
\begin{align}\label{restate_Thm}
\hpc \cong \mathcal{R}\big(S^r_{p,q}B(\T^d)_{\rm even}\big) \,.
\end{align}
Building on Lemma~\ref{app:approx} and Lemma~\ref{app:cub}, respectively, we give
two examples for the translation mechanism from $S^r_{p,q}B(\T^d)_{\rm even}$ to $\hpc$.
The first example is half-period cosine approximation, where Theorem~\ref{thm:least_squares} provides an upper bound for the convergence rate of least-squares approximation from point samples, the second example is cubature.

\subsubsection{Half-period cosine approximation in $\hpc$}

In view of~\eqref{restate_Thm} and Lemma~\ref{app:approx},
a half-period cosine approximation scheme $\{A^{(\sigma)}_{{\rm hpc},N}\}_{\raisebox{-0.6ex}{$\scriptstyle{N\in\N}$}}$ in $\hpc$ performs as well as the corresponding Fourier approximation scheme $\{A^{(\sigma)}_{N}\}_{\raisebox{-0.6ex}{$\scriptstyle{N\in\N}$}}$ in $S^r_{p,q}B(\T^d)_{\rm even}$ (see Subsection~\ref{ssec:further_aspects}).
The performance of the latter in $S^r_{p,q}B(\T^d)$ thus provides an upper bound for the performance of $\{A^{(\sigma)}_{{\rm hpc},N}\}_{\raisebox{-0.6ex}{$\scriptstyle{N\in\N}$}}$ in $\hpc$.
Fourier approximation in $S^{r}_{p,q}B(\T^d)$ is well-understood.
Optimal approximation schemes are obtained for the choice $\sigma(N):=\{ |\bk| : \bk\in\Gamma_N\}$, where
$\Gamma_N\subset\Z^d$ are the hyperbolic cross frequencies at level $N$ (see~\cite[page~21]{DuTeUl2018}),
\begin{align*}
\Gamma_N := \Big\{ \bk=(k_1,\ldots,k_d)\in\Z^d ~:~  \prod_{i=1}^{d} (1 + |k_i|)\le N \Big\} \quad\text{with}\quad |\Gamma_N|\asymp N \log(N)^{d-1} \,.  
\end{align*} 
The optimal rates for orthogonal $L_q$ approximation are given by the orthowidths (see~\cite[page~47]{DuTeUl2018}), 
\begin{align*}
    \varphi_n(S^{r}_{p,\theta}B(\T^d),L_q) := \inf\limits_{\substack{u_1,\ldots,u_n \\ \text{orthonormal}}}\sup\limits_{f\in U^{r}_{p,\theta}(\T^d)} \Big\| f - \sum\limits_{j=1}^{n} \langle f, \overline{u}_j\rangle_{\T^d}  u_j\Big\|_{L_q(\T^d)} \,,
\end{align*}
where $U^{r}_{p,\theta}(\T^d)$ denotes the unit ball in $S^{r}_{p,\theta}B(\T^d)$.
Consequently, the rates determined in~\cite[Sect.~4]{DuTeUl2018}
for these quantities 
are upper bounds for the corresponding half-period cosine approximation in $\hpc$.  
Taking Theorem~\ref{thm:identification_hpc} from the next section into consideration, these upper bounds also apply to functions $f\in  S^{r}_{p,q}B([0,1]^d)$ as long as $\sigma_p<r<\min\{1+\tfrac{1}{p},2\}$.

For $f\in H^{r}_{\rm mix}([0,1]^d)= S^{r}_{2,2}B([0,1]^d)$ in the range $0<r<\tfrac{3}{2}$ this implies that the approximation scheme $\{A^{(\sigma)}_{{\rm hpc},N}\}_{\raisebox{-0.6ex}{$\scriptstyle{N\in\N}$}}$ yields the optimal $L_2$ approximation rate of $N^{-r}$, i.e.\ $n^{-r}(\log{n})^{r(d-1)}$ expressed in the dimension $n$ of the space $V_{\mathrm{hpc},N}:=\Span \{ c_{|\bk|} :\bk \in \Gamma_N \}$.
In the boundary case $r=\tfrac{3}{2}$, $p=2$, one gets a rate of $n^{-3/2}(\log{n})^{2(d-1)}$ for $f\in S^{3/2}_{2,1}B([0,1]^d)$. 
Indeed, if we let
$
\alpha(r,p,\theta)_n:=  \varphi_n(S^{r}_{p,\theta}B(\T^d),L_2)
$,
the above rates follow directly from  
\begin{align*}
\alpha(r,2,2)_n \asymp n^{-r}(\log n)^{r(d-1)}
\quad\text{and}\quad
\alpha(\tfrac{3}{2},2,\infty)_n\asymp n^{-3/2}(\log n)^{2(d-1)} \,,
\end{align*} 
established in~\cite[Thm.~4.4.6 \&~Thm.~4.4.4]{DuTeUl2018}, and the embedding $S^{3/2}_{2,1}B([0,1]^d)\hookrightarrow\hpc[3/2][2][\infty]$ in Theorem~\ref{thm:endpoint_result}.

\subsubsection{Least-squares approximation}

Using~\cite[Prop.~1]{KrPozhUllUll23}, we can further derive the existence of associated least-squares reconstruction schemes. To this end, we choose a sequence of subspaces $V_n:=\Span\{\exp(2\pi\ii\bk\cdot):|\bk|\in H_n\}$ 
of $L_2(\T^d)$, where $\T^d\cong[0,1]^d$ and the index sets $H_n\subset\N_{0}^d$ are chosen according to a hyperbolic cross pattern and, in addition, satisfy $|H_n|=n$.
We further define the operator
\begin{align*}
\mathcal{E}:\; L_1([0,1]^d)\to L_1([0,1]^d) \;,\quad  f \mapsto 2^{-d} \sum\limits_{\beps\in\{0,1\}^d} f(\omega_{\varepsilon_1}(x_1),\ldots,\omega_{\varepsilon_d}(x_d)) 
\end{align*}
with
$\omega_0(x):=x$ and $\omega_1(x):=1-x$ for $x\in[0,1]$.
The operator $\mathcal{E}$
makes any function $f\in L_1([0,1]^d)$ `even' in the sense that
$(\mathcal{E}f)(x_1,\ldots,x_d)=(\mathcal{E}f)(\omega_{\varepsilon_1}(x_1),\ldots,\omega_{\varepsilon_d}(x_d))$ for every $\beps\in \{0,1\}^d$. This property corresponds to $d$-fold evenness on $[-1,1]^d$.

For subspaces $V\subset L_2([0,1]^d)$, let us denote the associated orthogonal projection by $\Pi_{V}$.
We then have, for any even $f\in L_2([0,1]^d)$, 
\begin{align*}
\| f - \Pi_{\mathcal{E}V_n} f\|_{L_2([0,1]^d)} \le \| f - \mathcal{E}\Pi_{V_n} f\|_{L_2([0,1]^d)} \le \| f - \Pi_{V_n} f\|_{L_2([0,1]^d)} \,.
\end{align*}
Since the image $f^\phi$ of a function $f\in L_1([0,1]^d)$ under the tent transform $\phi$ is even on $[0,1]^d$, we may define $W_n:=\phi^{-1}\mathcal{E}V_n$ and thus get a sequence $\{W_n\}_{n\in\N}$ of nested subspaces in $L_2([0,1]^d)$ with $\dim(W_n)=n$. The spaces $W_n$ have the form
$W_n=\Span\{c_{\bk}:\bk \in H_n \}$.
Since $(\Pi_{W_n} f)^\phi = \Pi_{\mathcal{E}V_n} f^\phi$ and $\|f^\phi\|_{L_2([0,1]^d)} = \|f\|_{L_2([0,1]^d)}$, it holds for any $f\in L_2([0,1]^d)$
\begin{align}\label{eq:aux_est_proj}
\| f - \Pi_{W_n} f\|_{L_2([0,1]^d)} = \| f^\phi - \Pi_{\mathcal{E}V_n} f^\phi\|_{L_2([0,1]^d)} \le \| f^\phi - \Pi_{V_n} f^\phi\|_{L_2([0,1]^d)} \,.
\end{align}
Next, with $U^{3/2}_{2,1}([0,1]^d)$ the unit ball in $S^{3/2}_{2,1}B([0,1]^d)$ and $U^{3/2}_{2,\infty}([0,1]^d)_{\mathrm{hpc}}$ the unit ball in $\hpc[3/2][2][\infty]$, we derive with Theorem~\ref{thm:endpoint_result}
\begin{align*}
\beta_n := \sup_{f\in U^{3/2}_{2,1}([0,1]^d)} \|f - \Pi_{W_n}f\|_{L_2([0,1]^d)} \lesssim \sup_{f\in U^{3/2}_{2,\infty}([0,1]^d)_{\mathrm{hpc}}} \|f - \Pi_{W_n}f\|_{L_2([0,1]^d)} \,.
\end{align*}
Let $U^{3/2}_{2,\infty}(\T^d)$ denote the unit ball in $S^{3/2}_{2,1}B(\T^d)$ and $U^{3/2}_{2,\infty}(\T^d)_{\rm even}$ its subset of even functions. 
With \eqref{eq:aux_est_proj}, then
\begin{align*}
\beta_n \lesssim \sup_{f\in U^{3/2}_{2,\infty}(\T^d)_{\rm even}} \|f -  \Pi_{V_n} f\|_{L_2(\T^d)} \le \sup_{f\in U^{3/2}_{2,\infty}(\T^d) } \|f - \Pi_{V_n} f\|_{L_2(\T^d)} 
\end{align*}
and, due to the elaborations in~\cite[Sect.~4]{DuTeUl2018} and $n\le\dim(V_n)\le 2^d n$,
\begin{align*}
\sup_{f\in U^{3/2}_{2,\infty}(\T^d) } \|f - \Pi_{V_n} f\|_{L_2(\T^d)} \asymp  \alpha(\tfrac{3}{2},2,\infty)_{\dim(V_n)} \lesssim \alpha(\tfrac{3}{2},2,\infty)_{n} \,.
\end{align*}
Now we can apply~\cite[Prop.~1]{KrPozhUllUll23}. This yields the following theorem.

\begin{theorem}\label{thm:least_squares}
There is an absolute constant $c>0$ such that for all $m\in\N$ 
there are point sets $\{x_j\}_{j=1}^{n}\subset[0,1]^d$ and functions $\{\varphi_j\}_{j=1}^{n} \subset W_{n}$ with $n\le cm$ such that 
\begin{align*}
    \sup_{f\in U^{3/2}_{2,1}([0,1]^d)} \|f - A_m (f)\|_{L_2([0,1]^d)} \le \frac{70}{\sqrt{m}}  \sum_{k>\lfloor m/4\rfloor} \frac{\beta_k}{\sqrt{k}} \lesssim  m^{-3/2}(\log m)^{2(d-1)} \,,
\end{align*}
where $A_m(f):=\sum_{i=1}^{n} f(x_i)\varphi_i$ is a weighted least-squares reconstruction algorithm in the space $W_{n}$ 
\end{theorem}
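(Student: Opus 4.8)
The plan is to obtain Theorem~\ref{thm:least_squares} as a direct application of the general weighted least-squares sampling result~\cite[Prop.~1]{KrPozhUllUll23}, invoked for the nested sequence $\{W_n\}_{n\in\N}$ of subspaces $W_n=\Span\{c_{\bk}:\bk\in H_n\}$ constructed just before the theorem. First I would check that the hypotheses of that proposition are met, which is already the case: the $W_n$ are nested with $\dim(W_n)=n$, each $W_n$ is spanned by finitely many members of the orthonormal system $\mathcal{C}_d\subset L_2([0,1]^d)$, and the error quantity entering the bound is the orthogonal projection error $\beta_n=\sup_{f\in U^{3/2}_{2,1}([0,1]^d)}\|f-\Pi_{W_n}f\|_{L_2([0,1]^d)}$ onto $W_n$ itself. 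Applying~\cite[Prop.~1]{KrPozhUllUll23} then yields, for every $m\in\N$, points $\{x_j\}_{j=1}^{n}\subset[0,1]^d$ and functions $\{\varphi_j\}_{j=1}^{n}\subset W_n$ with $n\le cm$ for an absolute constant $c>0$, such that the operator $A_m(f)=\sum_{i=1}^{n}f(x_i)\varphi_i$ obeys
\[
\sup_{f\in U^{3/2}_{2,1}([0,1]^d)}\|f-A_m(f)\|_{L_2([0,1]^d)}\le\frac{70}{\sqrt{m}}\sum_{k>\lfloor m/4\rfloor}\frac{\beta_k}{\sqrt{k}}\,,
\]
which is the first inequality of the claim.

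Next I would estimate the series. From the chain of inequalities established above --- combining the endpoint embedding $S^{3/2}_{2,1}B([0,1]^d)\hookrightarrow\hpc[3/2][2][\infty]$ of Theorem~\ref{thm:endpoint_result}, the periodization identity of Lemma~\ref{app:approx}, inequality~\eqref{eq:aux_est_proj}, and the orthowidth asymptotics $\alpha(\tfrac{3}{2},2,\infty)_n\asymp n^{-3/2}(\log n)^{2(d-1)}$ from~\cite[Thm.~4.4.4]{DuTeUl2018} --- one has $\beta_n\lesssim n^{-3/2}(\log n)^{2(d-1)}$. Hence
\[
\sum_{k>\lfloor m/4\rfloor}\frac{\beta_k}{\sqrt{k}}\lesssim\sum_{k>\lfloor m/4\rfloor}k^{-2}(\log k)^{2(d-1)}\,.
\]
Since $x\mapsto x^{-2}(\log x)^{2(d-1)}$ is eventually decreasing, comparing this sum with $\int_{\lfloor m/4\rfloor}^{\infty}x^{-2}(\log x)^{2(d-1)}\dd x\asymp m^{-1}(\log m)^{2(d-1)}$ gives $\sum_{k>\lfloor m/4\rfloor}\beta_k/\sqrt{k}\lesssim m^{-1}(\log m)^{2(d-1)}$. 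Multiplying by $m^{-1/2}$ produces the asserted rate $m^{-3/2}(\log m)^{2(d-1)}$.

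The hard part is not this concluding assembly but the two ingredients already in hand: that the quantity governing the sampling error is the orthogonal projection error onto the concrete cosine spaces $W_n$, which is exactly what the estimates built on Lemma~\ref{app:approx} and~\eqref{eq:aux_est_proj} secure; and the sharp width bound $\beta_n\lesssim n^{-3/2}(\log n)^{2(d-1)}$, which relies on Theorem~\ref{thm:endpoint_result} at the parameter $p=2$ (comfortably inside the admissible range $1<p\le\infty$, so the still-open case $p=1$ causes no trouble). Once these are taken for granted, Theorem~\ref{thm:least_squares} follows by the single invocation of~\cite[Prop.~1]{KrPozhUllUll23} together with the elementary series estimate above, and I expect no further technical difficulty.
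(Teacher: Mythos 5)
Your proposal is correct and follows exactly the paper's own argument: the paper establishes the bound $\beta_n\lesssim\alpha(\tfrac{3}{2},2,\infty)_n\asymp n^{-3/2}(\log n)^{2(d-1)}$ via Theorem~\ref{thm:endpoint_result}, Lemma~\ref{app:approx}, and~\eqref{eq:aux_est_proj}, and then states the theorem as a direct invocation of~\cite[Prop.~1]{KrPozhUllUll23}. Your concluding series-integral comparison is the standard way to turn the $\beta_k$ decay into the stated $m^{-3/2}(\log m)^{2(d-1)}$ rate and matches what is implicitly left to the reader in the paper.
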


\noindent
Note that this theorem in particular applies to functions $f\in H^{r}_{\rm mix}([0,1]^d)$ with $r>\tfrac{3}{2}$ due to the embedding $H^{3/2+\varepsilon}_{\rm mix}([0,1]^d)\hookrightarrow S^{3/2}_{2,1}B([0,1]^d)$ for any $\varepsilon>0$. For more details on the algorithm we refer to~\cite[Rem.~7]{KrPozhUllUll23}.

\subsubsection{Tent-transformed cubature}

Turning to tent-transformed cubature, we learn  
from  Lemma~\ref{app:cub} and Theorem~\ref{thm:relation_to_even_periodic}
that tent-transformed rules work as well on $\hpc$ as the untransformed rules on $S^r_{p,q}B(\T^d)_{\rm even}$. An upper performance bound is thus provided by
the convergence rate of the untransformed rules on $S^r_{p,q}B(\T^d)$.
We use this insight to translate~\cite[Thm.~3.1]{DiUl14} and~\cite[Thm.~5.3]{HinMarkOettUll2016} on
cubature in $S^r_{p,q}B(\T^d)$ to tent-transformed cubature in $\hpc[r][p][q][[0,1]^d]$.
The first result~\cite[Thm.~3.1]{DiUl14} deals with Fibonacci cubature in $S^r_{p,q}B(\T^2)$. 
Recall that a Fibonacci rule  
$\Phi_n(f):=\frac{1}{b_n}\sum_{j=1}^{b_n}f(x_j)$ uses the $n$th Fibonacci lattice $F_n:=\{x_1,\ldots,x_{b_n}\}$ as node set in $\T^2$, where $b_n$ is the $n$th Fibonacci number.
According to~\cite[Thm.~3.1]{DiUl14}, these rules provide the optimal rate $b_n^{-r}(\log b_n)^{(1-1/q)_+}$ in the periodic space $S^{r}_{p,q}B(\T^2)$ if $1\le p\le\infty$, $0<q\le\infty$, and $r>\tfrac{1}{p}$.
The associated tent-transformed rules thus provide at least the same rate
in $\hpc[r][p][q][[0,1]^2]$, and due to Theorem~\ref{thm:identification_hpc} and the embedding $S^{r}_{p,q}B([0,1]^2)\hookrightarrow S^{r}_{1,q}B([0,1]^2)$ from Proposition~\ref{prop:emb}
these rates also hold in $S^{r}_{p,q}B([0,1]^2)$ in the range
$1\le p\le\infty$, $0<q\le\infty$, and $\tfrac{1}{p}<r<2$.
To deduce a similar result for $d>2$, we resort to digital nets since for general parameter constellations optimal lattice constructions are not known for the periodic spaces $S^{r}_{p,q}B(\T^d)$. 
As shown in~\cite[Thm.~5.3]{HinMarkOettUll2016}, restricting to $1\le p,q\le\infty$ and $\tfrac{1}{p}<r<2$, nets of order $2$ exist with the optimal rate $n^{-r}(\log n)^{(d-1)(1-1/q)}$  in $S^{r}_{p,q}B(\T^d)$,
confer also~\cite[Rem.~5.4, Thm.~5.5]{HinMarkOettUll2016}.
Applying the translation mechanism as before, we now get
at least these rates for the tent-transformed digital nets
in $\hpc[r][p][q][[0,1]^d]$ as well as in $S^{r}_{p,q}B([0,1]^d)$. 
Let us finally consider the Hilbert space $H^{2}_{\rm mix}([0,1]^d)$. For arbitrarily small $\varepsilon>0$, by Theorem~\ref{thm:identification_hpc} and Proposition~\ref{prop:emb}, there holds
\[
H^{2}_{\rm mix}([0,1]^d)=S^{2}_{2,2}B([0,1]^d) \hookrightarrow S^{2-\varepsilon}_{1,1}B([0,1]^d) \cong \hpc[2-\varepsilon][1][1] \,.
\]
The tent-transformed digital nets from above, resp.\ the Fibonacci rules if $d=2$, thus provide at least $n^{\varepsilon-2}$ as convergence rate. This
fits with numerical observations and other theoretical results on tent-transformed cubature in $H^{2}_{\rm mix}([0,1]^d)$. Hickernell, in 2002, was the first to obtain this rate using randomly shifted lattices~\cite{Hick2002}. A similar result for randomly shifted digital nets~\cite{CrisDickLeoPill2007} was proved in 2007 by Cristea, Dick, Leobacher, and Pillichshammer. 
A notable recent result from 2019 is due to
Goda, Suzuki, and Yoshiki~\cite[Thm.~2, Cor.~1]{goda2019lattice}, who could also confirm it for tent-transformed lattice rules without shifting for all $d\in\N$, using a component-by-component construction. Due to the unknown performance of such lattices in $S^{2-\varepsilon}_{1,q}B(\T^d)$,
our analysis cannot to date reproduce this finding.
But it complements the picture.
The mentioned former results were obtained by Hilbert space methodology.
Our analysis, in contrast, is based on embeddings into the half-period cosine scale $\hpc$. Hereby, 
the necessity to include non-Hilbert spaces becomes apparent as e.g.\
$H^{r}_{\rm mix}([0,1]^d)$ with $\tfrac{3}{2}<r<2$ does not embed into $\hpc[r][2][2]$ but into the slightly larger space $\hpc[r][1][2]$, which gives the rate $n^{-r}(\log n)^{(d-1)/2}$.


\section{Relation of $\hpc$ to $S^r_{p,q}B([0,1]^d)$}
\label{sec:main}

We next analyze the relation of $\hpc$
to the classical non-periodic Besov space 
$S^r_{p,q}B([0,1]^d)$, the latter defined as restriction of the respective space on $\R^d$ (see Definition~\ref{def:BesovDomain}). Recall that our analysis is limited to the case $r>\sigma_p$.
Theorem~\ref{thm:relation_to_even_periodic} and Proposition~\ref{prop:restriction2} directly lead to the following theorem. 

\begin{theorem}\label{thm:embedding_hpc}
For parameters in the range $0<p,q\le \infty$, $\sigma_p<r$, we have the embedding
            \begin{align*}
           \operatorname{Id}:\;   \hpc \hookrightarrow  S^{r}_{p,q}B([0,1]^d) \,.
            \end{align*}
\end{theorem}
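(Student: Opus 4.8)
The plan is to realize the identity embedding as the composition of two maps that are already under control: the isomorphism $\mathcal{P}$ from Theorem~\ref{thm:relation_to_even_periodic} followed by the restriction $\mathcal{R}$ from Proposition~\ref{prop:restriction2}. Concretely, I would argue that as operators on $\hpc$ one has $\operatorname{Id}=\mathcal{R}\circ\mathcal{P}$, and then read off continuity from the continuity of the two factors.

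\textbf{Key steps.} Fix parameters $0<p,q\le\infty$, $r>\sigma_p$, and take $f\in\hpc$; recall that by Definition~\ref{d1} this is in particular a function in $L_1([0,1]^d)$. First, by Theorem~\ref{thm:relation_to_even_periodic}, using the identification $\T^d\cong[-1,1]^d$, the periodization $\mathcal{P}f$ belongs to $S^r_{p,q}B(\T^d)_{\rm even}\subset S^r_{p,q}B(\T^d)$ and obeys $\|\mathcal{P}f\|_{S^r_{p,q}B(\T^d)}\lesssim\|f\|_{\hpc}$. Second, applying Proposition~\ref{prop:restriction2} to the function $\mathcal{P}f$, its restriction $\mathcal{R}(\mathcal{P}f)$ lies in $S^r_{p,q}B([0,1]^d)$ with $\|\mathcal{R}(\mathcal{P}f)\|_{S^r_{p,q}B([0,1]^d)}\lesssim\|\mathcal{P}f\|_{S^r_{p,q}B(\T^d)}$. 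Third, I would identify $\mathcal{R}(\mathcal{P}f)$ with $f$ itself: since $\mathcal{P}f=f\circ\rho|_{[-1,1]^d}$ and $\rho$ restricted to $[0,1]^d$ is the identity map, we get $\mathcal{R}(\mathcal{P}f)=(f\circ\rho)|_{[0,1]^d}=f$ — equivalently, $\mathcal{R}$ is a left-inverse of $\mathcal{P}$ on $L_1([0,1]^d)$, as already noted after Definition~\ref{def:RestrictionOP}. Chaining the two norm estimates yields $\|f\|_{S^r_{p,q}B([0,1]^d)}\lesssim\|f\|_{\hpc}$, which is precisely the asserted continuous embedding.

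\textbf{Main obstacle.} There is essentially no obstacle at this point: the statement is an immediate corollary of Theorem~\ref{thm:relation_to_even_periodic} and Proposition~\ref{prop:restriction2}. All the substance lies upstream — in the identification $\mathcal{P}:\hpc\cong S^r_{p,q}B(\T^d)_{\rm even}$ (whose proof goes through the Fourier-coefficient relation of Lemma~\ref{lem:aux_Fourier} and the symmetrized partition of unity), and in the boundedness of restriction from the torus to the cube on the mixed Besov scale, which itself relies on the pointwise multiplier estimate of Lemma~\ref{lem:point_multiplier}. The only thing that genuinely needs to be checked in the present proof is the elementary identity $\mathcal{R}\circ\mathcal{P}=\operatorname{Id}$ on $\hpc\subset L_1([0,1]^d)$.
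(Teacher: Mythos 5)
Your proposal is correct and coincides with the paper's own proof: the paper also writes $\operatorname{Id}=\mathcal{R}\circ\mathcal{P}$ and invokes Theorem~\ref{thm:relation_to_even_periodic} together with Proposition~\ref{prop:restriction2}. The extra verification that $\mathcal{R}\circ\mathcal{P}=\operatorname{Id}$ on $L_1([0,1]^d)$ is exactly the point the paper records right after Definition~\ref{def:RestrictionOP}, so your write-up is, if anything, slightly more explicit.
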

\begin{proof}
This is a consequence of $\operatorname{Id}=\mathcal{R}\circ\mathcal{P}$, Theorem~\ref{thm:relation_to_even_periodic}, and Proposition~\ref{prop:restriction2}.
\end{proof}

\noindent
The space $\hpc$ is hence contained in the space
$S^{r}_{p,q}B([0,1]^d)$. As the identification in Theorem~\ref{thm:relation_to_even_periodic} shows, 
functions from $\hpc$ feature additional regularity conditions at the boundary of $[0,1]^d$. This makes these spaces potentially smaller than $S^{r}_{p,q}B([0,1]^d)$. For low regularity, however, they are in fact equal.

\begin{theorem}\label{thm:identification_hpc}
In case $0<p,q\le\infty$ and $\sigma_p<r<\min\{1+\tfrac{1}{p},2\}$
\begin{align*}
            \operatorname{Id}:\;     \hpc \cong S^{r}_{p,q}B([0,1]^d) \,.
\end{align*}
\end{theorem}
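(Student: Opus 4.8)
By Theorem~\ref{thm:embedding_hpc} the embedding $\hpc\hookrightarrow S^{r}_{p,q}B([0,1]^d)$ already holds, so only the reverse inclusion needs proof. Since $\mathcal{P}$ is an isomorphism $\hpc\cong S^{r}_{p,q}B(\T^d)_{\rm even}$ by Theorem~\ref{thm:relation_to_even_periodic} and $\mathcal{R}$ is a left inverse of $\mathcal{P}$ on $L_1([0,1]^d)$, the assertion $\operatorname{Id}: S^{r}_{p,q}B([0,1]^d)\hookrightarrow\hpc$ is equivalent to the boundedness of the reflection-periodization operator
\[
\mathcal{P}:\; S^{r}_{p,q}B([0,1]^d)\longrightarrow S^{r}_{p,q}B(\T^d)
\]
in the range $\sigma_p<r<\min\{1+\tfrac1p,2\}$, its image automatically landing in the $d$-fold even subspace. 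Given $f\in S^{r}_{p,q}B([0,1]^d)$, I would fix an extension $F\in S^{r}_{p,q}B(\R^d)$ with $F|_{[0,1]^d}=f$ and $\|F\|_{S^{r}_{p,q}B(\R^d)}\le 2\|f\|_{S^{r}_{p,q}B([0,1]^d)}$; since $\mathcal{P}f$ depends only on $F|_{[0,1]^d}$, it then suffices to bound $\|\mathcal{P}f\|_{S^{r}_{p,q}B(\T^d)}$ by $\|F\|_{S^{r}_{p,q}B(\R^d)}$.

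The engine of the proof is a wavelet discretization with the order-$2$ Chui-Wang system, i.e.\ the tensorized linear $B$-spline wavelets, for which Theorem~\ref{thm:CW_characterization} supplies the sequence-space characterization $\|G\|_{S^{r}_{p,q}B(\R^d)}\asymp\|\{\lambda_{\bj,\bk}(G)\}\|_{s^{r}_{p,q}b(\Gamma^d)}$ in a range that in particular covers $\sigma_p<r<2$. The decisive feature of order $2$ is that its generators --- the hat function and the order-$2$ Chui-Wang wavelet --- each possess a reflection symmetry, up to sign, about their centre, so that the one-dimensional even reflection $x\mapsto g(|x|)$ carries dilated and translated generators into dilated and translated generators of the same system. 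Using the tensor-product structure of $S^{r}_{p,q}B$ it is enough to analyze one such reflection and then iterate over the $d$ coordinates, the reflections across the faces $x_i=\pm1$ being of the same type by $2$-periodicity. The outcome is a linear-spline-wavelet expansion of $\mathcal{P}f$ on $\T^d$ whose coefficient array is obtained from $\{\lambda_{\bj,\bk}(F)\}$ by a fixed folding: interior generators are merely relabelled, while the uniformly bounded number of generators at each dyadic scale that straddle a reflection point must be re-expanded, each producing one correction coefficient. Applying a periodized version of Theorem~\ref{thm:CW_characterization} then identifies $\mathcal{P}f$ as an element of $S^{r}_{p,q}B(\T^d)$ with the desired norm control.

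The crux --- and the point where the admissible range of $r$ is decided --- is the estimate for these boundary corrections. Near a reflection point the periodized function is, to leading order, affine plus a term $c\,|x_i|$ with $|c|$ comparable to a first-order difference of $f$, and the linear-spline-wavelet coefficient such a corner contributes at dyadic scale $j$ in direction $i$ has size $\asymp 2^{-j}|c|$. Feeding such a sequence into the $s^{r}_{p,q}b(\Gamma^d)$-(quasi-)norm yields, per coordinate, a factor of the shape $\big(\sum_{j\ge 0}2^{j(r-1-\frac1p)q}\big)^{1/q}$, finite exactly when $r<1+\tfrac1p$ (for $q<\infty$; the borderline $q=\infty$ is the endpoint treated separately in Theorem~\ref{thm:endpoint_result}). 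Together with the restriction $r<2$ that is built into the order-$2$ Chui-Wang characterization, this produces precisely $\sigma_p<r<\min\{1+\tfrac1p,2\}$. The remainder is routine bookkeeping: verifying that the interior relabelling and the finitely-many-per-scale corrections together define a bounded operator on $s^{r}_{p,q}b(\Gamma^d)$, and that the folded array really is the Chui-Wang coefficient array of $\mathcal{P}f$. Note that the two bounds coincide, $1+\tfrac1p=2$, exactly at $p=1$, which is where this technique degenerates --- in line with $p=1$ being left open in the endpoint Theorem~\ref{thm:endpoint_result}.
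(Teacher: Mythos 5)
The structural reduction you make is exactly the paper's: Theorem~\ref{thm:embedding_hpc} handles one direction, and the theorem is reduced to showing boundedness of $\mathcal{P}\colon S^r_{p,q}B(\R^d)\to S^r_{p,q}B(\T^d)$, which combined with Theorem~\ref{thm:relation_to_even_periodic} gives the result via $\operatorname{Id}=\mathcal{R}\circ\mathcal{P}$. That much is right, and your observation that $\frac{1}{p}+1=2$ pinches at $p=1$ correctly explains why that endpoint is delicate. However, your technical engine is genuinely different from the paper's, and as laid out it has gaps that are not ``routine bookkeeping.''

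The paper never exploits the reflection (anti)symmetry of the order-$2$ Chui--Wang generators. It characterizes $\norm{\mathcal{P}F}_{S^r_{p,q}B(\T^d)}$ through rectangular means of second-order differences of $F\circ\rho$, decomposes $F$ (not $\mathcal{P}F$) in the Chui--Wang system, and then estimates $\mathcal{R}^{e}_2(\psi_{\bj+\bl,\bk}\circ\rho,2^{-\bj},\cdot)$ purely via support-size and Lipschitz bounds in~\eqref{111}--\eqref{444}, plus overlap counting. Interior and boundary wavelets are treated uniformly there; no re-expansion is needed, and the $r<1+\tfrac1p$ restriction emerges from the convergence of $\sum_{l<0}2^{lu(1+1/p-r)}$. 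Your plan instead wants a Chui--Wavelet characterization of the \emph{periodic} space $S^r_{p,q}B(\T^d)$ and a coefficient-folding map; both pieces are asserted but not available in the paper (Theorem~\ref{thm:CW_characterization} is stated on $\R^d$ only), and establishing a periodic analogue is not automatic.

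The deeper gap is in the boundary re-expansion. You present the correction as ``to leading order, affine plus $c\,|x_i|$'' with $|c|$ a first-order difference of $f$ and one corrected coefficient of size $2^{-j}|c|$ per scale. But for general $f\in S^r_{p,q}B([0,1]^d)$ at the low end of the admissible range (e.g.\ $p<1$, $r$ close to $\sigma_p$) there is no well-defined corner strength $c$: $f$ need not have a trace of a derivative at the reflection hyperplanes, and the kink is a multiscale object rather than a single $c|x|$. A straddling wavelet $\psi_{j_0,k_0}$ reflected across $0$ or $\pm1$ is not a finite linear combination of Chui--Wang wavelets; its re-expansion yields coefficients at all scales $j\ge j_0$, and one must sum these contributions, coupled across all $(j_0,k_0)$ that straddle, inside the $s^r_{p,q}b$ quasi-norm. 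That coupled bound is precisely the nontrivial content of the theorem, not a footnote. Without it the argument is a heuristic, not a proof. The $2^{-j}|c|$ scaling is the right intuition and does reproduce the constraint $r<1+\tfrac1p$, but you would still need to (i) define the folding rigorously on the straddling indices, (ii) show it is a bounded operator $s^r_{p,q}b(\Gamma^d)\to s^r_{p,q}b(\T^d\text{-indices})$, and (iii) verify the resulting array really is the coefficient array of $\mathcal{P}F$ in a suitable periodic expansion --- none of which is addressed.

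So: right reduction, a genuinely different (and potentially workable) wavelet-folding route based on a symmetry the paper does not use, but with substantial unfilled gaps in the periodic characterization and in the corner re-expansion; the paper's difference-characterization approach sidesteps both issues.
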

\begin{proof}
Assuming that the periodization operator $\mathcal{P}$ from Definition~\ref{def:PeriodizationOP} is a bounded operator from
$S^r_{p,q}B(\R^d)$ to $S^r_{p,q}B(\T^d)$, in which case it is also a bounded operator from
$S^r_{p,q}B([0,1]^d)$ to $S^r_{p,q}B(\T^d)$, we can argue as follows. The image
of $S^r_{p,q}B([0,1]^d)$ under $\mathcal{P}$ is $S^r_{p,q}B(\T^d)_{\rm even}$. Further, by the periodization principle, Theorem~\ref{thm:relation_to_even_periodic}, $S^r_{p,q}B(\T^d)_{\rm even}$ is mapped isomorphically to $\hpc$ by the restriction operator $\mathcal{R}$ from Definition~\ref{def:RestrictionOP}. Since $\operatorname{Id}=\mathcal{R}\circ\mathcal{P}$,
we obtain the result. The proof can be pictured nicely in a commutative diagram:
\begin{center}
\begin{tikzcd}
  S^{r}_{p,q}B([0,1]^d) \arrow{rrrd}[swap]{\operatorname{Id}=\mathcal{R}\circ\mathcal{P}} \arrow{rrr}{\mathcal{P}} & & & S^r_{p,q}B(\T^d)_{\rm even} \arrow{d}{\mathcal{R}}[swap]{\cong} \\
                                          &  & & \hpc \\
\end{tikzcd}
\end{center}
\vspace*{-4ex}
Note that the down-arrow on the right depicts the isomorphism according to the periodization principle. It remains to determine the range of $r$, where $\mathcal{P}:S^r_{p,q}B(\R^d)\to S^r_{p,q}B(\T^d)$ is bounded. This question is answered in Theorem~\ref{thm:periodization}.
\end{proof}

\noindent
The following theorem closes the gap
in the proof of Theorem~\ref{thm:identification_hpc}.

\begin{theorem}\label{thm:periodization} Let $0<p,q\le\infty$ and $\sigma_p<r<\min\{1+\tfrac{1}{p},2\}$. Then
$$
        \mathcal{P}:\; S^r_{p,q}B(\R^d) \to S^r_{p,q}B(\T^d) 
$$
is a bounded linear operator. Further, $\mathcal{P}$ is bounded as operator 
$$
        \mathcal{P}:\; S^{r}_{p,\min\{p,1\}}B(\R^d) \to S^{r}_{p,\infty}B(\T^d) 
$$
for $r=\min\{1+\tfrac{1}{p},2\}$ and either $\tfrac{1}{3}<p<1$ or $1<p\le\infty$.
\end{theorem}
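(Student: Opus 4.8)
The plan is to reduce the boundedness of $\mathcal{P}$ on $S^r_{p,q}B(\R^d)$ to a one-dimensional statement and then tensorize. Since $\rho = \rho_1\otimes\cdots\otimes\rho_1$ acts coordinatewise and the spaces $S^r_{p,q}B$ have a tensor-product structure, the natural route is to first prove that the univariate reflection-periodization $\mathcal{P}_1: f\mapsto f(\rho_1(\cdot))|_{[-1,1]}$ is bounded from $B^r_{p,q}(\R)$ to $B^r_{p,q}(\T)$ for $\sigma_p<r<\min\{1+\tfrac1p,2\}$, and then lift this to the dominating-mixed setting by an iterated (one variable at a time) argument, controlling mixed differences via the tensorized Littlewood--Paley decomposition. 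The upper bound $r<1+\tfrac1p$ is exactly the Besov embedding threshold below which the trace operator $f\mapsto f(0)$ is \emph{not} continuous, i.e.\ below which no boundary matching is forced — this is why no compatibility condition on $f$ at the cube's faces is needed, and it is the structural reason the range is what it is. The restriction $r<2$ reflects that $\rho_1$ has a corner (it is only Lipschitz), so composition with $\rho_1$ cannot preserve smoothness of order $\ge 2$.

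For the univariate core statement I would use a difference-based (Besov) characterization rather than the Fourier-analytic one, since $\mathcal{P}_1$ is a composition with a piecewise-linear map and moduli of smoothness behave predictably under such compositions. Concretely, writing $g:=\mathcal{P}_1 f$ on $[-1,1]$ (extended periodically), one estimates the second-order modulus $\omega_2(g,t)_{L_p(\T)}$ by splitting the torus into the regions where $\rho_1$ is affine (there $g$ inherits $\omega_2(f,\cdot)$ directly, with a harmless Jacobian factor $1$) and the neighborhoods of the reflection points $0$ and $\pm1$ of size $\sim t$. Near a reflection point, a second difference of $g$ unfolds into a combination of a second difference of $f$ and a \emph{first} difference of $f$ evaluated near the endpoint; the first-difference term is the dangerous one and is controlled by $\|f\|_{B^r_{p,q}}$ precisely when $r<1+\tfrac1p$, using that the near-boundary contribution is confined to an interval of length $\sim t$ and a Hardy-type / $L_p$-averaging bound for first differences of Besov functions. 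Summing $t^{-rq}\omega_2(g,t)_{L_p}^q\,dt/t$ then yields $\|g\|_{B^r_{p,q}(\T)}\lesssim\|f\|_{B^r_{p,q}(\R)}$. (For $p<1$ one works with $(\min\{1,p,q\})$-triangle inequalities throughout; the lower bound $r>\sigma_p$ guarantees the modulus characterization is valid.)

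For the tensorized lifting I would iterate: apply the univariate bound in the first variable (with the other variables frozen, using a vector-valued/Fubini version of the modulus-of-smoothness characterization of $S^r_{p,q}B$ in terms of mixed differences $\Delta^{2}_{t_1,e_1}\cdots$), then in the second, and so on; each step costs only a constant, and the mixed modulus $\omega^{\rm mix}_{2,\ldots,2}(g,(t_1,\ldots,t_d))_{L_p}$ is bounded by the corresponding mixed modulus of $f$ plus lower-order mixed terms that are again absorbed under $r<1+\tfrac1p$. The endpoint claim $r=\min\{1+\tfrac1p,2\}$, $q=\min\{p,1\}$ is handled separately and more delicately: here the first-difference remainder term sits exactly at the borderline, so one cannot afford an $L_\infty$-in-$t$ loss, and the choice $q=\min\{p,1\}$ on the source side together with the target exponent $q=\infty$ is what makes the summation over dyadic scales converge (a discrete Hardy inequality at the critical exponent); the exclusion $p=1$ is because there the relevant trace/Hardy inequality degenerates logarithmically. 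I expect this endpoint bookkeeping — tracking the first-difference boundary term at the critical smoothness and getting the $\ell_{\min\{p,1\}}\to\ell_\infty$ summation to close — to be the main obstacle; away from the endpoint there is room to spare and the argument is robust.
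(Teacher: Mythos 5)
Your route is genuinely different from the paper's, and your intuition for the thresholds is in the right place (the upper bound $r<1+\tfrac1p$ does indeed come from having to absorb a first-difference term near the reflection points; $r<2$ comes from the corner of $\rho_1$). But the ``tensorize by iterating the univariate result'' step is a real gap, not a routine reduction. The dominating-mixed space $S^r_{p,q}B(\R^d)$ has its $q$-summation taken jointly over the multi-index $\bj$, and for $p\neq q$ the Besov scale does \emph{not} enjoy a Fubini property: $S^r_{p,q}B(\R^d)$ is not an iterated vector-valued Besov space $B^r_{p,q}\big(\R;\,S^r_{p,q}B(\R^{d-1})\big)$. So ``apply the univariate bound in the first variable with the other variables frozen, then in the second, \dots'' does not close without either (i) restricting to $p=q$, where Fubini holds, or (ii) proving a genuinely vector-valued version of your univariate modulus argument with an additional mechanism that restores the joint $\ell_q$-summability over $\bj$. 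Neither is addressed, and (ii) is essentially the hard part of the theorem. The paper sidesteps this entirely by working with a \emph{multivariate} rectangular-means-of-differences characterization of $S^r_{p,q}B(\T^d)$ (cf.\ \cite[Thm.~3.6]{NUUChangeVariable2015}) combined with a Chui--Wang wavelet discretization of $S^r_{p,q}B(\R^d)$ (the content of Section~\ref{sec:CW_char}): one estimates mixed second differences of each piecewise-linear wavelet piece $\psi_{\bj+\bl,\bk}\circ\rho$ directly, tracks supports and overlaps, and the convergence of the resulting geometric sum over $\bl\in\Z^d$ is exactly the condition $\sigma_p<r<1+\tfrac1p$. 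No iteration is needed.

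A second, smaller point: your explanation of the $p=1$ exclusion at the endpoint (``the Hardy inequality degenerates logarithmically'') is a plausible story for your intended argument but does not match the actual structural reason. In the paper's proof the exclusion arises because at $p=1$ one has $\min\{1+\tfrac1p,2\}=2$, so $r=2$ is simultaneously the endpoint of the Chui--Wang characterization (which needs $r\le2$) \emph{and} the endpoint of the convergence condition $r<1+\tfrac1p$; hitting both limits at once is what breaks. For $\tfrac13<p<1$ the binding constraint is only $r\le2$ (Chui--Wang side), and for $p>1$ it is only $r\le1+\tfrac1p$ (geometric-sum side) — a single critical exponent, handled via the source/target exponent mismatch $q=\min\{p,1\}\to\infty$. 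If you pursue your route, you will still have to identify and separate these two independent criticalities; ``Hardy degenerates at $p=1$'' glosses over the fact that the obstruction is double-critical, not single-critical.

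So: the core univariate heuristic is sound and would reproduce a one-dimensional analogue, but the passage to $d\ge2$ as you sketch it does not go through for $p\neq q$, and the endpoint bookkeeping needs to be recast. You should either (a) adopt a genuinely multivariate difference characterization from the outset, as the paper does, or (b) show how your univariate argument can be made vector-valued in a way compatible with the joint $\ell_q$-norm over $\bj$ — which is a nontrivial new ingredient, not a Fubini step.
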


\noindent
A consequence of the second statement in Theorem~\ref{thm:periodization} is an interesting endpoint result.

\begin{theorem}\label{thm:endpoint_result}
Let $r=\min\{1+\tfrac{1}{p},2\}$. In case $\tfrac{1}{3}<p<1$ or $1< p\le\infty$ we have the embedding
\begin{align*}
            \operatorname{Id}:\;   S^{r}_{p,\min\{p,1\}}B([0,1]^d) \hookrightarrow \hpc[r][p][\infty][[0,1]^d]  \,.
\end{align*}
\end{theorem}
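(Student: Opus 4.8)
The plan is to mimic the proof of Theorem~\ref{thm:identification_hpc} almost verbatim, the only change being that the hard input is now the \emph{second} (endpoint) assertion of Theorem~\ref{thm:periodization} rather than the first. I would factor the identity as $\operatorname{Id}=\mathcal{R}\circ\mathcal{P}$ and run the argument along the chain
\[
S^{r}_{p,\min\{p,1\}}B([0,1]^d) \xrightarrow{\ \mathcal{P}\ } S^{r}_{p,\infty}B(\T^d)_{\rm even} \xrightarrow[\ \cong\ ]{\ \mathcal{R}\ } \hpc[r][p][\infty][[0,1]^d] \,,
\]
which is exactly the analogue of the commutative diagram appearing in the proof of Theorem~\ref{thm:identification_hpc}.

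First I would verify that all three spaces are legitimate, i.e.\ that $r=\min\{1+\tfrac1p,2\}$ satisfies $r>\sigma_p$ in the admitted parameter range: for $\tfrac13<p<1$ one has $r=2$ and $\sigma_p=\tfrac1p-1<2$ precisely because $p>\tfrac13$, while for $1<p\le\infty$ one has $r=1+\tfrac1p>0=\sigma_p$. This also explains the exclusions, in particular $p=1$ (where $r=2$, $\sigma_p=0$, but the endpoint assertion of Theorem~\ref{thm:periodization} is unavailable). In this smoothness range Theorem~\ref{thm:relation_to_even_periodic} applies, so $\mathcal{R}:S^{r}_{p,\infty}B(\T^d)_{\rm even}\cong\hpc[r][p][\infty][[0,1]^d]$ with equivalent (quasi-)norms.

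Next I would pass the second statement of Theorem~\ref{thm:periodization}, namely boundedness of $\mathcal{P}:S^{r}_{p,\min\{p,1\}}B(\R^d)\to S^{r}_{p,\infty}B(\T^d)$, from $\R^d$ down to the cube. Given $f\in S^{r}_{p,\min\{p,1\}}B([0,1]^d)$ and any extension $F\in S^{r}_{p,\min\{p,1\}}B(\R^d)$ with $F|_{[0,1]^d}=f$, one has $\mathcal{P}F=\mathcal{P}f$ because $\mathcal{P}$ reads its argument only on $[0,1]^d$ (see Definition~\ref{def:PeriodizationOP}, where $\rho$ takes values in $[0,1]^d$); hence $\|\mathcal{P}f\|_{S^{r}_{p,\infty}B(\T^d)}\lesssim\|F\|_{S^{r}_{p,\min\{p,1\}}B(\R^d)}$, and taking the infimum over all such $F$ gives $\|\mathcal{P}f\|_{S^{r}_{p,\infty}B(\T^d)}\lesssim\|f\|_{S^{r}_{p,\min\{p,1\}}B([0,1]^d)}$. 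Since $\mathcal{P}f$ is $d$-fold even, its image lies in $S^{r}_{p,\infty}B(\T^d)_{\rm even}$. Composing with the isomorphism $\mathcal{R}$ from the previous paragraph and using $\mathcal{R}\circ\mathcal{P}=\operatorname{Id}$ on $L_1([0,1]^d)$ then yields the claimed bounded embedding.

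There is essentially no remaining obstacle inside this statement itself: the entire analytic difficulty --- and the reason for the peculiar index pattern $(p,\min\{p,1\},\infty)$ and the exclusions $p=1$, $p\le\tfrac13$ --- has been pushed into Theorem~\ref{thm:periodization}, which is where the real work (the endpoint boundedness of the reflection periodization, presumably via Chui--Wang discretization and a careful treatment of the boundary cases) takes place. The only points requiring a little care here are to carry the correct secondary indices through the composition and to note that Theorem~\ref{thm:relation_to_even_periodic}, being stated for the whole range $r>\sigma_p$, already covers the borderline smoothness $r=\min\{1+\tfrac1p,2\}$ without any adjustment.
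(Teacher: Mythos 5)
Your argument reproduces exactly what the paper intends when it says the proof is analogous to Theorem~\ref{thm:identification_hpc}: factor $\operatorname{Id}=\mathcal{R}\circ\mathcal{P}$, invoke the endpoint statement of Theorem~\ref{thm:periodization} and infimize over extensions to pass to the cube, and then compose with the isomorphism from Theorem~\ref{thm:relation_to_even_periodic}. Both the approach and the bookkeeping of the secondary indices match the paper's implicit proof, so there is nothing to add.
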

\begin{proof}
The proof 
is analogous to the proof of Theorem~\ref{thm:identification_hpc}.
\end{proof}

\noindent
The remainder of this section is devoted to the proof of Theorem~\ref{thm:periodization}. It is
presented in Subsection~\ref{ssec:Proof}.
Before, let us recall some tools in the following two subsections, beginning with the Chui-Wang wavelets of $2$nd order.

\subsection{Chui-Wang wavelets of order $2$}
\label{ssec:CW_wavelets}

In 1992, Charles Chui and Jian-Zhong Wang~\cite{Chui1992}
introduced a class of compactly supported spline wavelets generalizing the well-known Haar wavelet to higher orders $m\in\N$. They constructed bases of such wavelets in $L_2(\R)$, nowadays called Chui-Wang wavelets, for every order $m$. 
These bases consist of $m$th order spline functions, i.e.\ functions that are piecewise assembled from polynomials of degree $m-1$. For $m=1$ they replicate the Haar basis, which is orthonormal. In case $m\ge 2$ the bases are not orthonormal any more, yet, they still represent well-behaved Riesz bases with nice duality properties. They are for instance semi-orthogonal and the dual bases can be calculated explicitly (see e.g.~\cite{Chui1992a, Chui1992} and \cite[Sec.~2.2]{DerUll19}). 

In our proof of Theorem~\ref{thm:periodization} we will make use of multivariate Chui-Wang bases of order $2$. Their relevant features for us are their compact support, two vanishing moments and their piecewise linear structure.
They are constructed from the original univariate bases via tensorization.
Let us therefore recall the original construction in $L_2(\R)$ first.
Following~\cite[Thm.~1]{Chui1992}, the $2$nd order univariate Chui-Wang wavelet $\psi_{2}$ can be written in the form (see formula~(2.3) in~\cite{Chui1992}) 
\begin{equation*}
	\psi_{2}(x)
	= \sum_{l = 0}^{4} q_{l} N_{2}(2x - l)
	\quad\text{with}\quad
	q_{l}
	= \frac{(-1)^{l}}{2} \sum_{i = 0}^{2} \binom{2}{i} N_{4}(l - i + 1) \,,
\end{equation*}
where $N_{2}$ stands for the $2$nd order cardinal $B$-spline $N_{2}:= \chi_{[0,1)} \ast \chi_{[0,1)}$ and $N_{4}:= N_{2} \ast N_{2}$ for the $4$th order cardinal $B$-spline.
For $l \in \N_{0}$ and $k \in \Z$, one then sets
\begin{equation}\label{gen:CWsystem}
	\psi_{-1, k}(x)
	:= N_{2}(x - k)
	\quad \text{and} \quad
	\psi_{l,k}(t)
	:= \psi_{2}(2^{l}x - k) \, .
\end{equation}
The collection $\{ \psi_{l,k} :  l\in \N_{-1}, k \in \Z \}$ is called the univariate Chui-Wang wavelet system of order $2$. 
The generators $\psi_{0,0}=\psi_{2}$ and $\psi_{-1,0}=N_2$ are sometimes referred to as the $2$nd order mother and father Chui-Wang wavelet. They have the
explicit form
\begin{align}\label{eq:explicit_CW}
    \psi_{0,0}(x) = \begin{cases}   
        \tfrac{1}{6}x &,\, 0\le x < \tfrac{1}{2} \,, \\
        -\tfrac{7}{6}x + \tfrac{2}{3} &,\, \tfrac{1}{2} \le x < 1 \,, \\
        \tfrac{8}{3}x - \tfrac{19}{6} &,\, 1 \le x < \tfrac{3}{2} \,, \\
        -\tfrac{8}{3}x + \tfrac{29}{6} &,\,  \tfrac{3}{2} \le x < 2 \,, \\
         \tfrac{7}{6}x - \tfrac{17}{6} &,\, 2 \le x < \tfrac{5}{2} \,, \\
         -\tfrac{1}{6}x + \tfrac{1}{2} &,\, \tfrac{5}{2} \le x < 3 \,, \\
         0 &,\,\text{otherwise}\,.
    \end{cases}
    \quad,\quad
    \psi_{-1,0}(x) = \begin{cases}   
        x &,\, 0\le x < 1 \,, \\
        2 - x  &,\, 1 \le x < 2 \,, \\
         0 &,\,\text{otherwise}\,.
    \end{cases}
\end{align}
Hence, with certain coefficients $c^{(0)}_{\kappa}\in\R$ and  $c^{(-1)}_{\kappa}\in\R$, it holds
\begin{align}\label{represent:CW_wavelets}
   \psi_{0,0}(x) = \sum_{\kappa=0}^{6} c^{(0)}_{\kappa} \big(x - \tfrac{\kappa}{2} \big)_{+}
   \quad\text{and}\quad
   \psi_{-1,0}(x) = \sum_{\kappa=0}^{6}  c^{(-1)}_{\kappa}  \big(x - \tfrac{\kappa}{2} \big)_{+} \,.
\end{align}
Note that, due to the half-integer shifts, we have here
$c^{(-1)}_{\kappa}=0$ for $\kappa\in\{1,3,5,6\}$.

The dual Chui-Wang basis $\{ \tilde{\psi}_{l,k} :  l\in \N_{-1}, k \in \Z \}$ of order $2$ is generated as in~\eqref{gen:CWsystem} from modified generators
\begin{align}\label{def:dualCW}
\tilde{\psi}_{0,0} := \sum_{n\in\Z} a^{(0)}_{n} \psi_{0,n}
\quad\text{and}\quad
\tilde{\psi}_{-1,0} := \sum_{n\in\Z} a^{(-1)}_{n} \psi_{-1,n} \,,
\end{align}
where $\{a^{(0)}_{n}\}_{n\in\Z}$ and $\{a^{(-1)}_{n}\}_{n\in\Z}$ are some specific sequences in $\mathbb{C}$ (see \cite[Sec.~2.2]{DerUll19} for an explicit calculation).
These sequences feature exponential decay, which means there exists $c>1$ so that 
\[
|a^{(\varepsilon)}_{n}| \lesssim c^{-|n|}  \quad\text{for $n\in\Z$,\, $\varepsilon\in\{-1,0\}$}\,.
\]
The relation to the primal Chui-Wang basis is expressed by the biorthogonality relation (see~\cite[Thm.~3]{Chui1992})
\begin{align}\label{rel_biortho_uni}
\langle \psi_{j,k} , \tilde{\psi}_{l,m} \rangle_{\R} = 2^{-(j_+ +l_+)/2} \delta_{j,l} \delta_{k,m} \,.
\end{align}

\noindent
Finally, to arrive at the desired multivariate generalization, we take tensor products,  
i.e.\  for $x=(x_1,\ldots,x_d)\in\R^d$ we let
\begin{align}\label{eqdef:tensor_CW}
    \psi_{\bl,\bk}(x) := \psi_{l_1,k_1}(x_1) \cdots \psi_{l_d,k_d}(x_d) \quad\text{and}\quad 
    \tilde{\psi}_{\bl,\bk}(x) := \tilde{\psi}_{l_1,k_1}(x_1) \cdots  \tilde{\psi}_{l_d,k_d}(x_d) \,.
\end{align}
The primal and dual tensor Chui-Wang wavelet systems on $\R^d$ then look like 
\begin{align*}
   \{ \psi_{\bl,\bk} : (\bl,\bk) \in \Gamma^{d} \} \quad\text{and}\quad \{ \tilde{\psi}_{\bl,\bk} : (\bl,\bk) \in \Gamma^{d} \} \quad\text{with index set } \Gamma^d:=\N_{-1}^d\times \Z^d \,.
\end{align*}
From~\eqref{rel_biortho_uni} we obtain
\begin{align}\label{rel_biortho_multi}
\langle \psi_{\bj,\bk} , \tilde{\psi}_{\bl,\bm} \rangle_{\R^d} = 2^{-(\bj_+ +\bl_+)/2} \delta_{\bj,\bl} \delta_{\bk,\bm} \,.
\end{align}
These systems thus 
constitute a pair of dual Riesz bases and 
every $f\in L_2(\R^d)$ can be expanded in the form
\[
 f= \sum_{(\bj,\bk)\in\Gamma^d} 2^{\bj_+} \langle f, \tilde{\psi}_{\bj,\bk} \rangle_{\R^d}  \psi_{\bj,\bk} = \sum_{(\bj,\bk)\in\Gamma^d} 2^{\bj_+} \langle f, \psi_{\bj,\bk} \rangle_{\R^d}  \tilde{\psi}_{\bj,\bk}
\]
with unconditional convergence in $L_2(\R^d)$. The factor $2^{\bj_+}$ is due to the
biorthogonality relation~\eqref{rel_biortho_multi}.

We further have
\begin{align*}
    \|f \|_{ L_2(\R^d)} \asymp  \| \{ 2^{\bj_+/2} \langle f,\psi_{\bj,\bk} \rangle \}_{\bj,\bk} \|_{\ell_2(\Gamma^d)} \asymp   \|  \{ 2^{\bj_+/2}  \langle f,\tilde{\psi}_{\bj,\bk} \rangle \}_{\bj,\bk}  \|_{\ell_2(\Gamma^d)} \,.
\end{align*}
In Section~\ref{sec:CW_char} we will investigate the generalization of these basis properties to the space $S^{r}_{p,q}B(\R^d)$. The result is formulated in Theorem~\ref{thm:CW_characterization}.

Concluding this subsection, let us note that we can derive from~\eqref{def:dualCW} the representation 
\begin{align}\label{dualCW_representation}
    \tilde{\psi}_{\bl,\bk} = \sum_{\bn\in\Z^d} a^{(\bl)}_{\bn} \psi_{\bl,\bk+\bn} \quad\text{with}\quad  a^{(\bl)}_{\bn} := \prod_{i\in[d]}   a^{(l_i^{-})}_{n_i}   
\end{align}
of the dual wavelets for any $(\bl,\bk)\in\Gamma^d$, where $l_i^{-}=\min\{0,l_i\}$. For the coefficients' decay we get
\begin{align}\label{eq:coeff_decay}
|a^{(\bl)}_{\bn}| \lesssim c^{-|\bn|}  \quad\text{for $\bn\in\Z^d$,\, $\bl\in\N_{-1}^d$}\,.
\end{align}


\subsection{Further preparation}

The Lipschitz constant $L(f)$ of a univariate function $f:\R\to\C$ is defined by
\begin{align*}
 L(f) := \sup\limits_{\substack{x,y\in\R \\ x\neq y}} \frac{|f(x)-f(y)|}{|x-y|} \,. 
\end{align*}
Further, the
univariate forward difference operator on $\R$, denoted by $\Delta_h$ for $h\in\R$, is given by
\begin{align*}
\Delta_h(f,x) := f(x+h) - f(x) \;,\quad x\in\R \,.
\end{align*}
Its multivariate version $\Delta_{h,i}$ acting on the $i$th coordinate of a function $f:\R^d\to\C$, for $i\in[d]$, is set to be
\[
\Delta_{h,i}(f,x) = f(x_1,\ldots,x_i+h,\ldots x_d) -  f(x_1,\ldots, x_d) \;,\quad x=(x_1,\ldots,x_d)\in\R^d \,.
\]
Recursively, we then define the $m$th order difference operator $\Delta^m_{h,i}$
for $m\in\N$ by setting $\Delta^{m}_{h,i}:=\Delta_{h,i}\circ\Delta^{m-1}_{h,i}$
and $\Delta^{0}_{h,i}:=\operatorname{Id}$. This operator acts on the $i$th coordinate as
\[
\Delta^{m}_{h,i}(f,x) = \sum\limits_{j=0}^{m} (-1)^j \binom{m}{j} f(x_1,\ldots,x_{i-1},x_i+(m-j)h,x_{i+1},\ldots, x_d)  \,.
\]
Utilizing this operator, we finally introduce the multivariate difference operator $\Delta^{m,e}_{\bh}$ of order $m$ for a subset $e\subset[d]$ of indices and $\bh=(h_1,\ldots,h_d)\in\R^d$. If $e=\emptyset$ we set $\Delta^{m,\emptyset}_{\bh}:= \operatorname{Id}$ and otherwise
\begin{align}\label{def:multi_diff_op}
    \Delta^{m,e}_{\bh} := \prod_{i\in e} \Delta^{m}_{h_i,i} \,.
\end{align}
This operator plays a crucial role in our proof of Theorem~\ref{thm:periodization} since we rely on a characterization of $S^r_{p,q}B(\R^d)$ by rectangular means of differences.
A few other auxiliary results needed in the proof are listed below.
\begin{itemize}[leftmargin=*]
\item
For a univariate function $f:\R\to\C$ and for $\kappa>0$
\begin{align}\label{rel:Lipschitz1}
    |[\Delta_h^2 f(\kappa \cdot)](x)|
    \le 2L(f)\kappa |h|  \quad\text{and}\quad |[\Delta_h^2 f(\kappa \cdot)](x)| \le 4\|f\|_{L_{\infty}(\R)} \,.
\end{align}
\item It holds $L(f\circ g)\le L(f)L(g)$ for any two univariate functions $f:\R\to\C$ and $g:\R\to\R$. In particular
\begin{align}\label{rel:Lipschitz2}   
L(f\circ\rho_1)\le L(f)
\end{align}
if $\rho_1$ is the function from~\eqref{eqdef:aux_rho_1} since $L(\rho_1)=1$.
\end{itemize}

\noindent
After this preparation, we are now ready for the proof of Theorem~\ref{thm:periodization}.

\subsection{Proof of Theorem~\ref{thm:periodization}}
\label{ssec:Proof}

Let $\mathcal{P}$ be the periodization operator from Definition~\ref{def:PeriodizationOP} and $f\in S^r_{p,q}B(\R^d)$ with parameters $0<p,q\le\infty$ and $\sigma_p<r\le2$, where $\sigma_p=\max\{0,\tfrac{1}{p}-1\}$. 
In this parameter range the (quasi-)norm
\begin{align}\label{fff1}
  \|\mathcal{P}(f) \|_{S^r_{p,q}B(\T^d)} =   \|f\circ\rho|_{[-1,1]^d}  \|_{S^r_{p,q}B(\T^d)}
\end{align}
can be characterized
via rectangular means of differences of $3$rd order. Here we follow~\cite[Sec.~3.3]{NUUChangeVariable2015} (cf.\ also~\cite[Sec.~4.5]{TDiff06}). For a function $f:\R^d\to\C$, a given multiscale $\bt\in\R^d_+$, and a subset of coordinate directions $e\subset[d]$, the order $m$ means of differences are defined locally at position $x\in\R^d$ by
\begin{align*}
  \mathcal{R}^{e}_{m}(f,\bt,x) :=  \int_{[-1,1]^d}  \big| \Delta^{m,e}_{(h_1t_1,\ldots,h_dt_d)}(f,x)\big|  \,\dint\bh 
  =  \int_{[-1,1]^d}  \big| \Big( \prod_{i\in e} \Delta^{m}_{h_it_i,i} \Big)(f,x)\big|  \,\dint\bh \,,
\end{align*}
where $\Delta^{m,e}_{\bh}$ is the operator~\eqref{def:multi_diff_op}.
According to~\cite[Thm.~3.6]{NUUChangeVariable2015} (cf.\ also~\cite[Thm.~4.5.1]{TDiff06} for a continuous version), when we let $e(\bj):=\{ i : j_i\neq 0 \}$ for $\bj\in\N^d_{0}$, 
we have the characterization
\begin{align*}
  \|\mathcal{P}(f) \|_{S^r_{p,q}B(\T^d)}\asymp 
    \Big( \sum_{\bj\in\N^{d}_{0}} 2^{|\bj|_1 rq} \Big\|  \mathcal{R}^{e(\bj)}_{3}(f\circ\rho,2^{-\bj},\cdot) \Big\|_{L_p([-1,1]^d)}^q \Big)^{1/q}  
\end{align*}
of the (quasi-)norm~\eqref{fff1}. Note that $f\circ\rho$ is a function on $\R^d$ that is periodic with respect to the domain $[-1,1]^d$. 
Noting $\mathcal{R}^{e}_{3}(f,\bt,x) \lesssim \mathcal{R}^{e}_{2}(f,\bt,x)$, we deduce
the following estimate
\begin{align*}
  \|\mathcal{P}(f) \|_{S^r_{p,q}B(\T^d)}\lesssim
    \Big( \sum_{\bj\in\N^{d}_{0}} 2^{|\bj|_1 rq} \Big\|  \mathcal{R}^{e(\bj)}_{2}(f\circ\rho,2^{-\bj},\cdot) \Big\|_{L_p([-1,1]^d)}^q \Big)^{1/q} \,. 
\end{align*}
This is our starting point for proving Theorem~\ref{thm:periodization}.
For the first statement, the estimate
\begin{align}\label{end_point}
 \|\mathcal{P}(f)\|_{S^r_{p,q}B(\T^d)} \lesssim \|f\|_{S^r_{p,q}B(\R^d)} 
\end{align}
is what we aim for. Towards this goal, we decompose $f$ into different scales 
\begin{align}\label{decomp_CW}
 f=\sum_{\bl\in\N_{-1}^d} f_{\bl}   
\end{align} 
of level-wise $2$nd order Chui-Wang expansions (see Theorem~\ref{thm:CW_characterization} for convergence properties)
\begin{align*}
  f_{\bl} := \sum_{\bk\in\Z^d} 2^{\bl_+} \langle f,\tilde{\psi}_{\bl,\bk} \rangle_{\R^d} \psi_{\bl,\bk} \,.
\end{align*}
Recall the tensor structure $\psi_{\bl,\bk}(x)=\psi_{l_1,k_1}(x_1) \cdots  \psi_{l_d,k_d}(x_d)$ and $\tilde{\psi}_{\bl,\bk}(x)=\tilde{\psi}_{l_1,k_1}(x_1) \cdots  \tilde{\psi}_{l_d,k_d}(x_d)$ of the Chui-Wang wavelets in~\eqref{eqdef:tensor_CW}. Since we want to sum over all $\bl\in\Z^d$ in~\eqref{decomp_CW}, let us put $\psi_{l_i,k_i}(x_i):=\tilde{\psi}_{l_i,k_i}(x_i):=0$
if $l_i<-1$. Then $\psi_{\bl,\bk}=\tilde{\psi}_{\bl,\bk}\equiv 0$ and $f_{\bl}\equiv0$ in case $\min_{i\in[d]}\{l_i\}<-1$.

Using decomposition~\eqref{decomp_CW}, we estimate with $u:=\min\{p,q,1\}$ and $\tilde{p}:=p/u$, $\tilde{q}:=q/u$, $\tilde{r}:=ru$,
\begin{align}
    \Big[ \eqref{fff1} \Big]^u 
    &\lesssim
    \Big( \sum_{\bj\in\N^{d}_{0}}  2^{|\bj|_1 rq} \Big\| \sum_{\bl\in\Z^d} \mathcal{R}^{e(\bj)}_{2}(f_{\bj+\bl}\circ\rho,2^{-\bj},\cdot) \Big\|^q_{L_p([-1,1]^d)}  \Big)^{1/\tilde{q}}  \notag\\
    &\le 
    \Big( \sum_{\bj\in\N^{d}_{0}}  2^{|\bj|_1 \tilde{r}\tilde{q}} \Big\|  \sum_{\bl\in\Z^d} |\mathcal{R}^{e(\bj)}_{2}(f_{\bj+\bl}\circ\rho,2^{-\bj},\cdot)|^u  \Big\|_{L_{\tilde{p}}([-1,1]^d)}^{\tilde{q}}  \Big)^{1/\tilde{q}}  \,. \label{norm-est0}
    \end{align}
    Since $\tilde{p},\tilde{q}\ge 1$, we may continue with Minkowski's inequality and arrive at
    \begin{align}
      \Big[ \eqref{fff1} \Big]^u 
    &\lesssim   \sum_{\bl\in\Z^d} \Big( \sum_{\bj\in\N^{d}_{0}}  2^{|\bj|_1 \tilde{r}\tilde{q}}   
     \Big\| |\mathcal{R}^{e(\bj)}_{2}(f_{\bj+\bl}\circ\rho,2^{-\bj},\cdot)|^u \Big\|_{L_{\tilde{p}}([-1,1]^d)}^{\tilde{q}} \Big)^{1/\tilde{q}}  \notag\\
      &= \sum_{\bl\in\Z^d} \Big(  \sum_{\bj\in\N^{d}_{0}}    2^{|\bj|_1 rq}   
     \Big\| \mathcal{R}^{e(\bj)}_{2}(f_{\bj+\bl}\circ\rho,2^{-\bj},\cdot) \Big\|_{L_{p}([-1,1]^d)}^{q} \Big)^{1/\tilde{q}}  \,.  \label{norm-est1}
\end{align}

\noindent
Putting $\lambda_{\bl,\bk}:=2^{\bl_+} \langle f,\tilde{\psi}_{\bl,\bk} \rangle_{\R^d}$ for $\bl\in\Z^d$,
where $\lambda_{\bl,\bk}=0$ if $\min_{i\in[d]}\{l_i\}<-1$,
we can further estimate
\begin{align}\label{est5}
    \mathcal{R}^{e(\bj)}_{2}(f_{\bj+\bl}\circ\rho,2^{-\bj},\cdot)
    \lesssim \sum_{\bk\in\Z^d} |\lambda_{\bj+\bl,\bk}| \mathcal{R}^{e(\bj)}_{2}(\psi_{\bj+\bl,\bk}\circ\rho,2^{-\bj},\cdot) 
\end{align}
and, due to the tensor structure of $\psi_{\bl,\bk}$, it holds
\begin{align*}
  \mathcal{R}^{e(\bj)}_{2}&(\psi_{\bj+\bl,\bk}\circ\rho,2^{-\bj},x) =  \int_{[-1,1]^d}  \big| \Delta^{2,e(\bj)}_{(h_12^{-j_1},\ldots,h_d2^{-j_d})}(\psi_{\bj+\bl,\bk}\circ\rho,x)\big|  \,\dint\bh \\
  &= \prod_{i\in e(\bj)} \LaTeXunderbrace{\int\limits_{-1}^{1}  \big| \Delta^{2}_{h_i2^{-j_i}}(\psi_{j_i+l_i,k_i}\circ\rho_1,x_i)\big|  \,\dint h_i}_{F(x_i)}  \cdot
  \prod_{i\in e_0(\bj)} \LaTeXunderbrace{\int\limits_{-1}^{1}  \big| \psi_{l_i,k_i}\circ\rho_1(x_i)\big|  \,\dint h_i}_{G(x_i)}  \,,
\end{align*}
where $e_0(\bj):=[d]\backslash e(\bj)$ and $\rho_1$ is the function~\eqref{eqdef:aux_rho_1}.
The functions $F,\,G:\R\to[0,\infty)$ 
are univariate and $2$-periodic.
Let us put 
\begin{align*}
\text{I}_{j_i,k_i,l_i}(x_i):=\begin{cases} F(x_i) \quad&,\quad j_i>0 \,, \\
G(x_i)&,\quad j_i=0 \,. \end{cases}
\end{align*}
We can then write conveniently 
\begin{align*}
  \mathcal{R}^{e(\bj)}_{2}(\psi_{\bj+\bl,\bk}\circ\rho,2^{-\bj},x) = \prod_{i\in[d]} \text{I}_{j_i,k_i,l_i}(x_i)  \,.
\end{align*} 
We next split 
\begin{align*}
 \text{I}_{j_i,k_i,l_i}(\cdot) = \text{I}^{[0]}_{j_i,k_i,l_i}(\cdot) + \text{I}^{[1]}_{j_i,k_i,l_i}(\cdot)   
\end{align*}
with the summands
\begin{align}\label{eqdef:aux_summands}
\text{I}^{[0]}_{j_i,k_i,l_i} :=  \text{I}_{j_i,k_i,l_i} \cdot \chi_{S_{j_i,k_i,l_i}}  \quad\text{and}\quad  \text{I}^{[1]}_{j_i,k_i,l_i} :=  \text{I}_{j_i,k_i,l_i} - \text{I}^{[0]}_{j_i,k_i,l_i}  \,,
\end{align}
where $\chi_{S_{j_i,k_i,l_i}}$ is the characteristic function of
\begin{align*}
S_{j_i,k_i,l_i}:= \supp(\psi_{j_i+l_i,k_i}\circ\rho_1) \,.
\end{align*}
Then we have for $x=(x_1,\ldots,x_d)\in\R^d$
\begin{align}\label{est6}
  \mathcal{R}^{e(\bj)}_{2}(\psi_{\bj+\bl,\bk}\circ\rho,2^{-\bj},x) = \prod_{i\in[d]} \Big( \text{I}^{[0]}_{j_i,k_i,l_i}(x_i) +  \text{I}^{[1]}_{j_i,k_i,l_i}(x_i) \Big) = \sum\limits_{\beps\in\{0,1\}^d}   \text{I}^{[\beps]}_{\bj,\bk,\bl}(x) 
\end{align}
with functions
\begin{align*}
    \text{I}^{[\beps]}_{\bj,\bk,\bl}(x) :=  \prod_{i\in[d]} \text{I}^{[\varepsilon_i]}_{j_i,k_i,l_i}(x_i) \quad\text{for}\quad \beps=(\varepsilon_1,\ldots,\varepsilon_d)\in \{0,1\}^d \,.
\end{align*}

\noindent
From \eqref{est5} and \eqref{est6} we get for fixed $\bj\in\N_{0}^{d}$ and $\bl\in\Z^d$:
\begin{align*}
\Big\|  \mathcal{R}^{e(\bj)}_{2}(f_{\bj+\bl}\circ\rho,2^{-\bj},\cdot)
     \Big\|_{L_{p}([-1,1]^d)}  
\lesssim \sum\limits_{\beps\in\{0,1\}^d}   \Big\|  \sum_{\bk\in\Z^d} |\lambda_{\bj+\bl,\bk}| \text{I}^{[\beps]}_{\bj,\bk,\bl}(\cdot)\Big\|_{L_{p}([-1,1]^d)} \,.
\end{align*}

\noindent
Let us next analyze some properties of the functions $\text{I}^{[\varepsilon_i]}_{j_i,k_i,l_i}$, which are piecewise linear and $2$-periodic. The periodicity allows to restrict the investigation to the period interval $[-1,1]$.
\begin{itemize}[leftmargin=*]
\item
We first prove the support properties
\begin{align}
\big|\supp(\text{I}^{[0]}_{j_i,k_i,l_i})\cap [-1,1]\big|
 &\lesssim 2^{-j_i-l_i^+} \,, \label{111} \\
\big|\supp(\text{I}^{[1]}_{j_i,k_i,l_i})\cap [-1,1]\big| &\lesssim
2^{-j_i} \,. \label{222}
\end{align}

\begin{proof}[Proof of~\eqref{111} and~\eqref{222}]\renewcommand{\qed}{$\square$}
To verify~\eqref{111}, we use that, by definition, the support of $\text{I}^{[0]}_{j_i,k_i,l_i}$ satisfies
\begin{align*}
\big|\supp(\text{I}^{[0]}_{j_i,k_i,l_i})\cap [-1,1]\big| \le \big| S_{j_i,k_i,l_i}\cap [-1,1] \big| \lesssim
\min\{ 2^{-j_i-l_i} , 1\} = 2^{-(j_i+l_i)_+} \,.
\end{align*}
In case $l_i\ge0$ or if $j_i=0$ this directly implies~\eqref{111}, since then $(j_i+l_i)_+=j_i+l_i^+$.
It remains the case, where both $l_i<0$ and $j_i>0$.
Assuming $j_i>0$, the only contributions to the support of $\text{I}_{j_i,k_i,l_i}$ come from the kink points $k^{(m)}_{j_i+l_i,k_i}$ of $\psi_{j_i+l_i,k_i}\circ\rho_1$.
If $\dist(x_i,k^{(m)}_{j_i+l_i,k_i})>2\cdot2^{-j_i}$ for all kink points $k^{(m)}_{j_i+l_i,k_i}$ 
then clearly 
 \begin{align}\label{kink_points}
 \text{I}_{j_i,k_i,l_i}(x_i)  = \int\limits_{-1}^{1}  \big| \Delta^{2}_{h_i2^{-j_i}}(\psi_{j_i+l_i,k_i}\circ\rho_1,x_i)\big|  \,\dint h_i = 0 \,.
\end{align}
Since $\psi_{j_i+l_i,k_i}$ has either $3$ or $7$ kink points (see~\eqref{eq:explicit_CW}), there are further at most $28$ kink points of $\psi_{j_i+l_i,k_i}\circ\rho_1$ in $[-2,2]$, which is the maximal range that might be relevant for the value of $\text{I}_{j_i,k_i,l_i}(x_i)$ if $x_i\in[-1,1]$ and $j_i>0$. Hence, we obtain for $j_i>0$
\begin{align*}
\big|\supp(\text{I}_{j_i,k_i,l_i})\cap [-1,1]\big| \lesssim
2^{-j_i} \,.
\end{align*}
Since $j_i=j_i+l_i^+$ if $l_i<0$, this settles the proof of~\eqref{111}.

The last estimate also
shows~\eqref{222} for $j_i>0$. For the remaining case $j_i=0$ of~\eqref{222} simply note that 
\begin{gather}\label{trivial}
  \text{I}^{[1]}_{0,k_i,l_i} \equiv 0 \,.  \;\text{\qedhere} 
\end{gather}
\end{proof}
\item 
Next, we prove the upper bounds
\begin{align}
\big\|\text{I}^{[0]}_{j_i,k_i,l_i}\big\|_{L_{\infty}(\R)}
 &\lesssim 2^{l_i^-} \,, \label{333} \\
\big\| \text{I}^{[1]}_{j_i,k_i,l_i} \big\|_{L_{\infty}(\R)} &\lesssim
2^{-|l_i|} \,. \label{444}
\end{align}

\begin{proof}[Proof of~\eqref{333} and~\eqref{444}]\renewcommand{\qed}{$\square$}
We begin with the case $j_i=0$ of~\eqref{333} and estimate
\begin{align*}
  \text{I}^{[0]}_{0,k_i,l_i}(x_i) \le \int\limits_{-1}^{1}  \big| \psi_{l_i,k_i}\circ\rho_1(x_i)\big|  \,\dint h_i \lesssim |\psi_{l_i,k_i}\circ\rho_1(x_i)|  \lesssim \min\{2^{l_i}, 1\} = 2^{l^-_{i}}\,,
\end{align*}
where we use $\psi_{l_i,k_i}=0$ for $l_i<-1$ and $|\psi_{l_i,k_i}\circ\rho_1(x_i)| \lesssim 1$. For proving~\eqref{333} in case $j_i>0$, we record on the one hand
\begin{align}\label{aux1}
  \text{I}_{j_i,k_i,l_i}(x_i) =  \int\limits_{-1}^{1}  \big| \Delta^{2}_{h_i2^{-j_i}}(\psi_{j_i+l_i,k_i}\circ\rho_1,x_i)\big|  \,\dint h_i \lesssim 1 \,.
\end{align}
Hereby, the previous estimate~\eqref{aux1} holds true since, according to~\eqref{rel:Lipschitz1}, 
\begin{align}\label{roman3}
|\Delta_{h_i2^{-j_i}}^2(\psi_{j_i+l_i,k_i}\circ\rho_1,x_i)| \le 4  \|\psi_{j_i+l_i,k_i}\circ\rho_1\|_{L_{\infty}(\R^d)} \le 4  \|\psi_{0,0}\|_{L_{\infty}(\R^d)} \lesssim 1 \,.
\end{align}

On the other hand, we get from the Lipschitz properties of $\psi_{j_i+l_i,k_i}$ and $\rho_1$ (see~\eqref{rel:Lipschitz1} and~\eqref{rel:Lipschitz2}) the estimate
\begin{align*}
    |\Delta_{h_i2^{-j_i}}^2(\psi_{j_i+l_i,k_i}\circ\rho_1,x_i)| \lesssim L(\psi_{j_i+l_i,k_i}\circ\rho_1) |h_i| 2^{-j_i} \le L(\psi_{j_i+l_i,k_i}) |h_i| 2^{-j_i} =   L(\psi_{0,0})  |h_i|2^{l_i} \lesssim
       |h_i|2^{l_i}   
\end{align*}
and thus, if $j_i>0$,
\begin{align}\label{roman4}
    \text{I}_{j_i,k_i,l_i}(x_i) = \int\limits_{-1}^{1}  \big| \Delta^{2}_{h_i2^{-j_i}}(\psi_{j_i+l_i,k_i}\circ\rho_1,x_i)\big|  \,\dint h_i \lesssim  2^{l_i} \,.
\end{align}
Together with~\eqref{aux1}, the last estimate~\eqref{roman4} in particular implies
\begin{align*}
     \text{I}^{[0]}_{j_i,k_i,l_i}(x_i)  \lesssim \min\{2^{l_i}, 1\} = 2^{l^-_{i}} 
     \quad\text{for }j_i>0 \,,
\end{align*}
finishing the proof of~\eqref{333}.

At last, we turn to the proof of~\eqref{444} and recall~\eqref{trivial}
to see that~\eqref{444} is trivially fulfilled in case $j_i=0$.
In order to prove~\eqref{444} in case $j_i>0$ we use~\eqref{roman3} and the fact that, if $x_i\notin S_{j_i,k_i,l_i} =\supp\big(\psi_{j_i+l_i,k_i}\circ\rho_1\big)$,
\begin{align*}
 |\supp(h_i\mapsto\Delta^{2}_{h_i2^{-j_i}}(\psi_{j_i+l_i,k_i}\circ\rho_1,x_i))\cap[-1,1]| &\lesssim
     2^{-l_i} 
\end{align*}
to deduce
\begin{align*}
  \text{I}_{j_i,k_i,l_i}(x_i) =  \int\limits_{-1}^{1}  \big| \Delta^{2}_{h_i2^{-j_i}}(\psi_{j_i+l_i,k_i}\circ\rho_1,x_i)\big|  \,\dint h_i \lesssim 2^{-l_i} 
\end{align*}
in case $x_i\notin S_{j_i,k_i,l_i}$. Combined with estimate~\eqref{roman4}, estimate~\eqref{444} is obtained. \qedhere 
\end{proof}
\end{itemize}

\noindent
Altogether, our estimates~\eqref{111}, \eqref{222}, \eqref{333}, \eqref{444} show that the multivariate functions $\text{I}^{[\beps]}_{\bj,\bk,\bl}$ satisfy
\begin{align}\label{final_estimate_I}
\big| \supp(\text{I}^{[\beps]}_{\bj,\bk,\bl}) \cap [-1,1]^d \big| \lesssim 2^{-\bj-\bl_{+}} A(\beps,\bl)  \qquad\text{and}\qquad
\big\| \text{I}^{[\beps]}_{\bj,\bk,\bl} \big\|_{L_{\infty}(\R^d)} \lesssim 2^{\bl_{-}} A(\beps,\bl)^{-1}  
\end{align}
if we define 
\begin{align*}
A(\beps,\bl):=\prod_{i\in e(\beps)} 2^{l_i^+} \quad\text{with}\quad e(\beps)=\{ i : \varepsilon_i\neq 0 \} \,.
\end{align*}

\noindent
Let us next count the number of local overlaps of the sum $\sum_{\bk\in\Z^d} |\lambda_{\bj+\bl,\bk}| \text{I}^{[\beps]}_{\bj,\bk,\bl}(x)$, i.e.\ its number of non-zero summands when $x\in\R^d$ is fixed. Due to the $2$-periodicity of the functions $\text{I}^{[\beps]}_{\bj,\bk,\bl}$ it hereby suffices to consider $x\in[-1,1]^d$. Further, we only aim for an upper bound. It is therefore enough to consider the values 
\[
O^{[\beps]}_{\bj,\bl}(x) := \big|\big\{ \bk\in\Z^d ~:~ \text{I}^{[\beps]}_{\bj,\bk,\bl}(x) \neq 0 \big\}\big| \,.
\]
The supports of the functions $\text{I}^{[\beps]}_{\bj,\bk,\bl}$ will subsequently be denoted by
\begin{align*}
T^{[\beps]}_{\bj,\bk,\bl} := \supp\big( \text{I}^{[\beps]}_{\bj,\bk,\bl} \big) = \supp\big( \text{I}^{[\varepsilon_1]}_{j_1,k_1,l_1} \big) \times \ldots \times \supp\big( \text{I}^{[\varepsilon_d]}_{j_d,k_d,l_d} \big) \,.
\end{align*}

\noindent
Moving in $i$-direction, $k_{i}\rightarrow k_{i}+1$, the wavelets
$\psi_{j_i+l_i,k_i}$, which are nontrivial only for $j_i+l_i\ge -1$, are shifted by a length of
$2^{-(j_i+l_i)_+}$. Further, only if their support intersects with $[0,1]$
the functions $\text{I}^{[\varepsilon_i]}_{j_i,k_i,l_i}$ do not vanish. Hence, if we let $T^{[\varepsilon_i]}_{j_i,k_i,l_i} := \supp\big( \text{I}^{[\varepsilon_i]}_{j_i,k_i,l_i} \big) $
for $i\in[d]$, we have $T^{[\varepsilon_i]}_{j_i,k_i,l_i} \neq \emptyset $ only for $k_i$ in the range 
\begin{align}\label{range_ki}
k_i \in \{ -3, -1, 0, \ldots, 2^{(j_i+l_i)_+} \} \,.
\end{align}
For this observation, it is also necessary to note that
$\supp\big(\psi_{0,0}\big)=[0,3]$ and $\supp\big(\psi_{-1,0}\big)=[0,2]$ for the generating wavelets $\psi_{0,0}$ and $\psi_{-1,0}$ (see~\eqref{eq:explicit_CW}).

The range~\eqref{range_ki} of relevant $k_i$ gives an initial bound on the maximal number of overlaps. However, locally, we can bound more sharply due to the shrinking support of the wavelets at rising scales. 
Let us first consider the case $\varepsilon_i=0$, 
when $\text{T}^{[0]}_{j_i,k_i,l_i}\subset S_{j_i,k_i,l_i}$ (see~\eqref{eqdef:aux_summands}) and $S_{j_i,k_i,l_i}\cap [-1,1]$ is contained in at most $2$ intervals of length  
$3\cdot 2^{-(j_i+l_i)_+}$ since $|\supp(\psi_{j_i+l_i,k_i})|\le 3\cdot 2^{-(j_i+l_i)_+}$. Indeed, the fact that here the size of the support
$\mathcal{O}(2^{-(j_i+l_i)_+})$ matches the size of the shifts of the wavelets results in just $\mathcal{O}(1)$ local overlaps of the functions $\text{I}^{[0]}_{j_i,k_i,l_i}$, $k_i\in\Z$.
In case $\varepsilon_i=1$, we have $T^{[1]}_{0,k_i,l_i}=\emptyset$
according to~\eqref{trivial} and hence no overlaps if $j_i=0$. 
If $j_i>0$, we know from \eqref{kink_points} that
the set $T^{[1]}_{j_i,k_i,l_i}\cap[-1,1]$ is contained in at most $28$ intervals of length $2^{1-j_i}$ 
around the kink points $k^{(m)}_{j_i+l_i,k_i}$ of $\psi_{j_i+l_i,k_i}\circ\rho_1$ in $[-2,2]$. 
As a result, relating the length of these intervals to the size $2^{-(j_i+l_i)_+}$ of the shifts, we obtain
in this case the upper bound $\mathcal{O}(2^{l_i^+})$ for the number of overlaps of the functions $\text{I}^{[1]}_{j_i,k_i,l_i}$, $k_i\in\Z$.

Altogether, our analysis can be summarized in the estimate
\begin{align}\label{final_estimate_II}
O^{[\beps]}_{\bj,\bl}(x) 
\lesssim A(\beps,\bl) \,, 
\end{align}
where the implicit constant is independent of all parameters $\bj\in\N_{0}^d$,
$\bl\in\Z^d$, $\beps\in\{0,1\}^d$, and $x\in\R^d$.
Building upon the estimate~\eqref{final_estimate_II}, we obtain 
\begin{align*}
    \Big( \sum_{\bk\in\Z^d} |\lambda_{\bj+\bl,\bk}| \text{I}^{[\beps]}_{\bj,\bk,\bl}(x) \Big)^p
    \lesssim A(\beps,\bl)^{(p-1)_+}  \sum_{\bk\in\Z^d}   |\lambda_{\bj+\bl,\bk}|^p   \Big(\text{I}^{[\beps]}_{\bj,\bk,\bl}(x) \Big)^p \,,
\end{align*}
which, in view of~\eqref{final_estimate_I}, yields
\begin{align*}
 \Big\|  \sum_{\bk\in\Z^d} |\lambda_{\bj+\bl,\bk}| \text{I}^{[\beps]}_{\bj,\bk,\bl}\Big\|_{L_{p}([-1,1]^d)} 
 &\lesssim 	A(\beps,\bl)^{(1-1/p)_+}  2^{l_{-}} A(\beps,\bl)^{-1} \Big( \sum_{\bk\in\Z^d} |\lambda_{\bj+\bl,\bk}|^p
\|\chi_{T^{[\beps]}_{\bj,\bk,\bl}}\|_{L_1([-1,1]^d)}\Big)^{1/p} \\
 &\lesssim 	A(\beps,\bl)^{(1-1/p)_+}  2^{l_{-}}  2^{(-\bj-\bl_+)/p} A(\beps,\bl)^{1/p-1} \Big( \sum_{\bk\in\Z^d} |\lambda_{\bj+\bl,\bk}|^p \Big)^{1/p} \,.
\end{align*}
Now note that $\sigma_p=(\tfrac{1}{p}-1)_+=(1-\tfrac{1}{p})_+ + \tfrac{1}{p}-1$ and put
\begin{align*}
	B_p(\bl):=\prod_{i\in[d]} B_p(l_i) \quad\text{with}\quad
	B_p(l_i):= \begin{cases} 2^{l_i} \quad &,\, l_i<0 \,, \\ 2^{-l_i/p} &,\, l_i\ge 0 \,. \end{cases}
\end{align*}
Then we can write
\begin{align*}
\Big\|  \sum_{\bk\in\Z^d} |\lambda_{\bj+\bl,\bk}| \text{I}^{[\beps]}_{\bj,\bk,\bl}(x)\Big\|_{L_{p}([-1,1]^d)}
\lesssim 	A(\beps,\bl)^{\sigma_p} B_p(\bl) 2^{-\bj/p}  \Big( \sum_{\bk\in\Z^d} |\lambda_{\bj+\bl,\bk}|^p \Big)^{1/p} \,.
\end{align*} 
Since $1\le A(\beps,\bl) \le 2^{\bl_+}$,
we finally arrive at 
\begin{align*}
\Big\|   \sum_{\bk\in\Z^d} |\lambda_{\bj+\bl,\bk}| \text{I}^{[\beps]}_{\bj,\bk,\bl}
     \Big\|_{L_{p}([-1,1]^d)}  
 \lesssim 	2^{\sigma_p\bl_+} B_p(\bl) 2^{-\bj/p}  \Big( \sum_{\bk\in\Z^d} |\lambda_{\bj+\bl,\bk}|^p \Big)^{1/p} \,,
\end{align*}
where the right-hand side is independent of $\beps$. Altogether, we thus obtain
\begin{align}\label{est:central3}
\Big\|  \mathcal{R}^{e(\bj)}_{2}(f_{\bj+\bl}\circ\rho,2^{-\bj},\cdot)
     \Big\|_{L_{p}([-1,1]^d)}  
 \lesssim 	2^{\sigma_p\bl_+} B_p(\bl) 2^{-\bj/p}  \Big( \sum_{\bk\in\Z^d} |\lambda_{\bj+\bl,\bk}|^p \Big)^{1/p} \,.
\end{align}

\noindent
We plug this estimate into~\eqref{norm-est1}. This yields
\begin{align*}
    \Big[ \eqref{fff1} \Big]^u &\lesssim  \sum_{\bl\in\Z^d} \Big( \sum_{\bj\in\N^{d}_{0}} \Big( 2^{\sigma_p\bl_+} B_p(\bl) 2^{-\bl(r-1/p)}  2^{|\bj+\bl|_1 (r-1/p)}      \Big( \sum_{\bk\in\Z^d} |\lambda_{\bj+\bl,\bk}|^p \Big)^{1/p} \Big)^{q} \Big)^{u/q}    \\
    &\lesssim   \sum_{\bl\in\Z^d} \Big( B_p(\bl) 2^{\sigma_p\bl_+} 2^{-\bl (r-1/p)} \Big)^u \cdot  \Big( \sum_{\bj\in\N^{d}_{-1}}  2^{|\bj|_1 (r-1/p)q}   \Big( \sum_{\bk\in\Z^d} |\lambda_{\bj,\bk} \big|^{p}   \Big)^{q/p} \Big)^{u/q}  \\
 &\lesssim \Big( \sum\limits_{\bj\in
\N_{-1}^d}2^{|\bj|_1(r-1/p)q}\Big(\sum\limits_{\bk\in\Z^d}|\lambda_{\bj,\bk}|^p\Big)^{q/p} \Big)^{u/q} \,,  \end{align*}
since for $\sigma_p<r<1+\tfrac{1}{p}$ we get  
\begin{align*}
     \sum_{\bl\in\Z^d} \Big( B_p(\bl) 2^{\sigma_p\bl_+} 2^{-\bl(r-1/p)} \Big)^u = \sum_{\bl\in\Z^d} \Big( \prod_{i\in[d]} 2^{l^{-}_{i}(1+1/p-r)} 2^{-l^{+}_{i}(r-\sigma_p)} \Big)^u   < \infty \,.
\end{align*}
Hence, we have $\|\mathcal{P}(f)\|_{S^r_{p,q}B(\T^d)} \lesssim \| \{\lambda_{\bl,\bk}\}_{\bl,\bk} \|_{s^r_{p,q}b(\Gamma^d)}$. Invoking Proposition~\ref{prop:analysis_CW} from Section~\ref{sec:CW_char} then shows~\eqref{end_point} in case $r<2$. The proof of the first statement
of Theorem~\ref{thm:periodization} is thus finished.

To prove the second statement, we distinguish between the cases $\tfrac{1}{3}<p<1$
and $1<p\le\infty$. Assuming $\tfrac{1}{3}<p<1$, 
note that in this range $r=2$ and $\sigma_p<2<1+\tfrac{1}{p}$.
Hence, we obtain as above
$\|\mathcal{P}(f)\|_{S^r_{p,\infty}B(\T^d)} \lesssim \| \{\lambda_{\bl,\bk}\}_{\bl,\bk} \|_{s^r_{p,\infty}b(\Gamma^d)}$. Lemma~\ref{lem:analysis_CW} then yields the result in this case.
Assuming $1<p\le\infty$, we need to show 
\begin{align*}
 \|\mathcal{P}(f)\|_{S^{r}_{p,\infty}B(\T^d)} \lesssim \|f\|_{S^{r}_{p,1}B(\R^d)}  \quad\text{with $r=1+\tfrac{1}{p}$}\,.
\end{align*}
Here, we argue as before until estimate~\eqref{norm-est0}. From there, we get
\begin{align*}
 \|\mathcal{P}(f)\|_{S^{r}_{p,\infty}B(\T^d)} \lesssim
     \sup_{\bj\in\N^{d}_{0}} \sum_{\bl\in\Z^d}  2^{|\bj|_1r} \Big\|  \mathcal{R}^{e(\bj)}_{2}(f_{\bj+\bl}\circ\rho,2^{-\bj},\cdot) \Big\|_{L_{p}([-1,1]^d)}   \,.
\end{align*}
Plugging in~\eqref{est:central3} with $\sigma_p=0$ and $r=1+\tfrac{1}{p}$, the right-hand side can be further bounded from above by
\begin{align*}
     \sup_{\bj\in\N^{d}_{0}} \sum_{\bl\in\Z^d}   B_p(\bl) 2^{-\bl}  2^{|\bj+\bl|_1}      \Big( \sum_{\bk\in\Z^d} |\lambda_{\bj+\bl,\bk}|^p \Big)^{1/p} 
     \lesssim
     \sup_{\bj\in\N^{d}_{0}} \sum_{\bl\in\Z^d}  2^{|\bj+\bl|_1}      \Big( \sum_{\bk\in\Z^d} |\lambda_{\bj+\bl,\bk}|^p \Big)^{1/p} =  \| \{\lambda_{\bl,\bk}\}_{\bl,\bk} \|_{s^{1+1/p}_{p,1}b(\Gamma^d)}\,.
\end{align*}
Note that $B_p(\bl)\le B_\infty(\bl)=2^{\bl_-}$ and therefore $\max_{\bl\in\Z^d} B_p(\bl) 2^{-\bl}  =1$.
Taking Proposition~\ref{prop:analysis_CW} into account, the proof of this case is also finished.
\hfill\qed


\section{Chui-Wang characterization of $S^{r}_{p,q}B(\R^{d})$}
\label{sec:CW_char}

Let $\{\psi_{\bj,\bk}\}_{\bj,\bk}$ and $\{\tilde{\psi}_{\bj,\bk}\}_{\bj,\bk}$
be the primal and dual Chui-Wang wavelet systems from Subsection~\ref{ssec:CW_wavelets}. 
In Theorem~\ref{thm:CW_characterization} below, we give 
characterizations of
functions $f\in S^{r}_{p,q}B(\R^{d})$ 
in terms of their Chui-Wang coefficients
\begin{align}\label{def:CWcoeff}
\lambda_{\bj,\bk}(f) := 2^{\bj_+} \dual{f,\psi_{\bj,\bk}} \quad\text{and}\quad 
\tilde{\lambda}_{\bj,\bk}(f) := 2^{\bj_+}\dual{f,\tilde{\psi}_{\bj,\bk}} \,,\quad\text{where }\; (\bj,\bk)\in \Gamma^d=\N_{-1}^d\times \Z^d \,.
\end{align}
The product $\dual{\cdot,\cdot}$ is hereby an extension of the usual integral product $\langle\cdot,\cdot\rangle_{\R^d}$ to the dual pairing
$\mathscr{B}\times\tilde{\mathscr{B}}$. The precise definition is given in~\eqref{eqdef:gen_dual_prod} and~\eqref{def:BB}.

A univariate version of~Theorem~\ref{thm:CW_characterization}, which includes the Triebel-Lizorkin case, is due to Ullrich and Derevianko~\cite[Thm.~5.1]{DerUll19}. 
However, the statement there concerning strong convergence in $B^{r-\varepsilon}_{p,q}(\R)$ in the boundary situation $\max\{p,q\}=\infty$ is wrong if $p=\infty$. At present, this case is lacking a proper treatment.
Our theorem 
generalizes the Besov case of~\cite[Thm.~5.1]{DerUll19} to the multivariate dominating mixed setting
and now provides a correct statement.

\begin{theorem}\label{thm:CW_characterization}
Let $0<p,q\le\infty$ and $f\in S^{r}_{p,q}B(\R^{d})$ with $r\in\R$ in the range
\begin{align*}
\frac{1}{p}-2 < r < \min\Big\{  \frac{1}{p}+1, 2 \Big\} \,.
\end{align*}
Then we have the equivalence of (quasi-)norms
\begin{align*}
    \|f\|_{S^{r}_{p,q}B(\R^{d})} \;\asymp\;  \big\| \big\{\lambda_{\bj,\bk}(f) \big\}_{\bj,\bk} \big\|_{s^{r}_{p,q}b(\Gamma^d)}  \;\asymp\;   \big\|  \big\{ \tilde{\lambda}_{\bj,\bk}(f) \big\}_{\bj,\bk} \big\|_{s^{r}_{p,q}b(\Gamma^d)} \,.
\end{align*}
Further, $f$ can be expanded in the form
\[
 f= \sum_{(\bj,\bk)\in\Gamma^d} \tilde{\lambda}_{\bj,\bk}(f)\psi_{\bj,\bk} = \sum_{(\bj,\bk)\in\Gamma^d} \lambda_{\bj,\bk}(f) \tilde{\psi}_{\bj,\bk}
\]
with unique expansion coefficients from $s^{r}_{p,q}b(\Gamma^d)$. In general, one has weak convergence with respect to the conjugate space $S^{r'}_{p',q'}B(\R^{d})$ (see Definition~\ref{def:conj_space}). If $\max\{p,q\}<\infty$, one has unconditional convergence in $S^{r}_{p,q}B(\R^{d})$. 
\end{theorem}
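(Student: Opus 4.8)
The plan is to prove Theorem~\ref{thm:CW_characterization} by splitting it into two one-sided estimates — an \emph{analysis} bound and a \emph{synthesis} bound — which, together with the biorthogonality relation~\eqref{rel_biortho_multi}, yield simultaneously the two norm equivalences, the expansions, the uniqueness of the coefficients, and the stated modes of convergence. Concretely, I would establish (i) that the analysis map $f\mapsto\{\lambda_{\bj,\bk}(f)\}_{\bj,\bk}$ of~\eqref{def:CWcoeff} is bounded from $S^{r}_{p,q}B(\R^{d})$ into $s^{r}_{p,q}b(\Gamma^d)$, and likewise for $\{\tilde\lambda_{\bj,\bk}(f)\}_{\bj,\bk}$; and (ii) that the synthesis map $\{\mu_{\bj,\bk}\}_{\bj,\bk}\mapsto\sum_{(\bj,\bk)\in\Gamma^d}\mu_{\bj,\bk}\psi_{\bj,\bk}$, and the analogous one built from $\tilde{\psi}_{\bj,\bk}$, is bounded from $s^{r}_{p,q}b(\Gamma^d)$ into $S^{r}_{p,q}B(\R^{d})$. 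Pairing a convergent synthesis series against $\tilde{\psi}_{\bj',\bk'}$ (resp.\ $\psi_{\bj',\bk'}$) and invoking~\eqref{rel_biortho_multi} then shows that analysis$\,\circ\,$synthesis is the identity on the sequence space, which gives uniqueness; combined with the already known $\mathcal{S}^\prime(\R^d)$-convergence of the expansion to $f$ it gives that synthesis$\,\circ\,$analysis is the identity on the function space. The equivalence of the two sequence norms is then automatic (both equal $\|f\|_{S^{r}_{p,q}B(\R^{d})}$ up to constants), but can also be seen directly from~\eqref{dualCW_representation} and~\eqref{eq:coeff_decay}, since summation against $\{a^{(\bl)}_{\bn}\}_{\bn}$ is a bounded operator on $\ell_p$ at each fixed scale. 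The restrictions on $r$ enter structurally: the two vanishing moments of $\psi_{0,0}$ (see~\eqref{eq:explicit_CW}) force the upper bound $r<2$, whereas the piecewise-linear, compactly supported, order-$2$ spline structure of the generators — Lipschitz regularity on the one side, $L_p$-boundedness of the dilates of $N_2$ on the other — produces $\tfrac{1}{p}-2<r<\tfrac{1}{p}+1$.

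For the analysis bound I would decompose $f=\sum_{\bj'\in\nd}f_{\bj'}$ into its Littlewood--Paley blocks as in Definition~\ref{def:domix_Besov_Rd} and estimate $\langle f_{\bj'},\psi_{\bj,\bk}\rangle$ coordinate by coordinate: in the directions $i$ with $j'_i\le j_i$ the two vanishing moments of $\psi_{0,0}$ in that variable annihilate the low-degree Taylor part of $f_{\bj'}$ and yield a gain $2^{-2(j_i-j'_i)}$ against the $L_p$-size of $f_{\bj'}$; in the directions with $j'_i>j_i$ the compact support and Lipschitz continuity of $\psi_{j_i,k_i}$ give a gain $2^{-(j'_i-j_i)}$. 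Tensorizing these produces an off-diagonal decay $\lesssim 2^{-\varepsilon|\bj-\bj'|_1}$ in the mixed-norm quantity governing $s^{r}_{p,q}b(\Gamma^d)$ (cf.\ the reformulation in Remark~\ref{rem:Besov_seq_spaces}), and summing over $\bj'$ via a discrete convolution/Hölder argument closes the estimate exactly when $\varepsilon>0$, i.e.\ in the claimed range. One technical point to dispatch first is that $\lambda_{\bj,\bk}(f)$ and $\tilde\lambda_{\bj,\bk}(f)$ are well defined for every $f$ in the space, including for $r\le\sigma_p$ where $f$ need not be a regular distribution and $\psi_{\bj,\bk}$ is merely a compactly supported Lipschitz function: this is the purpose of the extended pairing $\dual{\cdot,\cdot}$ on $\mathscr{B}\times\tilde{\mathscr{B}}$ from~\eqref{eqdef:gen_dual_prod}, and the whole analysis estimate has to be run through that pairing rather than through $\langle\cdot,\cdot\rangle_{\R^d}$.

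For the synthesis bound, given $\{\mu_{\bj,\bk}\}_{\bj,\bk}\in s^{r}_{p,q}b(\Gamma^d)$ I would group the series by scale, $g_{\bj}:=\sum_{\bk\in\Z^d}\mu_{\bj,\bk}\psi_{\bj,\bk}$, and use the bounded overlap of the supports of $\{\psi_{\bj,\bk}\}_{\bk}$ together with the $L_p$-normalization to obtain $\|g_{\bj}\|_{L_p(\R^d)}\lesssim 2^{-|\bj_+|_1/p}\big(\sum_{\bk}|\mu_{\bj,\bk}|^p\big)^{1/p}$, consistently with Remark~\ref{rem:Besov_seq_spaces}. Next I would estimate the Littlewood--Paley block $\Phi_{\bj'}\ast g=\sum_{\bj}\Phi_{\bj'}\ast g_{\bj}$, extracting decay in $|\bj-\bj'|_1$ by playing the vanishing moments of the Schwartz kernels $\Phi_{\bj'}$ against the spline regularity of $\psi$, and then sum in the $\ell_q$-direction to bound $\|g\|_{S^{r}_{p,q}B(\R^{d})}$ by $\|\{\mu_{\bj,\bk}\}_{\bj,\bk}\|_{s^{r}_{p,q}b(\Gamma^d)}$. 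The dual-wavelet synthesis reduces to this one through~\eqref{dualCW_representation} and the exponential decay~\eqref{eq:coeff_decay}. Convergence of the expansions is then read off from continuity of the synthesis map: when $\max\{p,q\}<\infty$ the finite partial sums form a Cauchy net in $s^{r}_{p,q}b(\Gamma^d)$ (the tails vanish by $\ell_p$- and $\ell_q$-completeness), hence their images converge in $S^{r}_{p,q}B(\R^{d})$; since the convergence in the sequence space is dominated, it is unconditional. The Hilbert case $p=q=2$ recalled just before Section~\ref{sec:CW_char} serves as the anchor/sanity check.

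The genuinely delicate regime — the one handled incorrectly in~\cite[Thm.~5.1]{DerUll19} for $p=\infty$ — is $\max\{p,q\}=\infty$, where $\mathcal{S}(\R^{d})$ is not norm-dense and the expansion simply cannot converge strongly. Here I would only assert, and prove, weak convergence against the conjugate space $S^{r'}_{p',q'}B(\R^{d})$ of Definition~\ref{def:conj_space}: testing a finite partial sum against an element $h$ of the conjugate space, rewrite the pairing as $\sum_{(\bj,\bk)}\mu_{\bj,\bk}\cdot(\text{dual Chui--Wang coefficient of }h)$ and pass to the limit using the discrete Hölder inequality of Lemma~\ref{lem:Hölder_discr}. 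Making this airtight requires: (a) fixing the correct extended pairing $\dual{\cdot,\cdot}$ on $\mathscr{B}\times\tilde{\mathscr{B}}$ so that both the coefficient functionals and the test pairing are simultaneously meaningful; (b) checking that the weak limit really equals $f$ — for which one pairs with $\tilde{\psi}_{\bj,\bk}$, resp.\ $\psi_{\bj,\bk}$, and uses~\eqref{rel_biortho_multi}; and (c) verifying uniqueness of the coefficients in this weak sense. I expect this boundary bookkeeping, and not the core atomic estimates, to be the main obstacle of the proof.
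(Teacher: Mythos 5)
Your proposal has the same overall architecture as the paper's proof — a one-sided \emph{analysis} bound (Proposition~\ref{prop:analysis_CW}, Corollary~\ref{cor:analysis_CW}), a one-sided \emph{synthesis} bound (Proposition~\ref{prop:synthesis_CW}, Corollary~\ref{cor:synthesis_CW}), biorthogonality~\eqref{rel_biortho_multi} on $L_2$ to glue the two into $S\circ\widetilde\Lambda=\operatorname{Id}$ and $\widetilde\Lambda\circ S=\operatorname{Id}$, the passage from primal to dual wavelets through~\eqref{dualCW_representation} and the exponential decay~\eqref{eq:coeff_decay}, and the extended pairing $\dual{\cdot,\cdot}$ on the reservoir/test pair~\eqref{def:BB} to make the coefficients and the weak convergence meaningful when $\max\{p,q\}=\infty$. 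Your synthesis sketch (group by scale, bounded overlap gives $\|g_{\bj}\|_{L_p}\lesssim 2^{-|\bj_+|_1/p}\big(\sum_\bk|\mu_{\bj,\bk}|^p\big)^{1/p}$, play the vanishing moments of the smooth kernel against the Lipschitz spline to get $|\bj-\bj'|_1$ off-diagonal decay, then $\ell_q$-sum) is essentially Part~(ii) of Proposition~\ref{prop:synthesis_CW}; your diagnosis of the boundary bookkeeping at $\max\{p,q\}=\infty$ matches Parts~(i) and~(iv). Your global explanation for the constraints on $r$ (two vanishing moments of $\psi_{0,0}$ give $r<2$, piecewise-linear regularity gives $r<1+\tfrac1p$) is correct, though you should note that the lower bound $r>\tfrac1p-2$ originates from the analysis side (via the Peetre exponent $a$ with $\tfrac1p<a<2+r$), not from the $L_p$-structure of the splines.

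The genuine gap is in your analysis sketch, and it is the part where your route diverges from the paper's. The paper does not pair $f_{\bj'}$ directly against $\psi_{\bj,\bk}$; it uses the discrete Calder{\'o}n reproducing formula~\eqref{def:Calderon_concrete} with one compactly supported local-means family $\{\Lambda_{\bl}\}$ and one band-limited family $\{\Phi_{\bl}\}$, and then estimates $|\lambda_{\bj,\bk}(f)|$ by $\alpha_{\bj,\bk}(f)$ in~\eqref{def:alpha_jk}. The entire point of that detour is the subsequent step~\eqref{est:Calderon}--\eqref{auxformula3}, where $|(\Phi_{\bj+\bl}\ast f)(y)|$ on the support of $\Lambda_{\bj+\bl}\ast\psi_{\bj,\bk}$ is replaced by the Peetre maximal function $P_{\bb,a}(\Phi_{\bj+\bl}\ast f)(x)$ evaluated at any $x\in Q_{\bj,\bk}$, and then Theorem~\ref{peetremax} is used to control the $\ell_p$-sum over $\bk$ (equivalently, the $L_p$-norm of $\sum_\bk|\alpha_{\bj,\bk}(f)|\chi_{\bj,\bk}$ via Remark~\ref{rem:Besov_seq_spaces}). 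Your sketch stops at pointwise coordinatewise decay estimates for a single coefficient $\langle f_{\bj'},\psi_{\bj,\bk}\rangle$ and then asserts that this "produces an off-diagonal decay in the mixed-norm quantity governing $s^{r}_{p,q}b(\Gamma^d)$"; that passage is precisely what the Peetre maximal inequality (or a Plancherel–Polya/Nikolskii or Fefferman–Stein type vector-valued maximal inequality) is needed for, since the region where $f_{\bj'}$ contributes to a given $\psi_{\bj,\bk}$ is at a mismatched scale and the naive local-$L_\infty$ bound does not recover the translation sum without such a tool. As written, the $\bk$-localization is silently assumed; if you fill it in with a Plancherel–Polya argument at each scale the route can be made to work, but it will then closely resemble the Peetre mechanism in Proposition~\ref{prop:analysis_CW}, and the exponent bookkeeping needed to recover the exact interval $\tfrac1p-2<r<2$ from the analysis side comes out of the relation $\tfrac1p<a<2+r$ there rather than from the decay exponents you quoted.
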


\begin{remark}
The notion of weak convergence with respect to the conjugate space is explained in Appendix~\ref{ssec:Appendix_duality}. 
In the parameter range of Theorem~\ref{thm:CW_characterization}, weak convergence in $S^{r}_{p,q}B(\R^{d})$ with respect to $S^{r'}_{p',q'}B(\R^{d})$ is a stronger notion of convergence than weak convergence in $\funpool$ with respect to $\testpool$ (a pair of conjugate spaces defined in~\eqref{def:BB} further down), which in turn is stronger than weak* convergence in $\mathcal{S}^\prime(\R^d)$.
\end{remark}

\noindent
Since $f$ in Theorem~\ref{thm:CW_characterization} can be distributional in $\mathcal{S}^\prime(\R^{d})$ we may not use the integral product $\langle  \cdot,  \cdot \rangle_{\R^d}$ in~\eqref{def:CWcoeff} to calculate the Chui-Wang coefficients.  
Let us hence now comment on the precise definition of the product $\dual{\cdot,\cdot}$.
It is a duality product on a suitable pair of conjugate spaces $\mathscr{B}$ 
and $\tilde{\mathscr{B}}$ (see Appendix~\ref{ssecA:Calderon}). The formula reads
\begin{align}\label{eqdef:gen_dual_prod}
\langle f,g \rangle_{\mathscr{B}\times\tilde{\mathscr{B}}} := \sum_{\bl\in\N_{0}^{d}} \langle  \Phi_{\bl}\ast f,
	\Lambda_{\bl}^{\boldsymbol{-}}\ast g \rangle_{\R^d} \qquad\text{for $f\in \mathscr{B}$\,, $g\in\tilde{\mathscr{B}}$}\,,
\end{align}
with $\Lambda_{\bl}^{\boldsymbol{-}}:=\Lambda_{\bl}(-\cdot)$ and certain kernel families $\{\Phi_{\bl}\}_{\bl}$ and 
$\{\Lambda_{\bl}\}_{\bl}$ in $\mathcal{S}(\R^{d})$. The details are given in Appendix~\ref{ssecA:Calderon}. Hereby we choose the family $\{\Lambda_{\bl}\}_{\bl}$ as local mean kernels with compact support on the spatial side and band-limited kernels $\{\Phi_{\bl}\}_{\bl}$.  
The corresponding Calder\'on reproducing formula (see Appendix~\ref{ssecA:Calderon} for more information) is the expansion
\begin{align}\label{def:Calderon_concrete}
	f=\sum_{\bl\in\N_{0}^{d}}  \Phi_{\bl}\ast\Lambda_{\bl}\ast f   \quad\text{weak*ly for $f\in \mathcal{S}^\prime(\R^{d})$}\,.  
\end{align}

\noindent
It remains to fix $\mathscr{B}$ and $\tilde{\mathscr{B}}$. In analogy to~\cite[eq.~(1.12)]{GSU22}, we choose
\begin{align}\label{def:BB}
\mathscr{B}:= S^{-2}_{\infty,1}B(\R^d)  \quad,\quad \tilde{\mathscr{B}}:=S^{2}_{1,\infty}B(\R^d) \,,  
\end{align}
the space $\mathscr{B}$ serving as our reservoir and $\tilde{\mathscr{B}}$ as our test space. 
This pair satisfies
\[
\mathcal{S}(\R^{d}) \hookrightarrow \tilde{\mathscr{B}} \hookrightarrow L_2(\R^{d}) \hookrightarrow \mathscr{B} \hookrightarrow \mathcal{S}^\prime(\R^{d}) 
\]
as well as
$\testpool \hookrightarrow S^{r'}_{p',q'}B(\R^{d})$ 
and $S^{r}_{p,q}B(\R^{d}) \hookrightarrow \funpool$ in the considered parameter range. Furthermore, the Chui-Wang systems $\{\psi_{\bj,\bk}\}_{\bj,\bk}$ and $\{\tilde{\psi}_{\bj,\bk}\}_{\bj,\bk}$ are contained in $\tilde{\mathscr{B}}$ so that the coefficients in~\eqref{def:CWcoeff} are well-defined.

Let us remark that
$\funpool$ is not identical to the dual space $\testpool^\prime$.
Since $\mathcal{S}(\R^{d})$ is not dense in $\testpool$, its dual $\testpool^\prime$ is in fact not even a subspace of $\mathcal{S}^\prime(\R^{d})$.
This is the case, however, if we use the closure of $\mathcal{S}(\R^{d})$ in $\testpool$, denoted by $\testpool_0$, as test space. 
Its dual $\testpool_0^\prime$ is then a subspace of 
$\mathcal{S}^\prime(\R^{d})$ and one can show the identification
\begin{align*}
   \funpool \cong \testpool_0^\prime   \,.
\end{align*}
The product $\dual{\cdot,\cdot}$ hence truly extends the duality product $\langle\cdot,\cdot\rangle_{\testpool_0^\prime\times\testpool_0}$ to a larger test space.

After these preliminary considerations, 
let us turn back to Theorem~\ref{thm:CW_characterization}. Its proof relies on two propositions, Proposition~\ref{prop:analysis_CW} and Proposition~\ref{prop:synthesis_CW},
and their dual statements, Corollary~\ref{cor:analysis_CW} and  Corollary~\ref{cor:synthesis_CW}.
We start with Proposition~\ref{prop:analysis_CW}. It gives an estimate for the primal Chui-Wang coefficients.

\begin{prop}\label{prop:analysis_CW}
Let $0<p,q\le\infty$ and $\frac{1}{p}-2 < r < 2$.
Then for any $f\in S^{r}_{p,q}B(\R^{d})$ we have
\begin{align}\label{lem5.1(ii)}
\big\| \{\lambda_{\bj,\bk}(f)\}_{\bj,\bk}  \big\|_{s^{r}_{p,q}b(\Gamma^d)} 
\lesssim \|f\|_{S^{r}_{p,q}B(\R^{d})} 
\end{align}
for the primal Chui-Wang coefficients $\lambda_{\bj,\bk}(f)$ defined as in~\eqref{def:CWcoeff}.
\end{prop}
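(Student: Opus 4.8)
The plan is to estimate the primal Chui-Wang coefficients $\lambda_{\bj,\bk}(f) = 2^{\bj_+}\dual{f,\psi_{\bj,\bk}}$ by exploiting the two structural features of the order-$2$ Chui-Wang wavelets: their compact support (so $\psi_{\bj,\bk}$ is localized on a box of sidelengths $\asymp 2^{-\bj_+}$) and their two vanishing moments in each coordinate. First I would fix a decomposition of unity $\{\varphi_{\bn}\}_{\bn}\in\Phi_{\rm hyp}(\R^d)$ with associated kernels $\{\Phi_{\bn}\}$ and write, using the Littlewood-Paley decomposition $f=\sum_{\bn}\Phi_{\bn}\ast f$ together with the duality product~\eqref{eqdef:gen_dual_prod}, the splitting
\[
\dual{f,\psi_{\bj,\bk}} = \sum_{\bn\in\N_0^d} \dual{\Phi_{\bn}\ast f,\psi_{\bj,\bk}} \,,
\]
reducing matters to bounding the single-block pairings $\abs{\dual{\Phi_{\bn}\ast f,\psi_{\bj,\bk}}}$. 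Here I must work coordinate-wise (by the tensor structure of both $\psi_{\bj,\bk}$ and the hyperbolic kernels), so the real content is a one-dimensional estimate for $\abs{\br{\Phi_n\ast g,\psi_{j,k}}_{\R}}$ in terms of $\norm{\Phi_n\ast g}_{L_p}$, which is then multiplied across the $d$ coordinates.

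The key one-dimensional estimate has the familiar form of an almost-diagonal bound: for $n\ge j_+$ one uses the vanishing moments of $\psi_{j,k}$ — more precisely, on the support of $\psi_{j,k}$ one subtracts the Taylor polynomial of the smooth function $\Phi_n\ast g$ of degree $1$, picking up a factor $2^{-2(n-j_+)}$ from the two vanishing moments combined with $\norm{\partial^2(\Phi_n\ast g)}_{L_\infty}\lesssim 2^{2n}\norm{\Phi_n\ast g}_{L_\infty}$; this requires $r$ to stay below $2$. For $n<j_+$ one instead exploits the band-limitedness of $\Phi_n$ together with the fact that $\psi_{j,k}$ itself has bounded derivatives, gaining a factor like $2^{-(j_+-n)}$ (or a high power thereof after iterating a Bernstein/integration-by-parts argument), which is what forces the lower restriction $r>\tfrac1p-2$. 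Assembling these, one arrives at a bound of the schematic type
\[
\abs{\lambda_{\bj,\bk}(f)} \lesssim \sum_{\bn\in\N_0^d} 2^{-\delta\abs{\bn-\bj_+}_1}\, 2^{(\bj_+-\bn)_+\cdot(\text{shift})} \,\big(\text{$L_p$-mean of }\Phi_{\bn}\ast f\text{ over the box }Q_{\bj,\bk}\big)
\]
for some $\delta>0$; summing over $\bk$ in $\ell_p$, then over $\bj$ with the weights $2^{\abs{\bj}_1(r-1/p)q}$, and finally carrying out the geometric sum over $\bn$ (convergent exactly in the stated $r$-range) yields~\eqref{lem5.1(ii)}. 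In executing the $\ell_p$/$\ell_q$ bookkeeping I would use the sequence-space reformulation from Remark~\ref{rem:Besov_seq_spaces} and, for $p<1$ or $q<1$, the $u$-triangle inequality with $u=\min\{p,q,1\}$; overlaps of the boxes $\supp\psi_{\bj,\bk}$ at a fixed scale are $\mathcal{O}(1)$ per coordinate, which keeps the counting harmless.

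The main obstacle, I expect, is making the duality pairing $\dual{\Phi_{\bn}\ast f,\psi_{\bj,\bk}}$ rigorous and summable when $f$ is merely a distribution in $\funpool=S^{-2}_{\infty,1}B(\R^d)$ rather than a genuine function: one has to verify that the defining series~\eqref{eqdef:gen_dual_prod} converges absolutely for $g=\psi_{\bj,\bk}\in\testpool$, that the Calderón formula~\eqref{def:Calderon_concrete} may be inserted and the order of summation interchanged, and that the single-block estimates above are uniform enough to survive this rearrangement. This is the part where the choice~\eqref{def:BB} of the pair $(\funpool,\testpool)$ — and the localization/band-limitedness of the kernels $\{\Phi_{\bl}\},\{\Lambda_{\bl}\}$ — really earns its keep; once the pairing is under control the remaining estimates are the standard wavelet-coefficient bounds. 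I would also take care at the boundary levels $j_i=-1$ (where the roles of $\psi_{-1,0}=N_2$ and $\psi_{0,0}$ differ and the scaling factor $2^{j_{i,+}}=1$), treating them by the same coordinate-wise argument with the trivial modification $2^{j_{i,+}}=2^{\max\{j_i,0\}}$.
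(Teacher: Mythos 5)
Your overall strategy—peel $f$ into hyperbolic Littlewood--Paley pieces, bound the single pairings by a coordinate-wise almost-diagonal estimate, and then run the $\ell_p$, $\ell_q$ bookkeeping with the $u$-triangle inequality—is the right template and matches the paper's shape. But the two decay mechanisms are attributed to the \emph{wrong} scale regimes, and in one of them the estimate you sketch would actually produce a \emph{loss} rather than a gain.

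Concretely: you propose, for $n\ge j_+$, Taylor-expanding $\Phi_n\ast g$ of degree $1$ on $\supp\psi_{j,k}$ and using the two vanishing moments of $\psi_{j,k}$ together with $\norm{\partial^2(\Phi_n\ast g)}_{L_\infty}\lesssim 2^{2n}\norm{\Phi_n\ast g}_{L_\infty}$. On a box of side $\asymp 2^{-j}$ the Taylor remainder is then $\lesssim 2^{-2j}\cdot 2^{2n}\norm{\Phi_n\ast g}_{L_\infty}=2^{2(n-j)}\norm{\Phi_n\ast g}_{L_\infty}$, which \emph{grows} like $2^{+2(n-j)}$—not the $2^{-2(n-j_+)}$ you wrote. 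The Taylor/vanishing-moment mechanism gains a factor $2^{-2(j-n)}$ only in the regime $n<j$ (coarse blocks under a fine wavelet), and that regime is precisely where the upper bound $r<2$ comes from, not the lower one. Conversely, for $n\ge j$ one cannot use the vanishing moments of $\psi_{j,k}$; instead, one must play the smoothness and vanishing moments of the analyzing kernel at scale $n$ against the \emph{piecewise-linear} structure of $\psi_{j,k}$ (it has only finitely many kinks). This is what produces a factor $\asymp 2^{-(n-j)}$ and forces $r>\tfrac1p-2$ after the $k$-overlap count. In short, you have swapped both the mechanisms and the parameter restrictions between the two regimes; keeping your labels, the $n\ge j_+$ part of your sum would diverge.

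There are two further structural ingredients in the paper's proof that your sketch does not account for. First, the paper does not pair $\Phi_n\ast f$ directly against $\psi_{j,k}$: it inserts the Calder\'on pair $\Phi_{\bl}\ast\Lambda_{\bl}$, so that the wavelet is convolved with the \emph{compactly supported} local-means kernel $\Lambda_{\bj+\bl}$ rather than the band-limited $\Phi$. This is what makes the spatial support estimates (sizes of $\supp(\Lambda_{j+l}\ast\psi_{j,k})$, the ``7 intervals around the kinks'' picture) rigorous, and it is also what makes the Chui--Wang coefficients well-defined for distributional $f$ via the pairing~\eqref{eqdef:gen_dual_prod}; your direct splitting $\dual{f,\psi_{\bj,\bk}}=\sum_{\bn}\dual{\Phi_{\bn}\ast f,\psi_{\bj,\bk}}$ has no such justification when $f\in\mathscr{B}$ only. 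Second, after localizing to the box $Q_{\bj,\bk}$ one needs the Peetre maximal function $P_{\bar b,a}(\Phi_{\bj+\bl}\ast f)$ together with the maximal inequality (Theorem~\ref{peetremax}) to convert the pointwise bounds into the $L_p$ estimates uniformly in $\bj,\bl$; the choice $1/p<a<2+r$ is exactly where both parameter endpoints enter simultaneously. Your ``$L_p$-mean over the box'' gestures at this but omits the actual device that makes the $L_p$ bookkeeping close.
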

\begin{proof}
    To use $\Z^d$ as index set in~\eqref{eqdef:gen_dual_prod} and in~\eqref{def:Calderon_concrete}, we append the kernel families $\{\Lambda_{\bl}\}_{\bl}$ and $\{\Phi_{\bl}\}_{\bl}$ by the trivial functions $\Lambda_{\bl}:=\Phi_{\bl}:=0$
for $\bl\in\Z^d\backslash\N_0^d$. Further, we put $\Lambda_{-l}:=\Phi_{-l}:=0$
for $l\in\N$.

    Using the reproducing formula~\eqref{def:Calderon_concrete}, we now estimate 
	\begin{align}\label{def:alpha_jk}
	|\lambda_{\bj,\bk}(f)| = 2^{\bj_+} |\dual{f,\psi_{\bj,\bk}}|
	 \lesssim \sum_{\bl\in\Z^{d}} \int_{\R^{d}} 2^{\bj_+} |(\Phi_{\bj+\bl}\ast f)(y)| |(\Lambda_{\bj+\bl}\ast \psi_{\bj,\bk})(y)| \,\dint y=: \alpha_{\bj,\bk}(f) \,.
	\end{align}
        Hence we can show~\eqref{lem5.1(ii)} by proving the stronger statement
        \begin{align}\label{stronger_analysis}
        \big\| \{\alpha_{\bj,\bk}(f)\}_{\bj,\bk}  \big\|_{s^{r}_{p,q}b(\Gamma^d)} 
\lesssim \|f\|_{S^{r}_{p,q}B(\R^{d})} \,.
        \end{align}
 This is what we will do in the subsequent proof. Let us begin by investigating the integral in the definition of $\alpha_{\bj,\bk}(f)$. 
 It is useful for this to define
  $S_{\bj,\bk,\bl}:= \supp \big(\Lambda_{\bj+\bl}\ast\psi_{\bj,\bk} \big)$
        and to note that
        \begin{align}\label{def:conv_support}
	S_{\bj,\bk,\bl} = S_{j_1,k_1,l_1} \times \cdots \times S_{j_d,k_d,l_d} \quad\text{with}\quad S_{j_i,k_i,l_i} :=\supp \big(\Lambda_{j_i+l_i}\ast \psi_{j_i,k_i}\big) 
	\quad\text{for }i\in[d]  \,.
	\end{align}
 We next 
        distinguish the cases $\min_{i\in[d]}\{j_i+l_i\}<0$ and  $\min_{i\in[d]}\{j_i+l_i\}\ge0$. In the first case, the integral vanishes since 
        $S_{\bj,\bk,\bl} =\emptyset$.
	In case $\min_{i\in[d]}\{j_i+l_i\}\ge0$, we obtain for fixed $a>0$ and any $x\in\R^{d}$
	\begin{align}\label{est:Calderon}
	 &\int_{\R^{d}} 2^{\bj_+} |(\Phi_{\bj+\bl}\ast f)(y)| |(\Lambda_{\bj+\bl}\ast \psi_{\bj,\bk})(y)| \,\dint y \\
	&\le P_{\bb(\bj+\bl),a}(\Phi_{\bj+\bl}\ast f)(x) \cdot \sup_{y\in S_{\bj,\bk,\bl}} \Big[ \prod_{i=1}^{d} (1+2^{j_i+l_i}|x_i-y_i|)^a \Big]
	\cdot \int_{\R^{d}} 2^{\bj_+}  |(\Lambda_{\bj+\bl}\ast \psi_{\bj,\bk})(z)| \,\dint z \,, \notag
	\end{align}
	where (see Appendix~\ref{ssec:Appendix_Peetre} for more information)
	\begin{align*}
	P_{\bb(\bj+\bl),a}(\Phi_{\bj+\bl}\ast f)(x) := \sup_{y\in\R^{d}} \Big[ |(\Phi_{\bj+\bl}\ast f)(y)| \cdot \prod_{i=1}^{d} (1+2^{j_i+l_i}|x_i-y_i|)^{-a} \Big]  
	\end{align*}
       is the Peetre maximal function of $\Phi_{\bj+\bl}\ast f$ 
	with respect to the parameters $a$ and $\bar{b}(\bj+\bl):=(2^{j_1+l_1},\ldots,2^{j_d+l_d})$.

  Let $Q_{\bj,\bk}$ denote the cubes defined in~\eqref{def:chi_cubes}. We next want to prove, for $x$ restricted to $Q_{\bj,\bk}$, 
  \begin{align}\label{est:Calderon2}
  \sup_{y\in S_{\bj,\bk,\bl}} \Big[ \prod_{i=1}^{d} (1+2^{j_i+l_i}|x_i-y_i|)^a \Big]
	\cdot \int_{\R^{d}} 2^{\bj_+}  |(\Lambda_{\bj+\bl}\ast \psi_{\bj,\bk})(z)| \,\dint z  \lesssim  A(\bl,a)  \,,
  \end{align}
  where
	\begin{align*}
	A(\bl,a):=\prod_{i\in[d]} A(l_i,a) \quad\text{with}\quad
	A(l_i,a):= \begin{cases} 2^{2l_i} \quad &,\, l_i<0 \,, \\ 2^{(a-2)l_i} &,\, l_i\ge 0 \,. \end{cases}
	\end{align*}
 
	Let us first estimate the second term in~\eqref{est:Calderon2}. We have for $\bj\in\N^{d}_{-1}$
    \begin{align*}
	 \int_{\R^{d}} 2^{\bj_+}  |(\Lambda_{\bj+\bl}\ast \psi_{\bj,\bk})(z)| \,\dint z  \lesssim  
	 \int_{\R} 2^{j^+_1}|(\Lambda_{j_1+l_1}\ast \psi_{j_1,k_1})(t)|\,\dint t \cdots \int_{\R} 2^{j^+_d}|(\Lambda_{j_d+l_d}\ast \psi_{j_d,k_d})(t)|\,\dint t
	\end{align*}
    due to the tensor-product structure
	\begin{align*}
	\Lambda_{\bj+\bl}\ast \psi_{\bj,\bk} = (\Lambda_{j_1+l_1}\ast \psi_{j_1,k_1}) \otimes \cdots \otimes (\Lambda_{j_d+l_d}\ast \psi_{j_d,k_d}) \,.
	\end{align*}
Now we make the two observations (i) and (ii) below.
 \begin{enumerate}[leftmargin=*]
	    \item[(i)] For $i\in[d]$ with $l_i\le0$ we have 
	\begin{align}\label{l-}
	    \|\Lambda_{j_i+l_i}\ast \psi_{j_i,k_i}\|_{L_\infty(\R)} \lesssim 2^{3l_i} \qquad\text{and}\qquad\|\chi_{S_{j_i,k_i,l_i}}\|_{L_1(\R)} \lesssim 2^{-j_i-l_i} \,.
	\end{align} 
        \begin{proof}[Proof of~\eqref{l-}]\renewcommand{\qed}{$\square$} 
        First note that clearly $\|\chi_{S_{j_i,k_i,l_i}}\|_{L_1(\R)} \lesssim 2^{-j_i-l_i}$, where $ S_{j_i,k_i,l_i} =\supp \big(\Lambda_{j_i+l_i}\ast \psi_{j_i,k_i}\big)$. Next, $\| \Lambda_{j_i+l_i}\ast \psi_{j_i,k_i} \|_{L_\infty(\R)} \lesssim 2^{3l_i}$ is shown using the two vanishing moments of the Chui-Wang wavelet $\psi_{j_i,k_i}$ when $j_i\ge0$, which is implied by
        our assumption $\min_{i\in[d]}\{j_i+l_i\}\ge0$.
        Due to these vanishing moments
	\begin{align*}
	|(\Lambda_{j_i+l_i}&\ast \psi_{j_i,k_i})(x)| =  \Big| \int_{\R}  \Lambda_{j_i+l_i}(x-y)   \psi_{j_i,k_i}(y) \,\dint y\Big| \\
	&= \Big| \int_{\R} \Big( \Lambda_{j_i+l_i}(x-y) - \Lambda_{j_i+l_i}(x-2^{-j_i}k_i) - \Lambda^\prime_{j_i+l_i}(x-2^{-j_i}k_i)(2^{-j_i}k_i-y) \Big) \psi_{j_i,k_i}(y) \,\dint y \Big| \\
    &\le \sup_{y\in \supp(\psi_{j_i,k_i})}  |2^{-j_i}k_i-y|^{2}   \|\Lambda^{\prime\prime}_{j_i+l_i}\|_{L_\infty(\R)} \|\psi_{j_i,k_i}\|_{L_1(\R)}
	\lesssim 2^{3l_i} \,,
	\end{align*}
       where in the last line a second-order Taylor estimate was applied. 
      Also note that, by definition, $\{\Lambda_{j}\}_{j\in\N_{0}}$ and $\{\psi_{j,k}\}_{(j,k)\in\N_{-1}\times\Z}$ are $L_1$ resp.\ $L_\infty$ normalized function systems.
        \end{proof}
	\item[(ii)] 
        For $i\in[d]$ with $l_i\ge0$ we have 
	\begin{align}\label{l+}
	    \|\Lambda_{j_i+l_i}\ast \psi_{j_i,k_i}\|_{L_\infty(\R)} \lesssim 2^{-l_i} 
\qquad\text{and}\qquad\|\chi_{S_{j_i,k_i,l_i}}\|_{L_1(\R)} \lesssim 2^{-j_i-l_i} \,.
	\end{align}
	\begin{proof}[Proof of~\eqref{l+}]\renewcommand{\qed}{$\square$}
        By substitution, we obtain
        \begin{align*}
	|(\Lambda_{j_i+l_i}\ast \psi_{j_i,k_i})(x)| = \Big| \int_{\R}  \Lambda_{j_i+l_i}(x-y)   \psi_{j_i,k_i}(y) \,\dint y\Big| 
	= \Big| \int_{\R}  \Lambda_{l_i}(2^{j_i} x- y)   \psi_{0,k_i}(y) \,\dint y\Big| \,.
    \end{align*}
    Then we use the representation~\eqref{represent:CW_wavelets}, which yields 
	\begin{align*}
   \psi_{0,k_i}(y) = \sum_{\kappa=0}^6 c_{\kappa} \big(y - k_i - \tfrac{\kappa}{2} \big)_{+} \,.
    \end{align*}
    Finally, taking into account $|\supp(\Lambda_{l_i})|\lesssim 2^{-l_i}$, we arrive at
	\begin{align*}
	|(\Lambda_{j_i+l_i}\ast \psi_{j_i,k_i})(x)| 
    &\le \sum_{\kappa=0}^6 |c_{\kappa}|  \cdot  \Big| \int_{\R} \Lambda_{l_i}(2^{j_i} x- y)  \big(y - k_i - \tfrac{\kappa}{2} \big)_{+} \,\dint y\Big|   \\
    &\le \sum_{\kappa=0}^6 |c_{\kappa}| \cdot   2^{l_i} \max\big\{ \|\Lambda_{0}\|_{L_\infty(\R)}, \|\Lambda_{1}\|_{L_\infty(\R)} \big\} \int\limits_{\supp(\Lambda_{l_i})}  \big(y - 2^{j_i} x - k_i - \tfrac{\kappa}{2} \big)_{+} \,\dint y  \lesssim 2^{-l_i} \,.
	\end{align*}
    As a consequence of the two vanishing moments of $\Lambda_{j_i+l_i}$,
    the support $S_{j_i,k_i,l_i}$ of $\Lambda_{j_i+l_i}\ast \psi_{j_i,k_i}$  merely stems from the kinks of the function $\psi_{j_i,k_i}$. It is thus contained in at most 7 intervals of length $\mathcal{O}(2^{-j_i-l_i})$.
    \end{proof}
	\end{enumerate}

	\noindent
	From~\eqref{l-} and~\eqref{l+}, we deduce
	\begin{align}\label{auxformula1}
	\int_{\R} 2^{j^+_i}|(\Lambda_{j_i+l_i}\ast \psi_{j_i,k_i})(t)| \,\dint t
	\lesssim 2^{-2|l_i|} \,.
	\end{align}
	

	\noindent
	Let us next turn our attention to the first term in~\eqref{est:Calderon2}. Here we show for $x_i\in Q_{j_i,k_i}$, with $Q_{j_i,k_i}$ as in~\eqref{def:chi_cubes}, 
        \begin{align}\label{auxformula2}
	\sup_{y_i\in S_{j_i,k_i,l_i}} (1+2^{j_i+l_i}|x_i-y_i|)^a \lesssim  \begin{cases} 1 \,&,\, l_i<0 \,, \\  2^{al_i}  &,\,l_i\ge0 \,.  \end{cases} 
	\end{align}
      For this, let $x_i\in Q_{j_i,k_i}=[2^{j_i^+}k_i,2^{j_i^+}(k_i+1)]$ and $y_i\in S_{j_i,k_i,l_i}$ and note
      \[
      S_{j_i,k_i,l_i}=\supp (\Lambda_{j_i+l_i}\ast \psi_{j_i,k_i}) 
	\subset  \supp(\Lambda_{j_i+l_i}) + \supp(\psi_{j_i,k_i}) \,,
      \]
      which allows to write $y_i=z_i + \tilde{z}_i$ 
      with $z_i\in\supp(\Lambda_{j_i+l_i})$ and $\tilde{z}_i\in \supp(\psi_{j_i,k_i})$. Since $|z_i|\lesssim 2^{-j_i-l_i}$ and $|x_i - \tilde{z}_i|\lesssim 2^{-j_i}$ for $x_i\in Q_{j_i,k_i}$, we can hence estimate
      \[
      2^{j_i+l_i}|x_i-y_i| \le 2^{j_i+l_i}\big( |z_i| + |x_i-\tilde{z}_i| \big) \lesssim 1 + 2^{l_i} \,.
      \]
      This directly proves~\eqref{auxformula2}.
      Further, estimate~\eqref{auxformula2} together with estimate~\eqref{auxformula1} yields~\eqref{est:Calderon2}.

   Using~\eqref{est:Calderon2}, we can proceed from~\eqref{est:Calderon} and get for any $x\in Q_{\bj,\bk}$ 
	\begin{align*}
	\int_{\R^{d}} 2^{\bj_+} |(\Phi_{\bj+\bl}\ast f)(y)|| (\Lambda_{\bj+\bl}\ast \psi_{\bj,\bk})(y)| \,\dint y \lesssim
	A(\bl,a) \cdot P_{\bb(\bj+\bl),a}(\Phi_{\bj+\bl}\ast f)(x) \,.
	\end{align*}	
	Hence, with the characteristic function $\chi_{\bj,\bk}$ of $Q_{\bj,\bk}$, we obtain for fixed $\bj\in\N^{d}_{-1}$ and uniformly in $x\in\R^{d}$
	\begin{align}\label{auxformula3}
	\sum_{\bk\in\Z^{d}} |\alpha_{\bj,\bk}(f)| \chi_{\bj,\bk}(x)  \lesssim
	\sum_{\bl\in\Z^{d}} A(\bl,a) \cdot P_{\bb(\bj+\bl),a}(\Phi_{\bj+\bl}\ast f)(x) \,.
	\end{align}
	
    \noindent
    With~\eqref{auxformula3}, we are finally prepared to estimate the expression
    \begin{align}\label{expression1}
    \big\| \{\alpha_{\bj,\bk}(f)\}_{\bj,\bk}  \big\|_{s^{r}_{p,q}b(\Gamma^d)} = \Big( \sum_{\bj\in\N^{d}_{-1}} 2^{\bj_+ rq} \Big\| \sum_{\bk\in\Z^{d}} |\alpha_{\bj,\bk}(f)| \chi_{\bj,\bk}(\cdot)  \Big\|_{L_{p}(\R^{d})}^q \Big)^{1/q}  \,.
    \end{align}
    Let $u:=\min\{1,p,q\}$ and $\tilde{p}:=p/u$, $\tilde{q}:=q/u$, $\tilde{r}:=ru$.
    Inserting~\eqref{auxformula3} in~\eqref{expression1}, we obtain
    \begin{align*}
    \big[ \eqref{expression1} \big]^u &\lesssim  \Big( \sum_{\bj\in\N^{d}_{-1}}  \Big\| \sum_{\bl\in\Z^{d}} 2^{\bj_+ \tilde{r}} A(\bl,a)^u \big|P_{\bb(\bj+\bl),a}(\Phi_{\bj+\bl}\ast f)(\cdot)\big|^u  \Big\|_{L_{\tilde{p}}(\R^{d})}^{\tilde{q}} \Big)^{1/\tilde{q}}  \\
    &\le \sum_{\bl\in\Z^{d}}  2^{-\bl \tilde{r}} A(\bl,a)^u  \Big( \sum_{\bj\in\N^{d}_{-1}} \Big\|  2^{\bl \tilde{r}}  2^{\bj_+ \tilde{r}}  \big|P_{\bb(\bj+\bl),a}(\Phi_{\bj+\bl}\ast f)(\cdot)\big|^u  \Big\|_{L_{\tilde{p}}(\R^{d})}^{\tilde{q}} \Big)^{1/\tilde{q}}  \\
    &\lesssim \sum_{\bl\in\Z^{d}}  2^{-\bl \tilde{r}} A(\bl,a)^u  \Big( \sum_{\bj\in\N^{d}_{-1}} \Big\|  2^{(\bj+\bl)\tilde{r}}    \big|P_{\bb(\bj+\bl),a}(\Phi_{\bj+\bl}\ast f)(\cdot)\big|^u  \Big\|_{L_{\tilde{p}}(\R^{d})}^{\tilde{q}} \Big)^{1/\tilde{q}} \,, 
    \end{align*}
    where we used $|j_i|\le j_i+2$ for $j_i\in\N_{-1}$ in the last line. The index shift $\bj+\bl\rightsquigarrow \bj$ yields at last
    \begin{align*}
    \big[ \eqref{expression1} \big]^u &\lesssim    \sum_{\bl\in\Z^{d}}  2^{-\bl \tilde{r}} A(\bl,a)^u\cdot \Big( \sum_{\bj\in\N^{d}_{0}}    \Big\|    2^{\bj_+ \tilde{r}}  \big| P_{\bb(\bj),a}(\Phi_{\bj}\ast f)(\cdot)\big|^u   \Big\|_{L_{\tilde{p}}(\R^{d})}^{\tilde{q}} \Big)^{1/\tilde{q}}  \,.
    \end{align*}
   
    \noindent
    The assumption $r>\tfrac{1}{p}-2$ allows to choose $a$ such that $1/p<a<2+r$. Since $r<2$ by assumption,
    we get
   \begin{align*}
    \sum_{\bl\in\Z^{d}} 2^{-\bl \tilde{r}} A(\bl,a)^u   = \sum_{\bl\in\Z^{d}}  \prod_{i=1}^{d} 2^{(2-r)ul^-_i}  2^{(a-2- r)ul^+_i}  <\infty \,.
    \end{align*}
    According to Peetre's maximal inequality (Theorem~\ref{peetremax}),
    we further get
    \begin{align*}
    \Big( \sum_{\bj\in\N^{d}_{0}}    \Big\|    2^{\bj_+ \tilde{r}}  \big| P_{\bb(\bj),a}(\Phi_{\bj}\ast f)(\cdot)\big|^u   \Big\|_{L_{\tilde{p}}(\R^{d})}^{\tilde{q}} \Big)^{1/q}
    &=
    \Big( \sum_{\bj\in\N^{d}_{0}}    \Big\|    2^{\bj_+ r}  P_{\bb(\bj),a}(\Phi_{\bj}\ast f)(\cdot)  \Big\|_{L_{p}(\R^{d})}^q \Big)^{1/q}   \\
    &\lesssim  \Big( \sum_{\bj\in\N^{d}_{0}}    \Big\|    2^{\bj_+ r} (\Phi_{\bj}\ast f)(\cdot)  \Big\|_{L_{p}(\R^{d})}^q \Big)^{1/q} \,. 
    \end{align*}
    The proof is finished since the right-hand expression is equivalent to $\|f\|_{S^{r}_{p,q}B(\R^{d})}$. 
\end{proof}


\noindent
From Proposition~\ref{prop:analysis_CW} we can directly deduce a result for the dual Chui-Wang coefficients $\tilde{\lambda}_{\bj,\bk}(f)$. 

\begin{corollary}\label{cor:analysis_CW}
Under the assumptions of Proposition~\ref{prop:analysis_CW} we also have
\begin{align*}
\big\| \{\tilde{\lambda}_{\bj,\bk}(f)\}_{\bj,\bk} \big\|_{s^{r}_{p,q}b(\Gamma^d)} 
\lesssim \|f\|_{S^{r}_{p,q}B(\R^{d})} \,,
\end{align*}
where $\tilde{\lambda}_{\bj,\bk}(f)$ are the dual Chui-Wang coefficients of $f$ defined in~\eqref{def:CWcoeff}.
\end{corollary}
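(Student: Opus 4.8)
The plan is to reduce the statement to Proposition~\ref{prop:analysis_CW} by trading the dual Chui-Wang coefficients for the primal ones. The link is the representation~\eqref{dualCW_representation}, which writes each dual wavelet as the exponentially localized superposition $\tilde{\psi}_{\bj,\bk}=\sum_{\bn\in\Z^d}a^{(\bj)}_{\bn}\psi_{\bj,\bk+\bn}$ of primal wavelets on the \emph{same} scale $\bj$. First I would check that this series converges absolutely in the test space $\testpool=S^{2}_{1,\infty}B(\R^d)$: the translates $\psi_{\bj,\bk+\bn}$, $\bn\in\Z^d$, have $\testpool$-norm bounded independently of $\bn$ by translation invariance, while $\sum_{\bn\in\Z^d}|a^{(\bj)}_{\bn}|<\infty$ by~\eqref{eq:coeff_decay}. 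Since $S^{r}_{p,q}B(\R^d)\hookrightarrow\funpool$ throughout the parameter range $\tfrac1p-2<r<2$ and $\dual{\cdot,\cdot}$ is continuous on $\funpool\times\testpool$, the duality product may be exchanged with the series, which yields the pointwise (in $(\bj,\bk)$) identity
\begin{align*}
\tilde{\lambda}_{\bj,\bk}(f)=2^{\bj_+}\dual{f,\tilde{\psi}_{\bj,\bk}}=\sum_{\bn\in\Z^d}a^{(\bj)}_{\bn}\,2^{\bj_+}\dual{f,\psi_{\bj,\bk+\bn}}=\sum_{\bn\in\Z^d}a^{(\bj)}_{\bn}\,\lambda_{\bj,\bk+\bn}(f)\,.
\end{align*}

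The second step is a purely sequential estimate, carried out one dyadic level at a time. For fixed $\bj$, the identity above displays $\{\tilde{\lambda}_{\bj,\bk}(f)\}_{\bk\in\Z^d}$ as a discrete convolution of $\{\lambda_{\bj,\bk}(f)\}_{\bk\in\Z^d}$ with the summable kernel $\{a^{(\bj)}_{\bn}\}_{\bn}$. For $1\le p\le\infty$ I would apply Young's convolution inequality to obtain $\|\{\tilde{\lambda}_{\bj,\bk}(f)\}_{\bk}\|_{\ell_p(\Z^d)}\le\big(\sum_{\bn}|a^{(\bj)}_{\bn}|\big)\|\{\lambda_{\bj,\bk}(f)\}_{\bk}\|_{\ell_p(\Z^d)}$; for $0<p<1$ I would instead use the $p$-subadditivity of $t\mapsto|t|^p$ to get $\|\{\tilde{\lambda}_{\bj,\bk}(f)\}_{\bk}\|_{\ell_p(\Z^d)}^p\le\big(\sum_{\bn}|a^{(\bj)}_{\bn}|^p\big)\|\{\lambda_{\bj,\bk}(f)\}_{\bk}\|_{\ell_p(\Z^d)}^p$. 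By~\eqref{eq:coeff_decay}, both $\sum_{\bn}|a^{(\bj)}_{\bn}|$ and $\sum_{\bn}|a^{(\bj)}_{\bn}|^p$ are bounded by $\sum_{\bn\in\Z^d}c^{-\min\{p,1\}|\bn|}<\infty$, a constant independent of $\bj$. Since the (quasi-)norm $\|\cdot\|_{s^{r}_{p,q}b(\Gamma^d)}$ sees the level $\bj$ only through $\|\{\lambda_{\bj,\bk}\}_{\bk}\|_{\ell_p(\Z^d)}$ weighted by a fixed power of $2^{|\bj|_1}$, raising to the $q$-th power and summing (resp.\ taking the supremum when $q=\infty$) over $\bj$ gives $\|\{\tilde{\lambda}_{\bj,\bk}(f)\}_{\bj,\bk}\|_{s^{r}_{p,q}b(\Gamma^d)}\lesssim\|\{\lambda_{\bj,\bk}(f)\}_{\bj,\bk}\|_{s^{r}_{p,q}b(\Gamma^d)}$. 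Proposition~\ref{prop:analysis_CW} then bounds the right-hand side by $\|f\|_{S^{r}_{p,q}B(\R^d)}$, which is the claim.

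I do not expect a genuine obstacle here: all the analytic weight has already been placed on Proposition~\ref{prop:analysis_CW}, and what remains is the soft passage from $\tilde{\psi}_{\bj,\bk}$ to $\psi_{\bj,\bk}$ afforded by the exponential decay~\eqref{eq:coeff_decay}. The two spots deserving a moment's attention are the interchange of $\dual{\cdot,\cdot}$ with the defining series of $\tilde{\psi}_{\bj,\bk}$ (handled by convergence in $\testpool$, as above) and the bookkeeping in the quasi-Banach range $p<1$, where one must invoke $\min\{p,1\}$-subadditivity rather than the triangle inequality and verify that the resulting constants stay uniform in the scale $\bj$; both are routine.
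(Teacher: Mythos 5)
Your proposal is correct and follows essentially the same route as the paper: both expand the dual wavelet in primal translates via~\eqref{dualCW_representation}, exploit the exponential decay~\eqref{eq:coeff_decay} together with translation invariance in $\bk$ at each fixed scale $\bj$, and reduce to Proposition~\ref{prop:analysis_CW}. The only cosmetic deviation is that the paper applies a $u$-triangle inequality with $u=\min\{1,p,q\}$ inside the $L_p$ reformulation of the sequence norm from Remark~\ref{rem:Besov_seq_spaces}, whereas you observe that a level-wise $\ell_p$ estimate with exponent $\min\{p,1\}$ (Young for $p\ge1$, $p$-subadditivity for $p<1$) already suffices and propagates to any $q$; both are valid, and your formulation is marginally more economical.
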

\begin{proof}
Recall the representation~\eqref{dualCW_representation} of the dual Chui-Wang wavelets as linear combinations of the primal wavelets. Further, recall  Remark~\ref{rem:Besov_seq_spaces} and the functions $\chi_{\bj,\bk}$ introduced there. Putting $u:=\min\{1,p,q\}$, we estimate
\begin{align*}
\big\| \{\tilde{\lambda}_{\bj,\bk}(f)\}_{\bj,\bk} \big\|_{s^{r}_{p,q}b(\Gamma^d)}
&\asymp
\Big(  \sum_{\bj\in\N^{d}_{-1}} 2^{\bj_+ rq} \Big\| \sum_{\bk\in\Z^{d}} \Big| \sum_{\bn\in\Z^{d}}  a^{(\bj)}_{\bn} \lambda_{\bj,\bk+\bn}(f) \Big| \chi_{\bj,\bk}(\cdot)  \Big\|_{L_{p}(\R^{d})}^q  \Big)^{1/q}  \\
&\le
\Big(  \sum_{\bj\in\N^{d}_{-1}} 2^{\bj_+ rq} \Big( \sum_{\bn\in\Z^{d}} |a^{(\bj)}_{\bn}|^{u} \Big\| \Big[ \sum_{\bk\in\Z^{d}}  |\lambda_{\bj,\bk+\bn}(f)| \chi_{\bj,\bk}(\cdot)  \Big]^u \Big\|_{L_{p/u}(\R^{d})} \Big)^{q/u}  \Big)^{1/q}  \\
&=
\Big(  \sum_{\bj\in\N^{d}_{-1}} 2^{\bj_+ rq} \Big( \sum_{\bn\in\Z^{d}} |a^{(\bj)}_{\bn}|^{u} \Big\| \Big[\sum_{\bk\in\Z^{d}}  |\lambda_{\bj,\bk}(f)| \chi_{\bj,\bk}(\cdot) \Big]^u  \Big\|_{L_{p/u}(\R^{d})} \Big)^{q/u}  \Big)^{1/q} \,.
\end{align*}
Utilizing the decay~\eqref{eq:coeff_decay} of the coefficients $a^{(\bj)}_{\bn}$, we obtain $\sum_{\bn\in\Z^{d}} |a^{(\bj)}_{\bn}|^{u} \lesssim 1$. Hence,
with Proposition~\ref{prop:analysis_CW},
\begin{gather*}
\big\| \{\tilde{\lambda}_{\bj,\bk}(f)\}_{\bj,\bk} \big\|_{s^{r}_{p,q}b(\Gamma^d)} 
\lesssim
\Big(  \sum_{\bj\in\N^{d}_{-1}} 2^{\bj_+ rq} \Big\| \sum_{\bk\in\Z^{d}} |\lambda_{\bj,\bk}(f)| \chi_{\bj,\bk}(\cdot)  \Big\|_{L_{p}(\R^{d})}^q  \Big)^{1/q} 
\lesssim
 \|f\|_{S^{r}_{p,q}B(\R^{d})} \,. \qedhere
\end{gather*}
\end{proof}


\noindent
An interesting endpoint result is given by the following lemma.

\begin{lemma}\label{lem:analysis_CW}
Let $\tfrac{1}{4}<p\le\infty$, $\frac{1}{p}-2 < r \le 2$, and $f\in S^{r}_{p,v}B(\R^{d})$ with $v=\min\{p,1\}$.
Then 
\begin{align*}
\big\| \{\lambda_{\bj,\bk}(f)\}_{\bj,\bk}  \big\|_{s^{r}_{p,\infty}b(\Gamma^d)} 
\lesssim \|f\|_{S^{r}_{p,v}B(\R^{d})} \quad\text{and}\quad \big\| \{\tilde{\lambda}_{\bj,\bk}(f)\}_{\bj,\bk}  \big\|_{s^{r}_{p,\infty}b(\Gamma^d)} 
\lesssim \|f\|_{S^{r}_{p,v}B(\R^{d})} \,.
\end{align*}
\end{lemma}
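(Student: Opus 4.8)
The plan is to mirror the proof of Proposition~\ref{prop:analysis_CW}, replacing the outer $\ell_q$-summation over $\bj$ by a supremum and exploiting the extra summability in $\bl$ afforded by the choice $v=\min\{p,1\}$. Concretely, I would start from the same pointwise bound~\eqref{auxformula3}, i.e.\ for fixed $\bj\in\N_{-1}^d$ and uniformly in $x\in\R^d$,
\[
\sum_{\bk\in\Z^d}|\alpha_{\bj,\bk}(f)|\chi_{\bj,\bk}(x)\lesssim\sum_{\bl\in\Z^d}A(\bl,a)\,P_{\bb(\bj+\bl),a}(\Phi_{\bj+\bl}\ast f)(x)\,,
\]
which was established there for all $r<2$ and in particular remains valid at $r=2$ since only $r<2$ was used to sum a geometric series in $\bl$ — and that series still converges at $r=2$ as long as $a<2+r=4$, which we can arrange.

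Next I would take the $s^r_{p,\infty}b(\Gamma^d)$-(quasi-)norm, i.e.\ the supremum over $\bj\in\N_{-1}^d$ of $2^{\bj_+ r}\|\sum_{\bk}|\alpha_{\bj,\bk}(f)|\chi_{\bj,\bk}(\cdot)\|_{L_p(\R^d)}$, and apply the $v$-triangle inequality in $L_p$ (valid since $v=\min\{p,1\}\le p$ and $v\le 1$) to the sum over $\bl$:
\begin{align*}
2^{\bj_+ r}\Big\|\sum_{\bk}|\alpha_{\bj,\bk}(f)|\chi_{\bj,\bk}\Big\|_{L_p}^v
&\lesssim\sum_{\bl\in\Z^d}2^{\bj_+ rv}A(\bl,a)^v\big\|P_{\bb(\bj+\bl),a}(\Phi_{\bj+\bl}\ast f)\big\|_{L_p}^v\\
&=\sum_{\bl\in\Z^d}2^{-\bl rv}A(\bl,a)^v\cdot 2^{(\bj+\bl)_+rv}\big\|P_{\bb(\bj+\bl),a}(\Phi_{\bj+\bl}\ast f)\big\|_{L_p}^v\,,
\end{align*}
where I use $|j_i|\le j_i+2$ to absorb the $\N_{-1}$ versus $\N_0$ discrepancy into constants. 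Taking the supremum over $\bj$ and bounding $\sup_{\bj}2^{(\bj+\bl)_+rv}\|P_{\bb(\bj+\bl),a}(\Phi_{\bj+\bl}\ast f)\|_{L_p}^v$ by $\sup_{\bj\in\N_0^d}2^{\bj_+rv}\|P_{\bb(\bj),a}(\Phi_{\bj}\ast f)\|_{L_p}^v$ (a larger quantity, since the index shift $\bj+\bl\rightsquigarrow\bj$ only enlarges the range), I get
\[
\big\|\{\alpha_{\bj,\bk}(f)\}_{\bj,\bk}\big\|_{s^r_{p,\infty}b(\Gamma^d)}^v\lesssim\Big(\sum_{\bl\in\Z^d}2^{-\bl rv}A(\bl,a)^v\Big)\cdot\sup_{\bj\in\N_0^d}2^{\bj_+rv}\big\|P_{\bb(\bj),a}(\Phi_{\bj}\ast f)\big\|_{L_p}^v\,.
\]
The geometric series in $\bl$ factorizes as $\prod_i\big(\sum_{l_i}2^{(2-r)vl_i^-}2^{(a-2-r)vl_i^+}\big)$, which is finite provided $r<2$ for the negative part and $a<2+r\le 4$ for the positive part; at $r=2$ the negative-part factor $\sum_{l_i<0}2^{0\cdot l_i^-}$ no longer converges in this crude form, so here I would instead use that at $r=2$ we have $A(l_i,a)=2^{2l_i}$ for $l_i<0$ and hence $2^{-l_i r v}A(l_i,a)^v=2^{-2l_i v}2^{2l_i v}=1$ — which still diverges. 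This is the delicate point: the summability in $\bl$ at the endpoint $r=2$ comes not from $A(\bl,a)$ but from the refined support estimate. I would therefore revisit the estimate leading to~\eqref{auxformula1}, namely $\int_\R 2^{j_i^+}|(\Lambda_{j_i+l_i}\ast\psi_{j_i,k_i})(t)|\,\dint t\lesssim 2^{-2|l_i|}$, which already yields the decay $2^{-2l_i}$ for $l_i>0$ and $2^{2l_i}$ for $l_i<0$ independently of the Peetre-maximal-function exponent $a$; combined with $A(l_i,a)$'s contribution $2^{2l_i^-}$ and the factor $2^{-l_i^-rv}=2^{-2l_i^-v}$ one has to check that the product is summable. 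For $l_i<0$: $2^{-2l_i v}\cdot 2^{2l_i v}\cdot 2^{2l_i}=2^{2l_i}$ after the correct bookkeeping (the $2^{-2|l_i|}=2^{2l_i}$ from~\eqref{auxformula1} enters the constant in~\eqref{est:Calderon2}, which is why $A(\bl,a)$ appeared there rather than a bare bound), giving convergence.

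Finally, applying Peetre's maximal inequality (Theorem~\ref{peetremax}) exactly as at the end of the proof of Proposition~\ref{prop:analysis_CW} — which requires $a>1/p$, compatible with $a<4$ since $p>1/4$ — I bound $\sup_{\bj}2^{\bj_+r}\|P_{\bb(\bj),a}(\Phi_{\bj}\ast f)\|_{L_p}\lesssim\sup_{\bj}2^{\bj_+r}\|\Phi_{\bj}\ast f\|_{L_p}$. The right-hand side is not quite $\|f\|_{S^r_{p,\infty}B(\R^d)}$ but rather that norm; since $S^r_{p,v}B(\R^d)\hookrightarrow S^r_{p,\infty}B(\R^d)$ trivially (monotonicity of $\ell_v\hookrightarrow\ell_\infty$), the desired bound $\lesssim\|f\|_{S^r_{p,v}B(\R^d)}$ follows. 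The dual statement for $\{\tilde\lambda_{\bj,\bk}(f)\}$ is then obtained verbatim as in Corollary~\ref{cor:analysis_CW}: expand $\tilde\psi_{\bl,\bk}$ via~\eqref{dualCW_representation}, use the $u$-triangle inequality with $u=\min\{1,p,v\}=v$, and invoke the exponential decay~\eqref{eq:coeff_decay} of the coefficients $a^{(\bj)}_{\bn}$ to get $\sum_{\bn}|a^{(\bj)}_{\bn}|^v\lesssim 1$, reducing to the primal estimate just proved. The main obstacle, as indicated, is verifying the geometric series in $\bl$ actually converges at the endpoint $r=2$ with the constraint $\tfrac14<p$; I expect this forces the sharper use of the vanishing-moment/support estimates~\eqref{l-},~\eqref{l+} rather than the cruder $A(\bl,a)$ bound, and the condition $p>1/4$ is presumably exactly what makes the positive-$l_i$ exponent $(a-2-r)v=(a-4)v$ negative for an admissible $a$ while keeping $a>1/p$ satisfiable.
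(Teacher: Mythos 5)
Your strategy tracks the paper's up to the crucial factorization step — start from~\eqref{auxformula3}, apply the $v$-triangle inequality, change variables $\bj+\bl\rightsquigarrow\bj$, invoke Peetre; and the reduction of the dual statement via~\eqref{dualCW_representation} and the decay~\eqref{eq:coeff_decay} is identical. But the step where you pull apart the $\bj$-supremum and the $\bl$-sum goes the wrong way. You write
\begin{equation*}
\sup_\bj\sum_\bl 2^{-\bl rv}A(\bl,a)^v\cdot 2^{(\bj+\bl)_+rv}\big\|P_{\bb(\bj+\bl),a}(\Phi_{\bj+\bl}\ast f)\big\|_{L_p}^v\;\le\;\Big(\sum_\bl 2^{-\bl rv}A(\bl,a)^v\Big)\cdot\sup_{\bm}2^{\bm_+rv}\big\|P_{\bb(\bm),a}(\Phi_{\bm}\ast f)\big\|_{L_p}^v\,,
\end{equation*}
putting the $\ell_1$-estimate on the decay weights and the $\ell_\infty$-estimate on the Peetre factor. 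As you yourself observe, the $\ell_1$-factor diverges at $r=2$: for $l_i<0$ the summand is $2^{(2-r)vl_i}=1$. That divergence is not a bookkeeping artefact to be repaired — your attempted patch double-counts, since the factor $2^{-2|l_i|}$ from~\eqref{auxformula1} is precisely what produced $A(l_i,a)=2^{2l_i}$ for $l_i<0$; invoking both and multiplying manufactures decay that is not there. In fact, had your factorization worked, you would have proven the estimate with $\|f\|_{S^r_{p,\infty}B}$ on the right, which is genuinely false at $r=2$; the hypothesis $f\in S^r_{p,v}B$ with $v=\min\{p,1\}$ must be used in an essential way.

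The fix is to swap the roles of the two factors: bound the decay weights in $\ell_\infty$ and the Peetre factor in $\ell_1$,
\begin{equation*}
\le\;\Big(\sup_\bl 2^{-\bl r}A(\bl,a)\Big)^v\cdot\sup_\bj\sum_\bl 2^{(\bj+\bl)_+rv}\big\|P_{\bb(\bj+\bl),a}(\Phi_{\bj+\bl}\ast f)\big\|_{L_p}^v\,.
\end{equation*}
For the first factor the supremum — unlike the sum — is finite at $r=2$: for $l_i<0$ one simply gets $2^{(2-r)l_i}\le1$, and for $l_i>0$ one gets $2^{(a-2-r)l_i}$, bounded once $a\le 2+r$. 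Combined with the Peetre requirement $a>1/p$, this forces $1/p<a\le 4$ at $r=2$, explaining the constraint $p>1/4$, which you did correctly identify. For the second factor: the inner sum over $\bl\in\Z^d$ equals $\sum_{\bm\in\N_{-1}^d}2^{\bm_+rv}\|P_{\bb(\bm),a}(\Phi_\bm\ast f)\|_{L_p}^v$ for every $\bj$ (the kernels vanish off $\N_{-1}^d$, and $\bj+\Z^d=\Z^d$), so the outer supremum over $\bj$ is vacuous; Peetre then gives $\lesssim\|f\|^v_{S^r_{p,v}B(\R^d)}$. This is exactly where the strengthened hypothesis on $f$ enters: $\ell_v$-summability of the Littlewood-Paley pieces over $\bm$ supplies the $\ell_1$-summability you need in $\bl$.
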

\begin{proof}
The proof of the primal statement is for the most part analogous to the proof of Proposition~\ref{prop:analysis_CW}. It only deviates after estimate~\eqref{auxformula3}.
Here we get for $\alpha_{\bj,\bk}(f)$ defined as in~\eqref{def:alpha_jk},
 \begin{align*}
    \big\| \{\alpha_{\bj,\bk}(f)\}_{\bj,\bk}  \big\|_{s^{r}_{p,\infty}b(\Gamma^d)}   &\lesssim  \Big( \sup\limits_{\bj\in\N^{d}_{-1}}  \Big\| \sum_{\bl\in\Z^{d}} 2^{|\bj|_1 r} A(\bl,a) P_{\bb(\bj+\bl),a}(\Phi_{\bj+\bl}\ast f)(\cdot)  \Big\|_{L_{p}(\R^{d})}^{v} \Big)^{1/v}  \\
    &\lesssim \Big(\sup\limits_{\bl\in\Z^{d}}  2^{-\bl r} A(\bl,a) \Big)  \Big( \sup\limits_{\bj\in\N^{d}_{-1}} \sum_{\bl\in\Z^{d}} \Big\|  2^{(\bj+\bl)r}    P_{\bb(\bj+\bl),a}(\Phi_{\bj+\bl}\ast f)(\cdot)  \Big\|_{L_{p}(\R^{d})}^{v} \Big)^{1/v} \\
    &\lesssim  \Big( \sum_{\bj\in\N^{d}_{-1}} \Big\|  2^{|\bj|_1 r}    P_{\bb(\bj),a}(\Phi_{\bj}\ast f)(\cdot)  \Big\|_{L_{p}(\R^{d})}^{v} \Big)^{1/v} \lesssim \|f\|_{S^{r}_{p,v}B(\R^{d})} \,. 
    \end{align*}
The dual statement is proved analogously to Corollary~\ref{cor:analysis_CW}.
\end{proof}

\noindent
Proposition~\ref{prop:synthesis_CW}, the other
essential ingredient in the proof of Theorem~\ref{thm:CW_characterization}, is proved next. It deals with the synthesis of functions from primal Chui-Wang wavelets as building blocks.

\begin{prop}\label{prop:synthesis_CW}
Let $0<p,q\le \infty$. 
Under the condition $\sigma_p-2 < r < 1 + \frac{1}{p}$ with $\sigma_p=(\frac{1}{p}-1)_+$
the sum 
\begin{align}\label{eqdef:f_synthesis}
f := \sum_{(\bj,\bk)\in\Gamma^d}  \mu_{\bj,\bk} \psi_{\bj,\bk}
\end{align}
converges weakly in $S^{r}_{p,q}B(\R^{d})$ for any sequence $\{\mu_{\bj,\bk}\}_{\bj,\bk} \in s^r_{p,q}b(\Gamma^d)$ with respect to the conjugate space $S^{r'}_{p',q'}B(\R^{d})$.
Moreover, the limit functions $f\in S^{r}_{p,q}B(\R^{d})$ fulfill the estimate 
\begin{align}\label{eqref:syn_estimate}
\| f \|_{S^{r}_{p,q}B(\R^{d})} \lesssim 
\big\| \{\mu_{\bj,\bk}\}_{\bj,\bk} \big\|_{s^{r}_{p,q}b(\Gamma^d)}\,.
\end{align}
If $\max\{p,q\}<\infty$ the convergence of~\eqref{eqdef:f_synthesis} is strong and unconditional in $S^{r}_{p,q}B(\R^{d})$.
\end{prop}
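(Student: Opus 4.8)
The plan is to establish the quasi-norm estimate \eqref{eqref:syn_estimate} first for finite sums, then promote it to the infinite series via a density/completeness argument that also yields the stated convergence. The starting point is the local mean (Chui--Wang) characterization machinery already set up in Section~\ref{sec:CW_char}: pick the band-limited kernels $\{\Phi_{\bl}\}_{\bl}$ and the compactly supported local mean kernels $\{\Lambda_{\bl}\}_{\bl}$ of~\eqref{eqdef:gen_dual_prod}, and for a finite subset $F\subset\Gamma^d$ set $f_F:=\sum_{(\bj,\bk)\in F}\mu_{\bj,\bk}\psi_{\bj,\bk}$. One computes the Littlewood--Paley block
\[
\Phi_{\bn}\ast f_F = \sum_{(\bj,\bk)\in F}\mu_{\bj,\bk}\,\Phi_{\bn}\ast\psi_{\bj,\bk},
\]
and then, writing $\bn=\bj+\bl$ and summing over $\bl\in\Z^d$, one estimates $\|\Phi_{\bj+\bl}\ast\psi_{\bj,\bk}\|$-type quantities exactly as in the proof of Proposition~\ref{prop:analysis_CW}, but with the roles of $\Phi$ and $\Lambda$ exchanged in spirit: now it is the \emph{band-limited} kernel $\Phi_{\bj+\bl}$ hitting the piecewise-linear, compactly supported wavelet $\psi_{\bj,\bk}$. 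The two vanishing moments of $\psi_{\bj,\bk}$ (for $j_i\ge0$) give decay when $l_i\ge 0$, while the moment/smoothness properties of $\Phi_{\bj+\bl}$ (a Schwartz function) give decay when $l_i<0$; combined with the standard support-size/overlap count $\mathcal O(2^{-\bj-\bl_+})$ for the wavelets, one obtains a convolution-type inequality
\[
\Big(\sum_{\bn\in\N_0^d}2^{\bn_+ rq}\|\Phi_{\bn}\ast f_F\|_{L_p(\R^d)}^q\Big)^{1/q}
\lesssim \Big(\sum_{\bl\in\Z^d} C(\bl)^u\Big)^{1/u}\cdot\big\|\{\mu_{\bj,\bk}\}_{\bj,\bk}\big\|_{s^r_{p,q}b(\Gamma^d)},
\]
with $u=\min\{1,p,q\}$ and $C(\bl)=\prod_i C(l_i)$, where $C(l_i)$ decays like $2^{l_i(r-\sigma_p+2-\varepsilon)}$ for $l_i<0$ (using $r>\sigma_p-2$) and like $2^{-l_i(1+1/p-r-\varepsilon)}$ for $l_i\ge 0$ (using $r<1+1/p$); the geometric sum $\sum_{\bl}C(\bl)^u$ converges precisely in the stated parameter range. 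This reasoning is almost verbatim the dual of the Peetre-maximal-function estimate already carried out, so I would present it by indicating the modifications rather than reproducing every line.

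Once \eqref{eqref:syn_estimate} holds uniformly for all finite partial sums $f_F$, the series $\sum_{(\bj,\bk)}\mu_{\bj,\bk}\psi_{\bj,\bk}$ is an unconditionally Cauchy net in $S^r_{p,q}B(\R^d)$ whenever $\max\{p,q\}<\infty$: the tail $\big\|\{\mu_{\bj,\bk}\}_{(\bj,\bk)\notin F}\big\|_{s^r_{p,q}b(\Gamma^d)}\to 0$ as $F\uparrow\Gamma^d$ precisely because the sequence space norm is a genuine (quasi-)norm with absolutely continuous ``norm" in that range, and the same estimate bounds the tail of the image. By completeness of $S^r_{p,q}B(\R^d)$ the limit $f$ exists in the strong topology, is independent of the exhaustion, and satisfies \eqref{eqref:syn_estimate}. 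For general $0<p,q\le\infty$ (in particular $p=\infty$ or $q=\infty$), strong convergence may fail — this is exactly the boundary phenomenon flagged in the discussion after Theorem~\ref{thm:CW_characterization} — so here I would instead test against $g\in S^{r'}_{p',q'}B(\R^d)$: using the duality product $\dual{\cdot,\cdot}$ and the Hölder-type inequality Lemma~\ref{lem:Hölder_discr} together with the already-proven analysis estimate (Corollary~\ref{cor:analysis_CW}) applied to $g$, one gets
\[
\Big|\dual{f_F,g}\Big|=\Big|\sum_{(\bj,\bk)\in F}\mu_{\bj,\bk}\,2^{-\bj_+}\,\widetilde{\lambda}_{\bj,\bk}(g)\Big|
\le \big\|\{\mu_{\bj,\bk}\}\big\|_{s^r_{p,q}b}\cdot\big\|\{\widetilde\lambda_{\bj,\bk}(g)\}\big\|_{s^{r'+1}_{p',q'}b}
\lesssim \big\|\{\mu_{\bj,\bk}\}\big\|_{s^r_{p,q}b}\,\|g\|_{S^{r'}_{p',q'}B(\R^d)},
\]
so the functionals $g\mapsto\dual{f_F,g}$ are uniformly bounded and form a Cauchy net in $(S^{r'}_{p',q'}B(\R^d))'$; the weak limit is the claimed $f\in S^r_{p,q}B(\R^d)$, and lower semicontinuity of the (quasi-)norm under weak convergence gives \eqref{eqref:syn_estimate} again.

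The main obstacle I expect is bookkeeping the boundary cases cleanly: making sure the kernel decay exponents are matched correctly so that the critical geometric series converges exactly on $\sigma_p-2<r<1+\tfrac1p$ and not on a strictly smaller range, and — more delicately — getting the \emph{convergence} statements right when $\max\{p,q\}=\infty$. In that regime one cannot claim strong convergence, and even the weak convergence must be phrased with respect to the conjugate space $S^{r'}_{p',q'}B(\R^d)$ via the extended duality product; care is needed that $f_F\to f$ in this weak sense is genuinely the relevant notion and that the pairing is well-defined (which is guaranteed by the embeddings $\testpool\hookrightarrow S^{r'}_{p',q'}B(\R^d)$ and $S^r_{p,q}B(\R^d)\hookrightarrow\funpool$ recorded after \eqref{def:BB}). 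The unconditionality in the case $\max\{p,q\}<\infty$ is then a routine consequence of the fact that the partial-sum estimate holds over arbitrary finite index sets $F$, with constants independent of $F$.
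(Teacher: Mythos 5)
Your overall architecture (estimate for finite/partial sums, promote to convergence, separate treatment of $\max\{p,q\}=\infty$, duality argument via Lemma~\ref{lem:Hölder_discr} and the analysis result for test functions) is essentially the one the paper follows. However, two steps in your proposal would break down as written.

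First, the central quasi-norm estimate. You propose to compute $\Phi_{\bn}\ast f_F$ with the \emph{band-limited} kernel $\Phi_{\bn}$ and then invoke a ``standard support-size/overlap count $\mathcal{O}(2^{-\bj-\bl_+})$'' for $\Phi_{\bj+\bl}\ast\psi_{\bj,\bk}$. This does not go through: $\Phi_{\bj+\bl}$ has compact Fourier support, hence is \emph{not} compactly supported on the spatial side, so the convolutions $\Phi_{\bj+\bl}\ast\psi_{\bj,\bk}$ spread over all of $\R^d$ and no finite overlap count exists. In Proposition~\ref{prop:analysis_CW} this is precisely why the Peetre maximal function is brought in. For the synthesis estimate the paper avoids the maximal-function machinery altogether by switching, via the local means characterization (Theorem~\ref{thm:char_localmeans}), to \emph{compactly supported} kernels $\Lambda_{\bj}$ as the Littlewood--Paley filters: then $\Lambda_{\bj}\ast\psi_{\bj+\bl,\bk}$ is genuinely compactly supported, the overlap count $\mathcal{O}(2^{\bl_+})$ holds, and the size bounds $2^{-3l_i}$ (using the vanishing moments of $\Lambda$) and $2^{l_i}$ (using the Lipschitz structure of $\psi$) produce a geometric sum that converges exactly on $\sigma_p-2<r<1+\tfrac1p$. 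If you keep $\Phi$, you either have to reintroduce the Peetre maximal operator or your estimate is simply unproven.

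Second, showing that the limit actually lies in $S^{r}_{p,q}B(\R^d)$. Your plan deduces this from ``lower semicontinuity of the (quasi-)norm under weak convergence.'' In the quasi-Banach regime $\min\{p,q\}<1$ this is not available in the form you need: the pairing with $S^{r'}_{p',q'}B(\R^d)$ only identifies the weak* limit inside the bi-dual $S^{r''}_{p'',q''}B(\R^d)=S^{r-\sigma_p}_{\max\{p,1\},\max\{q,1\}}B(\R^d)$, which is strictly larger than $S^{r}_{p,q}B(\R^d)$ when $\sigma_p>0$. Membership of $f$ in $S^{r}_{p,q}B(\R^d)$ therefore has to be proved directly: the paper does this by applying $\Lambda_{\bj}\ast(\cdot)$ to the weak* limit and showing that the resulting pointwise series converges absolutely almost everywhere, with the norm bound carried by the very estimate you want to establish. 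Relatedly, the weak convergence statement with respect to the conjugate space in the endpoint case requires the rather delicate sum-interchange justified in the paper via a hyperbolic decomposition of unity and an auxiliary quantity $\beta_{\bj,\bk}(\xi)$; this is not a one-line consequence of uniform boundedness of the functionals $g\mapsto\dual{f_F,g}$.
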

\begin{proof}
Let $\{\mu_{\bj,\bk}\}_{\bj,\bk} \in s^r_{p,q}b(\Gamma^d)$ and put 
$\mu_{\bj,\bk}:= 0$ and $\psi_{\bj,\bk}:=0$ if $j_i<-1$ for some $i\in[d]$, so that
\[
\sum_{(\bj,\bk)\in\Z^d\times\Z^d}  \mu_{\bj,\bk} \psi_{\bj,\bk} = \sum_{(\bj,\bk)\in\Gamma^d}  \mu_{\bj,\bk} \psi_{\bj,\bk}  \,.
\]
Further, for the univariate wavelets, we also define $\psi_{j,k}:=0$ if $j<-1$.

\noindent
{\bf Part (i)\,[}{\it Weak* convergence in $\mathcal{S}^\prime(\R^{d})$}{\bf]:}\:
We take a test function $\xi\in S^{r'}_{p',q'}B(\R^{d})$ with parameters as in~\eqref{eqdef:general_conj_exponents}. Since $\frac{1}{p'}-2 < r' < 2$ by the assumptions on $p$ and $r$, we can apply Proposition~\ref{prop:analysis_CW}, which yields
\[
\{2^{\bj_+} \dual{\xi,\psi_{\bj,\bk}}\}_{\bj,\bk} \in s^{r'}_{p',q'}b(\Gamma^d) \quad\text{or equivalently}\quad  \{\dual{\xi,\psi_{\bj,\bk}}\}_{\bj,\bk}\in s^{r'+1}_{p',q'}b(\Gamma^d) \,.
\]
The dual pairing between $s^{r}_{p,q}b(\Gamma^d)$ and $s^{r'+1}_{p',q'}b(\Gamma^d)$ is natural, as shown in Lemma~\ref{lem:Hölder_discr}.
We get
\begin{align}\label{central_weak_est}
\sum_{(\bj,\bk)\in\Z^{d}\times\Z^{d}} |\mu_{\bj,\bk} \dual{\xi,\psi_{\bj,\bk}}|
\le \big\|\{\mu_{\bj,\bk}\}_{\bj,\bk}\big\|_{s^{r}_{p,q}b(\Gamma^d)} \big\| \{\dual{\xi,\psi_{\bj,\bk}}\}_{\bj,\bk} \big\|_{s^{r'+1}_{p',q'}b(\Gamma^d)}  \lesssim \big\|\{\mu_{\bj,\bk}\}_{\bj,\bk}\big\|_{s^{r}_{p,q}b(\Gamma^d)}  \|\xi\|_{S^{r'}_{p',q'}B(\R^d)}\,.
\end{align}
The series~\eqref{eqdef:f_synthesis} hence converges weak*ly in $\big[S^{r'}_{p',q'}B(\R^{d})\big]^\prime$ to some limit $\tilde{f}\in\big[S^{r'}_{p',q'}B(\R^{d})\big]^\prime$, which can naturally be identified with an element $f\in\mathcal{S}^\prime(\R^{d})$, but note that
in case $\max\{p',q'\}=\infty$, i.e.\ when $\mathcal{S}(\R^{d})$ is not dense in $S^{r'}_{p',q'}B(\R^{d})$, the space $\big[S^{r'}_{p',q'}B(\R^{d})\big]^\prime$ is not
a subspace of $\mathcal{S}^\prime(\R^{d})$.
Rereading~\eqref{central_weak_est} for test functions $\varphi\in\mathcal{S}(\R^d)$, we can further deduce that $f\in\big[\overset{\circ}{S}{}^{r'}_{p',q'}B(\R^d)\big]^\prime$, where $\overset{\circ}{S}{}^{r'}_{p',q'}B(\R^d)$ is the closure of $\mathcal{S}(\R^{d})$ in $S^{r'}_{p',q'}B(\R^d)$. 
One can identify (cf.~\eqref{eq:conj_bidual}) 
\begin{align*}
\big[\overset{\circ}{S}{}^{r'}_{p',q'}B(\R^d)\big]^\prime\cong S^{r^{\prime\prime}}_{p^{\prime\prime},q^{\prime\prime}}B(\R^d) = S^{r-\sigma_p}_{\max\{p,1\},\max\{q,1\}}B(\R^d)\,.
\end{align*}
Altogether, we have proven unconditional weak* convergence of~\eqref{eqdef:f_synthesis} in $\big[\overset{\circ}{S}{}^{r'}_{p',q'}B(\R^d)\big]^\prime$ to some $f\in S^{r^{\prime\prime}}_{p^{\prime\prime},q^{\prime\prime}}B(\R^d)$, which in particular entails unconditional weak* convergence to $f$ in $\mathcal{S}^\prime(\R^{d})$, i.e.\
\begin{align}\label{eq:weakstarconv}
\langle f,\varphi\rangle_{\mathcal{S}^\prime\times\mathcal{S}} = \sum_{(\bj,\bk)\in\Z^d\times\Z^{d}}  \mu_{\bj,\bk} \langle\psi_{\bj,\bk},\varphi\rangle_{\mathcal{S}^\prime\times\mathcal{S}} \quad\text{for every $\varphi\in\mathcal{S}(\R^{d})$.}
\end{align}

\noindent
{\bf Part (ii)\,[}{\it Limit properties}{\bf]:}\:
Next, we want to show 
$f\in S^{r}_{p,q}B(\R^{d})$ and
\begin{align*}
\|f\|_{S^{r}_{p,q}B(\R^d)} \lesssim \big\| \{\mu_{\bj,\bk}\}_{\bj,\bk} \big\|_{s^{r}_{p,q}b(\Gamma^d)} \,. 
\end{align*}
We already know from Part~(i) that $f\in S^{r^{\prime\prime}}_{p^{\prime\prime},q^{\prime\prime}}B(\R^d)$, which settles $f\in S^{r}_{p,q}B(\R^{d})$ in the Banach case, when $\min\{p,q\}\ge1$.
In general, however, $f\in S^{r^{\prime\prime}}_{p^{\prime\prime},q^{\prime\prime}}B(\R^d)$ is a weaker statement than $f\in S^{r}_{p,q}B(\R^{d})$ since, according to~\eqref{eq:conj_bidual}, one only has the embedding 
\[
S^{r}_{p,q}B(\R^d) \hookrightarrow S^{r^{\prime\prime}}_{p^{\prime\prime},q^{\prime\prime}}B(\R^d) \,.
\] 
To prove $f\in S^{r}_{p,q}B(\R^{d})$ for the full parameter range $0<p,q\le\infty$, let $\{\Lambda_{\bj}\}_{\bj}\subset \mathcal{S}(\R^d)$ be a family of local mean kernels 
with $L>r$ vanishing
moments and compact support, as specified in Definition~\ref{def:localmeansfam}. Then, the weak* convergence of~\eqref{eq:weakstarconv}
translates to the identity 
\begin{align}\label{eq:weakstarconv2}
    \Lambda_{\bj} \ast f = \Lambda_{\bj} \ast \Big( \sum_{(\bl,\bk)\in\Z^d\times\Z^{d}}    \mu_{\bl,\bk}   \psi_{\bl,\bk} \Big) = \sum_{(\bl,\bk)\in\Z^d\times\Z^{d}}    \mu_{\bl,\bk} (\Lambda_{\bj} \ast \psi_{\bl,\bk}) =  \sum_{(\bl,\bk)\in\Z^d\times\Z^{d}}    \mu_{\bj+\bl,\bk} (\Lambda_{\bj} \ast \psi_{\bj+\bl,\bk}) 
\end{align}
in $\mathcal{S}^\prime(\R^d)$ with weak* convergence of the occurring sums. Our following considerations will show that, almost everywhere, the two right-most sums also converge pointwise absolutely.

Let $i\in[d]$ and $S_{j_i,k_i,l_i}:=\supp (\Lambda_{j_i+l_i}\ast \psi_{j_i,k_i})$ (cf.~\eqref{def:conv_support}). Analogously to \eqref{l-} and \eqref{l+}, we obtain
\begin{align*}
\|\chi_{S_{j_i+l_i,k_i,-l_i}}\|_{L_1(\R)} \lesssim 2^{-j_i} \quad\text{and}\quad |  \Lambda_{j_i} \ast \psi_{j_i+l_i,k_i} (x) | \lesssim \begin{cases} 2^{-3l_i}  \chi_{S_{j_i+l_i,k_i,-l_i}}  &,\, l_i\ge 0 \,, \\ 
   2^{l_i}  \chi_{S_{j_i+l_i,k_i,-l_i}} &,\, l_i< 0 \,. \end{cases}
	\end{align*} 

\noindent
Now, let us define $S_{\bj,\bk,\bl} := S_{j_1,k_1,l_1} \times \cdots \times S_{j_d,k_d,l_d}$ and
\begin{align*}
	A(\bl):=\prod_{i\in[d]}     A(l_i) \quad\text{with}\quad
	A(l_i):= \begin{cases} 2^{-3l_i}  &,\, l_i\ge 0 \,, \\ 2^{l_i} &,\, l_i< 0 \,. \end{cases}
\end{align*}
Further, we put $u:=\min\{1,p,q\}$.
Then, with the $u$-triangle inequality in the last estimation step,
 \begin{align*}
 X &:=
    \Big( \sum_{\bj\in\N^{d}_{0}}  2^{\bj_+ rq}  \Big\|       \sum_{\bl,\bk\in\Z^d}  
    \big|\mu_{\bj+\bl,\bk} (\Lambda_{\bj} \ast \psi_{\bj+\bl,\bk})\big|  \Big\|_{L_p(\R^d)}^q \Big)^{u/q} \\ 
    &\lesssim  \Big( \sum_{\bj\in\N^{d}_{0}}  2^{\bj_+ rq}   \Big\|    \sum_{\bl\in\Z^d} A(\bl)    
     \Big( \sum_{\bk\in\Z^d}  |\mu_{\bj+\bl,\bk}| \chi_{S_{\bj+\bl,\bk,-\bl}}  \Big)  \Big\|_{L_p(\R^d)}^{q} \Big)^{u/q}    \\
      &\le  \sum_{\bl\in\Z^d}  A(\bl)^u \Big( \sum_{\bj\in\N^{d}_{0}}   2^{\bj_+ r q}    \Big\|       \sum_{\bk\in\Z^d}   |\mu_{\bj+\bl,\bk}| \chi_{S_{\bj+\bl,\bk,-\bl}} \Big\|_{L_p(\R^d)}^{q} \Big)^{u/q}  \,.
\end{align*}

Due to $\mathcal{O}(2^{\bl_+})$ local overlaps of the function system $\big\{\chi_{S_{\bj+\bl,\bk,-\bl}}:\bk\in\Z^{d}\big\}$, we get  
\begin{align*}
    \Big( \sum_{\bk\in\Z^{d}}  |\mu_{\bj+\bl,\bk}| \chi_{S_{\bj+\bl,\bk,-\bl}}(x) \Big)^p
    \lesssim 2^{\bl_+(p-1)_+}  \sum_{\bk\in\Z^{d}}  |\mu_{\bj+\bl,\bk}|^p  \chi_{S_{\bj+\bl,\bk,-\bl}}(x) 
\end{align*}
for each fixed $x\in\R^{d}$, which yields
\begin{align*}
     \Big\|       
       \sum_{\bk\in\Z^{d}}  |\mu_{\bj+\bl,\bk}| \chi_{S_{\bj+\bl,\bk,-\bl}} \Big\|_{L_p(\R^d)}
       \lesssim 2^{\bl_+(1-\frac{1}{p})_+} \Big( \sum_{\bk\in\Z^{d}}  |  \mu_{\bj+\bl,\bk}|^p \|  \chi_{S_{\bj+\bl,\bk,-\bl}}\|_{L_1(\R^d)} \Big)^{1/p}
       \lesssim  2^{\bl_+(1-\frac{1}{p})_+} 2^{-\bj_+/p} \Big( \sum_{\bk\in\Z^{d}}  | \mu_{\bj+\bl,\bk}|^p  \Big)^{1/p} \,.
\end{align*}

Noting $2^{\bj_+}=2^{\bj}$ for $\bj\in\N^{d}_{0}$, we obtain 
\begin{align*}
    X
    &\lesssim \sum_{\bl\in\Z^{d}}  A(\bl)^u 2^{u\bl_{+}(1-\frac{1}{p})_+} \Big( \sum_{\bj\in\N^{d}_{0}}    2^{\bj_+(r-\frac{1}{p})q} \Big(  \sum_{\bk\in\Z^{d}} | \mu_{\bj+\bl,\bk}|^p \Big)^{q/p}   \Big)^{1/\tq}  \\ 
     &= \sum_{\bl\in\Z^{d}}  A(\bl)^u 2^{u\bl_{+}(1-\frac{1}{p})_+} \Big( \sum_{\bj\in\N^{d}_{0}}  2^{-\bl(r-\frac{1}{p})q}   2^{(\bj+\bl)(r-\frac{1}{p})q} \Big(  \sum_{\bk\in\Z^{d}} | \mu_{\bj+\bl,\bk}|^p \Big)^{q/p}   \Big)^{1/\tq}  \\ 
    &\lesssim  \sum_{\bl\in\Z^{d}}  A(\bl)^u 2^{u\bl_{+}(1-\frac{1}{p})_+}  2^{-u\bl(r-\frac{1}{p})}  
    \Big( \sum_{\bj\in\N^{d}_{-1}}   2^{\bj_+(r-\frac{1}{p})q}  \Big(  \sum_{\bk\in\Z^{d}} |\mu_{\bj,\bk}|^p \Big)^{q/p}    \Big)^{1/\tq} \,.
    \end{align*}
    
Since $r<1+\frac{1}{p}$ and $r>\sigma_p-2$, with $\sigma_p=(\frac{1}{p}-1)_+=(1-\frac{1}{p})_+-(1-\frac{1}{p})$,    
\begin{align*}
     \sum_{\bl\in\Z^{d}} \Big( A(\bl) 2^{\bl_{+}(1-\frac{1}{p})_+}  2^{-\bl(r-\frac{1}{p})}  \Big)^u = \sum_{\bl\in\Z^{d}} \Big( \prod_{i=1}^{d} 2^{(1+\frac{1}{p}-r)l^-_i}  \cdot 2^{((1-\frac{1}{p})_+-(1-\frac{1}{p})-2-r)l^+_i} \Big)^u <\infty \,.
\end{align*}
We thus obtain $X \lesssim \big\| \{\mu_{\bj,\bk}\}_{\bj,\bk} \big\|^{u}_{s^{r}_{p,q}b(\Gamma^d)}<\infty$, which implies that the right-most and the second  right-most sum in~\eqref{eq:weakstarconv2} converge pointwise almost everywhere, in an absolute sense. Since the pointwise limit function represents the Littlewood-Paley block $\Lambda_{\bj} \ast f$,
we finally conclude
\begin{align*}
\|f\|_{S^{r}_{p,q}B(\R^d)}^{u} \lesssim X \lesssim \big\| \{\mu_{\bj,\bk}\}_{\bj,\bk} \big\|^{u}_{s^{r}_{p,q}b(\Gamma^d)} \,.
\end{align*}
\noindent
{\bf Part (iii)\,[}{\it Unconditional strong convergence}{\bf]:}\:
Let us now handle the strong convergence of~\eqref{eqdef:f_synthesis} when $\max\{p,q\}<\infty$. In this case, the finite sequences are dense in $s^{r}_{p,q}b(\Gamma^d)$. Let us fix some enumeration $\gamma:\N \to \Gamma^d$ of the index set $\Gamma^d$
and define
\[
f_N:= \sum_{n=1}^N \mu_{\gamma(n)} \psi_{\gamma(n)} \;,\quad N\in\N \,.
\]
According to Part~(ii), we have the estimate~\eqref{eqref:syn_estimate} for all $f_N$. Let $M,N\in\N$ with $M\le N$ and define
a sequence $\mu^{(M,N)}_{\bj,\bk}$ with $(\bj,\bk)\in\Gamma^d$ by
\begin{align*}
    \mu^{(M,N)}_{\bj,\bk}  :=\begin{cases} \mu_{\bj,\bk}  &,\, (\bj,\bk)\in \{\gamma(M),\ldots,\gamma(N)\}  \,, \\ 
   0 &,\,(\bj,\bk)\notin \{\gamma(M),\ldots,\gamma(N)\}  \,. \end{cases}
\end{align*}
Then, by Part~(ii), 
\begin{align*}
    \| f_N - f_M \|_{S^{r}_{p,q}B(\R^{d})}  = \Big\|\sum_{(\bj,\bk)\in\Gamma^d} \mu^{(M,N)}_{\bj,\bk} \psi_{\bj,\bk} \Big\|_{S^{r}_{p,q}B(\R^{d})}  \lesssim  \big\|  \big\{\mu^{(M,N)}_{\bj,\bk}\big\}_{\bj,\bk}   \big\|_{s^{r}_{p,q}b(\Gamma^d)}  
\end{align*}
and, since finite sequences are dense in $s^{r}_{p,q}b(\Gamma^d)$, we have further
\[
 \big\|  \big\{\mu^{(M,N)}_{\bj,\bk}\big\}_{\bj,\bk}   \big\|_{s^{r}_{p,q}b(\Gamma^d)}  \to 0 \;,\quad M,N\to \infty \,.
\]
This shows that $\{f_N\}_{N\in\N}$ is a Cauchy sequence and therefore 
convergent in $S^{r}_{p,q}B(\R^{d})$. Consequently, since the enumeration $\gamma$ can be chosen arbitrarily, the series~\eqref{eqdef:f_synthesis} converges unconditionally in $S^{r}_{p,q}B(\R^{d})$. The strong limit thereby coincides with the weak* limit $f$.  

\noindent
{\bf Part (iv)\,[}{\it Weak convergence in $S^{r}_{p,q}B(\R^{d})$ with respect to $S^{r'}_{p',q'}B(\R^{d})$}{\bf]:}\:
Lastly, we prove the weak convergence of~\eqref{eqdef:f_synthesis} to $f$ with respect to $S^{r'}_{p',q'}B(\R^{d})$. This notion of convergence is stronger than weak* convergence in $\mathcal{S}^\prime(\R^{d})$. Let $f\in S^{r}_{p,q}B(\R^{d})$ be the weak* limit of~\eqref{eqdef:f_synthesis}. Using the abbreviations
$\mathscr{F}:=S^{r}_{p,q}B(\R^{d})$ and $\tilde{\mathscr{F}}:=S^{r'}_{p',q'}B(\R^{d})$, the product 
\begin{align*}
\langle f, \xi \rangle_{\mathscr{F}\times\tilde{\mathscr{F}}} 
= \sum_{\bl\in\Z^{d}} \big\langle  \Lambda_{\bl} \ast f \:,\,   \Phi^{\boldsymbol{-}}_{\bl} \ast \xi \big\rangle_{\R^d}
= \sum_{\bl\in\Z^{d}} \Big\langle   \sum_{(\bj,\bk)\in\Gamma^d}  \mu_{\bj,\bk} (\Lambda_{\bl} \ast\psi_{\bj,\bk}) \:,\,   \Phi^{\boldsymbol{-}}_{\bl} \ast \xi \Big\rangle_{\R^d} 
\end{align*}
is well-defined for any test function $\xi\in\tilde{\mathscr{F}}$, where $\{\Lambda_{\bl}\}_{\bl}$ and 
$\{\Phi_{\bl}\}_{\bl}$ are the kernel families from~\eqref{eqdef:gen_dual_prod}.
We know from Part~(ii) that $\sum_{(\bj,\bk)\in\Gamma^d}  \mu_{\bj,\bk} (\Lambda_{\bl} \ast\psi_{\bj,\bk})$ is a pointwise representation of $\Lambda_{\bl} \ast f$ and converges absolutely almost everywhere.  

As an auxiliary tool, let $\{\varphi_{\bm}\}_{\bm\in \N^d_0}$ be a hyperbolic decomposition of unity on $\R^d$. Then we can expand
\begin{align}
\langle f, \xi \rangle_{\mathscr{F}\times\tilde{\mathscr{F}}}
&= \sum_{\bl\in\Z^{d}} \sum_{\bm\in \N^d_0}   \Big\langle   \sum_{(\bj,\bk)\in\Gamma^d}  \mu_{\bj,\bk} (\Lambda_{\bl} \ast\psi_{\bj,\bk}) \:,\,  \varphi_{\bm}\cdot(\Phi^{\boldsymbol{-}}_{\bl} \ast \xi) \Big\rangle_{\mathcal{S}^\prime\times\mathcal{S}} \nonumber \\
&= \sum_{\bl\in\Z^{d}} \sum_{\bm\in \N^d_0} \sum_{(\bj,\bk)\in\Gamma^d}  \mu_{\bj,\bk}  \big\langle     \Lambda_{\bl} \ast\psi_{\bj,\bk} \:,\,  \varphi_{\bm}\cdot(\Phi^{\boldsymbol{-}}_{\bl} \ast \xi) \big\rangle_{\mathcal{S}^\prime\times\mathcal{S}} \label{beta_estimate}\,, 
\end{align}
with the second equality due to $\varphi_{\bm}\cdot(\Phi^{\boldsymbol{-}}_{\bl} \ast \xi) \in\mathcal{S}(\R^d)$
and weak* convergence of $\sum_{(\bj,\bk)\in\Gamma^d}  \mu_{\bj,\bk} (\Lambda_{\bl} \ast\psi_{\bj,\bk})$.

Next, we analyze the term 
\begin{align*}
   \sum_{\bl\in\Z^{d}} \sum_{\bm\in \N^d_0}  \sum_{(\bj,\bk)\in\Gamma^d}  |\mu_{\bj,\bk}| \big\langle     |\Lambda_{\bl} \ast\psi_{\bj,\bk}| \:,\,  \varphi_{\bm}\cdot |\Phi^{\boldsymbol{-}}_{\bl} \ast \xi| \big\rangle_{\R^d}   
  =  \sum_{(\bj,\bk)\in\Gamma^d}  |\mu_{\bj,\bk}| \sum_{\bl\in\Z^{d}} \big\langle     |\Lambda_{\bl} \ast\psi_{\bj,\bk}| \:,\,   |\Phi^{\boldsymbol{-}}_{\bl} \ast \xi| \big\rangle_{\R^d} \,.
\end{align*}
The quantity
\begin{align*}
\beta_{\bj,\bk}(\xi) := \sum_{\bl\in\Z^{d}} 2^{\bj_+} \big\langle     |\Lambda_{\bl} \ast\psi_{\bj,\bk}| \:,\,   |\Phi^{\boldsymbol{-}}_{\bl} \ast \xi| \big\rangle_{\R^d}
\end{align*}
is similar to $\alpha_{\bj,\bk}(\xi)$ from~\eqref{def:alpha_jk}.
In the proof of Proposition~\ref{prop:analysis_CW} we obtained the estimate \eqref{stronger_analysis}. Analogously, we get
$\| \{\beta_{\bj,\bk}(\xi)\}_{\bj,\bk}  \|_{s^{r'}_{p',q'}b(\Gamma^d)} 
\lesssim \|\xi\|_{S^{r'}_{p',q'}B(\R^{d})}$ since interchanging $\Phi_{\bl}$ by $\Phi^{\boldsymbol{-}}_{\bl}$ does not have an effect on the argumentation. This estimate then implies $\{2^{-\bj_+}\beta_{\bj,\bk}(\xi)\}_{\bj,\bk}\in s^{r'+1}_{p',q'}b(\Gamma^d)$
and Lemma~\ref{lem:Hölder_discr} yields
\begin{align*}
\sum_{(\bj,\bk)\in\Gamma^d} 2^{-\bj_+}|\mu_{\bj,\bk}|  \beta_{\bj,\bk}(\xi) \le  \|\{\mu_{\bj,\bk} \}_{\bj,\bk}  \|_{s^{r}_{p,q}b(\Gamma^d)} \cdot \| \{2^{-\bj_+}\beta_{\bj,\bk}(\xi)\}_{\bj,\bk}  \|_{s^{r'+1}_{p',q'}b(\Gamma^d)} < \infty \,.
\end{align*}
Hence, we may reorder the sum~\eqref{beta_estimate} and proceed
\begin{align*}
\langle f, \xi \rangle_{\mathscr{F}\times\tilde{\mathscr{F}}} 
&= \sum_{(\bj,\bk)\in\Gamma^d} \mu_{\bj,\bk} \sum_{\bl\in\Z^{d}}     \sum_{\bm\in \N^d_0}   \big\langle     \Lambda_{\bl} \ast\psi_{\bj,\bk} \:,\,  \varphi_{\bm}\cdot(\Phi^{\boldsymbol{-}}_{\bl} \ast \xi) \big\rangle_{\R^d} \\
&=    \sum_{(\bj,\bk)\in\Gamma^d}  \mu_{\bj,\bk} \sum_{\bl\in\Z^{d}} \big\langle     \Lambda_{\bl} \ast\psi_{\bj,\bk} \:,\,  \Phi^{\boldsymbol{-}}_{\bl} \ast \xi \big\rangle_{\R^d} 
=    \sum_{(\bj,\bk)\in\Gamma^d}  \mu_{\bj,\bk} \langle \psi_{\bj,\bk} , \xi \rangle_{\mathscr{F}\times\tilde{\mathscr{F}}}\,. 
\end{align*}
As a consequence, the limit $f$ of~\eqref{eqdef:f_synthesis} is characterized as the unique element $f\in \mathscr{F}$ with
\begin{align*}
\langle f, \xi \rangle_{\mathscr{F}\times\tilde{\mathscr{F}}}
= \sum_{(\bj,\bk)\in\Gamma^d}  \mu_{\bj,\bk} \langle \psi_{\bj,\bk} , \xi \rangle_{\mathscr{F}\times\tilde{\mathscr{F}}} \quad\text{for all }\xi\in\tilde{\mathscr{F}} \,,
\end{align*}
which means weak convergence to $f$ in $\mathscr{F}$ with respect to $\tilde{\mathscr{F}}$.
The proof is finished.
\end{proof}

\noindent
Similar as in Corollary~\ref{cor:analysis_CW}, we can deduce from Proposition~\ref{prop:synthesis_CW} a dual result.

\begin{corollary}\label{cor:synthesis_CW}
The statements of Proposition~\ref{prop:synthesis_CW} still hold true, when the primal Chui-Wang wavelets are replaced by the dual Chui-Wang wavelets. In particular,
\begin{align*}
 \Big\| \sum_{(\bj,\bk)\in\Gamma^{d}} \mu_{\bj,\bk} \tilde{\psi}_{\bj,\bk} \Big\|_{S^{r}_{p,q}B(\R^{d})}  \lesssim 
\big\| \{\mu_{\bj,\bk}\}_{\bj,\bk} \big\|_{s^{r}_{p,q}b(\Gamma^d)} \,.
\end{align*}
\end{corollary}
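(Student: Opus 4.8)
The plan is to reduce Corollary~\ref{cor:synthesis_CW} to Proposition~\ref{prop:synthesis_CW} by exploiting the representation~\eqref{dualCW_representation} of the dual wavelets as rapidly decaying linear combinations of the primal wavelets, in the same spirit as the passage from Proposition~\ref{prop:analysis_CW} to Corollary~\ref{cor:analysis_CW}. Concretely, given $\{\mu_{\bj,\bk}\}_{\bj,\bk}\in s^r_{p,q}b(\Gamma^d)$, I would write
\[
\sum_{(\bj,\bk)\in\Gamma^d} \mu_{\bj,\bk}\,\tilde{\psi}_{\bj,\bk}
= \sum_{(\bj,\bk)\in\Gamma^d} \mu_{\bj,\bk} \sum_{\bn\in\Z^d} a^{(\bj)}_{\bn}\,\psi_{\bj,\bk+\bn}
= \sum_{(\bj,\bk)\in\Gamma^d}\Big(\sum_{\bn\in\Z^d} a^{(\bj)}_{\bn}\,\mu_{\bj,\bk-\bn}\Big)\psi_{\bj,\bk}
=: \sum_{(\bj,\bk)\in\Gamma^d} \nu_{\bj,\bk}\,\psi_{\bj,\bk}\,,
\]
after an index shift $\bk+\bn\rightsquigarrow\bk$ in the inner sum. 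So the new coefficient sequence is the discrete convolution $\nu_{\bj,\bk}=\sum_{\bn}a^{(\bj)}_{\bn}\mu_{\bj,\bk-\bn}$.

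The first key step is to show $\{\nu_{\bj,\bk}\}_{\bj,\bk}\in s^r_{p,q}b(\Gamma^d)$ together with the norm bound $\|\{\nu_{\bj,\bk}\}_{\bj,\bk}\|_{s^r_{p,q}b(\Gamma^d)}\lesssim\|\{\mu_{\bj,\bk}\}_{\bj,\bk}\|_{s^r_{p,q}b(\Gamma^d)}$. This is exactly the computation already performed in the proof of Corollary~\ref{cor:analysis_CW}: using Remark~\ref{rem:Besov_seq_spaces} to rewrite the norm via the characteristic functions $\chi_{\bj,\bk}$, applying the $u$-triangle inequality with $u:=\min\{1,p,q\}$ to pull the $\bn$-sum outside, using translation invariance of the $L_p$-(quasi-)norm to remove the shift by $\bn$, and finally invoking the exponential decay~\eqref{eq:coeff_decay} of $a^{(\bj)}_{\bn}$ to get $\sum_{\bn\in\Z^d}|a^{(\bj)}_{\bn}|^u\lesssim 1$ uniformly in $\bj$. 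I would simply say this estimate is carried out verbatim as in Corollary~\ref{cor:analysis_CW}.

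With the sequence bound in hand, Proposition~\ref{prop:synthesis_CW} applied to $\{\nu_{\bj,\bk}\}_{\bj,\bk}$ immediately gives that $\sum_{(\bj,\bk)}\nu_{\bj,\bk}\psi_{\bj,\bk}$ converges (weakly with respect to $S^{r'}_{p',q'}B(\R^d)$, and strongly/unconditionally when $\max\{p,q\}<\infty$) to some $f\in S^r_{p,q}B(\R^d)$ with $\|f\|_{S^r_{p,q}B(\R^d)}\lesssim\|\{\nu_{\bj,\bk}\}_{\bj,\bk}\|_{s^r_{p,q}b(\Gamma^d)}\lesssim\|\{\mu_{\bj,\bk}\}_{\bj,\bk}\|_{s^r_{p,q}b(\Gamma^d)}$, which is the claimed estimate. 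The remaining point — and the one subtlety worth spelling out — is that the rearrangement identifying $\sum_{(\bj,\bk)}\mu_{\bj,\bk}\tilde{\psi}_{\bj,\bk}$ with $\sum_{(\bj,\bk)}\nu_{\bj,\bk}\psi_{\bj,\bk}$ is legitimate at the level of convergence in $S^r_{p,q}B(\R^d)$ (or the appropriate weak topology). For this I would test against an arbitrary $\xi\in S^{r'}_{p',q'}B(\R^d)$ and check absolute summability of $\sum_{\bj,\bk,\bn}|\mu_{\bj,\bk}|\,|a^{(\bj)}_{\bn}|\,|\dual{\psi_{\bj,\bk+\bn},\xi}|$: by Proposition~\ref{prop:analysis_CW} (valid since $\tfrac{1}{p'}-2<r'<2$) the coefficients $\{\dual{\psi_{\bj,\bk},\xi}\}_{\bj,\bk}$ lie in $s^{r'+1}_{p',q'}b(\Gamma^d)$, the shift by $\bn$ and the decay of $a^{(\bj)}_{\bn}$ again cost only a constant, and Lemma~\ref{lem:Hölder_discr} closes the pairing against $\{\mu_{\bj,\bk}\}_{\bj,\bk}\in s^r_{p,q}b(\Gamma^d)$. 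Fubini for series then justifies the reordering, so $\sum\mu_{\bj,\bk}\tilde{\psi}_{\bj,\bk}$ and $\sum\nu_{\bj,\bk}\psi_{\bj,\bk}$ have the same (weak) limit $f$.

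The main obstacle is not any single hard estimate but rather bookkeeping: one must make sure the index shift $\bk\rightsquigarrow\bk-\bn$ inside a multiscale sum is compatible with the convention $\psi_{\bj,\bk}:=0$ when some $j_i<-1$ (it is, since the shift only moves $\bk$, not $\bj$), and one must track that all convergence assertions — weak with respect to the conjugate space, and unconditional strong convergence for $\max\{p,q\}<\infty$ — transfer from the primal statement through the convolution, which they do because the map $\{\mu_{\bj,\bk}\}\mapsto\{\nu_{\bj,\bk}\}$ is a bounded linear operator on $s^r_{p,q}b(\Gamma^d)$ commuting with the finite-rank truncations used in Part~(iii) of Proposition~\ref{prop:synthesis_CW}.
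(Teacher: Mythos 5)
Your proposal is correct and follows the same route as the paper: reduce to primal synthesis via the representation~\eqref{dualCW_representation}, exploit the exponential decay of $a^{(\bj)}_\bn$, and invoke Proposition~\ref{prop:synthesis_CW}. The only cosmetic difference is that you form the convolved coefficient sequence $\nu_{\bj,\bk}=\sum_{\bn}a^{(\bj)}_{\bn}\mu_{\bj,\bk-\bn}$ and apply the proposition once, whereas the paper first splits $\Gamma^d$ into the pieces $\Gamma^d_\beps$ (on each of which $a^{(\bj)}_\bn=a^{(\beps)}_\bn$ is constant in $\bj$) and uses the $u$-triangle inequality to pull the $a$-coefficients out of the $S^r_{p,q}B$-norm before applying the proposition to each shifted primal sum.
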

\begin{proof}
We use representation~\eqref{dualCW_representation} in terms of the primal wavelets. For $\beps\in\{-1,0\}^d$ we define
\begin{align*}
\Gamma^d_{\beps}:=\Big(I_{\eps_1}\times \cdots \times I_{\eps_d} \Big)\times \Z^d 
\quad\text{with}\quad  I_{\eps}:=\begin{cases} \N_0 \quad&,\, \eps=0\,, \\
 \{-1\} &,\, \eps=-1 \,.\end{cases} 
\end{align*}
Using the $u$-triangle inequality for $u:=\min\{1,p,q\}$ and Proposition~\ref{prop:synthesis_CW}, we then estimate
\begin{align*}
 \Big\|\sum_{(\bj,\bk)\in\Gamma^{d}} \mu_{\bj,\bk} \tilde{\psi}_{\bj,\bk} \Big\|^{u}_{S^{r}_{p,q}B(\R^{d})}
 &= \Big\| \sum_{\beps\in\{-1,0\}^d} \sum_{\bn\in\Z^{d}}  a^{(\beps)}_{\bn}\sum_{(\bj,\bk)\in\Gamma^{d}_{\beps}}   \mu_{\bj,\bk} \psi_{\bj,\bk+\bn} \Big\|^{u}_{S^{r}_{p,q}B(\R^{d})} \\
 &\le \sum_{\beps\in\{-1,0\}^d} \sum_{\bn\in\Z^{d}} |a^{(\beps)}_{\bn}|^{u}  \cdot\Big\|\sum_{(\bj,\bk)\in\Gamma^{d}_{\beps}}  \mu_{\bj,\bk}  \psi_{\bj,\bk+\bn} \Big\|^{u}_{S^{r}_{p,q}B(\R^{d})} \\
 &\lesssim \sum_{\beps\in\{-1,0\}^d} \sum_{\bn\in\Z^{d}} |a^{(\beps)}_{\bn}|^{u} \cdot \big\| \{\mu_{\bj,\bk}\}_{\bj,\bk} \big\|^u_{s^{r}_{p,q}b(\Gamma^d)}  \,.
\end{align*}
Now the statement follows from $\sum_{\bn\in\Z^{d}} |a^{(\beps)}_{\bn}|^{u} <\infty$ for every $\beps\in\{-1,0\}^d$.
\end{proof}



\noindent
We close this section with the proof of Theorem~\ref{thm:CW_characterization}.

\begin{proof}[\bf Proof of Theorem~\ref{thm:CW_characterization}]
By Proposition~\ref{prop:synthesis_CW} the synthesis operator
\[
S: s^{r}_{p,q}b(\Gamma^d) \to S^{r}_{p,q}B(\R^d) \;,\quad \big\{\mu_{\bj,\bk}\big\}_{\bj,\bk} \mapsto \sum_{(\bj,\bk)\in\Gamma^d} \mu_{\bj,\bk} \psi_{\bj,\bk}
\]
is bounded. By Corollary~\ref{cor:analysis_CW} the analysis operator
\[
\widetilde{\Lambda}: S^{r}_{p,q}B(\R^d) \to s^{r}_{p,q}b(\Gamma^d) \;,\quad f \mapsto  \big\{2^{\bj_+} \dual{f,\tilde{\psi}_{\bj,\bk}}\big\}_{\bj,\bk}
\]
is bounded. We want to show
\begin{align}\label{iso_relation}
S\circ\widetilde{\Lambda} = \operatorname{Id}_{S^{r}_{p,q}B(\R^d)} \quad,\quad \widetilde{\Lambda}\circ S = \operatorname{Id}_{s^{r}_{p,q}b(\Gamma^d)}  \,,
\end{align}
which implies that both $S$ and $\widetilde{\Lambda}$ are isomorphisms. In the proof, let us use the abbreviations
$\mathscr{F}:=S^{r}_{p,q}B(\R^d)$,
$\tilde{\mathscr{F}}:=S^{r'}_{p',q'}B(\R^d)$, $\tilde{\tilde{\mathscr{F}}}:=S^{r^{\prime\prime}}_{p^{\prime\prime},q^{\prime\prime}}B(\R^d)$ and recall~\eqref{eq:conj_bidual}, i.e.\ $\mathscr{F}\hookrightarrow \tilde{\tilde{\mathscr{F}}}$.  

We first note that relation~\eqref{iso_relation} is certainly true on $L_2(\R^d)$ resp.\ $\ell_2(\Gamma^d)$,
since the primal and dual Chui-Wang systems form a pair of biorthogonal Riesz bases in $L_2(\R^d)$ (in the sense of~\eqref{rel_biortho_multi}).
Taking especially $\varphi\in\mathcal{S}(\R^d)\hookrightarrow L_2(\R^d)$, one obtains the $L_2$ reproducing formula  
\[
\varphi 
= \sum_{(\bj,\bk)\in\Gamma^d} 2^{\bj_+} \langle \varphi, \psi_{\bj,\bk}\rangle_{\R^d} \tilde{\psi}_{\bj,\bk}
\qquad\text{for all }\varphi\in\mathcal{S}(\R^d) \,. 
\]
Noting $\mathcal{S}(\R^d)\hookrightarrow \tilde{\mathscr{F}}$, the convergence above is also weakly in $\tilde{\mathscr{F}}$ with respect to $\tilde{\tilde{\mathscr{F}}}$. 
This follows from Proposition~\ref{prop:analysis_CW} and Corollary~\ref{cor:synthesis_CW}. 
As a consequence, for any $f\in\mathscr{F}\hookrightarrow \tilde{\tilde{\mathscr{F}}}$,
\begin{align}\label{first_consequence}
 \langle f,\varphi \rangle_{\mathscr{F}\times\tilde{\mathscr{F}}}
=\Big\langle f,  \sum_{(\bj,\bk)\in\Gamma^d} 2^{|\bj|_1} \langle \varphi,\psi_{\bj,\bk}\rangle_{\R^d} \cdot\tilde{\psi}_{\bj,\bk} \Big\rangle_{\mathscr{F}\times\tilde{\mathscr{F}}} =
\sum_{(\bj,\bk)\in\Gamma^d} 2^{|\bj|_1} \langle \varphi, \psi_{\bj,\bk} \rangle_{\R^d} \cdot \langle f,\tilde{\psi}_{\bj,\bk} \rangle_{\mathscr{F}\times\tilde{\mathscr{F}}} \,.
\end{align}

Further, with $f\in\mathscr{F}$ as before, $\widetilde{\Lambda}(f) \in s^{r}_{p,q}b(\Gamma^d)$ by Corollary~\ref{cor:synthesis_CW} and, according to Proposition~\ref{prop:analysis_CW}, the synthesis converges weakly to some $g\in \mathscr{F}$ with test space $\tilde{\mathscr{F}}$. 
To verify the equality $f=g$ it suffices to test with functions $\varphi\in\mathcal{S}(\R^d)$. In view of~\eqref{first_consequence}, a calculation indeed yields
\[
\langle g,\varphi\rangle_{\mathscr{F}\times\tilde{\mathscr{F}}} = \Big\langle  \sum_{(\bj,\bk)\in\Gamma^d} 2^{|\bj|_1} \dual{f,\tilde{\psi}_{\bj,\bk}} \cdot\psi_{\bj,\bk} , \varphi \Big\rangle_{\mathscr{F}\times\tilde{\mathscr{F}}} = \sum_{(\bj,\bk)\in\Gamma^d} 2^{|\bj|_1}  \dual{f,\tilde{\psi}_{\bj,\bk}} \cdot\langle  \psi_{\bj,\bk} , \varphi\rangle_{\mathscr{F}\times\tilde{\mathscr{F}}}
=   \langle f,\varphi \rangle_{\mathscr{F}\times\tilde{\mathscr{F}}} \,.
\]
We have thus proven the primal expansion $f=S(\widetilde{\Lambda}(f))$, which implies that the operator $\widetilde{\Lambda}$ must be bounded from below. By Corollary~\ref{cor:synthesis_CW}, it is also bounded from above. Consequently,
\begin{align*}
    \|f\|_{S^{r}_{p,q}B(\R^{d})} \asymp \big\| \widetilde{\Lambda}(f)\big\|_{s^{r}_{p,q}b(\Gamma^d)} 
    = \big\| \big\{ 2^{|\bj|_1} \dual{f,\tilde{\psi}_{\bj,\bk}} \big\}_{\bj,\bk} \big\|_{s^{r}_{p,q}b(\Gamma^d)}  \,.
\end{align*}
To prove the relation $\widetilde{\Lambda}\circ S = \operatorname{Id}_{s^{r}_{p,q}b(\Gamma^d)}$, let $\{\mu_{\bj,\bk}\}_{\bj,\bk}$ be a sequence from $s^{r}_{p,q}b(\Gamma^d)$. 
Using the weak convergence of $\sum_{(\bj,\bk)\in\Gamma^d}  \mu_{\bj,\bk} \psi_{\bj,\bk}$ in $\mathscr{F}$ with respect to  $\tilde{\mathscr{F}}$, we obtain
\begin{align*}
\Big\langle \sum_{(\bj,\bk)\in\Gamma^d}  \mu_{\bj,\bk} \psi_{\bj,\bk},\tilde{\psi}_{\bj^*,\bk^*} \Big\rangle_{\mathscr{F}\times\tilde{\mathscr{F}}}
= \sum_{(\bj,\bk)\in\Gamma^d}  \mu_{\bj,\bk}    \langle    \psi_{\bj,\bk} \:,\,  \tilde{\psi}_{\bj^\ast,\bk^\ast}  \rangle_{\mathscr{F}\times\tilde{\mathscr{F}}}
= \mu_{\bj^*,\bk^*} \,.
\end{align*}
This finishes the proof of~\eqref{iso_relation}.
The dual statements for the synthesis operator $\widetilde{S}$ and the analysis operator $\Lambda$, where the primal and dual roles of the wavelets are switched, follow analogously.
\end{proof}

\section*{Acknowledgement}
The authors would like to thank Dirk Nuyens for inspiring discussions which helped
to shape the concept of this paper. They would also like to thank Kai Lüttgen
for his valuable contributions to the presented work.


\newpage

\appendix

\section{Some background on Besov spaces}

A more general characterization of the space $S^{r}_{p,q}B(\R^{d})$ as in Definition~\ref{def:domix_Besov_Rd} is given in Theorem~\ref{thm:char_localmeans}.

\begin{definition}\label{def:localmeansfam}
We call a family $\{\Psi_{j}\}_{j\in\N_{0}}$ of functions in $\mathcal{S}(\R)$ a \emph{local means family on $\R$} with $L\in\N$ vanishing moments if the following criteria are met:
	\begin{itemize}
            \item[(i)] $\Psi_j(x) = 2^{j-1}\Psi_1(2^{j-1}x)$ for $j\ge 2$.
		\item[(ii)] There exists $\eps > 0$ such that $|\widehat{\Psi}_{0}(\xi)| > 0$ if $|\xi| < \eps$ and $|\widehat{\Psi}_{1}(\xi)|  > 0$ if $\eps/2 < |\xi| < 2\eps$.
		\item[(iii)] $D^{\alpha}\widehat{\Psi}_1(0)=0$ for $0\le\alpha<L$.
	\end{itemize}
Further, suppose $\{\Psi_{j}^{(i)}\}_{j\in\N_{0}}$ are local means families on $\R$ for every $i\in[d]$, each with $L$ vanishing moments. Then the family $\{\Psi_{\bj}\}_{\bj\in\N_{0}^{d}}$ with $\Psi_{\bj} := \Psi_{j_{1}}^{(1)} \otimes \ldots \otimes \Psi_{j_{d}}^{(d)}$ is called a \emph{local means family on $\R^{d}$} with $L$ vanishing moments.
\end{definition}

\noindent
With the local means families defined above the following characterization of $S^{r}_{p,q}B(\R^{d})$ is possible.

\begin{theorem}[{\cite[Thm~1.23]{Vy06}}]\label{thm:char_localmeans}
Let $0<p,q\le\infty$, $r\in\R$, and $\{\Psi_{\bj}\}_{\bj\in \N^d_{0}}$ be a local means family on $\R^d$
with $L\in\N$ vanishing moments. In case $L>r$, we have
\begin{align*}
\|f\|_{S^{r}_{p,q}B(\R^{d})} \asymp \begin{cases}
			\displaystyle \Big( \sum_{\bj \in \N_{0}^{d}} 2^{r q |\bj|_{1}} \norm{\Psi_{\bj} \ast f}_{L^{p}(\R^{d})}^{q} \Big)^{1/q} \; , \quad &q < \infty \; , \\[4ex]
			\displaystyle \sup_{\bj \in \N_{0}^{d}} 2^{r |\bj|_{1}} \norm{\Psi_{\bj} \ast f}_{L^{p}(\R^{d})}  \; , \quad &q = \infty \; .
   \end{cases}
\end{align*}
\end{theorem}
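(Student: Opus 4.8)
The plan is to deduce the claimed equivalence from the Fourier-analytic definition of $S^{r}_{p,q}B(\R^{d})$ in Definition~\ref{def:domix_Besov_Rd} by a two-sided kernel-comparison argument, in the spirit of the proof of Proposition~\ref{prop:analysis_CW}. Fix once and for all a reference hyperbolic decomposition of unity $\{\varphi_{\bj}\}_{\bj}\in\Phi_{\rm hyp}(\R^d)$ with associated band-limited kernels $\{\Phi_{\bj}\}_{\bj}\subset\mathcal{S}(\R^{d})$, so that $\|f\|_{S^{r}_{p,q}B(\R^{d})}\asymp\bigl(\sum_{\bj}2^{rq|\bj|_1}\|\Phi_{\bj}\ast f\|_{L_{p}}^q\bigr)^{1/q}$ (with the usual supremum for $q=\infty$). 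Since both families are of tensor-product type, all the kernel estimates below factor into $d$ univariate ones; I would carry out the analysis in dimension one and tensorize at the end.

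First I would prove $\bigl(\sum_{\bj}2^{rq|\bj|_1}\|\Psi_{\bj}\ast f\|_{L_{p}}^q\bigr)^{1/q}\lesssim\|f\|_{S^{r}_{p,q}B(\R^{d})}$. Inserting the Littlewood--Paley decomposition $f=\sum_{\bm}\Phi_{\bm}\ast f$ from~\eqref{LPdecomp} and substituting $\bm\rightsquigarrow\bj+\bl$ gives $\Psi_{\bj}\ast f=\sum_{\bl}(\Psi_{\bj}\ast\Phi_{\bj+\bl})\ast(\Phi_{\bj+\bl}\ast f)$. The central lemma is an off-diagonal decay estimate for the convolved kernels: since $\widehat{\Phi}_{\bm}$ is supported in a dyadic annulus at scale $2^{\bm}$, while $\widehat{\Psi}_{j}(\xi)=\widehat{\Psi}_{1}(2^{-(j-1)}\xi)$ for $j\ge2$ both decays rapidly at infinity and vanishes to order $L$ at the origin (Definition~\ref{def:localmeansfam}(i)--(iii)), one gets in each coordinate $\bigl|\widehat{\Psi_{j}\ast\Phi_{m}}(\xi)\bigr|\lesssim 2^{-|j-m|\min\{L,N\}}|\widehat{\Phi}_{m}(\xi)|$ for arbitrarily large $N$, hence a spatial bound $|(\Psi_{j}\ast\Phi_{m})(x)|\lesssim 2^{-|j-m|\min\{L,N\}}2^{\min\{j,m\}}(1+2^{\min\{j,m\}}|x|)^{-N'}$, the base cases $j,m\in\{0,1\}$ being checked directly. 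Estimating $(\Psi_{\bj}\ast\Phi_{\bj+\bl})\ast(\Phi_{\bj+\bl}\ast f)$ through the Peetre maximal function of $\Phi_{\bj+\bl}\ast f$ and invoking Peetre's maximal inequality (Theorem~\ref{peetremax}) — after pulling out the power $u=\min\{1,p,q\}$ so that the quasi-Banach range is covered, exactly as in Proposition~\ref{prop:analysis_CW} — reduces the claim to a discrete convolution over $\bl$ against a sequence that decays geometrically in each coordinate, at a rate proportional to $L-r$ (coming from the vanishing moments) or to a large exponent $N+r$ (coming from the rapid decay); both are positive precisely because $L>r$ and $N$ is taken large, which yields the estimate.

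For the converse bound I would first manufacture a reproducing identity built from $\{\Psi_{\bj}\}$ itself. Conditions (i)--(ii) of Definition~\ref{def:localmeansfam} guarantee that $\widehat{\Psi}_{0}$ and $\widehat{\Psi}_{1}$ are nonvanishing on the frequency regions for which they are responsible, and the standard construction underlying the Calder\'on reproducing formula~\eqref{def:Calderon_concrete} (Appendix~\ref{ssecA:Calderon}) then produces a dual, tensor-product, local-means-type family $\{\Theta_{\bj}\}\subset\mathcal{S}(\R^{d})$ with $f=\sum_{\bl}\Theta_{\bl}\ast\Psi_{\bl}\ast f$ weak*ly in $\mathcal{S}^{\prime}(\R^{d})$. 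Substituting this into $\Phi_{\bj}\ast f$ and re-running the same scheme — kernel decay for $\Phi_{\bj}\ast\Theta_{\bj+\bl}$ (again annulus-supported at scale $2^{\bj+\bl}$ because $\widehat{\Phi}_{\bj}$ is band-limited), then the Peetre maximal function of $\Psi_{\bj+\bl}\ast f$, then summation of the geometric series — produces $\|f\|_{S^{r}_{p,q}B(\R^{d})}\lesssim\bigl(\sum_{\bj}2^{rq|\bj|_1}\|\Psi_{\bj}\ast f\|_{L_{p}}^q\bigr)^{1/q}$. The main obstacle I expect is precisely this converse direction: constructing $\{\Theta_{\bj}\}$ and verifying it has the Schwartz decay and sufficiently many vanishing moments to let the off-diagonal kernel estimate be re-used, while correctly tracking the normalization $\Psi_{j}(x)=2^{j-1}\Psi_{1}(2^{j-1}x)$ through the tensorization and keeping the bounds uniform down to the $j=0$ and $j=1$ base blocks; the bookkeeping for the Peetre maximal inequality when $\min\{p,q\}<1$ is a secondary technicality that is by now routine. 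Since the assertion is exactly~\cite[Thm~1.23]{Vy06}, one may of course also simply invoke the reference.
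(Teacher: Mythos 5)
The paper does not prove Theorem~\ref{thm:char_localmeans}; it states it with the attribution ``[{\cite[Thm~1.23]{Vy06}}]'' and adds only a historical remark, so there is no in-paper argument to compare against. Your proposal is therefore assessed on its own merits.

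Your first direction (local means quasi-norm bounded by the Besov quasi-norm, via the Littlewood--Paley decomposition, the off-diagonal bound for $\Psi_{\bj}\ast\Phi_{\bj+\bl}$ coming from the $L$ vanishing moments of $\Psi_1$ on one side and rapid Fourier decay on the other, the Peetre maximal function of the band-limited blocks $\Phi_{\bj+\bl}\ast f$, and the $u$-power trick to reach the quasi-Banach range exactly as in Proposition~\ref{prop:analysis_CW}) is the standard argument and is sound; the geometric series over $\bl$ converges precisely when $L>r$, as you note.

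The converse direction, however, has a genuine gap. After inserting the reproducing identity $f=\sum_{\bl}\Theta_{\bl}\ast\Psi_{\bl}\ast f$ into $\Phi_{\bj}\ast f$, you propose to control $(\Phi_{\bj}\ast\Theta_{\bj+\bl})\ast(\Psi_{\bj+\bl}\ast f)$ via the Peetre maximal function of $\Psi_{\bj+\bl}\ast f$ and then invoke Theorem~\ref{peetremax}. But Theorem~\ref{peetremax} is stated only for functions with compactly supported Fourier transform, and $\Psi_{\bj+\bl}\ast f$ is \emph{not} band-limited: the local means kernels are compactly supported in space, not in frequency, so their convolutions with $f$ fall outside the scope of that maximal inequality. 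This is exactly the point at which the local means theory departs from the band-limited Littlewood--Paley setting, and it is the crux of the entire theorem. The standard remedy --- Rychkov's maximal function lemma, which Vyb\'iral adapts to the dominating mixed scale in~\cite{Vy06} --- is a self-improving pointwise inequality for a Peetre-type maximal function of $\Psi_{\bj}\ast f$, proved by a bootstrap over scales and then combined with a vector-valued Hardy--Littlewood (Fefferman--Stein) maximal inequality; it is not a direct application of Theorem~\ref{peetremax}. You correctly flag the converse as the main obstacle, but misplace where the difficulty sits: constructing $\{\Theta_{\bj}\}$ from the Tauberian conditions and checking its decay and vanishing moments is essentially what Appendix~\ref{ssecA:Calderon} already does for an arbitrary Tauberian family and is routine, whereas the Peetre-type maximal bound for non-band-limited local means is the substantive step that your outline glosses over.
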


\noindent
A short history of local mean characterizations is given in~\cite[Rem.~4.6]{UU2015}. They first appeared in \cite{Tr06} for the isotropic spaces $B^{r}_{p,q}(\R^d)$.
The dominating mixed setting was then first handled in~\cite{Vy06}.

\subsection{Peetre maximal inequality} 
\label{ssec:Appendix_Peetre}

\noindent 
Let $f \in L_1(\R^{d})$ be a function such that $\widehat{f}$ is compactly
supported. For $\bar{b}=(b_1,\ldots,b_d)>0$ and $a>0$, the Peetre maximal function $P_{\bar{b},a}f$ is defined by (cf.~\cite[eq.~(1.17)]{Vy06})
\begin{equation*}
  P_{\bar{b},a}f(x) := \sup\limits_{z\in \R^{d}} \frac{|f(x-z)|}{(1+|b_1z_1|)^a\cdots (1+|b_dz_d|)^a}
  \,.  
\end{equation*}
This construction is used for instance in~\cite{Schmeisser1987} and \cite{Vy06}.
Originally, maximal functions of this kind are due to Peetre~\cite{Peetre75} and Fefferman and Stein~\cite{FeffStein72}.
An important relation between $f$ and $P_{\bar{b},a}f$ is the Peetre maximal inequality, stated in the following theorem.

\begin{theorem}[{\cite[1.6.4]{Schmeisser1987}}]\label{peetremax} 
Let $0<p\le\infty$, $\bar{b}>0$, and $a>\tfrac{1}{p}$. For a bandlimited function $f\in L_1(\R^d)$ with
            $$
                      \supp(\widehat{f}) \subset [-b_1,b_1]\times\cdots \times [-b_d,b_d]
            $$
            the following inequality holds
            $$
         \|P_{\bar{b},a}f \|_{L_p(\R^d)} \leq C \|f\|_{L_p(\R^d)} \,,
            $$
           where $C>0$ is some constant independent of $f$ and $\bar{b}$.
 \end{theorem}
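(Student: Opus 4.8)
The plan is to prove Theorem~\ref{peetremax} in three moves: reduce to the isotropic unit band, bound the Peetre maximal function pointwise by a power of an iterated (strong) Hardy--Littlewood maximal operator applied to a power of $f$, and then invoke the strong maximal theorem on a suitable Lebesgue space.

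First I would remove the parameter $\bar b=(b_1,\dots,b_d)$ by the anisotropic dilation $g(x):=f(x_1/b_1,\dots,x_d/b_d)$. Then $\supp(\widehat g)\subset[-1,1]^d$, a substitution in the supremum defining the Peetre maximal function gives $P_{\bar b,a}f(x)=P_{\mathbf 1,a}g(b_1x_1,\dots,b_dx_d)$ with $\mathbf 1=(1,\dots,1)$, and both $L_p$-norms acquire the same Jacobian factor $(b_1\cdots b_d)^{1/p}$; so it suffices to treat $\bar b=\mathbf 1$. If $p=\infty$ this is immediate, since $|g(x-z)|\le\|g\|_{L_\infty(\R^d)}$ and $\prod_i(1+|z_i|)^a\ge1$ force $P_{\mathbf 1,a}g\le\|g\|_{L_\infty(\R^d)}$ pointwise; from now on I assume $p<\infty$.

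The band-limitation enters through the Plancherel--P\'olya inequality: for $g$ with $\supp(\widehat g)\subset[-1,1]^d$ and any $t\in(0,\infty)$ one has $|g(v)|^{t}\lesssim\int_{v+[-1,1]^d}|g(w)|^{t}\,\dint w$ uniformly in $v$, because such $g$ cannot concentrate below the unit scale. Since $a>1/p$, I can fix $t$ with $1/a<t<p$. Applying the above on the cube $(x-z)+[-1,1]^d$ and using that $1+|x_i-w_i|\le 2+|z_i|\le 2(1+|z_i|)$ whenever $w\in(x-z)+[-1,1]^d$, I obtain the $z$-independent pointwise bound
\[
\frac{|g(x-z)|^{t}}{\prod_{i=1}^{d}(1+|z_i|)^{at}}\ \lesssim\ \int_{(x-z)+[-1,1]^d}\frac{|g(w)|^{t}}{\prod_{i=1}^{d}(1+|x_i-w_i|)^{at}}\,\dint w\ \le\ \int_{\R^d}\frac{|g(w)|^{t}}{\prod_{i=1}^{d}(1+|x_i-w_i|)^{at}}\,\dint w,
\]
so that, taking the supremum over $z$, $P_{\mathbf 1,a}g(x)^{t}\lesssim\int_{\R^d}|g(w)|^{t}\prod_i(1+|x_i-w_i|)^{-at}\,\dint w$.

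Finally, I would decompose $\R^d$ into the anisotropic dyadic shells indexed by $\bar k\in\N_0^{d}$, on which $\prod_i(1+|x_i-w_i|)^{-at}\asymp 2^{-at|\bar k|_1}$ and whose measure is $\asymp 2^{|\bar k|_1}$, and compare the average of $|g|^{t}$ over each shell with the iterated strong maximal operator $\mathcal M_{\mathrm{mix}}:=\mathcal M_1\circ\cdots\circ\mathcal M_d$ (the composition of the one-dimensional Hardy--Littlewood maximal operators in each coordinate). This yields $P_{\mathbf 1,a}g(x)^{t}\lesssim\big(\sum_{\bar k\in\N_0^{d}}2^{(1-at)|\bar k|_1}\big)\,\mathcal M_{\mathrm{mix}}(|g|^{t})(x)$, where the series converges precisely because $at>1$. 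Since $p/t>1$, boundedness of $\mathcal M_{\mathrm{mix}}$ on $L_{p/t}(\R^d)$ then gives $\|P_{\mathbf 1,a}g\|_{L_p(\R^d)}^{t}=\big\|(P_{\mathbf 1,a}g)^{t}\big\|_{L_{p/t}(\R^d)}\lesssim\big\||g|^{t}\big\|_{L_{p/t}(\R^d)}=\|g\|_{L_p(\R^d)}^{t}$, which is the assertion. The main point --- and the only place the hypothesis $a>1/p$ is used --- is the choice of the auxiliary exponent $t$: it must simultaneously satisfy $t<p$ (so $L_{p/t}$ lies in the range of the strong maximal inequality) and $at>1$ (so the shell decomposition sums), and such $t$ exists exactly when $a>1/p$; everything else is a routine computation.
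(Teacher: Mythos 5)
Your overall strategy — rescale to $\bar b = \mathbf 1$, reduce the Peetre maximal function to a weighted $L_t$-average of $|g|^t$, split into tensor dyadic shells, compare shell averages to the iterated strong maximal operator, and use its $L_{p/t}$-boundedness — is the standard route for this result, and the parameter bookkeeping ($1/a < t < p$), the $p = \infty$ remark, the shell computation, and the final step are all correct. Note that the paper does not supply a proof of this statement; it merely cites it from Schmeisser--Triebel.

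The step with a genuine gap is the local estimate you label ``Plancherel--Pólya'':
\[
  |g(v)|^t \lesssim \int_{v + [-1,1]^d}|g(w)|^t\,\dd w
  \qquad\text{for all } 0<t<\infty.
\]
This is not the Plancherel--Pólya theorem: that is a \emph{global} statement ($\sum_{\bk}\sup_{Q_{\bk}}|g|^t \asymp \|g\|_{L_t(\R^d)}^t \asymp \sum_{\bk}\inf_{Q_{\bk}}|g|^t$ over all unit cubes), which only gives $|g(v)|^t\lesssim\|g\|_{L_t(\R^d)}^t$; the cube-by-cube version you invoke does not follow from it. Your purely local inequality is in fact true, but establishing it requires genuine extra input — e.g.\ a Remez/Tur\'an--Nazarov inequality for entire functions of exponential type, or a Fefferman--Stein subharmonicity argument using that $|g|^t$ is subharmonic whenever $g$ is holomorphic — and the case $0<t<1$ in particular cannot be reached by the naive argument from $g=g*\Phi$ and H\"older. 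The standard proof avoids the issue by proving \emph{directly} the weighted inequality
\[
\frac{|g(x-z)|^t}{\prod_{i}(1+|z_i|)^{at}} \lesssim \int_{\R^d}\frac{|g(w)|^t}{\prod_{i}(1+|x_i-w_i|)^{at}}\,\dd w ,
\]
which is precisely the display you reach one step later, from the Nikolskii representation $g=g*\Phi$ with a Schwartz kernel $\Phi$ whose Fourier transform equals $1$ on $[-1,1]^d$ (H\"older for $t\ge 1$, the Fefferman--Stein bootstrap for $0<t<1$). You should either prove or correctly cite the cube estimate, or — more economically — replace it by this standard weighted estimate; the remainder of your proof then goes through unchanged.
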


\subsection{Embeddings}

Some fundamental embeddings for 
$S^r_{p,q}B(\R^d)$ with $d\in\N$ have been collected in the following proposition.

\begin{prop}[{cf.~\cite[Sec.~2.2.3 \&~2.4.1]{Schmeisser1987},~\cite[Lem.~3.2.1 \&~3.2.2]{TDiff06}}]
Let $0<p,q,u,v\le\infty$ and $r,w\in \R$.
\begin{description}
\item(i) If $r>1/p$ then
 $$
    S^r_{p,q}B(\R^d) \hookrightarrow C(\R^d)\,.
 $$
 \item(ii) 
 If $v\ge q$ and $w\le r$ then
 $$
     S^r_{p,q}B(\R^d) \hookrightarrow S^w_{p,v}B(\R^d)\,.
 $$
   \item(iii) If $\varepsilon>0$ and $p\le u\le [(1/p-\varepsilon)_+]^{-1}$ (with $0^{-1}=\infty$) then
 $$
   S^{r+\varepsilon}_{p,q}B(\R^d) \hookrightarrow S^r_{p,v}B(\R^d) \quad\text{and}\quad S^{r+\varepsilon}_{p,q}B(\R^d) \hookrightarrow S^r_{u,q}B(\R^d)\,.
 $$
\end{description}
\end{prop}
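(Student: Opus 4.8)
The plan is to derive all three statements by elementary manipulations of the Littlewood--Paley decomposition $f=\sum_{\bj\in\N_0^d}f_{\bj}$, $f_{\bj}:=(\varphi_{\bj}\widehat{f}\,)^{\vee}=\Phi_{\bj}\ast f$, combined with the Nikolskii inequality for band-limited functions. I would abbreviate $c_{\bj}:=2^{r|\bj|_1}\|f_{\bj}\|_{L_p(\R^d)}$, so that $\|f\|_{S^r_{p,q}B(\R^d)}=\|\{c_{\bj}\}_{\bj}\|_{\ell_q(\N_0^d)}$ by Definition~\ref{def:domix_Besov_Rd}, and I would use throughout that $\supp\widehat{f_{\bj}}\subset\supp\varphi_{\bj}$ is contained in a rectangle with side lengths $\asymp 2^{j_i}$, so that $f_{\bj}$ is band-limited to a box of ``volume'' $\asymp 2^{|\bj|_1}$. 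Only after the estimates below would I invoke the independence of $S^r_{p,q}B(\R^d)$ from the chosen family $\{\varphi_{\bj}\}_{\bj}$.

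Statement (ii) is immediate: since $w\le r$ and $|\bj|_1\ge0$ on $\N_0^d$, one has $2^{w|\bj|_1}\le 2^{r|\bj|_1}$, hence $\big(\sum_{\bj}2^{wv|\bj|_1}\|f_{\bj}\|_{L_p}^v\big)^{1/v}\le\|\{c_{\bj}\}_{\bj}\|_{\ell_v}$ (with the usual modification for $v=\infty$), and the embedding $\ell_q(\N_0^d)\hookrightarrow\ell_v(\N_0^d)$ for $v\ge q$ gives $\|f\|_{S^w_{p,v}B}\le\|\{c_{\bj}\}_{\bj}\|_{\ell_q}=\|f\|_{S^r_{p,q}B}$.

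The analytic input for (i) and for the second embedding in (iii) is Nikolskii's inequality: if $g\in L_p(\R^d)$, $0<p\le u\le\infty$, has $\supp\widehat{g}\subset[-b_1,b_1]\times\dots\times[-b_d,b_d]$, then $\|g\|_{L_u(\R^d)}\lesssim(b_1\cdots b_d)^{1/p-1/u}\|g\|_{L_p(\R^d)}$; this is classical and can be cited from~\cite{Schmeisser1987} or obtained from the Peetre maximal inequality (Theorem~\ref{peetremax}). Applied to $g=f_{\bj}$ it yields $\|f_{\bj}\|_{L_u}\lesssim 2^{|\bj|_1(1/p-1/u)}\|f_{\bj}\|_{L_p}$. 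For (i) I would take $u=\infty$, so $\|f\|_{L_\infty}\le\sum_{\bj}\|f_{\bj}\|_{L_\infty}\lesssim\sum_{\bj}a_{\bj}c_{\bj}$ with $a_{\bj}:=2^{-(r-1/p)|\bj|_1}$; since $r-1/p>0$ one has $\sum_{\bj}a_{\bj}^s<\infty$ for every $s>0$ (the sum factors over coordinates into convergent geometric series), so $\sum_{\bj}a_{\bj}c_{\bj}\lesssim\|\{c_{\bj}\}_{\bj}\|_{\ell_q}$ for all $0<q\le\infty$ by Hölder (for $q\ge1$) or by $\ell_q\hookrightarrow\ell_1$ (for $q<1$), giving $\|f\|_{L_\infty}\lesssim\|f\|_{S^r_{p,q}B}$. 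Continuity then follows since the partial sums $\sum_{|\bj|_1\le N}f_{\bj}$ are smooth and, by the same estimate, Cauchy in the uniform norm; their uniform limit is continuous and coincides with $f$ in $\mathcal{S}'(\R^d)$. For the second embedding in (iii) with $u\ge p$, the same Nikolskii step gives $\|f\|_{S^r_{u,q}B}\lesssim\|\{2^{(r+1/p-1/u)|\bj|_1}\|f_{\bj}\|_{L_p}\}_{\bj}\|_{\ell_q}$, and since $u\le[(1/p-\varepsilon)_+]^{-1}$ is precisely the inequality $1/p-1/u\le\varepsilon$, the weight $2^{(r+1/p-1/u)|\bj|_1}$ is dominated by $2^{(r+\varepsilon)|\bj|_1}$, whence $\|f\|_{S^r_{u,q}B}\lesssim\|f\|_{S^{r+\varepsilon}_{p,q}B}$. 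The first embedding in (iii) needs no Nikolskii: writing $\|f\|_{S^r_{p,v}B}=\|\{2^{-\varepsilon|\bj|_1}\cdot 2^{(r+\varepsilon)|\bj|_1}\|f_{\bj}\|_{L_p}\}_{\bj}\|_{\ell_v}$, one bounds this by $\|f\|_{S^{r+\varepsilon}_{p,q}B}$ using $\ell_q\hookrightarrow\ell_v$ if $v\ge q$, and Hölder with exponent $q/v$ together with $\sum_{\bj}2^{-\varepsilon v(q/v)'|\bj|_1}<\infty$ if $v<q$ (usual modifications for $q=\infty$).

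The only step carrying genuine analytic content is Nikolskii's inequality for band-limited functions in the mixed (tensor-box) setting; everything else is summation over the index set $\N_0^d$, where the product structure makes the relevant series factor into one-dimensional geometric series. The routine but error-prone part of the write-up is the bookkeeping at the endpoints $p,q,u,v\in\{\infty\}$ and in the quasi-Banach range $\min\{p,q\}<1$, where the Hölder steps must be replaced by the corresponding $\ell$-embedding inequalities, and one must check that ``$\widehat{f_{\bj}}$ compactly supported'' is legitimate for distributional $f\in\mathcal{S}'(\R^d)$.
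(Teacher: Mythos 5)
The paper does not actually prove this proposition; it is a recall of standard embedding results in the appendix, cited directly from Schmeisser--Triebel and Ullrich's report, so there is no "paper's proof" to compare against. Your argument — estimating the Littlewood--Paley blocks with the tensor-box Nikolskii inequality for (i) and the second half of (iii), and doing elementary weighted $\ell_q$-summation over $\N_0^d$ with geometric factorization for (ii) and the first half of (iii) — is the standard textbook route underlying those references, and it is correct.
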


\noindent
The above embeddings carry over to the spaces $S^r_{p,q}B(D)$ on arbitrary domains $D\subset\R^d$ and also hold true in case $D=\T^d$ (see e.g.~\cite[Rem.~4.2.1]{TDiff06}).
If $D$ is bounded, an additional elementary embedding is given below. 

\begin{prop}\label{prop:emb} Let $0<p,q\le\infty$, $r\in \R$, and $D\subset\R^d$ be bounded or $D=\T^d$. If $0<u\le p$ then
 $$
   S^{r}_{p,q}B(D) \hookrightarrow S^r_{u,q}B(D) \,.
 $$
\end{prop}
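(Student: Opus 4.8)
The plan is to reduce the statement on a bounded domain $D$ (or $D=\T^d$) to the known embedding $S^{r}_{p,q}B(\R^d)\hookrightarrow S^{r}_{u,q}B(\R^d)$ for $0<u\le p$, which is not listed explicitly in the preceding proposition but follows from the elementary monotonicity of $L_p$-norms \emph{on sets of finite measure}. The key point that makes the bounded-domain case different from (and in fact easier than) the full-space case is exactly this: on a set of finite measure, $L_p\hookrightarrow L_u$ whenever $u\le p$, whereas on $\R^d$ this fails. So first I would record the domain version $S^{r}_{p,q}B(D)\hookrightarrow S^{r}_{u,q}B(D)$ directly from the Littlewood--Paley definition, using that all the relevant $L_p$-norms are taken over the bounded set $D$.

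Concretely, for $D=\T^d$ one uses Definition~\ref{def:PerBesov}: since $\T^d$ has finite measure, H\"older's inequality gives $\|\Phi_{\bj}\ast f\|_{L_u(\T^d)}\le |\T^d|^{1/u-1/p}\|\Phi_{\bj}\ast f\|_{L_p(\T^d)}$ for every $\bj$, and summing (or taking the supremum) over $\bj$ with the weights $2^{rq|\bj|_1}$ yields $\|f\|_{S^{r}_{u,q}B(\T^d)}\lesssim \|f\|_{S^{r}_{p,q}B(\T^d)}$ with implied constant depending only on $p,u,d$. For a bounded $D\subset\R^d$, the space $S^{r}_{p,q}B(D)$ is defined extrinsically (Definition~\ref{def:BesovDomain}, and the analogous restriction definition for general bounded $D$): given $f\in S^{r}_{p,q}B(D)$ pick an extension $F\in S^{r}_{p,q}B(\R^d)$ with $F|_D=f$ and $\|F\|_{S^{r}_{p,q}B(\R^d)}\le 2\|f\|_{S^{r}_{p,q}B(D)}$; the same $F$ is also an admissible extension for the $u$-space, so $\|f\|_{S^{r}_{u,q}B(D)}\le\|F\|_{S^{r}_{u,q}B(\R^d)}$, and one is reduced to proving $S^{r}_{p,q}B(\R^d)\hookrightarrow S^{r}_{u,q}B(\R^d)$ — but this last embedding is \emph{false} on $\R^d$, so for bounded $D$ one must instead argue intrinsically. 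I would therefore restrict $F$ to a fixed bounded open neighbourhood $\Omega\supset\overline D$ of finite measure, observe that restriction $S^{r}_{p,q}B(\R^d)\to S^{r}_{p,q}B(\Omega)$ is bounded (Proposition~\ref{prop:restriction1} and its obvious generalisation to $\Omega$), and then use H\"older on $\Omega$ exactly as in the torus case to pass from $p$ to $u$, finally restricting further from $\Omega$ to $D$.

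The only genuine subtlety — and the step I would treat with a little care — is that $L_u$-monotonicity has to be applied to the Littlewood--Paley blocks $\Phi_{\bj}\ast F$ rather than to $F$ itself, which is harmless on a \emph{bounded} set because H\"older's inequality applies blockwise with a single geometric constant $|\Omega|^{1/u-1/p}$ that is independent of $\bj$; one then just sums the resulting inequality against the weights $2^{rq|\bj|_1}$ (using the $\ell_q$ triangle inequality, or nothing at all if $q=\infty$). For $D=\T^d$ the periodic blocks are themselves $1$-periodic functions, so the argument is literally the same with $\Omega$ replaced by a fundamental domain. Assembling these pieces gives
\[
\|f\|_{S^{r}_{u,q}B(D)}\;\lesssim\;\|f\|_{S^{r}_{p,q}B(D)}\qquad(0<u\le p),
\]
which is the claim. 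I expect no real obstacle here; the point of the proposition is precisely to isolate this easy bounded-domain phenomenon from the genuine (and subtler) inclusions on $\R^d$ recorded just above it.
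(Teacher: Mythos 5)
Your torus case is clean and essentially optimal: Definition~\ref{def:PerBesov} gives an \emph{intrinsic} Littlewood--Paley norm built from $L_p(\T^d)$-norms of the blocks, and since $\T^d$ has finite measure, H\"older yields $\|\Phi_{\bj}\ast f\|_{L_u(\T^d)}\le |\T^d|^{1/u-1/p}\|\Phi_{\bj}\ast f\|_{L_p(\T^d)}$ with a $\bj$-independent constant; summing against the weights $2^{r|\bj|_1}$ in $\ell_q$ finishes that case.

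For bounded $D\subset\R^d$, however, the step ``use H\"older on $\Omega$ exactly as in the torus case'' is where the proof breaks, and I don't think it can be patched in the form you describe. The space $S^r_{u,q}B(\Omega)$ is defined extrinsically as an infimum over extensions to $\R^d$ --- there is no intrinsic Littlewood--Paley norm in terms of $L_u(\Omega)$-norms of blocks to which you could apply H\"older. Concretely, with $G=\eta F$ a compactly supported admissible extension (say $\supp G\subset K$), the blocks $\Phi_{\bj}\ast G$ are band-limited, hence entire, and therefore have \emph{full} support in $\R^d$: your H\"older step only controls $\|\Phi_{\bj}\ast G\|_{L_u(\Omega)}$ while the $S^r_{u,q}B(\R^d)$-norm you actually need to bound sums $\|\Phi_{\bj}\ast G\|_{L_u(\R^d)}$ over all of $\R^d$, with tails outside $\Omega$ left uncontrolled. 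The real content of the bounded-domain embedding is a counting fact, most cleanly seen in the sequence-space picture: for compactly supported $G$, at each dyadic level $\bj$ only $N_{\bj}\lesssim 2^{|\bj|_1}$ wavelets (or atoms, or local means nodes) have non-trivial interaction with $K$, so the finite-dimensional inclusion
\[
\Big(\sum_{\bk}|\lambda_{\bj,\bk}|^u\Big)^{1/u}\le N_{\bj}^{1/u-1/p}\Big(\sum_{\bk}|\lambda_{\bj,\bk}|^p\Big)^{1/p}
\]
combines with the weight shift
\[
2^{|\bj|_1(r-1/u)}\cdot 2^{|\bj|_1(1/u-1/p)}=2^{|\bj|_1(r-1/p)}
\]
to give exactly $\|\{\lambda_{\bj,\bk}\}\|_{s^r_{u,q}b(\Gamma^d)}\lesssim_K\|\{\lambda_{\bj,\bk}\}\|_{s^r_{p,q}b(\Gamma^d)}$. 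This is the ``blockwise with a constant independent of $\bj$'' mechanism you are reaching for, but it lives at the level of a discretization such as the Chui--Wang characterization in Theorem~\ref{thm:CW_characterization} or an atomic decomposition (cf.~\cite[Thm.~1.23]{Vy06}), not at the level of H\"older on the blocks themselves. An equally legitimate alternative, closer to your wording, is to invoke an intrinsic characterization of $S^r_{p,q}B(Q)$ on a reference cube $Q\supset D$ by rectangular means of differences (as in~\cite[Thm.~3.6]{NUUChangeVariable2015}), whose building blocks really do carry $L_p(Q)$-norms over a bounded set so that H\"older applies legitimately; but that characterization requires $r>\sigma_p$ and is not in your proposal. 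In either version you must put in the missing decomposition step.
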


\noindent
We finally note that $S^{r}_{p,q}B(\R^{d})$ is comprised entirely of regular distributions if the smoothness parameter $r\in\R$ is larger than $\sigma_p$ as defined in~\eqref{eqdef:sigma_p}. For a proof we refer to~\cite[Prop.~3(2.2.3)]{Schmeisser1987}.

\begin{prop}[{\cite[Lem.~3.2.2]{TDiff06}}]\label{prop:embed_regular}
Let $0<p,q\le\infty$ and $\tilde{p}:=\max\{p,1\}$. If $r > \sigma_p:=(\tfrac{1}{p} - 1)_+$ then
$$
S^r_{p,q}B(\R^d)  \hookrightarrow L_p(\R^d) \cap L_{\tilde{p}}(\R^d)  \,.
$$
\end{prop}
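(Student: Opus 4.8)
The plan is to run the argument through the Littlewood--Paley decomposition and reduce the statement to the convergence of a geometric series in the scale parameter. Fix a family $\{\varphi_{\bj}\}_{\bj}\in\Phi_{\rm hyp}(\R^d)$ with associated kernels $\{\Phi_{\bj}\}_{\bj}$, and for $f\in S^r_{p,q}B(\R^d)$ write $f=\sum_{\bj\in\nd} f_{\bj}$ with $f_{\bj}:=\Phi_{\bj}\ast f$, the series converging weak*ly in $\mathcal S^{\prime}(\R^d)$ by~\eqref{LPdecomp}. Each $f_{\bj}=(\varphi_{\bj}\widehat f)^{\vee}$ is band-limited, with $\supp\widehat{f_{\bj}}$ contained in a rectangular box of side-length $\asymp 2^{j_i}$ in the $i$-th coordinate (by condition~(i) on the one-dimensional factors $\varphi^{(i)}_{j_i}$). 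A tensorized Nikolskii (Plancherel--Pólya) inequality for such functions then yields
\[
\|f_{\bj}\|_{L_{\tilde p}(\R^d)}\lesssim 2^{(\frac1p-\frac1{\tilde p})|\bj|_1}\|f_{\bj}\|_{L_p(\R^d)}=2^{\sigma_p|\bj|_1}\|f_{\bj}\|_{L_p(\R^d)}\,,
\]
using that $\tfrac1p-\tfrac1{\tilde p}=\sigma_p$ in both regimes $p<1$ and $p\ge 1$ (where the exponent is $0$).

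Next I would sum the blocks in $L_{\tilde p}(\R^d)$. Since $\tilde p\ge 1$, Minkowski's inequality and the previous estimate give
\[
\|f\|_{L_{\tilde p}(\R^d)}\le\sum_{\bj\in\nd}\|f_{\bj}\|_{L_{\tilde p}(\R^d)}\lesssim\sum_{\bj\in\nd} 2^{-(r-\sigma_p)|\bj|_1}\,b_{\bj}\,,\qquad b_{\bj}:=2^{r|\bj|_1}\|f_{\bj}\|_{L_p(\R^d)}\,,
\]
where $\|\{b_{\bj}\}_{\bj}\|_{\ell_q(\nd)}\asymp\|f\|_{S^r_{p,q}B(\R^d)}$. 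For $q\ge 1$, Hölder's inequality in $\bj$ bounds the right-hand side by $\big(\sum_{\bj}2^{-(r-\sigma_p)q'|\bj|_1}\big)^{1/q'}\|f\|_{S^r_{p,q}B(\R^d)}$, and the first factor is a product of $d$ convergent geometric series precisely because $r>\sigma_p$; for $q<1$ one instead uses $\ell_q\hookrightarrow\ell_1$ together with $2^{-(r-\sigma_p)|\bj|_1}\le 1$ to reach the same bound. In particular $\sum_{\bj}\|f_{\bj}\|_{L_{\tilde p}(\R^d)}<\infty$, so $\sum_{\bj}f_{\bj}$ converges absolutely in $L_{\tilde p}(\R^d)$; since $L_{\tilde p}$-convergence implies weak* convergence in $\mathcal S^{\prime}(\R^d)$ and the weak* limit is $f$, it follows that $f\in L_{\tilde p}(\R^d)$ with $\|f\|_{L_{\tilde p}(\R^d)}\lesssim\|f\|_{S^r_{p,q}B(\R^d)}$.

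When $p\ge 1$ this already finishes the proof, since then $\tilde p=p$. When $p<1$ it remains to establish the $L_p$-bound. Here I would repeat the summation using the $p$-triangle inequality $\|f\|_{L_p(\R^d)}^p\le\sum_{\bj}\|f_{\bj}\|_{L_p(\R^d)}^p=\sum_{\bj}2^{-rp|\bj|_1}b_{\bj}^p$ in place of Minkowski (no Nikolskii step is needed now), and conclude by Hölder with exponent $q/p$ when $q\ge p$, or by $\ell_{q/p}\hookrightarrow\ell_1$ when $q<p$; the resulting geometric series converges as soon as $r>0$, which is guaranteed by $r>\sigma_p\ge 0$. This gives absolute convergence of $\sum_{\bj}f_{\bj}$ in $L_p(\R^d)$ as well, hence $f\in L_p(\R^d)\cap L_{\tilde p}(\R^d)$ with the asserted norm inequality, which is the claimed continuous embedding.

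The bookkeeping with the geometric series is routine. The one genuine ingredient is the Nikolskii inequality for functions with Fourier transform supported in a rectangular box, which produces the gain $2^{\sigma_p|\bj|_1}$; it can be derived from the Peetre maximal inequality (Theorem~\ref{peetremax}) by a standard argument, or simply quoted from the literature. The only point requiring a little care is the identification of the norm limit of $\sum_{\bj}f_{\bj}$ with the weak* limit $f\in\mathcal S^{\prime}(\R^d)$, which is immediate from uniqueness of weak* limits once absolute convergence in $L_p$ resp.\ $L_{\tilde p}$ has been established.
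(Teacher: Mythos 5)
The paper itself supplies no proof of this proposition; it simply cites \cite[Lem.~3.2.2]{TDiff06} (and, a little earlier in the text, \cite[Prop.~3(2.2.3)]{Schmeisser1987}). Your argument --- Littlewood--Paley decomposition, a tensorized Nikolskii (Plancherel--P\'olya) inequality to pass from $L_p$ to $L_{\tilde p}$ on each band-limited block with the gain $2^{\sigma_p|\bj|_1}$, and then Minkowski resp.\ the $p$-triangle inequality combined with H\"older so that the geometric series in $\bj$ converges precisely because $r>\sigma_p$ --- is the standard route through which this embedding is proved in those references, so it is a correct, self-contained replacement for the omitted proof. What it buys over the paper's bare citation is that it makes visible exactly where the hypothesis $r>\sigma_p$ is used and why $\tilde p=\max\{p,1\}$ is the natural second exponent.

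One small step needs a patch. You justify identifying the norm-limit of $\sum_{\bj}f_{\bj}$ with $f$ by ``uniqueness of weak* limits'' for both the $L_{\tilde p}$ and the $L_p$ cases. That is sound for $L_{\tilde p}$: since $\tilde p\ge 1$ one has $L_{\tilde p}(\R^d)\hookrightarrow\mathcal{S}^{\prime}(\R^d)$, so $L_{\tilde p}$-convergence of the partial sums implies weak* convergence, and the limit must be $f$. But for $p<1$ the space $L_p(\R^d)$ is \emph{not} contained in $\mathcal{S}^{\prime}(\R^d)$ (an $L_p$-function need not be locally integrable), so $L_p$-convergence alone does not entail weak* convergence and uniqueness of distributional limits cannot be invoked. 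The fix is easy: you have already shown that the partial sums $S_N$ converge to $f$ in $L_{\tilde p}$, hence $S_{N_k}\to f$ almost everywhere along a subsequence; Fatou's lemma then gives $\|f\|_{L_p}\le\liminf_k\|S_{N_k}\|_{L_p}\lesssim\|f\|_{S^{r}_{p,q}B(\R^d)}$, which is all you need. With that adjustment the proof is complete.
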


\subsection{Calder\'on reproducing formula}
\label{ssecA:Calderon}

Let us assume that $\{\Lambda_{l}\}_{l\in\N_{0}}$ is a local means family on $\R$ with $L\in\N$ vanishing moments. The functions $\Lambda_0$ and $\Lambda_1$ thus fulfill the Tauberian condition (condition~$(ii)$) in Definition~\ref{def:localmeansfam} for some $\varepsilon>0$. 
According to the construction in~\cite[Sec.~3.3]{UU2015}, there then exists a family of associated kernels $\{\Phi_{l}\}_{l\in\N_{0}}\subset\mathcal{S}(\R)$ such that  $\supp(\widehat{\Phi}_{0})\subset \{\xi \in \R : \abs{\xi} \leq \varepsilon \}$,  $\supp (\widehat{\Phi}_{1}) \subset \{ \xi \in \R : \varepsilon/2 \leq \abs{\xi} \leq 2\varepsilon \}$, for all $\xi\in\R$
\begin{align*}
  \sum_{l=0}^\infty \widehat{\Lambda}_l(\xi) \widehat{\Phi}_l(\xi) = 1 \,,
\end{align*}
and, additionally, $\Phi_l(x) = 2^{l-1}\Phi_1(2^{l-1}x)$ for $x\in\R$ and $l\ge 2$.

Defining 
$\Lambda_{\bl} := \Lambda_{l_{1}} \otimes \ldots \otimes \Lambda_{l_{d}}$
and $\Phi_{\bl} := \Phi_{l_{1}} \otimes \ldots \otimes \Phi_{l_{d}}$ for $\bl=(l_1,\ldots,l_d)\in\N_{0}^d$
one obtains a local means family $\{\Lambda_{\bl}\}_{\bl\in\N_{0}^{d}}$ on $\R^d$
with $L$ vanishing moments and an associated family  $\{\Phi_{\bl}\}_{\bl\in\N_{0}^{d}}$.
Let us put $\Delta_{\bl}:=\Lambda_{\bl}\ast\Phi_{\bl}$. 
The family  $\{\delta_{\bl}\}_{\bl\in\N_{0}^{d}}$ with
$\delta_{\bl}:=\widehat{\Delta}_{\bl}=\widehat{\Lambda}_{\bl}\cdot\widehat{\Phi}_{\bl}$
is then a hyperbolic decomposition of unity on $\R^d$. For a distribution $f\in\mathcal{S}^\prime(\R^{d})$, the associated Littlewood-Paley decomposition can hence be expanded into
\begin{align}\label{def:Calderon}
    f=\sum_{\bl\in\N_{0}^{d}} \Delta_{\bl} \ast f = \sum_{\bl\in\N_{0}^{d}}  \Lambda_{\bl} \ast\Phi_{\bl}\ast f  \,.
\end{align}
This expansion is known as discrete Calder\'on reproducing formula (cf.~\cite{Calderon77}) and at least weak* convergent in $\mathcal{S}^\prime(\R^{d})$.
For Schwartz functions $f$ it converges strongly in $S(\R^{d})$ (the isotropic case is proved e.g.\ in~\cite[Thm.~1.24 \&~Thm.~1.35]{Sawano18}). If $f\in L_2(\R^d)$ one has $L_2$ convergence.
The weak* convergence of~\eqref{def:Calderon} implies, with $\Lambda_{\bl}^{\boldsymbol{-}}:=\Lambda_{\bl}(-\cdot)$,
\begin{align}\label{Calderon_dual_prod}
\langle  f,   g \rangle_{\mathcal{S}^\prime\times\mathcal{S}}  = 
\sum_{\bl\in\N_{0}^{d}} \langle \Delta_{\bl} \ast f,
	 g \rangle_{S^\prime\times S}
=
\sum_{\bl\in\N_{0}^{d}} \langle  \Phi_{\bl}\ast f,
	\Lambda_{\bl}^{\boldsymbol{-}}\ast g \rangle_{\R^d} \,,
\end{align}
where $\langle f,g\rangle_{\R^d}$ is the short-hand notation for the integral product of $f$ and $g$.

\subsection{Duality product}
\label{ssec:Appendix_duality}

\noindent
An important concept in the theory of Besov spaces is conjugation, which 
is a special kind of duality.

\begin{definition}\label{def:conj_space}
Let $0<p,q\le\infty$, $r\in\R$. The \emph{conjugate parameters} $1\le p^\prime,q^\prime\le \infty$, $r^\prime\in\R$, are defined by  
\begin{align}\label{eqdef:general_conj_exponents}
p^\prime:=\begin{cases} \frac{p}{p-1}  &,\, 1< p\le\infty \\ \infty &,0<p\le1 \end{cases} \,,\quad q^\prime:=\begin{cases} \frac{q}{q-1}  &,\, 1< q\le\infty \\ \infty &,0<q\le1 \end{cases}   \,,\quad r^\prime:=-r+\sigma_p \,.
\end{align}
The space $S^{r'}_{p',q'}B(\R^{d})$ is called the \emph{conjugate space} or \emph{conjugate dual} of $S^{r}_{p,q}B(\R^{d})$.
\end{definition}

\noindent
For $p$ and $p'$, and accordingly for $q$ and $q'$, we have the relations
\[
\tfrac{1}{p'}=\big(1-\tfrac{1}{p}\big)_+ \quad\text{and}\quad \tfrac{1}{p'}+\tfrac{1}{p} = 1 + \sigma_p  \,.
\]
Note further that
always $1\le p^\prime,q^\prime\le\infty$ and
$p^{\prime\prime}=\max\{1,p\}$, $q^{\prime\prime}=\max\{1,q\}$
for the bi-dual parameters $p^{\prime\prime}=(p')'$ and $q^{\prime\prime}=(q')'$. In general, the bi-dual space
$S^{r^{\prime\prime}}_{p^{\prime\prime},q^{\prime\prime}}B(\R^d)$ is hence not identical to $S^{r}_{p,q}B(\R^d)$. Calculating $r^{\prime\prime}=(r')'=-r'=r-\sigma_p$ and in view of Proposition~\ref{prop:emb}, one obtains the canonical embedding
\begin{align}\label{eq:conj_bidual}
S^{r}_{p,q}B(\R^d) \hookrightarrow S^{r^{\prime\prime}}_{p^{\prime\prime},q^{\prime\prime}}B(\R^d)
= S^{r-\sigma_p}_{\max\{p,1\},\max\{q,1\}}B(\R^d) \,.
\end{align}

\noindent
Founded on formula~\eqref{Calderon_dual_prod}, a duality product can
be defined on pairs of conjugate spaces (cf.~\cite[Prop.~3.20]{LiYaYuSaUl13}).

\begin{definition}\label{def:duality_prod}
Let $\mathscr{F}=S^{r}_{p,q}B(\R^{d})$ with $0< p,q\le\infty$, $r\in\R$, and $\tilde{\mathscr{F}}$ its conjugate space. One defines 
\begin{align}\label{eqdef_app:gen_dual_prod}
\langle f,g \rangle_{\mathscr{F}\times \tilde{\mathscr{F}}} := \sum_{\bl\in\N_{0}^{d}} \langle  \Phi_{\bl}\ast f,
	\Lambda_{\bl}^{\boldsymbol{-}}\ast g \rangle_{\R^d} \qquad\text{for $f\in \mathscr{F}$, $g\in\tilde{\mathscr{F}}$}\,.
\end{align}
\end{definition}

\noindent
Due to \eqref{Calderon_dual_prod}, since $\mathscr{F}\hookrightarrow \mathcal{S}^\prime(\R^d)$ and $\mathcal{S}(\R^d)\hookrightarrow\tilde{\mathscr{F}}$, it is clear from the definition that 
\begin{align*}
\langle f,  g \rangle_{\mathscr{F}\times\tilde{\mathscr{F}}}  
=\langle  f,   g \rangle_{\mathcal{S}^\prime\times\mathcal{S}} \qquad\text{for $f\in \mathscr{F}$, $g\in\mathcal{S}(\R^d)$} \,.
\end{align*}
The product $\langle  \cdot,  \cdot \rangle_{\mathscr{F}\times\tilde{\mathscr{F}}}$ is hence compatible
with $\langle  \cdot,  \cdot \rangle_{\mathcal{S}^\prime\times\mathcal{S}}$.
Moreover, since $\mathcal{S}(\R^d)$ is dense in $S^{r}_{p,q}B(\R^{d})$ if $\max\{p,q\}<\infty$, the conjugate dual corresponds to the usual topological dual of $S^{r}_{p,q}B(\R^{d})$ in this case.

\begin{lemma}
    The product~\eqref{eqdef_app:gen_dual_prod} is a well-defined bounded bilinear map $\langle \cdot,\cdot \rangle_{\mathscr{F}\times \tilde{\mathscr{F}}}:\mathscr{F}\times \tilde{\mathscr{F}} \to \C $.
    \end{lemma}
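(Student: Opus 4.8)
The plan is to show that the series defining $\langle f,g\rangle_{\mathscr{F}\times\tilde{\mathscr{F}}}$ in~\eqref{eqdef_app:gen_dual_prod} converges absolutely with a bound controlled by $\|f\|_{\mathscr{F}}\|g\|_{\tilde{\mathscr{F}}}$, which simultaneously yields well-definedness, bilinearity, and boundedness. Recall the setup: with $\mathscr{F}=S^r_{p,q}B(\R^d)$ and $\tilde{\mathscr{F}}=S^{r'}_{p',q'}B(\R^d)$, the kernels $\{\Phi_{\bl}\}_{\bl}$ and $\{\Lambda_{\bl}\}_{\bl}$ satisfy the Calder\'on formula~\eqref{def:Calderon}, $\{\Phi_{\bl}\}$ is band-limited and $\{\Lambda_{\bl}\}$ is a local means family. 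First I would estimate each summand. Writing $\bb(\bl):=(2^{l_1},\ldots,2^{l_d})$, Hölder's inequality in $L_p$--$L_{p'}$ (in the form $\frac{1}{p}+\frac{1}{p'}=1+\sigma_p\ge 1$, so $\frac{1}{p}+\frac{1}{p'}\le 1$ fails in general; here I use instead that $\Phi_{\bl}\ast f$ is band-limited and pass to the Peetre maximal function to trade the possibly-too-large exponents) gives
\begin{align*}
\big|\langle \Phi_{\bl}\ast f,\Lambda_{\bl}^{\boldsymbol{-}}\ast g\rangle_{\R^d}\big|
\lesssim \|\Phi_{\bl}\ast f\|_{L_p(\R^d)}\,\|\Lambda_{\bl}^{\boldsymbol{-}}\ast g\|_{L_{p'}(\R^d)}\,,
\end{align*}
using that $\Lambda_{\bl}$ has enough vanishing moments and compact support so that $\Lambda_{\bl}^{\boldsymbol{-}}\ast g\in L_{p'}$ whenever $g\in S^{r'}_{p',q'}B(\R^d)$, and invoking Theorem~\ref{thm:char_localmeans} (local means characterization) to replace $\Phi_{\bl}\ast f$ by the standard Littlewood--Paley block up to equivalence of norms. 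In the non-Banach regime $p<1$ the naive Hölder step must be replaced by an embedding $S^r_{p,q}B(\R^d)\hookrightarrow S^{r-\sigma_p}_{1,q}B(\R^d)$ from Proposition~\ref{prop:emb}/the embedding proposition, after which $\Phi_{\bl}\ast f\in L_1$ and the pairing with $\Lambda_{\bl}^{\boldsymbol{-}}\ast g\in L_\infty$ is legitimate; this is the technical heart.

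Next I would sum over $\bl\in\N_0^d$. Multiplying and dividing by $2^{r|\bl|_1}$, the sum becomes
\begin{align*}
\sum_{\bl\in\N_0^d}\big|\langle \Phi_{\bl}\ast f,\Lambda_{\bl}^{\boldsymbol{-}}\ast g\rangle_{\R^d}\big|
\lesssim \sum_{\bl\in\N_0^d}\Big(2^{r|\bl|_1}\|\Phi_{\bl}\ast f\|_{L_p}\Big)\Big(2^{-r|\bl|_1}\|\Lambda_{\bl}^{\boldsymbol{-}}\ast g\|_{L_{p'}}\Big)\,,
\end{align*}
and since $r'=-r+\sigma_p$ the second factor is (up to the $\sigma_p$-shift and the $L_1$-vs-$L_\infty$ normalization of the kernels) exactly the summand of $\|g\|_{S^{r'}_{p',q'}B(\R^d)}$ in the local means form. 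Applying Hölder's inequality for sequences with exponents $q$ and $q'$ over $\bj\in\N_0^d$ (and, for the inner $\bk$-direction when $q,q'<\infty$, the discrete Hölder inequality already used in Lemma~\ref{lem:Hölder_discr}), one arrives at
\begin{align*}
\sum_{\bl\in\N_0^d}\big|\langle \Phi_{\bl}\ast f,\Lambda_{\bl}^{\boldsymbol{-}}\ast g\rangle_{\R^d}\big|
\lesssim \|f\|_{S^r_{p,q}B(\R^d)}\,\|g\|_{S^{r'}_{p',q'}B(\R^d)}\,,
\end{align*}
with the usual modifications when $q=\infty$ or $q'=\infty$. This establishes absolute convergence of~\eqref{eqdef_app:gen_dual_prod} and the bound $|\langle f,g\rangle_{\mathscr{F}\times\tilde{\mathscr{F}}}|\lesssim\|f\|_{\mathscr{F}}\|g\|_{\tilde{\mathscr{F}}}$. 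Bilinearity is then immediate since each summand is bilinear in $(f,g)$ and the series converges absolutely, so finite linear combinations pass through the sum; independence of the chosen local means family follows because the Calder\'on formula~\eqref{Calderon_dual_prod} shows the product agrees with $\langle\cdot,\cdot\rangle_{\mathcal{S}'\times\mathcal{S}}$ on the dense-enough pairs and the norm bounds are family-independent by Theorem~\ref{thm:char_localmeans}.

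I expect the main obstacle to be the case $0<p\le 1$ (equivalently $\sigma_p>0$), where the plain $L_p$--$L_{p'}$ duality pairing is not a genuine Hölder pairing and one must carefully exploit the band-limitedness of $\Phi_{\bl}$ together with the Peetre maximal inequality (Theorem~\ref{peetremax}) and the embedding~\eqref{eq:conj_bidual} to make sense of each $\langle\Phi_{\bl}\ast f,\Lambda_{\bl}^{\boldsymbol{-}}\ast g\rangle_{\R^d}$ and to absorb the $2^{\sigma_p|\bl|_1}$-type losses into the smoothness shift $r'=-r+\sigma_p$. A secondary subtlety is bookkeeping the $L_1$/$L_\infty$ normalizations of the two kernel families (the $\Lambda_{\bl}$ are $L_1$-normalized local means, the $\Phi_{\bl}$ band-limited), which is exactly the same $2^{\bj_+}$-type bookkeeping that appears throughout Section~\ref{sec:CW_char}; once the per-scale estimate is in hand, the double Hölder argument over scales is routine and mirrors the proof of Lemma~\ref{lem:Hölder_discr}.
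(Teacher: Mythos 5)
Your proposal is correct and follows essentially the same route as the paper: apply Hölder per scale in $L_{\max\{p,1\}}$--$L_{p'}$ and over scales in $\ell_{\max\{q,1\}}$--$\ell_{q'}$, using the bi-dual embedding~\eqref{eq:conj_bidual} to shift from $S^{r}_{p,q}B(\R^{d})$ into the Banach-indexed space $S^{r-\sigma_p}_{\max\{p,1\},\max\{q,1\}}B(\R^{d})$ so that the smoothness loss $\sigma_p$ is exactly absorbed by $r'=\sigma_p-r$. Two small expository slips: Proposition~\ref{prop:emb} is only valid on bounded domains and goes the wrong direction (it decreases $p$), so the correct citation for raising $p<1$ to $1$ is the Nikolskii-type part (iii) of the unlabeled embedding proposition, which is what produces~\eqref{eq:conj_bidual}; and the Peetre maximal function is not needed here — once $p$ has been lifted to $\max\{p,1\}$ via the embedding, the per-scale pairing is a genuine Hölder pairing in Lebesgue spaces and no maximal-function argument enters.
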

    \begin{proof}
    Since the sum in~\eqref{eqdef_app:gen_dual_prod} is absolutely convergent, the product is well-defined. In fact, utilizing the Hölder inequality and the embedding~\eqref{eq:conj_bidual}, one can estimate
    \begin{align*}
    |\langle f,g \rangle_{\mathscr{F}\times \tilde{\mathscr{F}}}|
   &\le  \Big\| \Big\{2^{(r-\sigma_p)|\bl|_1} \| \Phi_{\bl}\ast f \|_{L_{\max\{p,1\}}(\R^d)} \Big\}_{\bl} \Big\|_{\ell_{\max\{q,1\}}(\N_{0}^d)} \cdot  \Big\|\Big\{
           2^{(\sigma_p-r)|\bl|_1} \|\Lambda_{\bl}^{\boldsymbol{-}}\ast g \|_{L_{p^{\prime}}(\R^d)} \Big\}_{\bl} \Big\|_{\ell_{q^{\prime}}(\N_{0}^d)} \nonumber \\
   &\lesssim  \Big\| \Big\{2^{r |\bl|_1} \| \Phi_{\bl}\ast f \|_{L_p(\R^d)} \Big\}_{\bl} \Big\|_{\ell_{q}(\N_{0}^d)} \cdot  \Big\|\Big\{
           2^{r^{\prime}|\bl|_1} \|\Lambda_{\bl}^{\boldsymbol{-}}\ast g \|_{L_{p^{\prime}}(\R^d)} \Big\}_{\bl} \Big\|_{\ell_{q^{\prime}}(\N_{0}^d)} \asymp \|f\|_{\mathscr{F}} \cdot \|g\|_{\tilde{\mathscr{F}}}\,, 
    \end{align*}
where the observation $p^\prime=\max\{p,1\}^\prime$, $q^\prime=\max\{q,1\}^\prime$, and $r^\prime=\sigma_p-r$ is important. 
The boundedness of~\eqref{eqdef_app:gen_dual_prod} is a direct consequence of this estimate. 
The bilinearity of~\eqref{eqdef_app:gen_dual_prod}, at last, 
is obvious.
\end{proof}

\end{document}